\documentclass[11pt]{amsart}

\usepackage{amsmath,amsthm,amssymb,amsfonts,mathtools}
\usepackage{enumitem}
\usepackage{array,longtable,booktabs}
\usepackage{xcolor}

\definecolor{purple}{RGB}{128,0,128}
\usepackage[colorlinks=true,linkcolor=blue,citecolor=green,
  urlcolor=blue]{hyperref}
\hypersetup{
  pdftitle={Twisted Diophantine Approximation II: Uniform Theory},
  pdfauthor={Taehyeong Kim and Vasiliy Neckrasov},
  pdfkeywords={twisted Diophantine approximation, shrinking targets,
    parametric geometry of numbers, successive minima, Hausdorff measure}
}

\usepackage{tikz}

\setenumerate{leftmargin=*}
\setitemize{leftmargin=*}
\allowdisplaybreaks
\numberwithin{equation}{section}

\newtheorem{thm}{Theorem}[section]
\newtheorem{prop}[thm]{Proposition}
\newtheorem{lem}[thm]{Lemma}
\newtheorem{cor}[thm]{Corollary}
\newtheorem{ex}[thm]{Example}

\theoremstyle{definition}
\newtheorem{defn}[thm]{Definition}

\theoremstyle{remark}
\newtheorem{rem}[thm]{Remark}

\DeclareMathOperator{\rank}{rank}
\newcommand{\Mat}{\mathrm{Mat}}
\newcommand{\spanR}{\operatorname{span}_{\mathbb R}}

\newcommand{\R}{\mathbb R}
\newcommand{\Z}{\mathbb Z}
\newcommand{\Q}{\mathbb Q}
\newcommand{\T}{\mathbb T}
\newcommand{\N}{\mathbb N}

\newcommand{\cB}{\mathcal B}
\newcommand{\cG}{\mathcal G}
\newcommand{\cP}{\mathcal P}
\newcommand{\bq}{\mathbf q}
\newcommand{\bp}{\mathbf p}
\newcommand{\bb}{\mathbf b}
\newcommand{\bx}{\mathbf x}
\newcommand{\by}{\mathbf y}

\newcommand{\bv}{\mathbf v}

\newcommand{\q}{\mathbf q}
\newcommand{\p}{\mathbf p}
\newcommand{\x}{\mathbf x}
\newcommand{\one}{\mathbf 1}

\newcommand{\dist}{\operatorname{dist}}

\newcommand{\Sing}{{\mathrm{Sing}}}
\newcommand{\Dhat}{D}
\newcommand{\Emass}{\operatorname{Emass}}
\newcommand{\e}{\mathrm e}

\renewcommand{\le}{\leqslant}
\renewcommand{\ge}{\geqslant}

\title[Twisted Diophantine Approximation II]
{Twisted Diophantine Approximation II:\ Uniform Theory}
\author{Taehyeong Kim}
\address{Department of Mathematics, Brandeis University,
Waltham, MA 02453, USA}
\email{taehyeongkim@brandeis.edu}
\author{Vasiliy Neckrasov}
\address{Department of Mathematics, Brandeis University,
Waltham, MA 02453, USA}
\email{vneckrasov@brandeis.edu}
\subjclass[2020]{11J83, 11J20}
\date{}

\begin{document}

\begin{abstract}
 We develop a general method for uniform twisted metric Diophantine approximation with an arbitrary fixed real matrix. Using all successive minima of the associated diagonal lattice trajectory, we estimate lattice-point counts, accounting for clustering in every direction. Ratios of these counts give exact Hausdorff dimensions of sets of twisted $\psi$-Dirichlet vectors for a large class of functions $\psi$, including all except finitely many power functions {$T^{-\tau}$, $\tau>0$}. We obtain Hausdorff dimension formulae for the endpoint sets obtained by taking the intersection and union of the sets of twisted $c\psi$-Dirichlet vectors over
$c>0$, and prove that taking the intersections causes no dimension drop. Furthermore, we obtain Hausdorff dimensions of level sets of the uniform Diophantine exponent.

In addition, we prove zero--full laws for general Hausdorff measures of $\psi$-Dirichlet sets.
Applications recover the one-dimensional Kim--Liao formulae, strengthen Aggarwal's escape-of-mass upper bound, reproduce the metric criteria of Moshchevitin and the second author and give certain extensions of the above statements.
\end{abstract}

\maketitle

\tableofcontents

\part{Results}

\section{Introduction}\label{sec:introduction}

\subsection{Background and motivation}\label{sec:previous}

Let $m,n\in\N$, set $\R_+=(0,\infty)$, put $d=m+n$, and let
$\Mat_{m,n}$ denote the set of real $m\times n$ matrices. We use the
supremum norm, denoted by $|\cdot|$, in every finite-dimensional vector
space. For a non-increasing function $\psi:\R_+\to\R_+$ tending to zero, a pair
$(A,\bb)\in\Mat_{m,n}\times\R^m$ is called \emph{$\psi$-Dirichlet} if
the system
\begin{equation}\label{eq:1.2}
 |A\q-\p-\bb|\le\psi(T),\qquad |\q|\le T,
 \qquad (\p,\q)\in\Z^m\times\Z^n
\end{equation}
has a solution for every sufficiently large $T$.

Kleinbock and Wadleigh \cite[Theorem~1.6]{kwad} established a
zero-one law for the Lebesgue measure of the set of $\psi$-Dirichlet
pairs: in the notation of \eqref{eq:1.2}, this set has full or zero
measure according as
\[
 \sum\limits_{k=1}^{\infty} \frac{1}{k^{n+1}\psi(k)^m}
\]
converges or diverges. The first named author and Kim \cite[Theorem~1.3]{kimkim}
obtained the corresponding Hausdorff-measure laws for the
complementary sets of non-$\psi$-Dirichlet pairs, and deduced
dimension formulae for sublevel sets of the uniform Diophantine
exponent \cite[Corollary~1.5]{kimkim}.

The more classical inhomogeneous formulation fixes $\bb$ and varies
$A$. For every $\bb\notin\Z^m$, \cite[Theorem~1.4]{kimkim} proves the corresponding Hausdorff-measure law in this singly metric setting. {In particular, for the power functions $\psi_\tau(T):=T^{-\tau}$ with $0<\tau\le n/m$, their result gives}
\[
 \dim_H\{A\in\Mat_{m,n}:(A,\bb)\text{ is not {$\psi_\tau$-Dirichlet}}\}
 =mn-\frac{n-m\tau}{1+\tau}.
\]
\noindent See also \cite{Sch22} for some lower bounds of dimensions of related sets.

In \emph{twisted} inhomogeneous approximation, the matrix $A$ is
fixed and the shift $\bb$ is the metric variable. We identify
$\T^m=\R^m/\Z^m$ with $[0,1)^m$.
For a fixed $A$, put
\begin{equation}\label{eq:1.1}
 \Dhat_A(\psi):=
 \{\bb\in\T^m:(A,\bb)\text{ is $\psi$-Dirichlet}\}.
\end{equation}
The problem considered in this paper is the following:
\[
 \text{Given $A$ and $\psi$, determine }\dim_H\Dhat_A(\psi).
\]
This question was previously raised in \cite[\S 5.1]{kimkim}; see also \cite[\S 7.3]{kwad}. Results for almost every pair do not answer this question for an arbitrary fixed matrix. {Different denominator vectors can give nearby points in $\T^m$
when $A$ maps their differences close to integer vectors. As a result, many denominators can contribute to the same small region, and the
amount of clustering depends on $A$.}

There are already criteria for both extreme measure behaviours. Jarn\'ik's transference theorem \cite{Jar39}, in the formulation of \cite[Theorem~A]{MN25}, gives conditions under which every shift is uniformly approximable. A corresponding measure-zero result was proved by Moshchevitin and the second author (\cite[Theorem~3.1]{MN25}; see also \cite[Theorem~1.5]{N26} for the weighted version)\footnote{Earlier, Bugeaud and Laurent (\cite{BL05}; see also \cite{CGGMS20} for the weighted version) established the measure-zero result for exponents.}.  These criteria use homogeneous approximation to $A^\top$ and allow changes of constants in $\psi$; they do not determine the dimension in the null case. We state both, with extensions, in Propositions \ref{prop:transference-null} and~\ref{prop:transference-full}.

To the best of our knowledge, the only general exact formulae for nontrivial Hausdorff dimensions in the twisted uniform setting prior to the present work are those of Kim and Liao \cite{KL19} for irrational numbers (the case $m=n=1$) and power functions, away from the critical parameters. Their formulae use continued fractions. We recover their non-critical results in Theorem~\ref{cor:kim-liao}, and extend the explicit one-dimensional description to more general functions in
Theorem~\ref{thm:one-dimensional}. Another application strengthens Aggarwal's escape-of-mass upper bound \cite{Agg26} and gives an upper bound for all power functions; see Propositions~\ref{prop:aggarwal-special} and~\ref{prop:aggarwal-general}.

The purpose of this paper is to express the uniform dimension problem through homogeneous lattice data for an arbitrary fixed matrix. All successive minima explicitly enter the formula.
This is the uniform counterpart to the asymptotic problem studied in \cite{KN26}. 

\subsection{The main theorem}\label{subsec:main-theorem}

From now on, unless stated otherwise, $\psi:\R_+\to\R_+$ is continuous,
strictly decreasing, and tends to zero.
Changing $\psi$ on a bounded interval does not affect any of the sets
under consideration. We therefore make the harmless normalization
$\psi(T)\to\infty$ as $T\to0$. Thus $\psi$ and
$T\mapsto T/\psi(T)$ have inverses on $\R_+$.

Our main theorem requires a one-sided regularity condition which
excludes arbitrarily rapid decrease on a fixed multiplicative
scale. The corresponding two-sided condition was called
quasimultiplicativity in \cite[\S1.3]{N26}.

\begin{defn}\label{def:regular}
A decreasing function $\psi$ is called \emph{lower quasimultiplicative}
if there exist $0<\tau<\infty$ and $R_0>1$ such that
\begin{equation}\label{eq:1.3}
 R^{-\tau}\psi(T)\le\psi(RT)
\end{equation}
for every sufficiently large $T$ and every $R\ge R_0$.
\end{defn}

Lower quasimultiplicativity essentially means that the rate of decay of $\psi$ over all sufficiently large multiplicative intervals does not exceed that of some fixed power function. In particular, every power function $\psi_\tau(T)=T^{-\tau}$, with $\tau>0$, is lower quasimultiplicative.

By monotonicity, \eqref{eq:1.3} implies that, for every $R>1$,
\begin{equation}\label{eq:lower-qm-comparison}
 \max\{R,R_0\}^{-\tau}\psi(T)\le\psi(RT)\le\psi(T)
\end{equation}
for all sufficiently large $T$.

For lower quasimultiplicative $\psi$, define
\begin{equation}\label{eq:classical-endpoints}
 \Sing_A(\psi):=\bigcap_{c>0}\Dhat_A(c\psi),
 \qquad
  U_A(\psi):=\bigcup_{c>0}\Dhat_A(c\psi).
\end{equation}
Here $c\psi$ denotes the function $T\mapsto c\psi(T)$.
The first set consists of shifts for which arbitrarily small
constants are admissible; the second allows an arbitrary fixed
constant. We first compute the dimensions of these two endpoint
sets, and then identify when they determine the dimension of
$\Dhat_A(\psi)$ itself.

We now introduce the lattice data. Put
\begin{equation}\label{eq:1.6}
 u_A:=
 \begin{pmatrix}
 I_m&A\\
 0&I_n
 \end{pmatrix},
 \qquad
 a_t:=
 \begin{pmatrix}
 \e^{t/m}I_m&0\\
 0&\e^{-t/n}I_n
 \end{pmatrix}.
\end{equation}
Let $\lambda_i(\Lambda)$ denote the $i$-th successive minimum of a
lattice $\Lambda\subseteq\R^d$ relative to the unit cube, and write
\begin{equation*}\label{eq:1.7}
 \lambda_i(t):=\lambda_i(a_tu_A\Z^d),
 \qquad 1\le i\le d.
\end{equation*}
For {$R,Q>0$}, define
\begin{equation}\label{eq:2.1}
 {t_{R,Q}}:=\frac{mn}{d}\log\frac QR,
 \qquad
 \mathcal N_A(R,Q):=
 \prod_{i=1}^d
 \max\left\{
 \frac{Q^{n/d}R^{m/d}}{\lambda_i({t_{R,Q}})},1
 \right\}.
\end{equation}
This quantity records the homogeneous clustering at scale $(R,Q)$:
\begin{equation*}\label{eq:1.10}
 \#\left\{(\p,\q)\in\Z^m\times\Z^n:
 |A\q-\p|\le R,\ |\q|\le Q\right\}
 \asymp\mathcal N_A(R,Q).
\end{equation*}
The comparison is proved in Lemma~\ref{lem:profile-count}. In
particular, the data in \eqref{eq:2.1} are entirely homogeneous.

For $L>0$, let
\begin{equation}\label{eq:1.11}
 T_j:=\psi^{-1}(\e^{-jL}),\qquad j\ge0.
\end{equation}
Thus consecutive radii $\psi(T_j)$ differ by the fixed factor
$\e^{-L}$. 

\medskip 
For $L,c>0$, we define%
\begin{equation}\label{eq:4.19}
 \mathcal P_{A,\psi}(L,c):=
 \liminf_{N\to\infty}
 \frac1{NL}\sum_{j=0}^{N-1}
 \log
 \frac{\mathcal N_A(c\psi(T_j),T_{j+1})}
      {\mathcal N_A(c\psi(T_{j+1}),T_{j+1})}.
\end{equation}

The numerator and denominator use the same denominator bound and
two consecutive approximation radii. Their ratio governs the
number of smaller approximation balls that can be retained inside
a larger one; see Propositions~\ref{prop:upper} and~\ref{prop:lower}.
The normalization $NL=-\log\psi(T_N)$ converts their accumulated
growth into dimension.

We write $\mathcal H^s$ for $s$-dimensional Hausdorff measure and $\dim_H$ for Hausdorff dimension, using the quotient supremum metric on $\T^m$. We will also denote the $m$-dimensional Lebesgue measure by $\mathcal L^m$.

\begin{thm}\label{thm:main}
Let $A\in\Mat_{m,n}$ and let $\psi$ be continuous, strictly decreasing,
lower quasimultiplicative, and tend to zero.
\begin{enumerate}[label=\textup{(\roman*)}]
\item The limits
\begin{equation}\label{eq:pressure-minus}
 P_-(A,\psi):=
 \lim_{L\to\infty}\inf_{c>0}\mathcal P_{A,\psi}(L,c)
\end{equation}
and
\begin{equation*}\label{eq:pressure-plus}
 P_+(A,\psi):=
 \lim_{L\to\infty}\sup_{c>0}\mathcal P_{A,\psi}(L,c)
\end{equation*}
exist. One has
\begin{equation}\label{eq:1.15}
 \dim_H\Sing_A(\psi)
 =\inf_{c>0}\dim_H\Dhat_A(c\psi)
 =P_-(A,\psi)
\end{equation}
and
\begin{equation}\label{eq:1.16}
 \dim_H U_A(\psi)
 =\sup_{c>0}\dim_H\Dhat_A(c\psi)
 =P_+(A,\psi).
\end{equation}
\item The quantity $\dim_H\Dhat_A(c\psi)$ is independent of $c>0$
if and only if
\begin{equation}\label{eq:1.17}
 P_-(A,\psi)=P_+(A,\psi).
\end{equation}
Under this $c$-invariance condition,
\begin{equation}\label{eq:1.18}
 \begin{split}
 \dim_H\Dhat_A(\psi)
 &=\dim_H\Sing_A(\psi)=\dim_H U_A(\psi)\\
 &=P_-(A,\psi)=P_+(A,\psi)
 =\lim_{L\to\infty}\mathcal P_{A,\psi}(L,1).
 \end{split}
\end{equation}
\end{enumerate}
\end{thm}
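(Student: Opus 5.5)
The plan is to take as black boxes the two estimates announced right after \eqref{eq:4.19}: the covering bound of Proposition~\ref{prop:upper} and the Cantor-type lower bound of Proposition~\ref{prop:lower}. In a form uniform in $L$, I expect them to read
\[
 \mathcal P_{A,\psi}(L,c')-\frac{C}{L}\ \le\ \dim_H\Dhat_A(c\psi)\ \le\ \mathcal P_{A,\psi}(L,c'')+\frac{C}{L}
\]
for constants $c'\asymp c\asymp c''$. Here $C$ depends only on $m,n$ and the lower quasimultiplicativity data $(\tau,R_0)$. The two bounds would come about as follows. For the upper bound, if $\bb\in\Dhat_A(c\psi)$ then for every $j$ there is $\q$ with $|\q|\le T_{j+1}$ and $\|A\q-\bb\|\le c\psi(T_{j+1})$. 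A fixed ball of radius $c\psi(T_j)$ therefore contains at most $\asymp\mathcal N_A(c\psi(T_j),T_{j+1})/\mathcal N_A(c\psi(T_{j+1}),T_{j+1})$ relevant sub-balls of radius $c\psi(T_{j+1})$, by Lemma~\ref{lem:profile-count} applied to differences of denominators. Summing $\log$ of these ratios and dividing by $NL=-\log\psi(T_N)$ gives the upper bound along a liminf subsequence. For the lower bound, one builds a Cantor set with that many well-separated children per level and uses the mass distribution principle.

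The first task is to compare $\mathcal P_{A,\psi}(L,c)$ across $c$ and $L$. Since $\lambda_i$ is Lipschitz in $t$ on a logarithmic scale, and $t_{R,Q}$ shifts by $O(\log c)$ when $R$ is replaced by $cR$, Lemma~\ref{lem:profile-count} gives $\mathcal N_A(cR,Q)\asymp_c \mathcal N_A(R,Q)$ up to factors $c^{\pm d}$. Each summand in \eqref{eq:4.19} therefore changes by $O(|\log c|)$, so $\mathcal P_{A,\psi}(L,c)$ moves by $O(|\log c|/L)$. This shows that $\inf_c$ and $\sup_c$ are essentially attained at bounded $c$ only when $L$ is large; the genuine $c$-dependence survives only through $c\to0$ or $c\to\infty$. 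For the existence of the limits in $L$, I would compare $L$ with $kL$. Grouping $k$ consecutive summands telescopes after changing the denominator bound from $T_{j+1}$ to $T_{j+k}$. That change in the numerators is controlled by the monotonicity of $\mathcal N_A$ in $Q$ and by \eqref{eq:lower-qm-comparison}, which bounds $\log(T_{j+k}/T_{j+1})$ by $O(kL)$ on the $\psi$-scale. This should give $\inf_c\mathcal P(kL,c)\ge\inf_c\mathcal P(L,c)-O(1/L)$ and the analogous statement for the sup. Combined with the approximate continuity in $L$ for non-integer ratios, one gets $\limsup\le\liminf+0$.

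Given these, \eqref{eq:1.15} and \eqref{eq:1.16} follow by a sandwich argument. We have $\Sing_A(\psi)\subseteq\Dhat_A(c\psi)$ for every $c$, so $\dim_H\Sing\le\inf_c\dim_H\Dhat_A(c\psi)\le P_-$ by the upper bound with $L\to\infty$. For the reverse inequality, no dimension drop under the intersection, I would construct a single Cantor set inside $\Sing_A(\psi)$. Concatenate the lower-bound constructions for a sequence $c_k\to0$ and $L_k\to\infty$, using each stage on a block of indices $j$ long enough that the averages are dominated by the current stage. The liminf structure then yields dimension at least $\liminf_k\inf_c\mathcal P(L_k,c)-o(1)=P_-$. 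Similarly, $U_A(\psi)$ is a countable union over $c\in\N$, so $\dim_H U=\sup_c\dim_H\Dhat_A(c\psi)$ by countable stability. This supremum is $\ge P_+$ by the lower bound and $\le P_+$ by the upper bound. Part (ii) is then immediate. $c\mapsto\dim_H\Dhat_A(c\psi)$ is monotone, its inf and sup are $P_\mp$, so it is constant if and only if $P_-=P_+$. In that case the value $\lim_L\mathcal P(L,1)$ is squeezed between the inf and the sup.

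The main obstacle is the gluing construction for $\Sing_A(\psi)$. Switching $c$ between blocks changes the radii $c\psi(T_j)$, and the transition scales must not destroy the liminf. The children counts from Proposition~\ref{prop:lower} have to be realized by points that are separated at all intermediate scales, in all directions of clustering, uniformly across the switch. I would first try block lengths growing super-exponentially, so that each transition costs $O(1)$ in a single summand, which is negligible after division by $NL$.
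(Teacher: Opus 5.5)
Your overall architecture is the paper's: a two-sided estimate from the covering and Cantor constructions, infimum/supremum over $c$, countable stability for $U_A(\psi)$, and a squeeze for part (ii). However, the step you single out as the main obstacle, $\dim_H\Sing_A(\psi)\ge P_-(A,\psi)$, is not delivered by your gluing, and the real difficulty is not where you place it. Concatenating stages on blocks does not preserve the liminf of running averages. The hypothesis $\liminf_N\frac1N\sum_{j<N}a_j^{(p)}\ge d_0$ says nothing about averages over windows $[J,N)$ that start at a late index $J$. For instance, $a_j^{(p)}$ may equal its maximum $M$ for $j<J$, vanish on $[J,2J)$, and alternate between $M$ and $0$ afterwards. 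With $d_0=M/2$ this sequence satisfies the hypothesis. Yet switching to it at $J$, after earlier stages of average $d_0$, gives a glued average of about $d_0/2$ at $N=2J$. Super-exponentially long blocks control the end of each block and the single switch summand, but not this dip at the start of every block. So ``the liminf structure'' does not yield $\liminf_k\inf_c\mathcal P_{A,\psi}(L_k,c)$. The missing idea is the choice of switch points in Lemma~\ref{lem:diagonal}. Take $K_p\ge R_p$ to minimise $B_p(N)=\sum_{j<N}\bigl(a_j^{(p)}-d_0+\delta_p\bigr)$ over $N\ge R_p$. Then $\sum_{j=K_p}^{N-1}a_j^{(p)}\ge(d_0-\delta_p)(N-K_p)$ for \emph{every} $N\ge K_p$, so each window beginning at a switch point is already good.

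Conversely, the separation of children across a switch, which you worry about, is a non-issue once the glued radii are encoded in one approximating function. The paper fixes $L$ and lets $p_j$ increase by at most $1$ per step, with $p_j\to\infty$ and $p_j/j\to0$. It uses Lemma~\ref{lem:interpolation} to get $\vartheta=o(\psi)$ with $\vartheta(T_j)=\e^{-p_j}\psi(T_j)$ and $\vartheta(T_j)/\vartheta(T_{j+1})\in[\e^L,\e^{L+1}]$. It then applies Proposition~\ref{prop:lower} once to $\vartheta$ with $\rho=\e^{L+1}$. At a switch step, monotonicity of $\mathcal N_A$ in $R$ bounds the transition count from below by the constant-$c$ one. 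The condition $p_N/N\to0$ keeps $-\log\vartheta(T_N)\sim NL$, and $\Dhat_A(\e^{L+1}\vartheta)\subseteq\Sing_A(\psi)$; only then is $L\to\infty$ taken. Letting $L_k\to\infty$ inside the construction would make the ratio in \eqref{eq:2.5} unbounded and force the extra constraint $c_k\e^{L_k}\to0$.

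Finally, your separate argument for the existence of $P_\pm$ is both flawed and unnecessary. Lower quasimultiplicativity bounds $T_{j+1}/T_j$ from below, not above: for $\psi(T)=1/\log T$ one has $\log T_j=\e^{jL}$. In any case, changing $Q$ by a factor $\e^{O(kL)}$ could move $\log\mathcal N_A$ by $O(kL)$ per group, which is the same order as the quantity being compared. Instead, since $c\e^{-L}$ ranges over $\R_+$ together with $c$, taking $\inf_c$ and $\sup_c$ in the two-sided estimate gives $\bigl|\inf_c\mathcal P_{A,\psi}(L,c)-\inf_c\dim_H\Dhat_A(c\psi)\bigr|\le\varepsilon_L$, and likewise for the suprema. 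This proves existence of the limits and identifies them at once.
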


\begin{rem}\label{rem:generic-c-invariance}
The value $\dim_H\Dhat_A(c\psi)$ is independent of $c$ (in which case Theorem~\ref{thm:main} gives the exact expression for $\dim_H\Dhat_A(\psi)$) generically along power families, in the following precise sense:
for every fixed $A$ and every lower quasimultiplicative $\psi$, the dimension $\dim_H\Dhat_A(c\psi^s)$ is independent of $c>0$ for all but countably many $s>0$. Indeed, if $0<s<t$, then
$c_2\psi(T)^t\le c_1\psi(T)^s$ eventually for any $c_1,c_2>0$.
Theorem~\ref{thm:main} therefore gives
\begin{equation}\label{eq:power-pressure-ordering}
 P_+(A,\psi^t)\le P_-(A,\psi^s).
\end{equation}
The nonempty intervals
$(P_-(A,\psi^s),P_+(A,\psi^s))$ are consequently pairwise disjoint,
so there are only countably many of them. Outside this exceptional
set, \eqref{eq:1.18} gives the exact dimension for every $c>0$.
The exceptional set here may depend on $A$ and $\psi$.
In Section~\ref{subsec:near-powers}, for functions similar to power functions, we give an explicit finite set containing all possible exceptional powers, which is {independent} of $A$.
\end{rem}

In particular, for any $c>0$ the theorem gives the bounds
\[
 P_-(A,\psi)\le\dim_H\Dhat_A(c\psi)\le P_+(A,\psi)
 \qquad \text{for any} \ c>0.
\]

We note that the equality $\dim_H U_A(\psi)
 =\sup_{c>0}\dim_H\Dhat_A(c\psi)$ in \eqref{eq:1.16} follows directly from the countable stability of Hausdorff dimension. 
The situation with the lower endpoint set $\Sing_A(\psi)$ is different: a separate argument is needed to prove that no dimension drop is happening and $\dim_H\Sing_A(\psi)
 =\inf_{c>0}\dim_H\Dhat_A(c\psi)$.
 
A related no-dimension-drop phenomenon occurs in homogeneous
approximation. Write $D(\psi)$ for the set of matrices for
which \eqref{eq:1.2}, with $\bb=\mathbf0$ and the additional
restriction $\q\ne\mathbf0$, is solvable for every sufficiently
large $T$. If $\operatorname{Sing}(m,n)$ denotes the set of
singular matrices and $(m,n)\ne(1,1)$, then
\[
 \dim_H\operatorname{Sing}(m,n)
 =\inf_{c>0}\dim_H D(cT^{-n/m}).
\]
This follows by combining the covering
estimate of \cite[Theorem~1.5]{KKLM17} with the dimension formula
of \cite[Theorem~3.1]{DFSU24}. We also note that, to the best of the authors' knowledge, no general explicit evaluation of the Hausdorff dimensions of the sets of $\psi$-singular $m \times n$ matrices is known in the homogeneous case.

 In section \ref{subsec:continuity} we suggest two alternative representations of the endpoint sets in which the union and intersection are interchanged, showing that the structure is more symmetric than it can be seen from definitions.

\subsection{On functions close to powers}\label{subsec:near-powers}

For power functions, the possible exceptional exponents in Remark
\ref{rem:generic-c-invariance} belong to the finite set
\begin{equation}\label{eq:power-exceptional-set}
 \mathcal E_{m,n}:=
 \left\{\frac ba:a,b\in\N,\ 1\le a\le m,\ 1\le b\le n\right\}.
\end{equation}
{Define}
\begin{equation}\label{eq:power-rank}
 r_{A,\tau}(T):=
 \#\left\{1\le i\le d:
 \lambda_i\!\left(\frac{mn(1+\tau)}d\log T\right)
 <T^{(n-m\tau)/d}\right\}.
\end{equation}
Here we used ${t_{T^{-\tau},T}}=\frac{mn(1+\tau)}d\log T$ in
\eqref{eq:2.1}. It can be useful to note that $r_{A,\tau}$ counts the number of factors in the definition of $\mathcal N_A(T^{-\tau},T)$ that exceed 1.

\begin{thm}\label{thm:near-powers}
Let $A\in\Mat_{m,n}$ and $\tau\in(0,\infty)\setminus\mathcal E_{m,n}$.
For every $c>0$,
\begin{equation}\label{eq:power-rank-dimension}
 \dim_H\Dhat_A(c\psi_\tau)
 =\liminf_{T\to\infty}
 \frac{\displaystyle\int_1^T r_{A,\tau}(S)\,\frac{dS}{S}
       -\log\mathcal N_A(T^{-\tau},T)}{(1+\tau)\log T}.
\end{equation}
In particular, this dimension is independent of $c$. Its common value
is continuous as a function of $\tau$ on each component of
$(0,\infty)\setminus\mathcal E_{m,n}$.

Moreover, if $\psi$ is continuous, strictly decreasing to zero, and
\begin{equation}\label{eq:limiting-power-order}
 \lim_{T\to\infty}\frac{\log\psi(T)^{-1}}{\log T}=\tau,
\end{equation}
then, for every $c>0$,
\begin{equation}\label{eq:near-power-dimension}
 \dim_H\Dhat_A(c\psi)=\dim_H\Dhat_A(\psi_\tau),
\end{equation}
so the same right-hand side of \eqref{eq:power-rank-dimension} gives
the dimension for $c\psi$. No lower quasimultiplicativity is required in
this last assertion.
\end{thm}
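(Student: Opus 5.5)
We deduce the statement from Theorem~\ref{thm:main} applied to $\psi=\psi_\tau$, which is lower quasimultiplicative. The plan is to compute $\mathcal P_{\psi_\tau}(L,c)$ asymptotically and show that the dependence on $c$ disappears in the limit $L\to\infty$ when $\tau\notin\mathcal E_{m,n}$.

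\textbf{Step 1: expressing the pressure through $\mathcal N_A$.} Here $T_j=\e^{jL/\tau}$. The $j$-th summand in \eqref{eq:4.19} is
\[
\log\mathcal N_A(cT_j^{-\tau},T_{j+1})-\log\mathcal N_A(cT_{j+1}^{-\tau},T_{j+1}).
\]
Define $F(R,Q):=\log\mathcal N_A(R,Q)$. The sum telescopes after inserting the intermediate terms $F(cT_j^{-\tau},T_j)$. It becomes
\[
-F(cT_N^{-\tau},T_N)+\sum_j\bigl[F(cT_j^{-\tau},T_{j+1})-F(cT_j^{-\tau},T_j)\bigr]+O(1).
\]
Each bracket measures the growth of the count when $Q$ increases with $R$ fixed.

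\textbf{Step 2: the derivative in $Q$.} Along a path $(R,Q)$ with $R$ fixed, $t_{R,Q}$ moves. Each factor $\max\{Q^{n/d}R^{m/d}/\lambda_i,1\}$ is log-Lipschitz in $\log Q$, because $\lambda_i(t)$ is $\e^{O(|\Delta t|)}$-Lipschitz. The derivative of the sum of logarithms of the factors exceeding $1$ should equal the number of active factors, up to the interior error coming from $\frac{d}{dt}\log\lambda_i$. Concretely, the count $\#\{|A\q-\p|\le R,|\q|\le Q\}$ grows in $Q$ through exactly the "expanding" directions. Geometrically one checks this as follows: as $Q$ grows with $R$ fixed, the volume of $\{|A\q-\p|\le R,|\q|\le Q\}$ grows like $Q^n$. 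The number of lattice points, however, grows in $\log$-derivative only by $\#\{$active minima$\}$ times a weight.

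I would instead reparametrize by $S$ with $R=S^{-\tau}$ along the diagonal. On $[T_j,T_{j+1}]$ one has $R\equiv cT_j^{-\tau}$, not $S^{-\tau}$. The key claim is that, after dividing by $NL$ and letting $L\to\infty$ is replaced by \emph{no} $L$-limit subtleties, the bracket sum equals $\int_1^{T_N} r_{A,\tau}(S)\,dS/S+o(\log T_N)$. This uses the fact that $r$ counts indices whose factor has $Q$-exponent contributing to the derivative. Then $NL=\tau\log T_N$, and a normalization check against the $(1+\tau)$ in the denominator is needed. The liminf over $N$ becomes a liminf over $T$ because the expression varies by $O(L)$ between consecutive $T_j$.

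\textbf{Step 3 (main obstacle): independence of $c$ and the role of $\mathcal E_{m,n}$.} Replacing $R=cS^{-\tau}$ shifts each $\log\lambda_i$ threshold by $O(\log c)$. This changes $r_{A,\tau}(S)$ only on $S$ where some $\lambda_i(t)$ lies within a bounded factor of the threshold $T^{(n-m\tau)/d}$. I must show such $S$ have logarithmic density $o(1)$ uniformly. The piecewise-linear structure of successive minima (Roy's parametric geometry, or Schmidt–Summerer templates) gives this: $\log\lambda_i(t)$ has slopes in $\{1/m,-1/n\}$ combinations, i.e. slopes of the form $(b/n-a/m)$-type averages. Meanwhile, the threshold has slope $(n-m\tau)/(d\cdot mn(1+\tau)/d)$ in $t$. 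If $\tau\notin\mathcal E_{m,n}$, these slopes never coincide, so each crossing is transversal with slope gap bounded below. The time spent near the threshold is therefore $O(|\log c|)$ per crossing. The number of crossings per unit time is bounded after passing to a template within bounded distance. This gives $P_-=P_+$, hence formula \eqref{eq:power-rank-dimension} via \eqref{eq:1.18}.

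Continuity in $\tau$ follows because the integrand is uniformly transversal on compact subintervals avoiding $\mathcal E_{m,n}$, which makes the expression equicontinuous.

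\textbf{Step 4: the near-power case.} For $\psi$ satisfying \eqref{eq:limiting-power-order}, for any $\epsilon>0$ we have $\psi_{\tau+\epsilon}\le\psi\le\psi_{\tau-\epsilon}$ eventually. This gives $\dim_H\Dhat_A(\psi_{\tau+\epsilon})\le\dim_H\Dhat_A(c\psi)\le\dim_H\Dhat_A(\psi_{\tau-\epsilon})$ by inclusion, with no regularity needed. Choosing $\tau\pm\epsilon\notin\mathcal E_{m,n}$ and using continuity gives \eqref{eq:near-power-dimension}.
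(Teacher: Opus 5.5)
\textbf{Verdict: there is a genuine gap.} The key claim of Step~2 is false, and the density estimate in Step~3 is not actually proved, so neither the formula \eqref{eq:power-rank-dimension} nor the $c$-independence is established.

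\textbf{Step~2.} Your telescoping in Step~1 is correct, but the ``normalization check'' you postpone is exactly where Step~2 breaks. Put $x=\log Q$, $y=-\log R$ and $F(x,y)=\log\mathcal N_A(\e^{-y},\e^x)+O(1)$. Away from the threshold set (where every $|\log\lambda_i|$ is large), the maximum defining $F$ is attained by the unique primitive sublattice $\Gamma_{r'}$ spanned by the minima below $1$ (Lemma~\ref{lem:power-maximizer}). So locally $F=-(c_a+ay-bx)$, where $a$ and $b$ count the numerator and denominator coordinates of the dominant Pl\"ucker coordinate of $\Gamma_{r'}$, and $a+b=r'$. Hence the $Q$-derivative at fixed $R$ is $\partial_xF=b=r'-a$, not $r'$. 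Even your main term is really $\frac nd r'$, and the $\frac{d}{dt}\log\lambda_i$ contribution you treat as an interior error is of the same order; together they give $\frac nd r'-\frac{mn}d(\frac am-\frac bn)=b$.

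A concrete counterexample: take $m=n=1$, $\alpha$ badly approximable and $0<\tau<1$. Then $\mathcal N_\alpha(R,Q)\asymp\max\{QR,1\}$, so your bracket sum is $(1+O(L^{-1}))\log T_N$, while $\int_1^{T_N}r_{\alpha,\tau}(S)\,dS/S=2\log T_N+O(1)$. Your formula would then return $(1+\tau)/\tau>1$ instead of $1$.

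The missing identity is $\partial_xF-\partial_yF=r'$ off the threshold set, i.e.\ $\frac{d}{du}F(u,\tau u)=r'(u)+(1+\tau)\partial_yF(u,\tau u)$. Integrating it along the diagonal shows that the vertical sum $\sum_j a_jL$ in \eqref{eq:4.19} equals $\frac{\tau}{1+\tau}\bigl(\int_0^x r'(u)\,du-F(x,\tau x)\bigr)+o(x)$. Equivalently, your horizontal sum is $\frac{\tau}{1+\tau}\int r'+\frac1{1+\tau}F$. This is where the factor $1+\tau$ comes from. Proving it needs the exterior-product description of $F$ and uniqueness of the maximizer, and neither appears in your proposal.

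\textbf{Step~3.} The transversality you identify is right: a block-average slope of the minima equals the threshold slope exactly when $a\tau=b$. But transversal crossings at a bounded rate, each costing time $O(|\log c|)$, only bound the proportion of time near the threshold by a constant of order $|\log c|$, not by $o(1)$. The bounded-rate assertion itself is also left unjustified.

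For continuity in $\tau$ the requirement is stronger still. Since $|g_i^\sigma-g_i^\tau|\le|\sigma-\tau|x$ on $[0,x]$, you need $\lim_{\eta\searrow0}\limsup_{x\to\infty}\frac1x\bigl|\{u\le x:\min_i|g_i(u)|\le\eta x\}\bigr|=0$, and crossing counts give nothing for bands of width proportional to $x$. This is Lemma~\ref{lem:power-threshold}. The paper proves it by pigeonholing over the nested scales $R_j=\eta^{(d-j)/d}x$ to isolate a block $\{r+1,\dots,s\}$ of minima near zero, with a gap that persists on stretches of length $\asymp R_{j+1}$. On such stretches $G_s-G_r$ is within $O(1)$ of a piecewise-affine function with $O(d)$ pieces, because each $G_r$ is a maximum of at most $m+1$ affine functions; this is what rules out repeated returns. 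Its slopes $a\tau-b$ are nonzero because $\tau\notin\mathcal E_{m,n}$, which yields the bound $\ll\eta^{1/d}x+\eta^{-(d-1)/d}$.

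With these two ingredients supplied, your outline would go through; Step~4 is fine and matches the paper.
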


Many natural classes of functions satisfy \eqref{eq:limiting-power-order}: for instance, we can multiply power functions by any powers of logarithms (or repeated logarithms), and Theorem \ref{thm:near-powers} will apply.

\begin{rem}\label{rem:exceptional-powers} 
It is necessary to exclude the finite set of exceptional exponents from consideration in Theorem \ref{thm:near-powers}: for each of these exponents there is a matrix $A$ such that $\dim_H\Dhat_A(c\psi_\tau)$ is not $c$-invariant. Fix $\tau=b/a\in\mathcal E_{m,n}$, choose a badly approximable matrix $B\in\Mat_{a,b}$, and let $A\in\Mat_{m,n}$ have $B$ as its upper-left block and all other
entries zero. Then
\[
 \Dhat_A(c\psi_\tau)
 =\Dhat_B(c\psi_\tau)\times\{\mathbf0\}
 \subseteq\T^a\times\{\mathbf0\}.
\]
Bad approximability is preserved by transposition. Hence
Propositions~\ref{prop:trivial-shifts}
and~\ref{prop:transference-full}, applied to $B$, give constants
$0<c_0<c_1$ such that
\[
 \Dhat_A(c\psi_\tau)=
 \begin{cases}
 \langle B\Z^b \rangle\times\{\mathbf0\},
       &0<c<c_0,\\
 \T^a\times\{\mathbf0\},
       &c>c_1.
 \end{cases}
\]
Here $\langle \cdot \rangle$ denotes reduction modulo $\Z^a$. Their dimensions are respectively $0$ and $a$. 

We do not claim, however, that each exponent in $\mathcal E_{m,n}$ is exceptional for every matrix $A$.
\end{rem}

\smallskip

We are finishing this subsection with a result about the level sets of Diophantine exponents. Define the \emph{uniform Diophantine exponent} by
\begin{equation}\label{eq:uniform-shift-exponent}
 \widehat\omega(A,\bb):=
 \sup\{s>0:\bb\in\Dhat_A(\psi_s)\},
\end{equation}
where the supremum of the empty set is zero. In particular,
$\widehat\omega(A,\bb)=\infty$ for $\bb\in A\Z^n\bmod\Z^m$.
The following statement includes the critical exponents.

\begin{prop}\label{prop:uniform-exponent-levels}
For every $A\in\Mat_{m,n}$ and every $\tau>0$,
\begin{align}
 \dim_H\{\bb\in\T^m:\widehat\omega(A,\bb)\ge\tau\}
 &=\lim_{s\nearrow\tau}P_\pm(A,\psi_s) = \inf_{0<\sigma<\tau}\dim_H\Dhat_A(\psi_\sigma),
 \label{eq:exponent-superlevel-dimension}\\
 \dim_H\{\bb\in\T^m:\widehat\omega(A,\bb)>\tau\}
 &=\lim_{s\searrow\tau}P_\pm(A,\psi_s) = \sup_{\sigma>\tau}\dim_H\Dhat_A(\psi_\sigma).
 \label{eq:exponent-strict-superlevel-dimension}
\end{align}
All the involved limits exist; one can choose both $+$ and $-$ in the expression above. If the set $\{\bb:\widehat\omega(A,\bb)=\tau\}$ is nonempty,
then
\begin{equation}\label{eq:exact-exponent-dimension}
 \dim_H\{\bb\in\T^m:\widehat\omega(A,\bb)=\tau\}
 =\lim_{s\nearrow\tau}P_-(A,\psi_s).
\end{equation}
\end{prop}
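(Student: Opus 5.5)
The plan is to reduce everything to Theorem~\ref{thm:main}. The tool is the interleaving inequality \eqref{eq:power-pressure-ordering}, $P_+(A,\psi_t)\le P_-(A,\psi_s)$ for $0<s<t$, together with the trivial bound $P_-\le P_+$.

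\emph{Existence and equality of the limits.} For $s<s'<\tau$ we have $P_+(A,\psi_{s'})\le P_-(A,\psi_s)\le P_+(A,\psi_s)$. Hence $s\mapsto P_\pm(A,\psi_s)$ are both non-increasing and interleave. So the one-sided limits at $\tau$ exist, and the $+$ and $-$ versions coincide on each side. By Theorem~\ref{thm:main}(i), $P_-(A,\psi_\sigma)\le\dim_H\Dhat_A(\psi_\sigma)\le P_+(A,\psi_\sigma)$. Therefore $\inf_{\sigma<\tau}\dim_H\Dhat_A(\psi_\sigma)$ and $\sup_{\sigma>\tau}\dim_H\Dhat_A(\psi_\sigma)$ equal the left and right limits, respectively; the infimum and supremum are monotone limits, since $\sigma\mapsto\dim_H\Dhat_A(\psi_\sigma)$ is non-increasing.

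\emph{Strict superlevel set \eqref{eq:exponent-strict-superlevel-dimension}.} Directly from the definition,
\[
 \{\widehat\omega(A,\bb)>\tau\}=\bigcup_k\Dhat_A(\psi_{\sigma_k})
\]
for any sequence $\sigma_k\searrow\tau$. Countable stability of Hausdorff dimension then gives $\sup_k\dim_H\Dhat_A(\psi_{\sigma_k})$, which is the right limit.

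\emph{Superlevel set \eqref{eq:exponent-superlevel-dimension}.} We have
\[
 \{\widehat\omega\ge\tau\}=\bigcap_{\sigma<\tau}\Dhat_A(\psi_\sigma)=\bigcap_{\sigma<\tau}\Sing_A(\psi_\sigma),
\]
because $\psi_{\sigma'}\le c\psi_\sigma$ eventually for $\sigma<\sigma'$. The upper bound $\le\inf_{\sigma<\tau}\dim_H\Dhat_A(\psi_\sigma)$ is immediate. For the lower bound I would avoid a new Cantor construction and apply Theorem~\ref{thm:main} to a single diagonal function $\tilde\psi(T)=T^{-\sigma(T)}$. Here $\sigma(T)<\tau$ is a continuous step-like function increasing to $\tau$ so slowly that $\tilde\psi$ remains strictly decreasing and lower quasimultiplicative, with exponent $\tau+1$.

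Since $\tilde\psi\le c\psi_\sigma$ eventually for every $\sigma<\tau$ and every $c>0$, we get $\Sing_A(\tilde\psi)\subseteq\{\widehat\omega\ge\tau\}$. It then suffices to show
\[
 P_-(A,\tilde\psi)\ge\lim_{s\nearrow\tau}P_-(A,\psi_s)-\varepsilon .
\]
To get this, I would hold $\sigma(T)\equiv\sigma_k$ on a very long multiplicative range $[T'_k,T''_k]$. The length is chosen, via the liminf in \eqref{eq:4.19} for $\psi_{\sigma_k}$ and a diagonal choice of $L$ and of a finite range of $c$, so that every Cesàro average of the log-ratios ending in that range is at least $P_-(A,\psi_{\sigma_k})-\varepsilon_k$. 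The short transition zones contribute $o(NL)$, since $\log\mathcal N_A$ ratios are $O(L)$ per step by \eqref{eq:lower-qm-comparison} and Lemma~\ref{lem:profile-count}.

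This bookkeeping is the main technical obstacle, and it has two parts. First, the infimum over $c$ and the limit in $L$ in \eqref{eq:pressure-minus} must be handled uniformly across blocks. I would try exploiting that changing $c$ within a bounded range only shifts indices $j$ by $O(1)$. Second, the averages starting in earlier blocks must stay controlled.

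\emph{Exact level set \eqref{eq:exact-exponent-dimension}.} The upper bound follows from inclusion in $\{\widehat\omega\ge\tau\}$. For the lower bound there are three cases. If the left limit is $0$, nonemptiness suffices. If the left limit exceeds the right limit, then $\{\widehat\omega=\tau\}=\{\widehat\omega\ge\tau\}\setminus\{\widehat\omega>\tau\}$, and removing a set of smaller dimension does not lower the dimension. In the remaining case the two limits are equal and positive. There I would refine the Cantor set of Proposition~\ref{prop:lower} underlying $\Sing_A(\tilde\psi)$: at a sparse sequence of levels, discard the children lying within $\psi_{\tau+\delta_k}(T)$ of some $A\q-\p$ with $|\q|\le T$. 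The count of such children is governed by $\mathcal N_A(\psi_{\tau+\delta_k}(T),T)$, which is negligible against the retained ratio. Hence no dimension is lost, and every point of the limit set is non-$\psi_{\tau+\delta}$-Dirichlet for all $\delta>0$. This refinement is the second delicate step.
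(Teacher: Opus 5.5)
Your treatment of the one-sided limits and of $\{\widehat\omega>\tau\}$ is the same as the paper's. The gaps are the two steps you flag as delicate, and the second rests on a missing idea.

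\emph{The exact level set.} Here no Cantor refinement is needed, and the one you sketch is not justified as stated. A child at level $j+1$ has radius about $T_{j+1}^{-\sigma}$ with $\sigma<\tau$, which is far larger than $T_{j+1}^{-(\tau+\delta)}$. Moreover, every child is itself centred at some $\langle A\q\rangle$ with $|\q|\le T_{j+1}$. So the exclusion would have to be done many levels deeper. The quantity $\mathcal N_A(\psi_{\tau+\delta_k}(T),T)$ you cite measures clustering around a single point, not the number of bad descendants per parent, so the claimed negligibility is unproved.

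The paper uses the group structure instead. By \eqref{eq:1.19}, and since $\widetilde{(\psi_s)}_2=2^{1+s}\psi_s$ for powers, one gets $\widehat\omega(A,\bb_1\pm\bb_2)\ge\min\{\widehat\omega(A,\bb_1),\widehat\omega(A,\bb_2)\}$. Let $\bb_0$ have exponent exactly $\tau$ and $\bb\in H_\tau:=\{\widehat\omega>\tau\}$. Then $\bb_0+\bb$ has exponent at least $\tau$. It cannot exceed $\tau$, since otherwise $\bb_0=(\bb_0+\bb)-\bb$ would too. Thus $\bb_0+H_\tau\subseteq E_\tau:=\{\widehat\omega=\tau\}$, so $\dim_H H_\tau\le\dim_H E_\tau$. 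Countable stability on $\{\widehat\omega\ge\tau\}=E_\tau\cup H_\tau$ then gives $\dim_H E_\tau=\dim_H\{\widehat\omega\ge\tau\}$. This settles all three of your cases at once.

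\emph{The lower bound for $\dim_H\{\widehat\omega\ge\tau\}$.} Your single-function route through Theorem~\ref{thm:main} is plausible, but the bookkeeping that carries the content is not done. Three remarks:
\begin{itemize}
\item The uniformity in $c$ you worry about is unnecessary. Since $c\tilde\psi(T)\le T^{-\sigma}$ eventually for every $c>0$ and $\sigma<\tau$, already $\Dhat_A(c\tilde\psi)\subseteq\{\widehat\omega\ge\tau\}$ for every $c$.
\item The transitions are not short. Lower quasimultiplicativity of $\tilde\psi$ forces the passage from $\sigma_k$ to $\sigma_{k+1}$ to occupy a range of $\log T$ of length $\gg(\sigma_{k+1}-\sigma_k)\log T$. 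They are only asymptotically negligible.
\item Your second obstacle, controlling averages started in earlier blocks, is exactly what Lemma~\ref{lem:diagonal} handles, by starting each block at a minimiser of the centred partial sums.
\end{itemize}

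The paper's Lemma~\ref{lem:no-drop-powers} avoids Theorem~\ref{thm:main} altogether. Put $D:=\inf_{\sigma<\tau}\dim_H\Dhat_A(\psi_\sigma)$ and use the fixed mesh $T_j=\e^{jL}$. Proposition~\ref{prop:upper} is applied to $\psi_{\sigma_p}$, and combined with $\dim_H\Dhat_A(\psi_{\sigma_p})\ge D$ it gives Ces\`aro lower bounds $\sigma_1D-L^{-1}\log C_d$ for every $p$. After diagonalising, the Cantor set with radii $T_j^{-\sigma_{p_j}}$ is built using Lemmas~\ref{lem:lower-transition} and~\ref{lem:tree}, which need only \eqref{eq:2.7} and \eqref{eq:tree-log-ratio}. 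Membership in each $\Dhat_A(\psi_\sigma)$, $\sigma<\tau$, is then checked by hand. Because the target exponents lie strictly below $\tau$, the unbounded jumps of the radii at block switches do no harm. So no regularity of an interpolating function, and no uniformity in $c$ or $L$, is ever needed.
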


\begin{cor}\label{cor:noncritical-exponent-levels}
If $\tau\notin\mathcal E_{m,n}$, then
\begin{equation}\label{eq:noncritical-exponent-dimension}
\begin{aligned}
 \dim_H\{\bb\in\T^m:\widehat\omega(A,\bb)\ge\tau\}
 &=\dim_H\{\bb\in\T^m:\widehat\omega(A,\bb)>\tau\}\\
 &=\dim_H\Dhat_A(\psi_\tau).
\end{aligned}
\end{equation}
The same equality holds for the exact level
$\{\bb:\widehat\omega(A,\bb)=\tau\}$ whenever it is nonempty.
Thus these dimensions are given by \eqref{eq:power-rank-dimension}.
\end{cor}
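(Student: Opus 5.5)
The corollary follows by combining Proposition~\ref{prop:uniform-exponent-levels} with Theorem~\ref{thm:near-powers}; the work is to show that the one-sided limits of $P_\pm(A,\psi_s)$ at $\tau$ agree with $\dim_H\Dhat_A(\psi_\tau)$ when $\tau\notin\mathcal E_{m,n}$.

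First we fix the parameters. Since $\mathcal E_{m,n}$ is finite and $\tau\notin\mathcal E_{m,n}$, there is an open interval $I\ni\tau$ disjoint from $\mathcal E_{m,n}$. For every $s\in I$, Theorem~\ref{thm:near-powers} gives that $\dim_H\Dhat_A(c\psi_s)$ is independent of $c>0$. By Theorem~\ref{thm:main}(ii), this gives $P_-(A,\psi_s)=P_+(A,\psi_s)=\dim_H\Dhat_A(\psi_s)=:f(s)$. Theorem~\ref{thm:near-powers} also asserts that $f$ is continuous on the component of $(0,\infty)\setminus\mathcal E_{m,n}$ containing $\tau$, hence on $I$.

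Next we take the one-sided limits. In \eqref{eq:exponent-superlevel-dimension}, the expression $\lim_{s\nearrow\tau}P_\pm(A,\psi_s)$ is, for $s$ close to $\tau$, just $\lim_{s\nearrow\tau}f(s)$. By continuity this equals $f(\tau)=\dim_H\Dhat_A(\psi_\tau)$. Likewise \eqref{eq:exponent-strict-superlevel-dimension} gives $\lim_{s\searrow\tau}f(s)=f(\tau)$. This proves \eqref{eq:noncritical-exponent-dimension}. For the exact level set, assume it is nonempty. Then \eqref{eq:exact-exponent-dimension} gives $\lim_{s\nearrow\tau}P_-(A,\psi_s)=f(\tau)$ by the same reasoning. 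Finally, by Theorem~\ref{thm:near-powers} the value $f(\tau)$ is the right-hand side of \eqref{eq:power-rank-dimension}.

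The only step needing care is the continuity of $f$ at $\tau$. It is quoted from Theorem~\ref{thm:near-powers}, so it requires that theorem's continuity assertion as stated and does not follow from monotonicity alone. Monotonicity by itself would only give the ordering \eqref{eq:power-pressure-ordering}, which allows $f$ to jump at $\tau$. With the continuity statement from that theorem the argument is complete.
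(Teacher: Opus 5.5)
Your proposal is correct and matches the paper's proof: the paper also uses Theorem~\ref{thm:near-powers} to get continuity of the common value $P_\pm(A,\psi_s)=\dim_H\Dhat_A(\psi_s)$ at every noncritical $\tau$, so both one-sided limits in Proposition~\ref{prop:uniform-exponent-levels} equal $\dim_H\Dhat_A(\psi_\tau)$. You simply spell out more explicitly, via Theorem~\ref{thm:main}(ii), why $P_-$ and $P_+$ coincide with this common value for $s$ near $\tau$.
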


We again note that the homogeneous analog of this problem -- finding Hausdorff dimension of level sets of the uniform Diophantine exponent -- has not been completely settled (see \cite[Theorem 4.9, Theorem 4.10]{DFSU24} for some results in terms of template geometry). In particular, the monotonicity and continuity (away from critical exponents) of this dimension, which in twisted case follows for nonempty sets from Theorem \ref{thm:near-powers} and Proposition \ref{prop:uniform-exponent-levels}, remain open (\cite[Problem 5.3]{DFSU24}).

\subsection{Zero-full dichotomy for the Hausdorff measure}
\label{subsec:zero-full}

Theorem~\ref{thm:main} determines whether the $s$-dimensional Hausdorff measure $\mathcal H^s$ vanishes or is infinite on the two endpoint sets away from their critical
dimensions. In this section we present a more general zero-full law, which includes critical exponents and does not require lower quasimultiplicativity. For this statement it is
natural to introduce the following notation. For $c>0$, put
\begin{equation}\label{eq:rescaled-function}
 \widetilde\psi_c(T):=c\psi(T/c).
\end{equation}
For a general continuous, strictly decreasing function $\psi$
tending to zero, we put
\begin{equation}\label{eq:1.14}
 \Sing_A(\psi):=\bigcap_{c>0}\Dhat_A(\widetilde\psi_c),
 \qquad
  U_A(\psi):=\bigcup_{c>0}\Dhat_A(\widetilde\psi_c).
\end{equation}
The following lemma shows that these definitions agree with
\eqref{eq:classical-endpoints} whenever lower quasimultiplicativity
holds.

\begin{lem}\label{lem:rescaling-comparison}
Suppose that $\psi$ is lower quasimultiplicative. For every $c>0$
there are constants $a_c,b_c>0$ such that
\begin{equation}\label{eq:rescaling-comparison}
 a_c\psi(T)\le\widetilde\psi_c(T)\le b_c\psi(T)
\end{equation}
for all sufficiently large $T$. Consequently, the definitions in
\eqref{eq:classical-endpoints} and \eqref{eq:1.14} agree.
\end{lem}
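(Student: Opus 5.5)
We prove the comparison \eqref{eq:rescaling-comparison} by splitting into the cases $c\ge1$ and $c<1$. In each case the only inputs are monotonicity of $\psi$ and the two-sided bound \eqref{eq:lower-qm-comparison}. Throughout, write $\widetilde\psi_c(T)=c\psi(T/c)$ and fix $\tau,R_0$ as in Definition~\ref{def:regular}.

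\emph{Case $c\ge1$.} Here $T/c\le T$, so monotonicity gives $\psi(T/c)\ge\psi(T)$. Hence $\widetilde\psi_c(T)\ge c\psi(T)$, and we may take $a_c=c$. For the upper bound, apply \eqref{eq:lower-qm-comparison} with $R=c$ at the point $T/c$, assuming $c>1$ (the case $c=1$ is trivial). Since $T/c\to\infty$, this is valid for large $T$ and gives
\[
\max\{c,R_0\}^{-\tau}\psi(T/c)\le\psi(T).
\]
Therefore $\widetilde\psi_c(T)\le c\max\{c,R_0\}^{\tau}\psi(T)$, and we take $b_c=c\max\{c,R_0\}^\tau$.

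\emph{Case $c<1$.} Now $T/c>T$, so $\psi(T/c)\le\psi(T)$, which gives the upper bound with $b_c=c$. For the lower bound, apply \eqref{eq:lower-qm-comparison} with $R=1/c>1$ at the point $T$:
\[
\psi(T/c)\ge\max\{1/c,R_0\}^{-\tau}\psi(T).
\]
This yields $a_c=c\max\{1/c,R_0\}^{-\tau}$. In both cases the inequalities hold for all sufficiently large $T$, which proves \eqref{eq:rescaling-comparison}.

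\emph{Agreement of the two definitions.} We first note that $\Dhat_A(\cdot)$ depends only on the function for large $T$, since $\psi$-Dirichlet is an "every sufficiently large $T$" condition. It is also monotone: if $\phi_1\le\phi_2$ eventually, then $\Dhat_A(\phi_1)\subseteq\Dhat_A(\phi_2)$.

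For the intersection, \eqref{eq:rescaling-comparison} gives
\[
\Dhat_A(a_c\psi)\subseteq\Dhat_A(\widetilde\psi_c),
\]
so $\bigcap_{c'}\Dhat_A(c'\psi)\subseteq\bigcap_c\Dhat_A(\widetilde\psi_c)$. For the converse inclusion, we need, for each $c'>0$, some $c$ with $b_c\le c'$. In the case $c<1$ we have $b_c=c$, so $c=\min\{c',1/2\}$ works.

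For the union, \eqref{eq:rescaling-comparison} gives
\[
\Dhat_A(\widetilde\psi_c)\subseteq\Dhat_A(b_c\psi),
\]
which yields one inclusion. For the other, given $c'$ we need some $c$ with $a_c\ge c'$. The case $c\ge1$ gives $a_c=c$, so $c=\max\{c',1\}$ works.

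The only point requiring care is this last step. In the set-equality argument, the constants must range over all of $(0,\infty)$: $b_c$ has to become arbitrarily small and $a_c$ arbitrarily large. The explicit formulas above supply exactly this. Everything else is routine bookkeeping with \eqref{eq:lower-qm-comparison}.
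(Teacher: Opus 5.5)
Your proof is correct and follows essentially the same route as the paper: \eqref{eq:lower-qm-comparison} with $R=\max\{c,c^{-1}\}$ supplies the nontrivial side of \eqref{eq:rescaling-comparison}, and the monotonicity bounds $\widetilde\psi_c\le c\psi$ for $c\le1$ and $\widetilde\psi_c\ge c\psi$ for $c\ge1$ give the reverse inclusions for the intersection and the union. Your explicit constants and the choices $c=\min\{c',1/2\}$ and $c=\max\{c',1\}$ simply make that argument concrete.
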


\begin{proof}
Inequalities \eqref{eq:rescaling-comparison} follow from
\eqref{eq:lower-qm-comparison}, applied with
$R=\max\{c,c^{-1}\}$ when $c\ne1$; the case $c=1$ is immediate.
They give one inclusion in each comparison of the intersections
and unions in \eqref{eq:classical-endpoints} and \eqref{eq:1.14}.
For the reverse inclusions, observe that
\begin{equation}\label{eq:elementary-rescaling-comparison}
 \widetilde\psi_c(T)\le c\psi(T)\quad(0<c\le1),
 \qquad
 \widetilde\psi_c(T)\ge c\psi(T)\quad(c\ge1).
\end{equation}
\end{proof}

Before stating the main results of this section, we introduce the groups in which zero-full law for Hausdorff measures becomes meaningful. Put
\begin{equation*}\label{eq:orbit-closure}
 \Gamma_A:=A\Z^n\bmod\Z^m,
 \qquad H_A:=\overline{\Gamma_A}\subseteq\T^m,
\end{equation*}
and let $\mathcal L_A$ be normalized Haar measure on $H_A$.
By the Closed Subgroup Theorem, $H_A$ is a compact Lie group
\cite[Theorem~20.12]{Lee}. When $H_A=\T^m$, one has
$\mathcal L_A=\mathcal L^m$; more generally, if $\dim H_A>0$,
the restriction of $\mathcal H^{\dim H_A}$ to $H_A$ is a positive
constant multiple of $\mathcal L_A$.

Every point of $\Gamma_A$ belongs to $\Dhat_A(\psi)$ by using a
fixed denominator. Conversely, if $\bb\notin H_A$, then
$|A\q-\bb|_{\Z^m}\ge\dist(H_A,\bb)>0$, so $\psi(T)\to0$
excludes $\bb$ from $\Dhat_A(\psi)$. Here and hereafter, 
\begin{equation}\label{eq:distance_to_the_nearest_integer}
 |\bx|_{\Z^m}:=\min_{\p\in\Z^m}|\bx-\p|.
\end{equation}
Thus
\begin{equation*}\label{eq:ambient-chain}
 \Gamma_A\subseteq\Sing_A(\psi)\subseteq\Dhat_A(\widetilde\psi_c)
 \subseteq U_A(\psi)\subseteq H_A
 \qquad \text{for any} \ c>0.
\end{equation*}

Lastly, the zero-full theorem can be stated for the general Hausdorff measures. A \emph{dimension function} is a continuous nondecreasing function $f:\R_+\to\R_+$ such that $f(r)\to0$ as $r\to0$. We denote the corresponding Hausdorff measure by $\mathcal H^f$. Thus $f(r)=r^s$ gives $\mathcal H^s$. A set $E\subseteq H_A$ has \emph{locally full $\mathcal H^f$-measure} if
\[
 \mathcal H^f(E\cap B)=\mathcal H^f(B)
\]
for every nonempty relatively open ball $B\subseteq H_A$.

\begin{thm}\label{thm:zero-full}
Let $A\in\Mat_{m,n}$ and let $\psi$ be continuous, strictly decreasing,
and tend to zero.
\begin{enumerate}[label=\textup{(\roman*)}]
\item\label{enu:zero-full-amplification}
Let $f$ be a dimension function. There exists $h_0\in[0,+\infty]$
such that, for $b>0$,
\begin{equation}
\mathcal H^f\bigl(\Dhat_A(\widetilde\psi_b)\bigr) = \begin{cases}
 \mathcal H^f(H_A),& b>h_0,\\
 0,& 0<b<h_0.
\end{cases}
\end{equation}
In the first case, $\Dhat_A(\widetilde\psi_b)$ actually has locally full
$\mathcal H^f$-measure in $H_A$.
In particular,
\begin{equation}\label{eq:Haar-amplification}
 \mathcal L_A\bigl(\Dhat_A(\widetilde\psi_a)\bigr)>0
 \ \Rightarrow\
 \mathcal L_A\bigl(\Dhat_A(\widetilde\psi_b)\bigr)=1 \quad \text{for any} \ b > a.
\end{equation}
\item\label{enu:zero-full-exact}
Let $a>0$. If
\[
 \mathcal L_A\bigl(\Dhat_A(\widetilde\psi_a)\bigr)=1,
\]
then
\begin{equation}\label{eq:exact-amplification}
 \Dhat_A(\widetilde\psi_{2a})=H_A.
\end{equation}
\item\label{enu:zero-full-subgroups}
The sets $\Sing_A(\psi)$ and $ U_A(\psi)$ are Borel
subgroups of $H_A$. For every dimension function $f$, each has
either zero or locally full $\mathcal H^f$-measure in $H_A$.
\end{enumerate}
\end{thm}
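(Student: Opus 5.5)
The plan is to exploit the additive structure of the sets $\Dhat_A(\widetilde\psi_c)$ under translation by $\Gamma_A$ and by sums of points. The basic observation concerns two points $\bb_1\in\Dhat_A(\widetilde\psi_{c_1})$ and $\bb_2\in\Dhat_A(\widetilde\psi_{c_2})$. For large $T$ we choose solutions $\q_1,\q_2$ with $|\q_i|\le T$ and $|A\q_i-\bb_i|_{\Z^m}\le c_i\psi(T/c_i)$. Then $\q_1\pm\q_2$ has norm at most $2T$ and approximates $\bb_1\pm\bb_2$ within $c_1\psi(T/c_1)+c_2\psi(T/c_2)$. After rescaling $T\mapsto T/2$, this leads to inclusions of the form
\[
\Dhat_A(\widetilde\psi_{c})\pm\Dhat_A(\widetilde\psi_{c})\subseteq\Dhat_A(\widetilde\psi_{2c}),
\]
because $2c\psi(2T/(2c))=2c\psi(T/c)$, which is exactly $\widetilde\psi_{2c}(2T)$. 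The choice of rescaled functions $\widetilde\psi_c$ is made precisely so that this bookkeeping is exact. Likewise, translation by $\gamma=A\q_0\in\Gamma_A$ maps $\Dhat_A(\widetilde\psi_c)$ into $\Dhat_A(\widetilde\psi_{c'})$ for every $c'>c$, since $\psi(T/c)\to0$ absorbs the shift $\q_0$ after slightly enlarging the scale. This translation property is the sole use of $c'>c$ and is the engine for the zero--one behaviour in $c$.

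For (iii), taking the intersection and union over $c$ in the sum inclusion immediately shows that $\Sing_A(\psi)$ and $U_A(\psi)$ are closed under addition and negation, so they are subgroups. They are also $\Gamma_A$-invariant. For Borel measurability, I would write $\Dhat_A(\widetilde\psi_c)$ as $\bigcup_{T_0}\bigcap_{T\ge T_0}$ of closed sets, restricting to rational $T$ and using continuity of $\psi$ with slack, and then take countable intersections and unions over rational $c$ using monotonicity in $c$. For the zero-or-locally-full statement, I would use the fact that a $\Gamma_A$-invariant Borel set $E\subseteq H_A$ has zero or locally full $\mathcal H^f$-measure. Suppose $\mathcal H^f(E\cap B)<\mathcal H^f(B)$ for some ball $B$. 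When $\mathcal H^f$ is not $\sigma$-finite on $H_A$, comparison with $\mathcal H^{\dim H_A}$ is handled separately; otherwise $\mathcal H^f|_{H_A}$ is either a multiple of Haar measure or zero/infinite, in which case ergodicity of the dense subgroup $\Gamma_A$ acting by translations on $H_A$ gives the dichotomy. The cleaner route I would actually try is a density-point argument. If $E$ has positive measure, then translating a near-full portion of one ball by elements of $\Gamma_A$, which is dense, and using translation invariance of $\mathcal H^f$ makes every ball nearly full. This step is the main obstacle for general $f$, since Lebesgue density is unavailable. I would handle it by reducing to two cases. If $f(r)/r^{\dim H_A}\to\infty$ or $0$, the measures are trivially $\infty$ or $0$ on $H_A$, so the dichotomy reduces to positive versus zero $\mathcal H^{\dim H_A}$-measure. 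Otherwise $\mathcal H^f$ is comparable to Haar measure, and the Haar ergodic argument applies.

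For (i), let $h_0=\inf\{b:\mathcal H^f(\Dhat_A(\widetilde\psi_b))>0\}$. If $a<b$ and $\Dhat_A(\widetilde\psi_a)$ has positive measure, then by the above it is $\Gamma_A$-translate-stable into level $a'\in(a,b)$. The same density/ergodicity argument then shows that $\bigcup_{\gamma}(\gamma+\Dhat_A(\widetilde\psi_a))\subseteq\Dhat_A(\widetilde\psi_{a'})$ is locally full, and monotonicity yields the result for $b$. For (ii), if $\Dhat_A(\widetilde\psi_a)$ has full Haar measure in $H_A$, then for any $\bb\in H_A$ the sets $\Dhat_A(\widetilde\psi_a)$ and $\bb-\Dhat_A(\widetilde\psi_a)$ are both of full measure. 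Their intersection is therefore nonempty, which writes $\bb=\bb_1+\bb_2$ with $\bb_1,\bb_2\in\Dhat_A(\widetilde\psi_a)$. The sum inclusion then gives $\bb\in\Dhat_A(\widetilde\psi_{2a})$.
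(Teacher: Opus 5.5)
\paragraph{The gap: the invariant-set dichotomy for general $f$.}
Several parts of your sketch match the paper's proof: the sum inclusion $\Dhat_A(\widetilde\psi_c)\pm\Dhat_A(\widetilde\psi_c)\subseteq\Dhat_A(\widetilde\psi_{2c})$, the $\Gamma_A$-translation into $\Dhat_A(\widetilde\psi_{c'})$ for $c'>c$, the subgroup and Borel claims in (iii), the reduction of (i) to the invariant set $\bigcup_{\gamma}(\gamma+\Dhat_A(\widetilde\psi_a))$, and the argument for (ii). The gap is in the one nontrivial lemma everything rests on: a $\Gamma_A$-invariant Borel set $E\subseteq H_A$ has zero or locally full $\mathcal H^f$-measure.

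Write $h=\dim H_A$. In the case $f(r)/r^{h}\to\infty$ you say the dichotomy ``reduces to positive versus zero $\mathcal H^{h}$-measure.'' It does not, because $\mathcal H^f$ charges $\mathcal H^{h}$-null sets. Take $f(r)=r^s$ with $s<h$, and a compact $K\subseteq H_A$ with $0<\mathcal H^s(K)<\infty$. Then $E=\bigcup_{\gamma\in\Gamma_A}(K+\gamma)$ is $\Gamma_A$-invariant, Borel and $\mathcal H^{h}$-null, yet $\mathcal H^s(E)>0$. For such $E$ you still have to prove $\mathcal H^f(E\cap B)=\infty$ for every relatively open ball $B$, and nothing in your sketch does this.

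This is not a corner case. The regime $f(r)=r^s$ with $s<\dim H_A$ is exactly the one relevant to the dimension results, for example the critical dimensions of $\Sing_A(\psi)$ and $U_A(\psi)$. There the sets are typically Haar-null, so your reduction would misclassify them. The density-point route you mention does not help, since, as you note yourself, it is unavailable for general $f$.

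\paragraph{The missing idea.}
Split according to the size of $\mathcal H^f(E)$, not according to the behaviour of $f$.

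If $\mathcal H^f(E)=\infty$: density of $\Gamma_A$ and compactness give finitely many translates $B+\gamma_1,\dots,B+\gamma_N$ covering $H_A$. Invariance of $E$ and of $\mathcal H^f$ then yields $\infty=\mathcal H^f(E)\le N\,\mathcal H^f(E\cap B)$.

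If $0<\mathcal H^f(E)<\infty$: the map $F\mapsto\mathcal H^f(F\cap E)$ is a finite Borel measure invariant under the dense subgroup $\Gamma_A$. Hence it is invariant under $H_A$, hence a positive multiple of Haar measure. So $E$ is Haar-conull, and therefore $\mathcal H^{h}(E)>0$.
\begin{itemize}
\item This rules out $\liminf_{r\to0}f(r)/r^{h}=\infty$, which would force $\mathcal H^f(E)=\infty$.
\item When that $\liminf$ is finite, cover $H_A$ at scales $r_k\to0$ with $f(r_k)\le Cr_k^{h}$. This gives $\mathcal H^f(H_A)<\infty$, so $\mathcal H^f|_{H_A}$ is itself a Haar multiple and $E$ is $\mathcal H^f$-conull.
\end{itemize}

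This last step also corrects your ``otherwise $\mathcal H^f$ is comparable to Haar measure.'' When $\liminf f(r)/r^{h}<\infty=\limsup f(r)/r^{h}$ there is no two-sided comparison. What you need instead is finiteness of $\mathcal H^f(H_A)$ together with translation invariance.
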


In particular, the zero-full law for the endpoints sets still holds at each positive critical dimension in Theorem~\ref{thm:main}.

We note that the same zero-full law in $\Mat_{m,n}$ holds in the homogeneous case for the sets $\Sing(m,n)$ and, more generally, $ \Sing(\psi):=\bigcap_{c>0}\Dhat(\widetilde\psi_c)$. Indeed: these sets are invariant under translation by rational matrices, and the zero-full dichotomy directly follows from \cite[Theorem 1]{BDK05}.

\subsection{Continuity of dimension and endpoint sets representation}
\label{subsec:continuity}

In \eqref{eq:classical-endpoints}, $\Sing_A(\psi)$ is defined
by an intersection and $ U_A(\psi)$ by a union over
constant multiples of $\psi$. The following proposition
reverses these operations: $\Sing_A(\psi)$ is a union over
approximating functions $\vartheta=o(\psi)$, whereas
$ U_A(\psi)$ is an intersection over functions
$\varphi$ satisfying $\psi=o(\varphi)$. Here, $\psi=o(\varphi)$ is meant to denote that $\lim\limits_{T \to \infty} \frac{\psi(T)}{\varphi(T)} = 0$.
In the supremum, infimum, union and intersection below, all approximating functions are understood to be continuous, strictly decreasing, and tend to zero.

\begin{samepage}
\begin{prop}
\label{prop:endpoint-representations}
Under the assumptions of Theorem \ref{thm:main}, the following hold.
\begin{enumerate}[label=\textup{(\roman*)}]
\item\label{enu:upper-intersection}
\begin{equation}\label{eq:1.28}
  U_A(\psi)
 =\bigcap_{\psi=o(\varphi)}\Dhat_A(\varphi).
\end{equation}
The intersection in \eqref{eq:1.28} is unchanged if it is restricted to
lower quasimultiplicative functions.

\item\label{enu:lower-union}
\begin{equation}\label{eq:sing-union}
 \Sing_A(\psi)
 =\bigcup_{\vartheta=o(\psi)}\Dhat_A(\vartheta).
\end{equation}
The union in \eqref{eq:sing-union} is unchanged if it is restricted to
lower quasimultiplicative functions.
\end{enumerate}
\end{prop}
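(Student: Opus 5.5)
The plan is to prove each identity by two inclusions. One inclusion in each part will be immediate from monotonicity. The other will come from a diagonal construction of an auxiliary function, and I will build that function to be lower quasimultiplicative so that the restricted versions follow at once. The basic monotonicity fact is this: if $\vartheta\le\varphi$ for all large $T$, then $\Dhat_A(\vartheta)\subseteq\Dhat_A(\varphi)$, because \eqref{eq:1.2} need only hold for sufficiently large $T$.

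\emph{Easy inclusions.} Suppose $\psi=o(\varphi)$. Then for every $c>0$ we have $c\psi\le\varphi$ eventually, so $\Dhat_A(c\psi)\subseteq\Dhat_A(\varphi)$, and hence $U_A(\psi)\subseteq\bigcap_{\psi=o(\varphi)}\Dhat_A(\varphi)$. Similarly, if $\vartheta=o(\psi)$, then $\vartheta\le c\psi$ eventually for each $c>0$, so $\Dhat_A(\vartheta)\subseteq\Sing_A(\psi)$. Restricting the intersection in \eqref{eq:1.28} to fewer functions can only enlarge it, and restricting the union in \eqref{eq:sing-union} can only shrink it. So the "unchanged" claims follow once the constructed functions below are lower quasimultiplicative.

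\emph{Reverse inclusion in \ref{enu:lower-union}.} Let $\bb\in\Sing_A(\psi)$. For each $k\ge1$ there is $S_k$ such that \eqref{eq:1.2} with radius $\psi(T)/k$ is solvable for all $T\ge S_k$. I will choose these with $S_{k+1}\ge 2S_k$. Define a continuous nondecreasing $g$ with $g=1$ on $(0,S_2]$ and $g$ rising from $k-1$ to $k$ linearly in $\log T$ on $[S_k,S_{k+1}]$ for $k\ge 2$; then $g\le k$ on $[S_k,S_{k+1}]$, so the radius-$\psi/k$ solutions apply there. Put $\vartheta:=\psi/g$. Then:
\begin{itemize}
\item $\vartheta$ is continuous, strictly decreasing and $\vartheta=o(\psi)$.
\item $\vartheta(T)\ge\psi(T)/k$ on $[S_k,S_{k+1}]$, so $\bb\in\Dhat_A(\vartheta)$.
\item $g$ increases by at most $1$ on each multiplicative interval of length $\ge\log 2$, so $g(RT)\le g(T)\,(2+\log R/\log 2)\le R^{\sigma}g(T)$ for $R\ge R_0$. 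Combined with \eqref{eq:1.3} for $\psi$, this shows $\vartheta$ is lower quasimultiplicative.
\end{itemize}

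\emph{Reverse inclusion in \ref{enu:upper-intersection}.} This is the main obstacle, since the constructed function must stay decreasing while the factor in front of $\psi$ grows. Let $\bb\notin U_A(\psi)$. For each $k$, $\bb\notin\Dhat_A(k\psi)$, so there are arbitrarily large $T$ at which \eqref{eq:1.2} with radius $k\psi(T)$ has no solution. I will inductively choose such failure points $T_1<T_2<\cdots$, taking $T_{k+1}$ so large that $\psi(T_{k+1})\le\psi(T_k)/(k+1)^{2}$. Set $\varphi=g\psi$, where $g$ is continuous and nondecreasing, with $g(T_k)=k$. On $[T_k,T_{k+1}]$ I take
\[
 g(T)=k\,\bigl(\psi(T_k)/\psi(T)\bigr)^{\varepsilon_k},
\]
with $\varepsilon_k\in(0,1/2]$ fixed by $g(T_{k+1})=k+1$; the decay gap $\psi(T_{k+1})\le\psi(T_k)/(k+1)^2$ ensures such an exponent exists. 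Then:
\begin{itemize}
\item On each such interval $\varphi$ is a constant times $\psi^{1-\varepsilon_k}$, so $\varphi$ is continuous and strictly decreasing.
\item $\varphi/\psi=g\to\infty$.
\item $\varphi(T_k)=k\psi(T_k)$, so \eqref{eq:1.2} fails at every $T_k$ and $\bb\notin\Dhat_A(\varphi)$.
\item Since $g$ is nondecreasing, $\varphi(RT)\ge g(T)\psi(RT)\ge R^{-\tau}\varphi(T)$, so $\varphi$ is lower quasimultiplicative.
\end{itemize}
These properties of the interpolation are the only delicate verifications in the argument.
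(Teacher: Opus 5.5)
Your proof is correct and follows the paper's route. The easy inclusions come from monotonicity. For the reverse inclusions you take a sparse sequence of times and an interpolated factor $g\nearrow\infty$, chosen so that $g\psi$, respectively $\psi/g$, stays strictly decreasing and lower quasimultiplicative. In part (i) your factor $k\bigl(\psi(T_k)/\psi(T)\bigr)^{\varepsilon_k}$ is exactly the paper's interpolation lemma with $c_k=k$ and $\delta=1/2$. In part (ii) you interpolate linearly in $\log T$ over doubling times instead of reusing that lemma, which works equally well.

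The only unstated check is that $\varphi\to0$. It follows from $\varphi(T)\le\varphi(T_k)=k\psi(T_k)\le k\psi(T_1)/(k!)^2$ on $[T_k,T_{k+1}]$.
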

\end{samepage}

The following proposition complements the endpoint dimension formulae \eqref{eq:1.15} and \eqref{eq:1.16}: the dimension of $\Sing_A(\psi)$ can also be approximated arbitrarily closely from below by dimensions of $\Dhat_A(\vartheta)$ with $\vartheta=o(\psi)$, and that of $ U_A(\psi)$ from above by dimensions of $\Dhat_A(\varphi)$ with $\psi=o(\varphi)$. These conclusions do not follow from the set representations alone: intersections can lose dimension, and the union in \eqref{eq:sing-union} is indexed by an uncountable family. They express continuity of the endpoint dimensions with respect to asymptotic comparison of approximating functions.

\begin{prop}
\label{prop:dimension-continuity}
Under the assumptions of Theorem \ref{thm:main},
\begin{equation}\label{eq:dimension-continuity}
\begin{split}
 \dim_H\Sing_A(\psi)
 &=\sup_{\vartheta=o(\psi)}\dim_H\Dhat_A(\vartheta)
 \le \dim_H\Dhat_A(\psi)\\
 &\le \inf_{\psi=o(\varphi)}\dim_H\Dhat_A(\varphi)
 =\dim_H U_A(\psi).
\end{split}
\end{equation}
The supremum and the infimum in \eqref{eq:dimension-continuity} are
unchanged if they are restricted to lower quasimultiplicative functions.
\end{prop}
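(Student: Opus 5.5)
The plan is to treat the two outer equalities separately. The middle inequalities are immediate: for $\vartheta=o(\psi)$ we have $\vartheta\le\psi$ eventually, so $\Dhat_A(\vartheta)\subseteq\Dhat_A(\psi)$, and similarly $\Dhat_A(\psi)\subseteq\Dhat_A(\varphi)$ whenever $\psi=o(\varphi)$. For the same reason, Proposition~\ref{prop:endpoint-representations} gives
\[
\sup_{\vartheta=o(\psi)}\dim_H\Dhat_A(\vartheta)\le\dim_H\Sing_A(\psi)
\quad\text{and}\quad
\dim_H U_A(\psi)\le\inf_{\psi=o(\varphi)}\dim_H\Dhat_A(\varphi),
\]
since each $\Dhat_A(\vartheta)$ is contained in the union \eqref{eq:sing-union}, and $U_A(\psi)$ is contained in every $\Dhat_A(\varphi)$ by \eqref{eq:1.28}. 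The same inclusions hold if we restrict to lower quasimultiplicative $\vartheta,\varphi$. It therefore suffices to prove the reverse inequalities using lower quasimultiplicative test functions. By \eqref{eq:1.15} and \eqref{eq:1.16}, this means finding, for each $\varepsilon>0$:
\begin{itemize}
\item a lower quasimultiplicative $\vartheta=o(\psi)$ with $\dim_H\Dhat_A(\vartheta)\ge P_-(A,\psi)-\varepsilon$;
\item a lower quasimultiplicative $\varphi$ with $\psi=o(\varphi)$ and $\dim_H\Dhat_A(\varphi)\le P_+(A,\psi)+\varepsilon$.
\end{itemize}

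For the lower bound I would take $\vartheta=c(T)\psi(T)$, where $c(T)\searrow0$ extremely slowly. Concretely, $c$ is piecewise constant on long blocks $[T_{j_k},T_{j_{k+1}})$ with values $c_k=2^{-k}$, smoothed to be continuous and strictly decreasing. Lower quasimultiplicativity of $\vartheta$ with the same $\tau$ (slightly enlarged) is automatic as long as $c$ decays slower than any power. The key input is the lower-bound mechanism of Proposition~\ref{prop:lower}. That construction is a Cantor-type tree built level by level in the scales $j$, and at level $j$ it only uses the count ratio $\mathcal N_A(c\psi(T_j),T_{j+1})/\mathcal N_A(c\psi(T_{j+1}),T_{j+1})$ with the current constant. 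Because $\inf_c\mathcal P_{A,\psi}(L,c)\ge P_-(A,\psi)-\varepsilon/2$ for large $L$, every constant $c_k$ yields average growth at least $P_--\varepsilon/2$ over sufficiently long blocks. I then choose $j_{k+1}$ so large that, on $[j_k,j_{k+1}]$, the liminf average for $c_k$ is already nearly attained and the earlier blocks are negligible. Concatenating the trees, a mass-distribution argument gives dimension at least $P_--\varepsilon$.

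The upper bound is symmetric. Take $\varphi=C(T)\psi(T)$ with $C(T)\nearrow\infty$ slowly, and use the covering argument of Proposition~\ref{prop:upper} block by block with constants $C_k=2^k$. Since $\sup_c\mathcal P_{A,\psi}(L,c)\le P_++\varepsilon/2$, each block contributes growth at most $P_++\varepsilon/2$ as long as the block is long enough for the liminf to be nearly realized from above along a subsequence of $N$. The subtlety is that a liminf gives an upper bound only along some sequence of $N$. So I would pick $j_{k+1}$ to be a time at which the running average for $C_k$ computed from $j_k$ is below $P_++\varepsilon$, and such times exist infinitely often. The covers at those times then yield the Hausdorff upper bound.

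The main obstacle is uniformity at the block junctions. The counts $\mathcal N_A$ at a transition scale mix two constants, and changing $c$ by a factor of $2$ can alter the ratio at a single $j$ by a bounded but $L$-dependent amount. I expect to control this through the comparison $\mathcal N_A(2R,Q)\ll\mathcal N_A(R,Q)$ (from Lemma~\ref{lem:profile-count}) together with \eqref{eq:lower-qm-comparison}. These show that each junction costs only $O(1)$ in $\log$, which is negligible once blocks are long. A second check is that the estimates of Propositions~\ref{prop:upper} and~\ref{prop:lower} really are local in $j$, with constants uniform in $c$ ranging over a window $[c_k,c_{k-1}]$. I would verify this first, since the whole block-concatenation argument rests on it.
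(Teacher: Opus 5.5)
Your overall plan matches the paper's: the easy inequalities from Proposition~\ref{prop:endpoint-representations}, then a slowly varying constant built block by block. For $\Sing_A(\psi)$ it is fed into Proposition~\ref{prop:lower}, and for $U_A(\psi)$ into Proposition~\ref{prop:upper}. Your upper half is sound. Ending block $k$ at a time $j_{k+1}\gg j_k$ where the running average of the $C_k$-ratios from $j_k$ is below $P_+(A,\psi)+\varepsilon$ is exactly the reversed form of Lemma~\ref{lem:diagonal}, and an upper bound for a liminf only needs such a subsequence. The junction cost you single out as the main obstacle is in fact zero. Because the constant is monotone, monotonicity of $R\mapsto\mathcal N_A(R,Q)$ compares each mixed ratio with the ratio for the current block's constant: from above for $\varphi$, and from below for $\vartheta$.

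The real difficulty lies in the lower half, and your proposal does not address it. Lemma~\ref{lem:tree}, equivalently \eqref{eq:2.6}, controls a liminf over \emph{all} levels. So the running average of the concatenated sequence $a_j^{(p_j)}$ must stay above $P_-(A,\psi)-\varepsilon$ also for $N$ just beyond a junction $j_k$, where the earlier blocks dominate rather than being negligible. Taking $j_{k+1}$ large controls only the ends of blocks. Your claim that each $c_k$ gives average growth at least $P_--\varepsilon/2$ over sufficiently long blocks fails for initial segments of a block whose start $j_k$ was chosen without regard to the $c_k$-sequence. Indeed, $\liminf_N N^{-1}\sum_{j<N}a_j^{(k)}\ge d_0$ still allows $a_j^{(k)}=0$ for $j_k\le j<(M/d_0)j_k$ whenever $\sum_{j<j_k}a_j^{(k)}\approx Mj_k$. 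If the earlier blocks only achieved average about $d_0$ up to $j_k$, the concatenated average near $N=(M/d_0)j_k$ is then about $d_0^2/M$, which is below $d_0$ when $d_0<M$, and the Cantor set loses dimension.

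The missing idea is to choose each junction adapted to the \emph{incoming} sequence. The paper takes $K_p$ to be a minimizer over $N\ge R_p$ of $B_p(N)=\sum_{j<N}(a_j^{(p)}-d_0+\delta_p)$. This gives $\sum_{j=K_p}^{N-1}a_j^{(p)}\ge(d_0-\delta_p)(N-K_p)$ for every $N\ge K_p$, see \eqref{eq:4.26}. Starting block $k$ at a time where the running average of the $c_k$-sequence is within $\delta_k$ of its liminf would also work. With this repair your argument is complete. Alternatively, you can simply reuse the function $\vartheta$ from the proof of Proposition~\ref{prop:no-drop}, multiplied by $\rho_L$, which is exactly what the paper does.

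A smaller point concerns lower quasimultiplicativity of $c\psi$. It requires a local bound $c(RT)\ge R^{-\delta}c(T)$ for $R\ge R_0$, as in Lemma~\ref{lem:interpolation}, not merely decay of $c$ slower than every power. Sparse factor-$2$ jumps, interpolated as in that lemma, do satisfy it.
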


\subsection{Outline of the proof of Theorem~\ref{thm:main}}

Fix an increasing sequence $T_j\to\infty$, and put
\[
 \mathcal P_{T_j}:=
 \{B(\langle A\q\rangle,\psi(T_j)):
       \q\in\Z^n,\ |\q|\le T_j\},
 \qquad
 E_j:=\bigcup_{B\in\mathcal P_{T_j}}B,
\]
where $\langle\cdot\rangle$ denotes reduction modulo $\Z^m$ and
$B(\bb,r)$ is the closed ball of radius $r$ about $\bb\in\T^m$.
Let $\Dhat_{A,T_k}(\psi)$ consist of the shifts for which
\eqref{eq:1.2} is solvable for every $T\ge T_k$. Then
\[
 \Dhat_{A,T_k}(\psi)\subseteq\bigcap_{j\ge k}E_j,
 \qquad
 \Dhat_A(\psi)=\bigcup_{k\ge1}\Dhat_{A,T_k}(\psi).
\]

For the upper bound, fix $k$ and choose a maximal disjoint
subcollection (Vitali subcollection) $\mathcal G_{T_k}^{(k)}\subseteq\mathcal P_{T_k}$.
For each $Q\in\mathcal G_{T_j}^{(k)}$, put
\[
 \mathcal P_{T_j\to T_{j+1}}^{(k)}(Q)
 :=
 \{B\in\mathcal P_{T_{j+1}}:B\subseteq6Q\},
\]
and select a Vitali subcollection
$\mathcal G_{T_j\to T_{j+1}}^{(k)}(Q)$.
Their union over $Q$ defines $\mathcal G_{T_{j+1}}^{(k)}$.
By the Vitali covering argument, triples of the selected balls cover
each candidate union. Lemma~\ref{lem:upper-cover} shows that
$4\mathcal G_{T_j}^{(k)}$ covers $\Dhat_{A,T_k}(\psi)$.
These fixed dilations do not affect the dimension estimate.

To count the selected balls, lift them to $\R^{m+n}$. Lemma~\ref{lem:profile-count} expresses the resulting
lattice-point counts through all the successive minima, accounting
for clustering in every direction. Dividing the count in the parent
box by the count corresponding to one next-stage ball gives
Lemma~\ref{lem:upper-transition}:
\[
 \#\mathcal G_{T_j\to T_{j+1}}^{(k)}(Q)
 \ll
 \frac{\mathcal N_A(\psi(T_j),T_{j+1})}
      {\mathcal N_A(\psi(T_{j+1}),T_{j+1})}.
\]
Multiplying these bounds and comparing their logarithms with
$-\log\psi(T_\ell)$ proves the upper dimension estimate, recorded in Proposition~\ref{prop:upper}.

For the lower bound, start with a single ball and replace
$B\subseteq6Q$ by $B\subseteq Q$ in the definition of
$\mathcal P_{T_j\to T_{j+1}}^{(k)}(Q)$.
The available margins are now
$\psi(T_j)-\psi(T_{j+1})$ in the approximation coordinates and
$T_{j+1}-T_j$ in the denominator coordinates.
Under certain regularity conditions on the sequence $\{ T_j \}$ and function $\psi$, Lemma
\ref{lem:lower-transition} gives the same transition estimate as we had for the upper bound,
up to constants. The nested unions of the collections
$\mathcal G_{T_j}^{(k)}$ produce a compact set
$\mathbf G \subseteq\bigcap_{j\ge k}E_j$.

The condition $\psi(T_j)\le\rho\psi(T_{j+1})$ bridges the gaps between
mesh times: in Lemma~\ref{lem:limit-inclusion} we show that
\[
 \mathbf G\subseteq\Dhat_{A,T_k}(\rho\psi)
 \subseteq\Dhat_A(\rho\psi).
\]
It also controls intermediate radii in the mass-distribution
argument of Lemma~\ref{lem:tree}, which yields the lower bound recorded in Proposition~\ref{prop:lower}.

Theorem~\ref{thm:uniform-pressure} then follows by choosing suitable
parameters. We choose the equal-$\psi$ mesh \eqref{eq:1.11}, and together with the lower quasimultiplicativity of $\psi$ it gives us all the necessary regularity assumptions. The fixed transition constants from Propositions~\ref{prop:upper} and \ref{prop:lower} contribute only $O(L^{-1})$.
Taking the infimum and supremum over $c>0$ and letting $L\to\infty$, we identify  $P_-(A,\psi)$ and $P_+(A,\psi)$ with the infimum and supremum of $\dim_H\Dhat_A(c\psi)$.

The remaining nontrivial step is that passing to the intersection
defining $\Sing_A(\psi)$ causes no dimension drop in comparison to $\inf\limits_{c > 0} \dim_H\Dhat_A(c\psi)$.
We construct a function
$\vartheta=o(\psi)$ for which the lower construction gives the dimension of $\Dhat_A(\vartheta)$
arbitrarily close to $P_-(A,\psi)$.
This proves Proposition~\ref{prop:no-drop}.
For $ U_A(\psi)$, the corresponding equality follows from
countable stability and
$ U_A(\psi)=\bigcup_{N\ge1}\Dhat_A(N\psi)$,
completing the proof of Theorem~\ref{thm:main}.

\medskip
\noindent\textbf{Companion paper.}
The companion paper~\cite{KN26} studies asymptotic twisted approximation
for arbitrary fixed matrices. For positive non-increasing approximation
functions satisfying dyadic regularity, it establishes an exact zero--one
law for the Lebesgue measure of asymptotically approximable shifts.
Applications include an almost-sure zero--infinity law for inhomogeneous
$\psi$-Lagrange constants, separate Hausdorff-measure criteria, and
dimension results. The two papers form parts of the same project, with
the same ideas leading to complementary results in the asymptotic and
uniform settings.

\medskip
\noindent\textbf{Organization.}
Section~\ref{sec:applications} develops the applications of our main theorems: the one-dimensional dimension formulae in Section \ref{sec:one-dimensional}, bounds in terms of escape of mass in Section \ref{subsec:Aggarwal}, and results on Lebesgue measure and the structure of the approximation sets in Section \ref{subsec:transference}. Section~\ref{sec:CD} formulates the general upper and lower estimates. Section~\ref{sec:coverings} proves these estimates using lattice counting, covering arguments, and a Cantor construction. Section~\ref{sec:pressure} derives the uniform dimension estimates and proves that passing to the singular set causes no dimension drop, completing the proof of Theorem~\ref{thm:main}. Section~\ref{sec:near-power-proofs} proves the integral formula for powers and functions close to them, along with the dimension formulae for sets with prescribed uniform exponents. Section~\ref{sec:remaining-theorems} proves the zero--full dichotomy, the endpoint set identities, and the continuity statements. Section~\ref{subsec:one-dimensional} develops the continued-fraction description and proves the one-dimensional results. Section~\ref{sec:Aggarwal_result} proves the escape-of-mass estimates. Section \ref{sec:transference} proves the transference applications. Finally, Section~\ref{sec:conclusion} suggests some further research directions.

\medskip
\noindent\textbf{Conventions.}
For nonnegative quantities $X$ and $Y$, we write $X\ll Y$ if
$X\le CY$ for some constant $C>0$, write $X\gg Y$ if $Y\ll X$,
and write $X\asymp Y$ if both $X\ll Y$ and $Y\ll X$.
For an arbitrary real quantity $X$ and a positive quantity $Y$, we
write $X=O(Y)$ if $|X|\ll Y$, and write $X=o(Y)$ if $X/Y\to0$ in the
limit under consideration.
Since the dimensions $m,n$, the matrix $A$, and the function $\psi$
are fixed throughout, dependence on them is suppressed in
$\ll,\gg,\asymp$, $O(\cdot)$, and $o(\cdot)$.
Dependence on any additional parameters is recorded by subscripts.

\medskip
\noindent\textbf{Use of artificial intelligence.} OpenAI Codex was used as an auxiliary tool in checking selected calculations and arguments and in refining the exposition. The authors independently verified every mathematical argument, determined the final content and wording, and take full responsibility for the manuscript.

\medskip
\noindent\textbf{Acknowledgments.}
The authors are grateful to Dmitry Kleinbock, Uri Shapira, and Nikolay
Moshchevitin for helpful discussions.

\section{Applications}\label{sec:applications}

\subsection{Results in the one-dimensional case}\label{sec:one-dimensional}

Let $m=n=1$. Due to presence of continued fractions, the question about Hausdorff dimensions of sets $\Dhat_{\alpha}(\psi), \ \alpha \in \R$, was resolved for power functions $\psi_\tau$. 

Namely, let $\tau>0$. 
Let
$\alpha\in\R\setminus\Q$, and let $p_k/q_k$ be its convergents. We define the \emph{ordinary Diophantine exponent} of $\alpha$ via
\begin{equation*}\label{eq:5.89}
 \omega(\alpha):=
 \limsup_{k\to\infty}\frac{\log q_{k+1}}{\log q_k}
 =
 \limsup_{k\to\infty}
 \frac{\log|q_k\alpha|_{\Z}^{-1}}{\log q_k},
\end{equation*}
where $p_k/q_k$ are the convergents of $\alpha$; the notation $| \cdot |_{\Z}$ stands for the distance to the nearest integer and was introduced in \eqref{eq:distance_to_the_nearest_integer}. 
We will also replace the sets $\Dhat_{\alpha}(\psi)$ by the one-sided version
\begin{equation}\label{eq:D_alpha_N}
\Dhat_{\alpha, \N}(\psi):=\left\{b\in\T:
\begin{array}{l}
 \text{for every sufficiently large $T$, there is} \\ 1\le q\le T \ 
 \text{such that }|q\alpha-b|_{\Z}\le\psi(T)
 \end{array}
\right\}.
\end{equation}
The statement below is a slight modification of Theorem~1 and the non-critical case of Theorem~2 from \cite{KL19}.

\begin{thm}[Kim--Liao, \cite{KL19}]
\label{cor:kim-liao}
Let $\alpha \in \R$ be irrational and $c > 0$.
\begin{enumerate}[label=\textup{(\roman*)}]
\item If $0<\tau<1/\omega(\alpha)$, then
${\Dhat_{\alpha,\N}}(c\psi_\tau)=\T$.
\item If $\tau>\omega(\alpha)$, then
\[
 {\Dhat_{\alpha,\N}}(c\psi_\tau)=\{q\alpha:q \in \N \}.
\]
\item If $1/\omega(\alpha)<\tau<1$, fix $a > 0$ and let $\{n_i\}$ be the increasing
enumeration of the convergent denominators satisfying
\begin{equation*}\label{eq:5.91}
 n_i|n_i\alpha|_{\Z}^{\tau}<a.
\end{equation*}
Then
\begin{equation*}\label{eq:5.92}
 \dim_H{\Dhat_{\alpha,\N}}(c\psi_\tau)
 =
 \liminf_{i\to\infty}
 \frac{
 \displaystyle
 \log\left(
 n_i^{1+1/\tau}
 \prod_{j=1}^{i-1}
 n_j^{1/\tau}|n_j\alpha|_{\Z}
 \right)}
 {\displaystyle
 \log\left(n_i|n_i\alpha|_{\Z}^{-1}\right)}.
\end{equation*}
\item If $1<\tau<\omega(\alpha)$, fix $b > 0$ and let $\{n_i\}$ be the increasing
enumeration of the convergent denominators satisfying
\begin{equation*}\label{eq:5.93}
 n_i^\tau|n_i\alpha|_{\Z}<b.
\end{equation*}
Then
\begin{equation*}\label{eq:5.94}
 \dim_H{\Dhat_{\alpha,\N}}(c\psi_\tau)
 =
 \liminf_{i\to\infty}
 \frac{
 \displaystyle
 -\log\left(
 \prod_{j=1}^{i-1}
 n_j|n_j\alpha|_{\Z}^{1/\tau}
 \right)}
 {\displaystyle
 \log\left(n_i|n_i\alpha|_{\Z}^{-1}\right)}.
\end{equation*}
\end{enumerate}
\end{thm}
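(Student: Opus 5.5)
The plan is to specialize Theorem~\ref{thm:main} and Theorem~\ref{thm:near-powers} to $m=n=1$, where $\mathcal E_{1,1}=\{1\}$, and to read off the successive minima of $a_tu_\alpha\Z^2$ from the continued fraction expansion of $\alpha$. Since $\tau\ne1$ in all four cases, Theorem~\ref{thm:near-powers} shows that $\dim_H\Dhat_\alpha(c\psi_\tau)$ does not depend on $c$. It is then given by the integral formula \eqref{eq:power-rank-dimension} with $d=2$. Here
\[
 r_{\alpha,\tau}(T)=\#\{i:\lambda_i(\tfrac{1+\tau}{2}\log T)<T^{(1-\tau)/2}\}\in\{0,1,2\}.
\]

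First, the one-sided set $\Dhat_{\alpha,\N}$ must be related to the two-sided set $\Dhat_\alpha$. Using $q$ and $-q$ shows that $b\in\Dhat_\alpha$ if and only if $b$ or $-b$ is approximated along positive denominators on a cofinal set of $T$. The case $q=0$ matters only for $b$ near $0$, where it adds at most countably many points. I would first check that the two sets differ by a countable set in cases (ii)--(iv). Then I would check that the union/intersection manipulations preserve the dimension. The route is to rerun the Cantor construction of Proposition~\ref{prop:lower} restricted to $q>0$; the counting in Lemma~\ref{lem:profile-count} changes only by a factor $2$.

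Cases (i) and (ii) are extremal. In case (ii), $\tau>\omega(\alpha)$, the plan is to argue directly. Take $T$ between consecutive convergent denominators $q_k\le T<q_{k+1}$. Any $b$ that is not of the form $q\alpha$ must, by three-distance-type spacing, be at distance $\gg|q_{k-1}\alpha|_{\Z}$ from some point set. Comparing $T^{-\tau}$ with $|q_k\alpha|\ge q_{k+1}^{-1}\ge q_k^{-\omega-\epsilon}$ eventually forces $b$ to coincide with some $q\alpha$. This is the analogue of Proposition~\ref{prop:trivial-shifts}. In case (i), Minkowski together with $\lambda_1\lambda_2\asymp1$ shows that $\lambda_2(t)$ is eventually below the threshold. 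Equivalently, $r_{\alpha,\tau}\equiv2$ eventually, and the covering-radius bound (Proposition~\ref{prop:transference-full}, Jarník transference) gives all of $\T$.

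Cases (iii) and (iv) carry the real computation. Let $q_k$ denote the convergent denominators. On the interval of $t$ governed by $q_k$, we have $\lambda_1(t)\asymp\max(e^{t/2}|q_k\alpha|_{\Z},e^{-t/2}q_k)$ and $\lambda_2\asymp\lambda_1^{-1}$. So $\log\lambda_1$ is a piecewise linear template with slopes $\pm\frac12$, and its local minima occur at $e^{t}=q_k/|q_k\alpha|_{\Z}$. The next step is to compute $r_{\alpha,\tau}(S)$ on each interval. In case (iii) ($\tau<1$) the threshold exceeds $1$, so $r\ge1$ always, and $r=2$ on a window around each $q_k$ with $q_k|q_k\alpha|^\tau<a$ (these are the $n_i$). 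In case (iv) ($\tau>1$) the threshold is below $1$, so $r\le1$, and $r=1$ exactly on a window around the $n_i$ with $n_i^\tau|n_i\alpha|<b$. The window endpoints come from solving $e^{\pm t/2}$ equations, giving $\log$-lengths like $\log(n_j^{1/\tau}|n_j\alpha|)$ (case iv) or their analogues (case iii). Summing these window lengths gives the integral $\int_1^T r\,dS/S$ as the product over $j<i$ in the stated formulae. The term $\log\mathcal N$ evaluated at the right time supplies the $n_i^{1+1/\tau}$ factor in case (iii), and nothing in case (iv).

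The final point is that the $\liminf$ over all $T$ is attained at $T$ corresponding to times near $n_i$, with denominator $(1+\tau)\log T\approx\log(n_i/|n_i\alpha|)$. This holds because between these times the numerator is monotone in the appropriate direction. I expect the main obstacle to be this bookkeeping: verifying the monotonicity of the ratio between windows, and checking that the constants $a,b$ and the $\asymp$ in the minima only shift $\log$ terms by $O(1)$. Such shifts are harmless unless $\log n_i$ stays bounded, which it does not.
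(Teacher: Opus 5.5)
Your strategy differs from the paper's and can be made to work. However, the computation you sketch for (iii)--(iv) contains errors that would produce the wrong formula.

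\emph{Comparison with the paper.} The paper does not use Theorem~\ref{thm:near-powers} here. It first proves Theorem~\ref{thm:one-dimensional} for every lower quasimultiplicative $\psi$ whose power order stays on one side of $1$. It does this by computing $\mathcal P_{\alpha,\psi}(L,c)$ directly from the continued-fraction profile $F_\alpha$ and the intervals $I_k$, $J_k$. It then takes $\psi=a^{-1}\psi_\tau$, resp.\ $\psi=b\psi_\tau$, so that the selection rules \eqref{eq:5.59}, \eqref{eq:5.62} become exactly $n_i|n_i\alpha|_{\Z}^\tau<a$, resp.\ $n_i^\tau|n_i\alpha|_{\Z}<b$; \eqref{eq:5.65} covers every $c$. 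Parts (i) and (ii) are the positive-denominator assertions of Propositions~\ref{prop:transference-full} and~\ref{prop:trivial-shifts} with $\mathcal Q=\N$. The passage from $\Dhat_\alpha$ to $\Dhat_{\alpha,\N}$ is Remark~\ref{rem:lower-Q}, since $\N$ satisfies \eqref{eq:central-extension}. That is exactly your restricted Cantor construction plus the trivial inclusion for the upper bound. Your rank-integral route is shorter for exact powers, and Lemma~\ref{lem:power-threshold} is available since $\mathcal E_{1,1}=\{1\}$. The paper's route yields the general one-dimensional theorem.

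\emph{Main error: in case (iii) your windows are inverted.} For $m=n=1$ one has $a_t=\operatorname{diag}(\e^t,\e^{-t})$, not $\e^{\pm t/2}$. Also $\lambda_1\le1<T^{(1-\tau)/2}$ and $\lambda_1\lambda_2\asymp1$, so $r_{\alpha,\tau}(T)=2$ \emph{unless} $\lambda_1$ is exceptionally small. Up to $O(1)$ in $\log T$:
\begin{itemize}
\item $r_{\alpha,\tau}(T)=1$ exactly for $\log T\in(\tau^{-1}\log q_k,\log|q_k\alpha|_{\Z}^{-1})$, an interval that is nonempty iff $q_k|q_k\alpha|_{\Z}^\tau<1$;
\item $r_{\alpha,\tau}=2$ elsewhere.
\end{itemize}
With your version, the window lengths enter the numerator with the wrong sign. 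With the correct version, at $(1+\tau)\log T=\log(n_i/|n_i\alpha|_{\Z})$ one has $\log\mathcal N_\alpha(T^{-\tau},T)=\frac{\log|n_i\alpha|_{\Z}^{-1}-\tau\log n_i}{1+\tau}+O(1)$. The numerator is then $(1+\tau^{-1})\log n_i-\sum_{j<i}(\log|n_j\alpha|_{\Z}^{-1}-\tau^{-1}\log n_j)$, which is the Kim--Liao expression.

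In case (iv) you locate $\{r=1\}$ correctly, but two details are off. The window is $(\log q_k,\tau^{-1}\log|q_k\alpha|_{\Z}^{-1})$, of length $-\log(q_k|q_k\alpha|_{\Z}^{1/\tau})$ rather than $\log(n_j^{1/\tau}|n_j\alpha|_{\Z})$. And $\log\mathcal N$ is not absent: it cancels the part of the $i$-th window already traversed.

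In both cases the liminf is attained at these points for the following reason. The ratio decreases exactly where the numerator is constant: on each window up to that point, and between windows in (iv). Elsewhere the numerator grows at the same rate $1+\tau$ as the denominator, so the ratio increases.

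\emph{Further issues.}
\begin{itemize}
\item Your reason why $a$, $b$ and the implied constants in $\lambda_1\asymp\max(\cdots)$ are harmless (``unless $\log n_i$ stays bounded'') is not sufficient. Each window endpoint carries an $O(1)$ error, and these accumulate over the $i-1$ factors, so you need $i=o(\log n_i)$. This does hold, because $\tau\ne1$ makes the selected convergents lacunary: $\log n_{i+1}\ge\lambda\log n_i-O(1)$ with $\lambda>1$ (cf.\ Lemma~\ref{lem:one-dimensional-log-sparsity}). Alternatively, Lemma~\ref{lem:power-threshold} shows the near-threshold set has density zero, which also absorbs the bounded windows switched on or off by changing $a$ or $b$.
\item The claim that $\Dhat_\alpha(c\psi_\tau)$ and $\Dhat_{\alpha,\N}(c\psi_\tau)$ differ by a countable set is false in general. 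Suppose $\log q_{k+1}\sim\omega\log q_k$ with $\omega>\tau>1$. Every $b=\sum_k\epsilon_k(q_k\alpha-p_k)$ with $\epsilon_k=\pm1$ lies in the first set. But for $q_{k+1}^{1/\tau}\ll T\le q_{k+1}/3$ the only admissible denominator is $\sum_{j\le k}\epsilon_jq_j$. So the uncountably many such $b$ with infinitely many $\epsilon_k=-1$ are not in the second set. Drop this step; it is not needed.
\item In (ii), your three-distance sketch is imprecise; note $|q_k\alpha|_{\Z}<1/q_{k+1}$, not $\ge$. Simply apply Proposition~\ref{prop:trivial-shifts} with $\mathcal Q=\N$ to a constant multiple of $\psi_\tau$, which is legitimate because $\tau>\omega(\alpha)$.
\item In (i), $r_{\alpha,\tau}\equiv2$ only gives dimension $1$. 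The equality with $\T$ is the $\N^n$ assertion of Proposition~\ref{prop:transference-full}, applied using $1/\tau>\omega(\alpha)$.
\end{itemize}
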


\smallskip

Specifically, one can obtain the original result of Kim-Liao by taking $c = 1$, $a = 1$ and $b = 2$. The remaining difference between Theorem \ref{cor:kim-liao} and the original result of Kim-Liao is that they used the strict inequality $|q\alpha-b|_{\Z}<\psi(T)$ in \eqref{eq:D_alpha_N}. Such a modification does not change the Hausdorff dimension of the set; see Remark \ref{rem:for-KL}.

\medskip

Using Theorem \ref{thm:main}, we will prove a generalization of Theorem \ref{cor:kim-liao} for functions $\psi$ under {lower quasimultiplicativity from Definition \ref{def:regular}}.

For every sufficiently large $k$, let $Y_k$ be the unique solution of
\begin{equation*}\label{eq:5.58}
 \frac{Y_k}{\psi(Y_k)}=\frac{q_k}{|q_k\alpha|_{\Z}}.
\end{equation*}

We prove the following theorem:

\begin{thm}
\label{thm:one-dimensional}
Suppose that $\psi$ is {continuous, strictly decreasing to zero,
and lower quasimultiplicative}.  Then the following statements hold.
\begin{enumerate}[label=\textup{(\roman*)}]
\item {Suppose that
\begin{equation}\label{eq:one-dimensional-slow-hypothesis}
 \limsup_{T\to\infty}\frac{\log\psi(T)^{-1}}{\log T}<1,
\end{equation}
}and let $\{k_i\}$ be the increasing
enumeration of all sufficiently large indices satisfying
\begin{equation}\label{eq:5.59}
 \psi\bigl(|q_{k_i}\alpha|_{\Z}^{-1}\bigr)<q_{k_i}^{-1}.
\end{equation}
If this sequence is finite, then
\begin{equation}\label{eq:5.60}
 \dim_H\Dhat_\alpha(\psi)=1.
\end{equation}
If it is infinite, then
\begin{equation}\label{eq:5.61}
 \dim_H\Dhat_\alpha(\psi)
 =
 \liminf_{i\to\infty}
 \frac{
 \displaystyle
 \log\left(
 q_{k_i}\prod_{j=1}^{i-1}
 q_{k_j}\psi(Y_{k_j})
 \right)}
 {\log\psi(Y_{k_i})^{-1}}.
\end{equation}

\item {Suppose that
\begin{equation}\label{eq:one-dimensional-fast-hypothesis}
 \liminf_{T\to\infty}\frac{\log\psi(T)^{-1}}{\log T}>1,
\end{equation}
}and let $\{k_i\}$ be the increasing
enumeration of all sufficiently large indices satisfying
\begin{equation}\label{eq:5.62}
 \psi(q_{k_i})>|q_{k_i}\alpha|_{\Z}.
\end{equation}
If this sequence is finite, then
\begin{equation}\label{eq:5.63}
 \dim_H\Dhat_\alpha(\psi)=0.
\end{equation}
If it is infinite, then
\begin{equation}\label{eq:5.64}
 \dim_H\Dhat_\alpha(\psi)
 =
 \liminf_{i\to\infty}
 \frac{
 \displaystyle
 \log\left(
 \prod_{j=1}^{i-1}
 \frac{\psi(Y_{k_j})}{|q_{k_j}\alpha|_{\Z}}
 \right)}
 {\log\psi(Y_{k_i})^{-1}}.
\end{equation}
\end{enumerate}
In either regime, for every fixed $c>0$,
\begin{equation}\label{eq:5.65}
 \dim_H\Dhat_\alpha(c\psi)
 =
 \dim_H\Dhat_\alpha(\widetilde\psi_c)
 =
 \dim_H\Dhat_\alpha(\psi).
\end{equation}
\end{thm}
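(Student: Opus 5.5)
The plan is to specialize Theorem~\ref{thm:main} to $m=n=1$, $d=2$, and evaluate $\mathcal P_{\alpha,\psi}(L,c)$ explicitly with continued fractions. The first step is to describe the successive minima of $a_tu_\alpha\Z^2$. By Legendre's theorem and best approximation, for $t$ in the window where the convergent vector $(q_k\alpha-p_k,q_k)$ is shortest, one has $\lambda_1(t)\asymp\max\{\e^{t}|q_k\alpha|_{\Z},\e^{-t}q_k\}$. Since $\lambda_1\lambda_2\asymp1$, this gives $\lambda_2\asymp\lambda_1^{-1}$.

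Substituting $t=t_{R,Q}=\tfrac12\log(Q/R)$ and $Q^{1/2}R^{1/2}$ into \eqref{eq:2.1}, I expect
\[
\mathcal N_\alpha(R,Q)\asymp \max\{RQ,\ 1,\ \min(Q/q_k,\,R/|q_k\alpha|_{\Z})\}
\]
up to bounded factors, with $k=k(R,Q)$ the convergent active at that scale. Equivalently, $\mathcal N\asymp 1+RQ+\#\{\text{multiples of }q_k\text{ fitting}\}$. This is just a one-dimensional lattice-point count, which I would verify directly from Lemma~\ref{lem:profile-count}.

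Next I would compute the one-step ratio $\mathcal N(c\psi(T_j),T_{j+1})/\mathcal N(c\psi(T_{j+1}),T_{j+1})$ on the mesh \eqref{eq:1.11}. For each $j$, exactly one of three regimes applies.
\begin{itemize}
\item[(a)] $\psi(T)T\gg1$, so the count is $RQ$ and the ratio is $\e^{L}$. This is full-dimensional growth.
\item[(b)] The count is $1$ for both radii, so the ratio is $1$.
\item[(c)] The clustering term $\min(Q/q_k,R/|q_k\alpha|)$ dominates. This term is linear in $R$ up to the crossover $R\approx|q_k\alpha|Q/q_k$, i.e. up to $T\approx Y_k$ by the definition of $Y_k$.
\end{itemize}
Summing logarithms over $j\le N$ gives a telescoping expression. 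Each convergent $q_{k}$ that becomes "relevant" contributes the factor $q_k\psi(Y_k)$ in case (i), or $\psi(Y_k)/|q_k\alpha|$ in case (ii). The relevant convergents are exactly those satisfying \eqref{eq:5.59}, resp.\ \eqref{eq:5.62}: these say that the clustering term beats the Lebesgue term $RQ$, resp.\ beats $1$, at the relevant scale.

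Under hypothesis \eqref{eq:one-dimensional-slow-hypothesis}, regime (b) is eventually absent, and the numerator accumulates $\log q_{k_i}+\sum_{j<i}\log(q_{k_j}\psi(Y_{k_j}))$. Under \eqref{eq:one-dimensional-fast-hypothesis}, regime (a) is absent, and the numerator becomes $\sum\log(\psi(Y_{k_j})/|q_{k_j}\alpha|)$. Dividing by $NL=\log\psi(T_N)^{-1}$ and taking the liminf gives \eqref{eq:5.61} and \eqref{eq:5.64}. I must check that the liminf over all $N$ is attained along $T_N\approx Y_{k_i}$, since between these times the ratio is monotone in $N$. If the sequence $\{k_i\}$ is finite, the ratio is eventually $\e^{L}$ per step in case (i), giving dimension $1$, and eventually $1$ per step in case (ii), giving dimension $0$.

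\textbf{Main obstacle.} The hardest step is $c$-independence, \eqref{eq:5.65}, which is needed to apply part (ii) of Theorem~\ref{thm:main}. I would show that replacing $\psi$ by $c\psi$ changes the selected indices and the $Y_k$ only by bounded multiplicative factors. By lower quasimultiplicativity, $\psi(Y_k)$ changes by $O_c(1)$, and so does each factor in the products. The danger is borderline convergents whose status under \eqref{eq:5.59} flips with $c$. For these I would argue that such a convergent contributes a factor that is $O_c(1)$ when measured against the growth of the denominators, so including or excluding it does not change the liminf. This uses the strict inequalities in \eqref{eq:one-dimensional-slow-hypothesis} and \eqref{eq:one-dimensional-fast-hypothesis}, which keep $\tau$ away from the exceptional value $1\in\mathcal E_{1,1}$. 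Hence $P_-=P_+$, and \eqref{eq:1.18} identifies $\dim_H\Dhat_\alpha(c\psi)$ with the formulas above. Equality with $\widetilde\psi_c$ then follows from Lemma~\ref{lem:rescaling-comparison}.
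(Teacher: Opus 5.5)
Your outline is the paper's route. The convergent description of $\mathcal N_\alpha$ is Lemma~\ref{lem:cf-count}. Your regimes (a)--(c) are the windows $I_k$, $J_k$ of Lemma~\ref{lem:one-dimensional-branches}. The liminf is read off at $\xi_k=\log Y_k$, and $c$-invariance is fed into Theorem~\ref{thm:main}\textup{(ii)}.

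\textbf{The gap.} It is in the step you flag as the main obstacle. You argue that passing from $\psi$ to $c\psi$ changes the logarithmic contribution of each relevant convergent by $O_c(1)$, including a borderline one whose status under \eqref{eq:5.59} or \eqref{eq:5.62} flips. That is true. But what must be negligible is the \emph{sum} of these discrepancies over all convergents relevant before scale $T_N$, measured against $\log\psi(T_N)^{-1}$. Your sketch contains no bound on how many convergents are relevant, and such a bound is not automatic:
\begin{itemize}
\item In the slow regime $h(x):=-\log\psi(\e^x)$ can grow far more slowly than $x$. For example, $\psi(T)=1/\log T$ is lower quasimultiplicative. So counting all convergents with $q_k\le T_N$ is useless.
\item For $\psi(T)=T^{-1}$ and $\alpha$ badly approximable, every convergent has $q_k|q_k\alpha|_{\Z}\asymp1$. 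Its status then depends on $c$, all $\asymp\log T_N$ convergents can switch, and $c$-invariance genuinely fails (Remark~\ref{rem:exceptional-powers}). Your per-convergent reasoning cannot tell this case apart from the theorem's setting.
\end{itemize}
So the strict hypotheses \eqref{eq:one-dimensional-slow-hypothesis}, \eqref{eq:one-dimensional-fast-hypothesis} must enter quantitatively. Saying that they keep $\tau$ away from $1$ is not an argument.

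\textbf{The missing ingredient.} This is Lemma~\ref{lem:one-dimensional-log-sparsity}. Put $w=\log c$ and use $\log|q_k\alpha|_{\Z}^{-1}=\log q_{k+1}+O(1)$.
\begin{itemize}
\item Slow regime: the window of $q_k$ for $c\psi$ is nonempty iff $\log q_k+w<h(\log|q_k\alpha|_{\Z}^{-1})$. Together with $h(x)\le\theta x$, $\theta<1$, this gives $\log q_{k+1}\ge\theta^{-1}\log q_k-O_w(1)$.
\item Fast regime: nonemptiness means $h(\log q_k)<\log|q_k\alpha|_{\Z}^{-1}+w$. Together with $h(x)\ge\theta x$, $\theta>1$, this gives $\log q_{k+1}\ge\theta\log q_k-O_w(1)$.
\end{itemize}
Thus the logarithms of the selected denominators grow geometrically. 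A selected convergent can affect the count before $x=\log T_N$ only if $\log q_k\le h(x)+|w|$ in the slow regime, or $\log q_k\le x\le h(x)/\theta$ in the fast regime. Hence there are $O_w(\log h(x))$ of them, and the total discrepancy is $O_w(\log h(x))=o(h(x))$.

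The same count controls the mesh steps at which the one-step ratio is intermediate rather than $\e^L$ or $1$; each such step costs at most $L$. The same hypotheses also give the disjointness of the windows of different convergents, which you assume when speaking of ``the'' active convergent.

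It is also cleaner to compare the continuous quantities $B_{\pm,w}(x)$ and $B_\pm(x)$ for every $x$, as in Lemma~\ref{lem:one-dimensional-shift-stability}. Comparing the two explicit liminf formulas is awkward, since they run along different subsequences $\{k_i\}$. With this in place, $\mathcal P_{\alpha,\psi}(L,c)$ equals a $c$-free quantity up to $O(L^{-1})$ uniformly in $c$, which is exactly what Theorem~\ref{thm:main}\textup{(ii)} requires.
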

\smallskip

\begin{rem}\label{rem:for-KL}
Due to the last assertion of Remark \ref{rem:lower-Q}, the above theorem also holds if one replaces the set $\Dhat_\alpha(\psi)$ by $\Dhat_{\alpha, \N}(\psi)$ defined in \eqref{eq:D_alpha_N}.
Due to the $c$-invariance \eqref{eq:5.65} and trivial inclusion property 
$$
\Dhat_{\alpha, \N}(\widetilde{\psi}_c) \subseteq \Dhat_{\alpha, \N}(\psi)
$$
which holds for $c < 1$, one can also use strict inequalities in the definition \eqref{eq:D_alpha_N}.
\end{rem}

\subsection{Upper dimension estimates via escape of mass}
\label{subsec:Aggarwal}

The \emph{lower escape of mass} of the trajectory $\{ a_tu_A\Z^d \}_{t \ge 0}$ is defined by
\begin{equation*}
 \underline{\Emass}(A):=
 \lim_{\delta\searrow0}\liminf_{R\to\infty}
 \frac1R
 \mathcal L^1\left(
 \left\{t\in[0,R]:
 \lambda_1(t)\le\delta
 \right\}\right).
\end{equation*}

Aggarwal  \cite[Corollary~2.10]{Agg26} proved that the Hausdorff dimension of $\Sing_A(\psi_{\frac nm})$ has an upper bound $m\underline{\Emass}(A)$. We improve this result by the factor $(d-1)/d$.
\begin{prop}
\label{prop:aggarwal-special}
For every $A\in\Mat_{m,n}$,
\begin{equation}
 \dim_H\Sing_A(\psi_{\frac nm})
 \le
 m{\frac{d-1}{d}}\,\underline{\Emass}(A).
\end{equation}
\end{prop}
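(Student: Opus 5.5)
The plan is to deduce the bound from Theorem~\ref{thm:main}(i): the power function $\psi_{n/m}$ is lower quasimultiplicative, so $\dim_H\Sing_A(\psi_{n/m})=P_-(A,\psi_{n/m})\le\lim_{L\to\infty}\mathcal P_{A,\psi_{n/m}}(L,c_L)$ for any choice of constants $c_L>0$. It therefore suffices to estimate the individual summands in \eqref{eq:4.19} for $\psi=\psi_\tau$, $\tau=n/m$, with $c=c_L$ chosen very small depending on $L$ and on a cusp threshold $\delta$.

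First I would make the mesh explicit. Here $T_j=e^{jLm/n}$, and a direct computation from \eqref{eq:2.1} gives the following. The denominator $\mathcal N_A(c\psi(T_{j+1}),T_{j+1})$ is evaluated at time $s_j=(j+1)Lm-\frac{mn}{d}\log c$ with scale $Q^{n/d}R^{m/d}=c^{m/d}$. The numerator is evaluated at time $s_j-\Delta$, where $\Delta=mnL/d$, with scale $c^{m/d}e^{Lm/d}$. Thus the $j$-th summand is
\[
 \sum_{i=1}^d\log\max\Bigl\{\frac{c^{m/d}e^{Lm/d}}{\lambda_i(s_j-\Delta)},1\Bigr\}
 -\sum_{i=1}^d\log\max\Bigl\{\frac{c^{m/d}}{\lambda_i(s_j)},1\Bigr\}.
\]
The times $s_j$ are spaced by $Lm$, so the windows $[s_j-\Delta,s_j]$ are disjoint. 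Averaging over $j$ with the normalization $1/(NL)$ then turns a per-window count into a proportion of time $t\in[0,s_N]$, with $s_N\approx NLm$. This is where the factor $m$ in front of $\underline{\Emass}(A)$ should come from.

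Next I would split the indices $j$ into two classes. \emph{Good} windows are those on which $\lambda_1(t)>\delta$ for all $t\in[s_j-\Delta,s_j]$. By Minkowski's second theorem $\lambda_i\le\lambda_d\ll\delta^{-(d-1)}$ there, so all $\lambda_i$ lie between $\delta$ and $O_\delta(1)$. Choosing $c\le c(\delta,L)$ so small that $c^{m/d}e^{Lm/d}<\delta$ makes every factor in the numerator equal to $1$, and the summand is $0$. \emph{Bad} windows are the remaining ones. Since $\lambda_d(t)\gg1$ always, at most $d-1$ factors in the numerator exceed $1$. The main claim is that on a bad window the summand is at most $\frac{d-1}{d}Lm+O(1)$.

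This per-window bound is the step I expect to be the obstacle. The naive factor-by-factor comparison, using $\lambda_i(s-\Delta)\ge e^{-\Delta/n}\lambda_i(s)$, loses too much: it yields $2m(d-1)/d$ or even $L$ per factor. I would instead use the counting interpretation behind Lemma~\ref{lem:profile-count}. The numerator counts lattice points of $u_A\Z^d$ in a box obtained from the denominator box by enlarging the $m$ approximation coordinates by $e^L$. Along the relevant times the points involved lie in the rational subspace spanned by the short vectors, of dimension $k\le d-1$. I would then bound the ratio by the growth of a $k$-dimensional volume under this enlargement. The key claim is that for $k\le d-1$ this growth is at most $e^{kLm/d}\le e^{(d-1)Lm/d}$ up to constants, once the matching factors of the denominator (the same short directions at time $s_j$) are subtracted. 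If this direct route fails, the fallback is to telescope the sums in $j$, pairing the numerator at $s_j-\Delta$ with the denominator at $s_{j-1}$. Since $s_j-\Delta-s_{j-1}=Lm(1-n/d)=Lm^2/d$, the discrepancy is controlled by the variation of $\log\lambda_i$ over a time interval of length $Lm^2/d$.

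Granting that bound, the average is at most $\frac{m(d-1)}{d}$ times the lower density of bad windows. Since $\Delta$ is a fixed fraction of each period, this density is bounded by $\liminf_R\frac1R\mathcal L^1\{t\le R:\lambda_1(t)\le\delta\}$ up to $o(1)$ as $L\to\infty$. The estimate holds along a subsequence of $N$ realizing this liminf, which suffices because \eqref{eq:4.19} is itself a liminf. Letting $L\to\infty$ and then $\delta\searrow0$ gives $P_-(A,\psi_{n/m})\le m\frac{d-1}{d}\underline{\Emass}(A)$, as claimed.
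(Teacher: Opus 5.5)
Your reduction to the summands of $\mathcal P_{A,\psi_{n/m}}(L,c)$ and your treatment of good windows are fine. The central claim, however, is false, not merely unproved: you assert that on a bad window the $j$-th summand is at most $\frac{d-1}{d}Lm+O(1)$.

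The paper's own computation in the proof of Proposition~\ref{prop:aggarwal-sharpness} refutes it. Take a matrix following the tent template, which exists by Theorem~\ref{thm:template-correspondence}. Deep inside an excursion, the $j$-th summand equals $(m-1)L+O(1)$ while $b$ increases and $mL+O(1)$ while $b$ decreases. Both kinds of window are bad, since $\lambda_1$ is tiny there, and $mL$ exceeds $\frac{d-1}{d}mL$ by $mL/d$. The constant $\frac{m(d-1)}{d}$ appears only after averaging, through the proportions $m/d$ and $n/d$ of the two phases.

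Your volume heuristic breaks because the numerator and denominator are evaluated at different times, $s_j-\Delta$ and $s_j$. The covolume of the short sublattice changes between these times by an amount of order $L$, of either sign. The fallback has the same defect. The paired increment $\log\mathcal N_A(c\psi(T_j),T_{j+1})-\log\mathcal N_A(c\psi(T_j),T_j)$ equals $mL+O(1)$ on the increasing phases of the same example. Bounding the discrepancy by the variation of $\log\lambda_i$ over a time of length $Lm^2/d$ can cost as much as the main term itself.

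There is a smaller issue as well. The density of windows that merely meet $\{\lambda_1\le\delta\}$ is not bounded by the time density of that set. You would need to enlarge $\delta$ by a factor $\e^{O(L)}$ and let $\delta$ depend on $L$.

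The missing idea is to separate a pure dilation from a same-scale time difference, and then average over the phase of the mesh. The numerator box at time $s_j-\Delta$ is the $\e^{Lm/d}$-dilate of the box at the same time with scale $c^{m/d}$. Write $H(t)$ for $\log\mathcal N_A$ at time $t$ and scale $c^{m/d}$. Then the $j$-th summand is at most $\frac{Lm}{d}r_j+H(s_j-\Delta)-H(s_j)$. Here $r_j\le d-1$ counts the minima at time $s_j-\Delta$ lying below $c^{m/d}\e^{Lm/d}$, and $r_j$ vanishes unless $\lambda_1$ is small there.

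The $H$-terms need not telescope along a single mesh, because the windows have length $\Delta$ but the period is $Lm$. The paper handles this in Lemma~\ref{lem:power-upper-all}. It applies Proposition~\ref{prop:upper} to every shifted mesh $T_j=\rho\e^{v+jh}$, averages over the shift $v$, and uses Fatou's lemma. The $H$-contributions then become an integral of $H(u-\Delta)-H(u)$ over a long interval. This telescopes to boundary terms and is bounded above by a constant because $H\ge0$. The $r_j$ become a time average of $r_{A,n/m}(\cdot\,;\delta)$.

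This yields Proposition~\ref{prop:aggarwal-general}. The statement then follows from two facts. First, for small $\delta$, Minkowski's theorem gives $r_{A,n/m}(T;\delta)\le(d-1)\one_{\{\lambda_1<\delta\}}$. Second, $\log\mathcal N_A\ge0$. The time density of $\{\lambda_1<\delta\}$ enters directly, so no window counting is needed. The argument is a direct upper bound and does not pass through Theorem~\ref{thm:main} or through the fixed-phase quantity $\mathcal P_{A,\psi}(L,c)$.
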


Since the lower escape of mass takes values between zero and one, an immediate corollary of our result is that the dimension of $\Sing_A(\psi_{\frac nm})$ is smaller than full at least by the factor of $1 - \frac{1}{m+n}$. This gives one more parallel with the classical homogeneous case: it is known \cite[Theorem~3.1]{DFSU24} that, if $(m,n) \ne (1,1)$, the Hausdorff dimension of singular $m \times n$ matrices is equal to $mn \left( 1 - \frac{1}{m+n} \right)$, that is, smaller than full by exactly same factor. 

Of course, in twisted case we can only have the upper bound of this form: the actual dimension can be smaller (e.g. when the lower escape of mass is smaller than one), and the exact value of this dimension can be established, using the full minima profile, by Theorem \ref{thm:main}. We can, however, claim that Proposition \ref{prop:aggarwal-special} cannot be improved in the following sense: it is the best bound one can get from considering the lower escape of mass alone.
\begin{prop}\label{prop:aggarwal-sharpness}
For every $0\le\eta\le1$, there exists $A\in\Mat_{m,n}$ such that
\[
 \underline{\Emass}(A)=\eta
 \qquad \text{and} \qquad
 \dim_H\Sing_A(\psi_{n/m})=\eta\,\frac{m(d-1)}d.
\]
\end{prop}

We construct examples for Proposition \ref{prop:aggarwal-sharpness} using parametric geometry of numbers. In dimension one, it is also not hard to construct such a number explicitly via the continued fraction expansion.

\begin{ex}\label{rem:aggarwal-sharp-example}
Let $\alpha=[0;2,3,4,\ldots]$. Then
\[
 \underline{\Emass}(\alpha)=1,
 \qquad
 \dim_H\Sing_\alpha(\psi_1)
 =\dim_H\Dhat_\alpha(c\psi_1)=\frac12
 \quad\text{for every }c>0.
\]
\end{ex}

For $\delta>0$, introduce the two-parameter version of the quantity $r_{A,\tau}$ (defined by \eqref{eq:power-rank}):
\begin{equation}\label{eq:power-rank-cutoff}
 r_{A,\tau}(T;\delta):=
 \#\left\{1\le i\le d:
 \lambda_i\!\left(\frac{mn(1+\tau)}d\log T\right)
 <\delta T^{(n-m\tau)/d}\right\}.
\end{equation}
The one-parameter function defined in \eqref{eq:power-rank} is the
case $\delta=1$, that is, $r_{A,\tau}(T)=r_{A,\tau}(T;1)$.

We deduce Proposition~\ref{prop:aggarwal-special} by comparing escape
of mass with the right-hand side of the following more general result.

\begin{prop}
\label{prop:aggarwal-general}
For every $A\in\Mat_{m,n}$ and every $\tau>0$,
\begin{equation}\label{eq:5.31}
 \dim_H\Sing_A(\psi_\tau)
 \le
 \lim_{\delta\searrow0}\liminf_{T\to\infty}
 \frac{\displaystyle\int_1^T r_{A,\tau}(S;\delta)\,\frac{dS}{S}
       -\log\mathcal N_A(T^{-\tau},T)}{(1+\tau)\log T}.
\end{equation}
\end{prop}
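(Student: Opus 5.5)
We will prove Proposition~\ref{prop:aggarwal-general} by starting from the upper bound in Theorem~\ref{thm:main}, $\dim_H\Sing_A(\psi_\tau)\le\dim_H\Dhat_A(c\psi_\tau)\le\liminf_L \mathcal P_{A,\psi_\tau}(L,c)$ (more precisely the upper estimate of Proposition~\ref{prop:upper} with the equal-$\psi$ mesh \eqref{eq:1.11}, valid for every fixed $c>0$ and $L$ up to $O(L^{-1})$). With $\psi_\tau$ one has $T_j=\e^{jL/\tau}$. The plan is to evaluate the telescoping-type sum in \eqref{eq:4.19} explicitly for small $c$, then let $c\to0$. The role of $c$ will be played by $\delta$.

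\textbf{Step 1: rewrite the ratio.} For fixed $Q=T_{j+1}$, compare $\mathcal N_A(c\psi(T_j),Q)$ and $\mathcal N_A(c\psi(T_{j+1}),Q)$. The ratio of the radii is $\e^{L}$, and both are governed by the minima of the lattice trajectory at the times $t_{R,Q}$. Write the logarithm of the sum as
\[
\sum_{j<N}\bigl[\log\mathcal N_A(c\psi(T_j),T_{j+1})-\log\mathcal N_A(c\psi(T_j),T_{j})\bigr]+\log\mathcal N_A(c\psi(T_0),T_0)-\log\mathcal N_A(c\psi(T_N),T_N),
\]
which is an exact reindexing of the denominators. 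The boundary term gives $-\log\mathcal N_A(cT_N^{-\tau},T_N)$, and $\mathcal N_A(cT^{-\tau},T)\ge c^{m}\mathcal N_A(T^{-\tau},T)$-type comparisons hold up to constants depending on $c$. These constants are harmless after dividing by $NL$.

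\textbf{Step 2: bound the increments by the rank function.} Each increment corresponds to enlarging $Q$ from $T_j$ to $T_{j+1}$ at fixed $R$. I will continuously interpolate $Q$ over $[T_j,T_{j+1}]$, i.e.\ $S\in[T_j,T_{j+1}]$, and differentiate $\log\mathcal N_A(R,S)$ in $\log S$. Since $\lambda_i(t)$ is $\e^{O(1)}$-Lipschitz in $t$ in the sense $|\frac{d}{dt}\log\lambda_i|\le \max(1/m,1/n)$, each factor that exceeds $1$ grows at most like $S^{n/d}\cdot S^{(\text{shift in }t)}$. A direct computation should show that the derivative of $\log$ of an active factor with respect to $\log S$ is at most $1$ in total weight when the radius is tied to $S^{-\tau}$. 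The essential point is that a factor is active only if $\lambda_i<\delta S^{(n-m\tau)/d}$ with $\delta\asymp c^{m/d}$, up to the bounded discrepancy between $R=c\psi(T_j)$ and $cS^{-\tau}$. That discrepancy changes $\delta$ by a factor $\e^{O(L)}$. Thus the total increment is at most $\int_{T_j}^{T_{j+1}} r_{A,\tau}(S;\delta_{c,L})\,dS/S+O(1)$.

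The main obstacle is keeping the $L$-dependence of $\delta$ under control, since making $L$ large costs a factor $\e^{O(L)}$ in the cutoff. I will handle this by first fixing $L$, then sending $c\to0$, which is allowed since $\Sing_A\subseteq\Dhat_A(c\psi_\tau)$ for all $c$. This makes $\delta_{c,L}\to0$ for each fixed $L$. I then take the monotone limit $\delta\searrow0$ of the liminf, which is nonincreasing in $\delta$. The $O(1)$ and $O(L^{-1})$ errors vanish as $L\to\infty$.

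\textbf{Step 3: normalize.} Summing over $j$ and dividing by $NL=\tau\log T_N$, one must also pass from the discrete liminf over $T_N$ to the continuous liminf over $T$. The $\tau$ in the denominator versus $(1+\tau)$ requires care: the normalization in \eqref{eq:power-rank-dimension} suggests the correct bookkeeping uses $-\log\psi(T)$ together with the $T/\psi(T)$ scaling. I will therefore match the mesh to $\log(T/\psi(T))=(1+\tau)\log T$, exactly as in the proof of Theorem~\ref{thm:near-powers}, and reuse that computation with the cutoff $\delta$ inserted. This yields \eqref{eq:5.31}.
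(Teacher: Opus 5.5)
Your telescoping identity in Step~1 is exact, and the boundary term is handled correctly. The problem is that Step~2 charges too much to the rank function, and the repair announced in Step~3 is not available.

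At fixed radius $R=c\psi_\tau(T_j)$, you bound the growth of each active factor by $1$ per unit of $\log S$. This charges the rank over the whole interval $[T_j,T_{j+1}]$, of logarithmic length $L/\tau$. After summing you must divide by $-\log(c\psi_\tau(T_N))=\tau\log T_N+O_c(1)$. Proposition~\ref{prop:upper} fixes this normalization for every mesh, so re-indexing by $\log(T/\psi(T))$ changes nothing. What you actually prove is the right-hand side of \eqref{eq:5.31} with $\tau\log T$ in place of $(1+\tau)\log T$, which is weaker by the factor $(1+\tau)/\tau$.

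The loss is genuine. Take $m=n=1$, $\alpha$ badly approximable and $\tau<1$. The numerator in \eqref{eq:5.31} is then $(1+\tau)\log T+O_\delta(1)$, so your bound is $(1+\tau)/\tau>1$, while \eqref{eq:5.31} gives the sharp value $1$. Nor can you import the computation behind Theorem~\ref{thm:near-powers}. Lemmas~\ref{lem:power-pressure} and~\ref{lem:power-rank-identity} rest on Lemma~\ref{lem:power-threshold}, i.e.\ on $\tau\notin\mathcal E_{m,n}$. The proposition is claimed for every $\tau$, and $\tau=n/m$, the case used in Proposition~\ref{prop:aggarwal-special}, always lies in $\mathcal E_{m,n}$.

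The missing idea is a different split of each transition, as in Lemma~\ref{lem:power-upper-all}. Set $h=L/\tau$, $\rho=c^{1/(1+\tau)}$, $H_\rho(t)=\log\mathcal N_A(\rho\e^{-\tau t},\rho\e^{t})$ and $t=x+h/(1+\tau)$. The box with half-sides $\rho\e^{-\tau x}$ and $\rho\e^{x+h}$ is the $\rho\e^{\tau h/(1+\tau)}$-dilate of the diagonal box at time $t$. Only this dilation, of logarithmic size $\tau h/(1+\tau)=L/(1+\tau)$, is charged to $r_{A,\tau}$; what remains is the diagonal difference $H_\rho(t)-H_\rho(x+h)$.

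Since $H_\rho$ is not monotone, for a single mesh the sum $\sum_j[H_\rho(x_j+\frac h{1+\tau})-H_\rho(x_j+h)]$ need not be close to $-\frac{\tau}{1+\tau}H_\rho(Nh)$. The paper therefore averages over the offset $v$ of the meshes $T_j=\rho\e^{v+jh}$ and applies Fatou's lemma. This turns the sum into $\frac1h\int_0^{Nh}[H_\rho(u+\frac h{1+\tau})-H_\rho(u+h)]\,du=-\frac{\tau}{1+\tau}H_\rho(Nh)+O(1)$. Both contributions then carry the factor $\tau/(1+\tau)$, which is exactly what produces $(1+\tau)\log T$ in the denominator.

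Your cutoff is also misidentified. Active factors at $(R,S)$ with $R=\beta S^{-\tau}$ are governed by the minima at time $t_{R,S}=t_{S^{-\tau},S}+\frac{mn}d\log(1/\beta)$. Moving back to the same $S$ via $\lambda_i(t+s)\ge\e^{-s/n}\lambda_i(t)$ loses exactly the factor $\beta^{m/d}$ you gained, leaving cutoff $1$. What is true is that $[-\beta S^{-\tau},\beta S^{-\tau}]^m\times[-S,S]^n$ is the $\beta^{1/(1+\tau)}$-dilate of the diagonal box at parameter $\beta^{-1/(1+\tau)}S$. So the cutoff is $\beta^{1/(1+\tau)}$, which is the paper's $(c\e^L)^{1/(1+\tau)}$, at the cost of only a harmless constant shift of $S$.
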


\subsection{On Lebesgue measure and structure of the sets
\texorpdfstring{$\Dhat_A(\psi)$}{D(A, psi)}}
\label{subsec:transference}

We collect a measure consequence of Theorem \ref{thm:zero-full}, a
consequence of inhomogeneous transference, and an elementary criterion
for trivial shifts.

Let $\varphi:\R_+\to\R_+$ be continuous and strictly decreasing, and
suppose that $\varphi(S)\to0$ as $S\to\infty$. As with $\psi$, we make
the harmless normalization $\varphi(S)\to\infty$ as $S\to0$. We denote
the transposed matrix to $A$ by $A^{\top}$. Say that $A^\top$ \emph{is
$\varphi$-approximable} if
\begin{equation*}\label{eq:5.1}
 |A^\top\p-\q|\le\varphi(S),\qquad |\p|\le S
\end{equation*}
has a solution
$(\p,\q)\in(\Z^m\setminus\{\mathbf0\})\times\Z^n$ for an unbounded set
of parameters $S$.  The functions $\varphi$ and $\psi$ are called
\emph{dual} if
\begin{equation}\label{eq:5.2}
 \psi(T)=\frac1{\varphi^{-1}(1/T)},
 \qquad\text{or, equivalently,}\qquad
 \varphi(S)=\frac1{\psi^{-1}(1/S)}.
\end{equation}
Recall from
\eqref{eq:rescaled-function} that $\widetilde\psi_\delta(T)=\delta\psi(T/\delta)$.

Similarly, say that $A$ is \emph{$\varphi$-approximable} if
\begin{equation*}\label{eq:5.50}
 |A\q-\p|\le\varphi(S),\qquad |\q|\le S
\end{equation*}
has a solution $(\p,\q)\in\Z^m\times(\Z^n\setminus\{\mathbf0\})$ for
an unbounded set of parameters $S$.

Put
\begin{equation*}\label{eq:transference-constant}
 \kappa_d:=2^{1-d}(d!)^2.
\end{equation*}

The following proposition corresponds to the full-space case of
\cite[Theorem~3.1]{MN25}, in the present normalization.

\begin{prop}\label{prop:transference-null}
Let $\psi$ and $\varphi$ be dual. If $A^\top$ is
$\varphi$-approximable, then, for every $0<\varepsilon<1/(2d)$,
\begin{equation}\label{eq:transference-null}
 \mathcal L^m\bigl(\Dhat_A(\widetilde\psi_\varepsilon)\bigr)=0.
\end{equation}
\end{prop}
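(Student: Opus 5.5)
Fix a nonzero integer vector $\p\in\Z^m$ with $|\p|\le S$ and an integer vector $\q'$ with $|A^\top\p-\q'|\le\varphi(S)$; such pairs exist for an unbounded set of $S$. I use them to show that a uniform solution forces $\bb$ into a thin neighbourhood of a family of parallel hyperplanes. The starting point is a single inhomogeneous transference inequality. Suppose $\bb\in\Dhat_A(\widetilde\psi_\varepsilon)$ and $T$ is large, and pick $(\p_0,\q)$ with $|A\q-\p_0-\bb|\le\widetilde\psi_\varepsilon(T)=\varepsilon\psi(T/\varepsilon)$ and $|\q|\le T$. Pairing with $\p$ gives
\[
 \p\cdot\bb \equiv \p\cdot(A\q-\p_0) - \p\cdot\bigl(A\q-\p_0-\bb\bigr)
 \equiv (A^\top\p-\q')\cdot\q - \p\cdot\bigl(A\q-\p_0-\bb\bigr)\pmod 1 .
\]
Hence
\[
 |\p\cdot\bb|_{\Z}\le n\,T\varphi(S)+m\,S\,\varepsilon\psi(T/\varepsilon).
\]

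The next step is to choose $T$ as a function of $S$ using duality \eqref{eq:5.2}. I take $T$ with $T/\varepsilon=\psi^{-1}(1/S)$. Then $\psi(T/\varepsilon)=1/S$, and $\varphi(S)=1/\psi^{-1}(1/S)=\varepsilon/T$. Both terms are then of size $\asymp\varepsilon$, giving
\[
 |\p\cdot\bb|_{\Z}\le (n+m)\varepsilon=d\varepsilon<\tfrac12 .
\]
Since $S\to\infty$ along the unbounded set, the corresponding $T\to\infty$ as well. So every $\bb\in\Dhat_{A,T_0}(\widetilde\psi_\varepsilon)$ satisfies $|\p_S\cdot\bb|_\Z\le d\varepsilon$ for infinitely many nonzero $\p_S$. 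The constants need care: with the sup norm, $|\p\cdot\mathbf x|\le m|\p||\mathbf x|$. This is exactly why the hypothesis is $\varepsilon<1/(2d)$, which gives room strictly below $1/2$.

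It remains to show that the set of $\bb\in\T^m$ with $|\p_k\cdot\bb|_\Z\le\delta$ for infinitely many $k$ has measure zero when $\delta<1/2$. By countable union over $T_0$, this suffices. Each individual set $\{\bb:|\p\cdot\bb|_\Z\le\delta\}$ has measure exactly $2\delta<1$, so Borel--Cantelli does not apply directly. Instead I use that the events are asymptotically independent, or more simply argue with Fourier/ergodic reasoning. If the $\p_k$ are distinct, which we may assume after passing to a subsequence since $|\p_S|$ must grow (otherwise $A^\top\p$ would be an integer vector and the set lies in a proper closed subgroup, treated separately), then $\bb\mapsto\p_k\cdot\bb$ are characters with distinct frequencies.

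The main obstacle is this last step: getting a zero-measure limsup set from sets of fixed measure $2\delta<1$. I would try passing to a sparse subsequence on which the frequencies $\p_{k}$ are "lacunary enough" that the events $E_k$ are quasi-independent. Then $\mathcal L^m(\bigcap_{k\in F}E_k)\le(2\delta+o(1))^{|F|}$, shown via Fourier expansion of smoothed indicator functions. This makes the intersection of infinitely many $E_k$ null. Note that the right set to control is $\bigcap_{k\ge k_0}E_k$, because the uniform property yields the inequality for all large $S$ in the sequence, not merely infinitely many. That requirement is exactly what makes the argument work.

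The degenerate case $A^\top\p\in\Z^n$ is handled separately. There $\bb$ is confined to $\{\p\cdot\bb\approx0\}$ at fixed precision, which is impossible for positive measure as $\varepsilon$-bands shrink. Alternatively, it is covered directly since $H_A$ is then a proper subgroup of $\T^m$ and has measure zero.
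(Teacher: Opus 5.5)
Your argument is sound in outline, but it takes a different route from the paper's. Your transference step is exactly the paper's Lemma~\ref{lem:dual-strip}: it gives $\Dhat_A(\widetilde\psi_\varepsilon)\subseteq\bigcup_{k_0}\bigcap_{k\ge k_0}E_k$ with $E_k=\{\bb\in\T^m:|\p_k\cdot\bb|_{\Z}\le d\varepsilon\}$.

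The paper never shows that this liminf set is null. It only uses $\mathcal L^m(\liminf_k E_k)\le\liminf_k\mathcal L^m(E_k)$. With a slightly larger $\delta\in(\varepsilon,1/(2d))$ in place of $\varepsilon$, this gives $\mathcal L^m(\Dhat_A(\widetilde\psi_\delta))\le 2d\delta<1$. The paper then upgrades ``measure $<1$'' to ``measure $0$'' through the amplification in Theorem~\ref{thm:zero-full}\textup{(i)}. Since $\Dhat_A(\widetilde\psi_\varepsilon)+\Gamma_A\subseteq\Dhat_A(\widetilde\psi_\delta)$ and $\Gamma_A$ is dense in $H_A=\T^m$, positive measure at $\varepsilon$ would force full measure at $\delta$.

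You instead use the escape of the frequencies $\p_k$ to infinity to kill the liminf set directly. Your route is self-contained and bypasses both the zero--one law and the comparison of two constants. The paper's route needs no harmonic analysis, because it recycles a structural theorem it proves anyway.

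As written, your final step is only a plan, but it closes in a few lines without lacunary subsequences. Suppose $|\p_k|\to\infty$. The nonzero Fourier modes of $\mathbf 1_{E_k}$ lie in $\Z\p_k\setminus\{\mathbf 0\}$. Hence $\int P\,\mathbf 1_{E_k}=2d\varepsilon\int P$ for every trigonometric polynomial $P$ whose frequencies have norm $<|\p_k|$. Such $P$ are dense in $L^1$, so $\mathbf 1_{E_k}\to 2d\varepsilon$ weak-$*$ in $L^\infty(\T^m)$. For $X=\bigcap_{k\ge k_0}E_k$ this gives $\mathcal L^m(X)=\int\mathbf 1_X\mathbf 1_{E_k}\to 2d\varepsilon\,\mathcal L^m(X)$. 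Since $2d\varepsilon<1$, it follows that $\mathcal L^m(X)=0$.

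Your product-bound plan can also be made to work, but it needs a nonnegative trigonometric-polynomial majorant of $\mathbf 1_{[-\delta,\delta]}$ with mean $<1$, for example a Beurling--Selberg majorant. Then dissociation of the subsequence removes every cross term exactly. A merely smooth majorant leaves infinite Fourier tails whose errors compound over $|F|$ factors.

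Two statements should be corrected.
\begin{itemize}
\item The set of $\bb$ with $|\p_k\cdot\bb|_{\Z}\le\delta$ for infinitely many $k$ is not null. Applying the same weak-$*$ argument to the complements $\T^m\setminus E_k$ shows that this limsup set is conull. What saves you, as you note later, is that uniform approximation gives the inequality for all sufficiently large $k$, so only the liminf set matters. The phrases ``for infinitely many nonzero $\p_S$'' and ``zero-measure limsup set'' should be changed accordingly.
\item In the degenerate case the strips do not shrink; their width stays $2d\varepsilon$. The argument that works is your alternative one. If $A^\top\p\in\Z^n$ with $\p\ne\mathbf 0$, then $\Gamma_A$, and hence $H_A\supseteq\Dhat_A(\widetilde\psi_\varepsilon)$, lies in the proper closed subgroup $\{\bb\in\T^m:\p\cdot\bb\equiv0\pmod 1\}$, which is Lebesgue-null.
\end{itemize}
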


The following proposition is a modification (up to absolute constants) Theorem~A of \cite{MN25}, written in the
present normalization. The additional assertion is an extension in
which the denominator vectors are required to be positive.

\begin{prop}\label{prop:transference-full}
Suppose that $\psi$ and $\varphi$ are dual and that $A^\top$ is not
$\varphi$-approximable. Then
\begin{equation}\label{eq:transference-full}
 \Dhat_A(\widetilde\psi_{\kappa_d})=\T^m.
\end{equation}
Moreover, under the same assumption,
\[
 \Dhat_{A,\N^n}(\widetilde\psi_{3\kappa_d})=\T^m,
\]
where $\Dhat_{A,\mathcal Q}(\psi)$ is defined in
Remark~\ref{rem:lower-Q}.
\end{prop}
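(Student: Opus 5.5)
The plan is to run the classical Jarník--Mahler transference argument through the geometry of numbers. Fix a large $T$ and consider the lattice $\Lambda:=u_A\Z^d=\{(A\q-\p,\q)\}$. Its dual lattice is $\Lambda^*=u_A^{-\top}\Z^d=\{(\p,\,\q-A^\top\p)\}$. Let $K_{r,T}:=\{(\bx,\by):|\bx|\le r,\ |\by|\le T\}$. Then $\bb$ is approximable at level $(r,T)$ exactly when the shifted lattice $\Lambda-(\bb,\mathbf 0)$ meets $K_{r,T}$. It therefore suffices that the covering radius of $\Lambda$ with respect to $K_{r,T}$ is at most $1$, for a radius $r$ of the form $\widetilde\psi_{\kappa_d}(T)$ and all large $T$.

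First I would use three standard facts:
\begin{itemize}[label=$\cdot$]
\item The covering radius is at most $\tfrac d2\lambda_d(\Lambda,K_{r,T})$, by the usual argument with a basis realising the successive minima.
\item Mahler's inequality gives $\lambda_d(\Lambda,K)\,\lambda_1(\Lambda^*,K^\circ)\le d!$.
\item The polar body of $K_{r,T}$ contains the box $\{|\p|\le 1/(dr),\ |\q|\le 1/(dT)\}$ (in sup norm, via $|\cdot|_1\le d|\cdot|$).
\end{itemize}
Combining these, the covering radius is at most $\tfrac{d\cdot d!}{2\lambda}$, where $\lambda$ is the first minimum of $\Lambda^*$ in the box of half-sides $1/(dr)$, $1/(dT)$. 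So it is enough to bound $\lambda$ from below.

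Next I would feed in the hypothesis. Since $A^\top$ is not $\varphi$-approximable, for all large $S$ there is no nonzero $\p$ with $|\p|\le S$ and $|A^\top\p-\q|\le\varphi(S)$. Nonzero dual vectors with $\p=\mathbf 0$ have $|\q|\ge1$ and are harmless once $T$ is large. Choose $S$ so that $\varphi(S)\asymp 1/T$, with the scaling constant $\delta$ built in. Duality \eqref{eq:5.2} then gives $S\asymp 1/\psi(T/\delta)$ up to the factor $\delta$. With $r=\delta\psi(T/\delta)=\widetilde\psi_\delta(T)$, this yields $\lambda\gg_d 1$ with an explicit constant. Tracking the factors $d$ (from the polar box), $d!$ (from Mahler) and $d/2$ (from the covering radius) should produce $\delta=2^{1-d}(d!)^2=\kappa_d$, or at least something of this size. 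I would reconcile the exact constant only at the end; the bookkeeping of how $\delta$ enters both arguments of $\widetilde\psi_\delta$ is the main obstacle. Monotonicity of $\varphi$ and $\psi$ handles the passage between "unbounded set of $S$" and "all sufficiently large $T$".

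For the positive-denominator assertion, I would apply the same covering statement to an inhomogeneous target with a nonzero $\by$-component. Given $T$, put $T'=T/3$ and ask that $\Lambda-(\bb,-2T'\one)$ meet $K_{r,T'}$. A solution then has every coordinate of $\q$ in $[T',3T']$, so $\q\in\N^n$ and $|\q|\le T$. The covering-radius bound is uniform in the shift, so this works with $r=\widetilde\psi_{\kappa_d}(T/3)$. Comparing this with $\widetilde\psi_{3\kappa_d}(T)=3\kappa_d\psi(T/(3\kappa_d))$, using monotonicity of $\psi$, gives the factor $3$.
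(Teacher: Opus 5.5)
\textbf{The gap: the polar-body inclusion points the wrong way.} Your overall plan is sound. It differs from the paper mainly in that you prove the transference step yourself. The paper quotes that step as Lemma~\ref{lem:inhomogeneous-transference} (with constant $\kappa_d$), applies it with $T=\kappa_d/\varphi(S)$, and reads off $\kappa_d/S=\widetilde\psi_{\kappa_d}(T)$ from duality.

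Mahler gives $\lambda_d(\Lambda,K)\le d!/\lambda_1(\Lambda^*,K^\circ)$. So you need a \emph{lower} bound for $\lambda_1(\Lambda^*,K^\circ)$, that is, a body \emph{containing} $K^\circ$. The inclusion you quote, $K^\circ\supseteq\{|\p|\le 1/(dr),\ |\q|\le 1/(dT)\}$, is true but only yields $\lambda_1(\Lambda^*,K^\circ)\le\lambda$. Hence the claimed bound $d\cdot d!/(2\lambda)$ for the covering radius does not follow from your three facts.

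\textbf{The repair.} The gauge of $K^\circ$ at $(\mathbf u,\mathbf w)$ is the support function of $K_{r,T}$, namely $r|\mathbf u|_1+T|\mathbf w|_1\ge\max\{r|\mathbf u|,T|\mathbf w|\}$. Hence $K^\circ\subseteq B:=\{|\mathbf u|\le 1/r,\ |\mathbf w|\le 1/T\}$, and the covering radius is at most $d\cdot d!/(2\lambda_1(\Lambda^*,B))$.

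Put $c_0:=d\cdot d!/2$, $S:=\varphi^{-1}(c_0/T)$ and $r:=c_0/S$. A nonzero dual vector $(\p,\q-A^\top\p)$ of $B$-gauge $<c_0$ would fall into one of two cases:
\begin{itemize}
\item $\p=\mathbf 0$ and $0<|\q|<c_0/T<1$, which is impossible;
\item $\p\ne\mathbf 0$, $|\p|<S$ and $|A^\top\p-\q|<\varphi(S)$, which the hypothesis excludes once $T$ is large.
\end{itemize}
Duality gives $\varphi^{-1}(y)=1/\psi(1/y)$, so $S=1/\psi(T/c_0)$ and $r=\widetilde\psi_{c_0}(T)$.

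You do not need to reproduce $\kappa_d$ exactly. One has $\kappa_d/c_0=2^{2-d}(d-1)!\ge 1$, and $c\mapsto\widetilde\psi_c(T)=c\psi(T/c)$ is increasing. Therefore $\Dhat_A(\widetilde\psi_{c_0})\subseteq\Dhat_A(\widetilde\psi_{\kappa_d})$, and the bookkeeping obstacle you flag disappears.

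\textbf{The positive-denominator assertion.} Your target has the wrong sign. The condition that $\Lambda-(\bb,-2T'\one)$ meets $K_{r,T'}$ means $|\q+2T'\one|\le T'$, which puts every coordinate of $\q$ in $[-3T',-T']$. You want the target $(\bb,2T'\one)$. With that change the argument works: it gives $\q\in\N^n$ with coordinates in $[T/3,T]$ and the bound $\widetilde\psi_{c_0}(T/3)=c_0\psi(T/(3c_0))\le\widetilde\psi_{3\kappa_d}(T)$.

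Using an arbitrary real target in $\R^d$ is slightly cleaner than the paper's device. The cited lemma only handles targets $(\bb,\mathbf 0)$, so the paper applies it to $\bb-A\mathbf v_T$ with the integer vector $\mathbf v_T=\lfloor T/2\rfloor(1,\ldots,1)$ and denominator bound $T/3$, landing in $[T/6-1,5T/6]^n$.

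\textbf{Comparison.} Once corrected, your proof is self-contained and yields the constant $d\cdot d!/2$. This equals $\kappa_d$ for $d=2,3$ and is smaller for $d\ge 4$. The paper's proof is shorter but relies on the external lemma.
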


The following proposition is a structural strengthening, in the
present normalization, of \cite[Theorem~3.8\textup{(a)}]{MN25}. It
identifies exactly the trivial shifts and allows an arbitrary prescribed
set of denominator vectors.

\begin{prop}\label{prop:trivial-shifts}
If $A$ is not $\varphi$-approximable, then, for every $0<c<1/2$,
\begin{equation}\label{eq:5.51}
 \Dhat_A(\widetilde\varphi_c)=\Gamma_A.
\end{equation}
Moreover, under the same assumption, for every
$\mathcal Q\subseteq\Z^n$ one has
\[
 \Dhat_{A,\mathcal Q}(\widetilde\varphi_c)
 =A\mathcal Q\bmod\Z^m.
\]
\end{prop}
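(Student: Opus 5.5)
The statement to prove is Proposition~\ref{prop:trivial-shifts}: if $A$ is not $\varphi$-approximable and $0<c<1/2$, then $\Dhat_{A,\mathcal Q}(\widetilde\varphi_c)=A\mathcal Q\bmod\Z^m$ for every $\mathcal Q\subseteq\Z^n$. The case $\mathcal Q=\Z^n$ gives \eqref{eq:5.51}. The plan is a direct pigeonhole argument in which two denominators are compared by subtraction.

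\textbf{Easy inclusion.} If $\bb\equiv A\q_0$ with $\q_0\in\mathcal Q$, then the fixed denominator $\q_0$ gives $|A\q_0-\bb|_{\Z^m}=0$ for every $T\ge|\q_0|$. Hence $\bb$ lies in the set.

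\textbf{Reverse inclusion: unpacking the hypothesis.} Since $A$ is not $\varphi$-approximable, there is $S_0$ such that for every $S\ge S_0$ and every nonzero $\q\in\Z^n$ with $|\q|\le S$ one has $|A\q|_{\Z^m}>\varphi(S)$.

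\textbf{Reverse inclusion: the main step.} Suppose $\bb\in\Dhat_{A,\mathcal Q}(\widetilde\varphi_c)$. Then for all large $T$ there is $\q_T\in\mathcal Q$ with $|\q_T|\le T$ and $|A\q_T-\bb|_{\Z^m}\le c\varphi(T/c)$.

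\begin{itemize}
\item Compare the solutions at $T$ and at $T'=2T$, say. Their difference $\q=\q_{2T}-\q_T$ satisfies $|\q|\le 3T$.
\item By the triangle inequality, $|A\q|_{\Z^m}\le c\varphi(T/c)+c\varphi(2T/c)\le 2c\varphi(T/c)$.
\item Now choose the comparison parameter more carefully. Since $c<1/2$, we have $T/c>2T$. The ideal case is $3T\le T/c$, i.e.\ $c\le1/3$; then $2c\varphi(T/c)<\varphi(T/c)$ with $|\q|\le 3T\le T/c=:S$.
\item For general $c<1/2$, instead compare $\q_T$ with $\q_{T'}$ for $T'\in[T,\,T(1/c-1)]$. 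Then $|\q|\le T+T'\le T/c=:S$, and still $|A\q|_{\Z^m}\le 2c\varphi(T/c)<\varphi(S)$.
\end{itemize}

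The hypothesis then forces $\q=\mathbf 0$. So $\q_T=\q_{T'}$ for all $T'\in[T,\lambda T]$ with $\lambda=1/c-1>1$. Chaining over $T,\lambda T,\lambda^2T,\dots$ shows that $\q_T=\q^*$ is eventually constant.

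\textbf{Conclusion.} With the fixed $\q^*\in\mathcal Q$ we get $|A\q^*-\bb|_{\Z^m}\le c\varphi(T/c)\to0$. Therefore $\bb\equiv A\q^*\bmod\Z^m$.

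The main obstacle I expect is the constant bookkeeping in this main step. One must make sure the height bound $|\q_T|+|\q_{T'}|\le S$ and the approximation bound $2c\varphi(T/c)<\varphi(S)$ hold for the same $S$, using that $\varphi$ is decreasing. One must also handle the case where the chosen solutions are not unique, which the argument above does by picking any $\q_T$. The condition $c<1/2$ is exactly what makes $\lambda>1$, so the chaining argument works.
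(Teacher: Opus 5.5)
Your proof is correct and takes essentially the same approach as the paper. You subtract two admissible denominators at comparable scales, use non-$\varphi$-approximability at height $S=T/c$ (where $2c<1$ gives $2c\varphi(T/c)<\varphi(T/c)$) to force them to coincide, and conclude that the denominator is eventually constant; the paper compares consecutive integer levels $N-1,N$ with $|\q_N-\q_{N-1}|\le 2N<N/c$ instead of your window $T'\in[T,(1/c-1)T]$ with geometric chaining, which is only a cosmetic difference.
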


\section*{Glossary of Notation}
\label{sec:glossary}

Principal symbols and terminology are grouped by topic below.
References lead to the full definitions.

\begingroup
\small
\setlength{\tabcolsep}{5pt}
\setlength{\LTpre}{7pt}
\setlength{\LTpost}{8pt}
\renewcommand{\arraystretch}{1.05}
\newcommand{\glossarygroup}[1]{%
  \addlinespace[7pt]
  \multicolumn{3}{@{}l}{\scshape #1}\\*[4pt]}
\begin{longtable}{@{}>{\raggedright\arraybackslash}p{0.25\textwidth}%
 >{\raggedright\arraybackslash}p{\dimexpr0.55\textwidth-2\tabcolsep\relax}%
 >{\raggedright\arraybackslash}p{\dimexpr0.20\textwidth-2\tabcolsep\relax}@{}}
\toprule
\textsc{Notation} & \textsc{Meaning} & \textsc{Reference} \\
\midrule
\endfirsthead
\toprule
\textsc{Notation} & \textsc{Meaning} & \textsc{Reference} \\
\midrule
\endhead
\bottomrule
\endfoot
\glossarygroup{Approximation sets and terminology}
$d$ & $d=m+n$ & \S\ref{sec:previous} \\[3pt]
$\Mat_{m,n}$ & Real $m\times n$ matrices &
\S\ref{sec:previous} \\[3pt]
$\T^m$ & $\T^m=\R^m/\Z^m$ & \S\ref{sec:previous} \\[3pt]
$\langle\cdot\rangle$ & Residue class modulo $\Z^m$ &
\S\ref{sec:previous} \\[3pt]
$|\cdot|$ & Supremum norm: $|\bx|=\max_i|x_i|$ &
\S\ref{sec:previous} \\[3pt]
$|\cdot|_{\Z^m}$ & $|\bx|_{\Z^m}=\min_{\p\in\Z^m}|\bx-\p|$ &
\eqref{eq:distance_to_the_nearest_integer} \\[3pt]
$B(\bx,r)$ & $B(\bx,r)=\{\by:|\by-\bx|\le r\}$ &
\S \ref{sec:upperbd}  \\[3pt]
$\lambda B$ & $\lambda B(\bx,r)=B(\bx,\lambda r)$, $\lambda>0$ &
\S \ref{sec:upperbd} \\[3pt]
$\ll,\gg,\asymp,O,o$ & Comparison and asymptotic notation. &
\S \ref{sec:introduction}, Conventions \\[3pt]
$\widetilde\psi_c$ & $\widetilde{\psi}_c(T)=c\psi(T/c)$, $c>0$ &
\eqref{eq:rescaled-function} \\[3pt]
$\psi_\tau$ & $\psi_\tau(T)=T^{-\tau}$, $\tau>0$ &
\S\ref{sec:previous} \\[3pt]
$\Dhat_A(\psi)$ & Twisted $\psi$-Dirichlet vectors &
\eqref{eq:1.1} \\[3pt]
$\Dhat_{A,\mathcal Q}(\psi)$ & Twisted $\psi$-Dirichlet vectors with
denominators restricted to $\mathcal Q\subseteq\Z^n$ &
Rem.~\ref{rem:lower-Q} \\[3pt]
$\Sing_A(\psi)$, $U_A(\psi)$ & Intersection and union, respectively, of
$\Dhat_A(\widetilde\psi_c)$ over $c>0$ &
\eqref{eq:1.14}, Lem.~\ref{lem:rescaling-comparison} \\[3pt]
Lower quasi-\newline multiplicativity & Decreasing $\psi$ for which
there exist $\tau>0$ and $R_0>1$ such that
$R^{-\tau}\psi(T)\le\psi(RT)$ for all sufficiently large $T$ and every
$R\ge R_0$. & Def.~\ref{def:regular} \\[3pt]
$\mathcal E_{m,n}$ & Finite set of possible exceptional power exponents &
\eqref{eq:power-exceptional-set} \\[3pt]
$\widehat\omega(A,\bb)$ & Uniform Diophantine exponent &
\eqref{eq:uniform-shift-exponent} \\[3pt]
\glossarygroup{Lattice counts and dimension formulae}
$u_A$ & $u_A=\begin{pmatrix}I_m&A\\0&I_n\end{pmatrix}$ &
\eqref{eq:1.6} \\[5pt]
$a_t$ & $a_t=\operatorname{diag}(\e^{t/m}I_m,\e^{-t/n}I_n)$ &
\eqref{eq:1.6} \\[3pt]
$\Lambda_A$ & $\Lambda_A=u_A\Z^d$ &
\S\ref{sec:near-power-proofs} \\[3pt]
$\lambda_i(E,\Lambda)$ & $i$-th successive minimum of $\Lambda$ with respect to $E$ &
\S\ref{ssec:counting} \\[3pt]
$\lambda_i(\Lambda)$ & $\lambda_i(\Lambda)=\lambda_i([-1,1]^d,\Lambda)$ &
\S\ref{subsec:main-theorem} \\[3pt]
$\lambda_i(t)$ & Successive minima $\lambda_i(a_tu_A\Z^d)$ &
\S\ref{subsec:main-theorem} \\[3pt]
${t_{R,Q}}$ & $t_{R,Q}=\frac{mn}{d}\log(Q/R)$ &
\eqref{eq:2.1} \\[3pt]
$N_A(R,Q)$ & Number of integer pairs with
$|A\q-\p|\le R$, $|\q|\le Q$ & \S\ref{ssec:counting} \\[3pt]
$\mathcal N_A(R,Q)$ & $\displaystyle\prod_{i=1}^d
\max\left\{\frac{Q^{n/d}R^{m/d}}{\lambda_i(t_{R,Q})},1\right\}$ &
\eqref{eq:2.1}, Lem.~\ref{lem:profile-count} \\[5pt]
$T_j$ & $T_j=\psi^{-1}(\e^{-jL})$ &
\eqref{eq:1.11} \\[3pt]
$\mathcal P_{A,\psi}(L,c)$ & $\displaystyle
 \liminf_{N\to\infty}\frac1{NL}\sum_{j=0}^{N-1}\log
 \frac{\mathcal N_A(c\psi(T_j),T_{j+1})}
      {\mathcal N_A(c\psi(T_{j+1}),T_{j+1})}$ &
\eqref{eq:4.19} \\[5pt]
$P_-(A,\psi)$ & $\displaystyle\lim_{L\to\infty}\inf_{c>0}\mathcal P_{A,\psi}(L,c)$ &
Thm.~\ref{thm:main}(i) \\[5pt]
$P_+(A,\psi)$ & $\displaystyle\lim_{L\to\infty}\sup_{c>0}\mathcal P_{A,\psi}(L,c)$ &
Thm.~\ref{thm:main}(i) \\[5pt]
$r_{A,\tau}(T;\delta)$ & $\displaystyle
\#\left\{\begin{gathered}
1\le i\le d:\\[-2pt]
\lambda_i(t_{T^{-\tau},T})<\delta T^{(n-m\tau)/d}
\end{gathered}\right\}$ &
\eqref{eq:power-rank-cutoff} \\[5pt]
$r_{A,\tau}(T)$ & $r_{A,\tau}(T)=r_{A,\tau}(T;1)$ &
\eqref{eq:power-rank} \\[3pt]
$\underline{\Emass}(A)$ & Lower escape of mass of $a_tu_A\Z^d$ &
\S\ref{subsec:Aggarwal} \\[3pt]
$a_{x,y}$ & $a_{x,y}=\operatorname{diag}(\e^yI_m,\e^{-x}I_n)$ &
\S\ref{sec:near-power-proofs} \\[3pt]
$\omega_\Gamma$ & $\omega_\Gamma=\bv_1\wedge\cdots\wedge\bv_r$ for a basis
$(\bv_1,\ldots,\bv_r)$ of $\Gamma$, with supremum norm in the standard
exterior basis. & \S\ref{sec:near-power-proofs} \\[3pt]
$F(x,y)$ & $\displaystyle F(x,y)=\max_\Gamma\bigl(-\log|a_{x,y}\omega_\Gamma|\bigr)$,
where $\Gamma$ ranges over primitive sublattices of $\Lambda_A$. &
\S\ref{sec:near-power-proofs}, Lem.~\ref{lem:power-profile} \\[3pt]
$g_i(x)$ & $g_i(x)=\log\lambda_i(a_{x,\tau x}\Lambda_A)$ &
\eqref{eq:power-normalized-minima} \\[3pt]
$G_r(x)$ & $\displaystyle G_r(x)=\sum_{i=1}^r g_i(x)$ &
\eqref{eq:power-normalized-minima} \\[5pt]
$\mathbf f=(f_1,\ldots,f_d)$ & An $(m,n)$-template. &
\S\ref{subsec:templates}, Def.~\ref{def:template} \\[3pt]
\glossarygroup{Measures and transference}
$\Gamma_A$ and $H_A$ & $\Gamma_A=\langle A\Z^n\rangle$ and
$H_A=\overline{\langle A\Z^n\rangle}$ &
\S\ref{subsec:zero-full} \\[3pt]
$\mathcal L_A$ & Normalized Haar measure on $H_A$ &
\S\ref{subsec:zero-full} \\[3pt]
$\mathcal L^m$ & Lebesgue measure on $\T^m$ &
\S\ref{subsec:zero-full} \\[3pt]
$\mathcal H^f$ & $f$-Hausdorff measure &
\S\ref{subsec:zero-full} \\[3pt]
$\mathcal H^s$ & $s$-Hausdorff measure; corresponds to $f(r)=r^s$ &
\S\ref{subsec:zero-full} \\[3pt]
$\dim_H$ & Hausdorff dimension &
\S\ref{subsec:zero-full} \\[3pt]
Locally full measure & $\mathcal H^f(E\cap B)=\mathcal H^f(B)$ for every\newline
nonempty relatively open ball $B\subseteq H_A$ &
\S\ref{subsec:zero-full} \\[3pt]
$\kappa_d$ & $\kappa_d=2^{1-d}(d!)^2$ &
\S\ref{subsec:transference} \\[3pt]
\glossarygroup{One-dimensional notation}
\multicolumn{3}{@{}l}{\textit{Here $m=n=1$ and $A=\alpha\notin\Q$.}}\\*[5pt]
$p_k/q_k$ & Continued-fraction convergents &
\S\ref{sec:one-dimensional} \\[3pt]
$\omega(\alpha)$ & Ordinary Diophantine exponent &
\S\ref{sec:one-dimensional} \\[3pt]
$Y_k$ & $\displaystyle\frac{Y_k}{\psi(Y_k)}=\frac{q_k}{|q_k\alpha|_{\Z}}$ &
\S\ref{sec:one-dimensional} \\[5pt]
$h(x)$ & $h(x)=-\log\psi(\e^x)$ &
\S\ref{subsec:one-dimensional} \\[3pt]
$a_k$, $b_k$ & $a_k=\log q_k$, $b_k=\log|q_k\alpha|_{\Z}^{-1}$ &
\eqref{eq:a_k_b_k} \\[3pt]
$F_\alpha(x,y)$ & $\displaystyle\max\left\{
\begin{gathered}
0,\ x-y,\\[-2pt]
\displaystyle\sup_{k\ge0}\min\{x-a_k,b_k-y\}
\end{gathered}\right\}$ &
\S\ref{subsec:one-dimensional}, Lem.~\ref{lem:cf-count} \\[5pt]
$\sigma(x)$ & $\sigma(x)=x+h(x)$ & \eqref{eq:5.69} \\[3pt]
$\xi_k(w)$ & $\xi_k(w)=\sigma^{-1}(a_k+b_k+w)$ &
\eqref{eq:shifted-xi} \\[3pt]
$I_k(w)$ & $I_k(w)=\bigl(h^{-1}(a_k+w),\xi_k(w)\bigr)$ &
Lem.~\ref{lem:one-dimensional-branches} \\[3pt]
$J_k(w)$ & $J_k(w)=\bigl(\xi_k(w),h^{-1}(b_k+w)\bigr)$ &
Lem.~\ref{lem:one-dimensional-branches} \\[3pt]
\end{longtable}
\endgroup

\part{Proofs}

\section{The upper and lower estimates}
\label{sec:CD}

In this section we formulate some lower and upper estimates which will be used in the
proof of Theorem \ref{thm:main}. We also use the upper estimate directly in the proof of Lemma \ref{lem:power-upper-all}. 

Throughout this and the next section, let
$\psi:\R_+\to\R_+$ be strictly decreasing and tend to zero.
Let $\{T_j\}_{j\ge1}$ be a strictly increasing sequence of positive real
numbers such that $T_1\ge1$, $\psi(1) < \frac{1}{12}$, and
$T_j\to\infty$.

We use the quantity $\mathcal N_A$ defined in \eqref{eq:2.1}.

\begin{prop}[Upper {Hausdorff dimension} estimate]
\label{prop:upper}
For every sequence $\{T_j\}_{j\ge1}$ as above,
\begin{equation}\label{eq:2.4}
{\dim_H}\bigl(\Dhat_A(\psi)\bigr)
{\le}
\liminf_{\ell\to\infty}
{\frac{\displaystyle
(\ell-1)\log C_d+\sum_{j=1}^{\ell-1}\log
\frac{\mathcal N_A(\psi(T_j),T_{j+1})}
{\mathcal N_A(\psi(T_{j+1}),T_{j+1})}}
{-\log\psi(T_\ell)}},
\end{equation}
where $C_d\ge1$ depends only on $d$.
\end{prop}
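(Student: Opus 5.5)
By countable stability, $\Dhat_A(\psi)=\bigcup_k\Dhat_{A,T_k}(\psi)$, where $\Dhat_{A,T_k}(\psi)$ is the set of shifts for which \eqref{eq:1.2} is solvable for every $T\ge T_k$. So it suffices to bound $\dim_H\Dhat_{A,T_k}(\psi)$ by the right-hand side of \eqref{eq:2.4} for each fixed $k$. Discarding the first $k$ terms of the sum changes the numerator only by a quantity that is bounded in $\ell$: each factor $\mathcal N_A(\psi(T_j),T_{j+1})/\mathcal N_A(\psi(T_{j+1}),T_{j+1})$ is at least a constant by monotonicity of lattice counts in $R$. The denominator $-\log\psi(T_\ell)$ tends to infinity, so this shift does not affect the liminf.

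\textbf{Covering construction.} Fix $k$ and define $E_j$ as the union of the closed balls $B(\langle A\q\rangle,\psi(T_j))$ over $|\q|\le T_j$, so that $\Dhat_{A,T_k}(\psi)\subseteq\bigcap_{j\ge k}E_j$.
\begin{enumerate}
\item Choose a Vitali (maximal disjoint) subfamily $\mathcal G_{T_k}$ of these balls at level $k$.
\item Inductively, for each selected $Q\in\mathcal G_{T_j}$, consider the level-$(j+1)$ balls contained in $6Q$ and take a Vitali subfamily of them. The union of these subfamilies over $Q$ is $\mathcal G_{T_{j+1}}$.
\item Show inductively that $\bigcap_{i\le j}E_i\cap\Dhat_{A,T_k}(\psi)$ is covered by $4\mathcal G_{T_j}$. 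A point $\bb$ covered by $4Q$ lies in some level-$(j+1)$ ball $B$. Since $\psi(T_{j+1})\le\psi(T_j)$, that ball $B$ lies in $6Q$. Then $B$ meets a selected ball of radius $\psi(T_{j+1})$, so $\bb$ is in its triple.
\end{enumerate}
This produces a cover of $\Dhat_{A,T_k}(\psi)$ by $\prod_{j=k}^{\ell-1}\max_Q\#\mathcal G_{T_j\to T_{j+1}}(Q)$ balls of radius $4\psi(T_\ell)$. Since $\psi(T_\ell)\to0$, the standard estimate $\dim_H\le\liminf_\ell\log(\#\text{cover})/(-\log\psi(T_\ell))$ then gives \eqref{eq:2.4}, provided the transition bound below holds.

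\textbf{Transition bound (the main step).} We need
$\#\mathcal G_{T_j\to T_{j+1}}(Q)\le C_d\,\mathcal N_A(\psi(T_j),T_{j+1})/\mathcal N_A(\psi(T_{j+1}),T_{j+1})$.

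Let $Q$ have center $\langle A\q_0\rangle$. The selected next-stage balls have centers $\langle A\q\rangle$ with $|\q|\le T_{j+1}$ and $|A\q-\p-A\q_0|\le 6\psi(T_j)$ for suitable $\p$, and these balls are pairwise disjoint.

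\emph{Upper count.} Subtracting $\q_0$, each center yields an integer pair $(\p',\q')$ with $|A\q'-\p'|\le 6\psi(T_j)$ and $|\q'|\le 2T_{j+1}$. Disjointness makes the centers distinct modulo $\Z^m$, so it is enough to count each center once.

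\emph{Packing argument.} Attach to each selected center the set of lattice pairs $(\p'',\q'')$ with $|A\q''-\p''|\le\psi(T_{j+1})/2$ and $|\q''|\le T_{j+1}$, translated to that center. These fibres are disjoint because the balls are disjoint. Each fibre contains, up to translation, the homogeneous set counted by $N_A(\psi(T_{j+1})/2,T_{j+1})$. All translates lie inside the set counted by $N_A(7\psi(T_j),3T_{j+1})$. Therefore
\[
\#\mathcal G_{T_j\to T_{j+1}}(Q)\le \frac{N_A(7\psi(T_j),3T_{j+1})}{N_A(\psi(T_{j+1})/2,T_{j+1})}.
\]

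\emph{Conversion to $\mathcal N_A$.} Lemma~\ref{lem:profile-count} gives $N_A\asymp\mathcal N_A$. It remains to absorb the bounded dilations of $R$ and $Q$. For this, check that $\mathcal N_A(cR,c'Q)\asymp_{c,c'}\mathcal N_A(R,Q)$, using that $t_{R,Q}$ shifts by $O(1)$ and each $\lambda_i$ changes by a bounded factor under a bounded time shift. This yields a constant $C_d$ depending only on $d$ (and, implicitly, on the normalization $\psi(1)<1/12$ so that the balls are small in $\T^m$).

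I expect the careful disjointness bookkeeping in the packing argument — ensuring the fibres attached to distinct centers really do not overlap, including the passage modulo $\Z^m$ — to be the main obstacle.
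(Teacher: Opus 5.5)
Your proposal is correct and follows essentially the same route as the paper: the same Vitali-selection cover by $4\mathcal G_{T_j}$ (with children taken among level-$(j+1)$ balls inside $6Q$), and the same packing of pairwise disjoint lattice fibres over the lifted children inside a lattice box over $6Q$, converted to $\mathcal N_A$ via Lemma~\ref{lem:profile-count} together with monotonicity and scaling. The differences are cosmetic: you use fibres of radius $\psi(T_{j+1})/2$ and a $\q$-window centred at $\q_0$ instead of the lifted balls and $[-2T_{j+1},2T_{j+1}]^n$. Two small points: your cover count should include the fixed factor $\#\mathcal G_{T_k}$, and the first $k-1$ summands can be dropped simply because they are finitely many fixed finite numbers, not because of any lower bound on them; neither affects the liminf.
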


\begin{prop}[Lower {Hausdorff dimension} estimate]
\label{prop:lower}
Suppose that 
\begin{equation}\label{eq:2.5}
 \psi(T_j)\le\rho\psi(T_{j+1}),\qquad j\ge1,
\end{equation}
for some $\rho>1$.  {Then}
\begin{equation}\label{eq:2.6}
\begin{split}
{\dim_H}\bigl(\Dhat_A(\rho\psi)\bigr)
{\ge}{}&
\liminf_{\ell\to\infty}{\frac{1}{-\log\psi(T_\ell)}}\\
&{\quad\times\sum_{j=1}^{\ell-1}\log}
\max\Biggl\{1,c_d
\frac{\mathcal N_A(\psi(T_j)-\psi(T_{j+1}),
 T_{j+1}-T_j)}
{\mathcal N_A(\psi(T_{j+1}),T_{j+1})}
\Biggr\},
\end{split}
\end{equation}
where $c_d>0$ is a constant depending only on $d$.

If, in addition, there exists $\eta>1$ such that
\begin{equation}\label{eq:2.7}
 T_{j+1}\ge\eta T_j\ \ \ \text{and} \ \ 
 \psi(T_j)\ge\eta\psi(T_{j+1})
 \ \ \text{for any} \ \ j\ge1,
\end{equation}
then
\begin{equation}\label{eq:2.8}
\begin{split}
{\dim_H}\bigl(\Dhat_A(\rho\psi)\bigr)
{\ge}{}&
\liminf_{\ell\to\infty}{\frac{1}{-\log\psi(T_\ell)}}\\
&{\quad\times\sum_{j=1}^{\ell-1}\log}
\max\Biggl\{1,c_d\left(\frac{\eta-1}{\eta}\right)^d
\frac{\mathcal N_A(\psi(T_j),T_{j+1})}
{\mathcal N_A(\psi(T_{j+1}),T_{j+1})}
\Biggr\}.
\end{split}
\end{equation}
\end{prop}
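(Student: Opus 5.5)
We construct a Cantor-type subset of $\Dhat_A(\rho\psi)$ and run a mass-distribution argument, as sketched in the outline.

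\textbf{Cantor construction.} Start from a single ball $Q_0=B(\langle A\q_0\rangle,\psi(T_1))$ with $|\q_0|\le T_1$. Inductively, suppose $\mathcal G_{T_j}$ is a collection of pairwise disjoint balls $Q=B(\langle A\q\rangle,\psi(T_j))$ with $|\q|\le T_j$. For each such $Q$, consider the candidates
\[
 \mathcal P_{T_j\to T_{j+1}}(Q)=\{B(\langle A\q'\rangle,\psi(T_{j+1})):|\q'|\le T_{j+1},\ B\subseteq Q\}.
\]
Choose a Vitali (maximal disjoint) subcollection $\mathcal G(Q)$ of these candidates, and set $\mathcal G_{T_{j+1}}=\bigcup_Q\mathcal G(Q)$. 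The limit set $\mathbf G=\bigcap_j\bigcup\mathcal G_{T_j}$ is compact and nonempty.

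\textbf{Inclusion $\mathbf G\subseteq\Dhat_A(\rho\psi)$.} Every point of $\mathbf G$ lies, at every stage $j$, in a ball of radius $\psi(T_j)$ centred at $\langle A\q\rangle$ with $|\q|\le T_j$. For $T\in[T_j,T_{j+1})$, use the stage-$j$ denominator: it satisfies $|\q|\le T_j\le T$, and
\[
 |A\q-\p-\bb|\le\psi(T_j)\le\rho\psi(T_{j+1})\le\rho\psi(T)
\]
by \eqref{eq:2.5}. This is Lemma~\ref{lem:limit-inclusion}.

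\textbf{Counting children.} This is the key step. Lifting to $\R^d$, the candidates $\q'=\q+\mathbf v$ with $|\mathbf v|\le T_{j+1}-T_j$ and $|A\mathbf v-\mathbf p|\le\psi(T_j)-\psi(T_{j+1})$ all give balls inside $Q$, provided $|\q|\le T_j$. By Lemma~\ref{lem:profile-count}, their number is at least
\[
 \gg N_A\bigl(\psi(T_j)-\psi(T_{j+1}),T_{j+1}-T_j\bigr)\asymp\mathcal N_A\bigl(\psi(T_j)-\psi(T_{j+1}),T_{j+1}-T_j\bigr).
\]
On the other hand, a maximal disjoint subfamily has tripled balls covering all candidates. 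Hence each selected ball absorbs at most the number of candidates whose centres lie within $2\psi(T_{j+1})$ of it. Since differences of such denominators have norm $\le2T_{j+1}$ and approximation error $\le2\psi(T_{j+1})$, a pigeonhole/difference argument combined with Lemma~\ref{lem:profile-count} bounds this number by $\ll\mathcal N_A(\psi(T_{j+1}),T_{j+1})$. Dividing gives
\[
 \#\mathcal G(Q)\ge\max\Bigl\{1,\;c_d\,\frac{\mathcal N_A(\psi(T_j)-\psi(T_{j+1}),T_{j+1}-T_j)}{\mathcal N_A(\psi(T_{j+1}),T_{j+1})}\Bigr\}.
\]
The count is at least $1$ because $\q$ itself is always a candidate.

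\textbf{Mass distribution.} Distribute mass uniformly down the tree, so that each stage-$\ell$ ball receives mass at most $\prod_{j<\ell}(\#\text{children})^{-1}$. Stage-$(j+1)$ balls are disjoint and sit inside $Q$. For a ball $B(x,r)$ with $\psi(T_{\ell+1})\le r<\psi(T_\ell)$, the number of stage-$(\ell+1)$ balls it meets is bounded by volume, so the mass of $B(x,r)$ is at most $\ll$ the product at stage $\ell$ times $\min\{\#\text{children},(r/\psi(T_{\ell+1}))^m\}$. Condition \eqref{eq:2.5} says $\psi(T_\ell)/\psi(T_{\ell+1})\le\rho$, so this minimum contributes only a bounded factor. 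This is Lemma~\ref{lem:tree}. Taking logarithms and dividing by $-\log r\approx-\log\psi(T_\ell)$ yields \eqref{eq:2.6}.

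I expect the main obstacle to be the upper bound on how many candidates a single selected ball absorbs. Clustering of $\langle A\q'\rangle$ must be controlled in every lattice direction simultaneously, and this control needs Lemma~\ref{lem:profile-count} applied to a difference set.

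\textbf{Deriving \eqref{eq:2.8}.} Under \eqref{eq:2.7},
\[
 \psi(T_j)-\psi(T_{j+1})\ge\frac{\eta-1}{\eta}\psi(T_j),\qquad T_{j+1}-T_j\ge\frac{\eta-1}{\eta}T_{j+1}.
\]
Scaling both $R$ and $Q$ by a factor $\theta=(\eta-1)/\eta\le1$ leaves $t_{R,Q}$ unchanged. It multiplies $Q^{n/d}R^{m/d}$ by $\theta$, so each factor in $\mathcal N_A$ decreases by at most $\theta$, and $\mathcal N_A(\theta R,\theta Q)\ge\theta^d\mathcal N_A(R,Q)$. Monotonicity of $\mathcal N_A$ in both arguments (from the counting interpretation, up to constants absorbed into $c_d$) then turns \eqref{eq:2.6} into \eqref{eq:2.8}.
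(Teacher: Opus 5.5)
Your proposal is correct and follows essentially the same route as the paper. It uses the same nested Vitali construction of children inside each parent, the same inclusion via \eqref{eq:2.5}, the same comparison of lattice points in the translated box $B(A\q_0-\p_0,\psi(T_j)-\psi(T_{j+1}))\times B(\q_0,T_{j+1}-T_j)$ against the $O(\mathcal N_A(\psi(T_{j+1}),T_{j+1}))$ points absorbed by each selected ball, the mass distribution principle, and the scaling/monotonicity step for \eqref{eq:2.8}. The only cosmetic difference is at intermediate radii: you bound the number of stage-$(\ell+1)$ balls directly by $O(\rho^m)$ using \eqref{eq:2.5}, whereas the paper passes through stage-$\ell$ balls and uses the consequence $\log\psi(T_{j+1})/\log\psi(T_j)\to1$; both work.
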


\begin{rem}\label{rem:lower-Q}
For a nonempty set $\mathcal Q\subseteq\Z^n$, let
$\Dhat_{A,\mathcal Q}(\psi)$ be defined by \eqref{eq:1.1}, with the
additional requirement $\q\in\mathcal Q$ in \eqref{eq:1.2}. Suppose
that $\mathcal Q$ satisfies
\begin{equation*}\tag{S}\label{eq:central-extension}
 \forall \ \text{$\q\in\mathcal Q$ and $\bx\in\Z^n$, at least one of }
 \q+\bx\text{ and }\q-\bx\text{ belongs to }\mathcal Q.
\end{equation*}
Then Proposition \ref{prop:lower} holds with
$\Dhat_A(\rho\psi)$ replaced by
$\Dhat_{A,\mathcal Q}(\rho\psi)$, after possibly replacing $c_d$ by
another positive constant depending only on $d$.

As a consequence, under the hypotheses of Theorem~\ref{thm:main}, put
\begin{equation*}\label{eq:restricted-classical-endpoints}
 \Sing_{A,\mathcal Q}(\psi):=
 \bigcap_{c>0}\Dhat_{A,\mathcal Q}(c\psi),
 \qquad
  U_{A,\mathcal Q}(\psi):=
 \bigcup_{c>0}\Dhat_{A,\mathcal Q}(c\psi).
\end{equation*}
Then Theorem~\ref{thm:main} remains valid with
$\Dhat_A(\psi)$, $\Sing_A(\psi)$ and $ U_A(\psi)$
replaced by their $\mathcal Q$-restricted counterparts.
\end{rem}

We prove Propositions \ref{prop:upper} and \ref{prop:lower} and the
first assertion of Remark \ref{rem:lower-Q} in Section
\ref{sec:coverings}. The last assertion of the remark is proved in
Subsection~\ref{subs:proof-of-main}.

\section{Lattice counting and the covering constructions}
\label{sec:coverings}

\subsection{The counting function}\label{ssec:counting}

For $Q,R>0$, let
\begin{equation*}\label{eq:3.1}
 N_A(R,Q):=
 \#\left\{(\p,\q)\in\Z^m\times\Z^n:
 |A\q-\p|\le R,\ |\q|\le Q\right\}.
\end{equation*}
Let $\cB:=[-1,1]^d$.
For a convex, centrally symmetric set $E \subseteq \R^d$ and lattice $\Lambda$ in $\R^d$, the notation $\lambda_i(E, \Lambda)$ stands for the $i$-th successive minimum of $E$ on $\Lambda$; that is, $\lambda_i(E, \Lambda)$ is the infimum of $\lambda > 0$ such that the set $\lambda E \cap \Lambda$ contains $i$ linearly independent vectors. In particular, previously defined minima $\lambda_i(\Lambda)$ satisfy $\lambda_i(\Lambda) = \lambda_i(\cB, \Lambda)$.

The following standard estimate is proved in
\cite[Lemma~5.1]{KN26}.

\begin{lem}
\label{lem:lattice-count}
Let $\Lambda$ be a full-rank lattice in $\R^d$.  Then, for every $r>0$,
\begin{equation*}\label{eq:3.2}
 \#\bigl(\Lambda\cap r\cB\bigr)
 \asymp_d
 \prod_{i=1}^d\max\left\{\frac r{\lambda_i(\Lambda)},1\right\}.
\end{equation*}
\end{lem}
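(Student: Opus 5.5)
We prove the two inequalities separately, using Minkowski's second theorem together with a reduced basis of $\Lambda$. Write $\lambda_i=\lambda_i(\Lambda)$. By Minkowski's theorem on successive minima there are linearly independent $\bv_1,\dots,\bv_d\in\Lambda$ with $|\bv_i|=\lambda_i$. Moreover, by a standard lemma (Mahler/Weyl), $\Lambda$ has a basis $\mathbf w_1,\dots,\mathbf w_d$ with $|\mathbf w_i|\le c_d\lambda_i$. Fix $r>0$ and let $k$ be the largest index with $\lambda_k\le r$; if no such index exists, set $k=0$. The claimed quantity is then $\prod_{i\le k} r/\lambda_i$, up to the convention that the factors with $\lambda_i>r$ equal $1$.

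\emph{Lower bound.} Consider the integer combinations $\sum_{i\le k} a_i\bv_i$ with $|a_i|\le r/(k\lambda_i)$. Each lies in $r\cB$, and they are distinct because the $\bv_i$ are linearly independent. Their number is
\[
\prod_{i\le k}\bigl(2\lfloor r/(k\lambda_i)\rfloor+1\bigr)\gg_d\prod_{i\le k}\max\{r/\lambda_i,1\},
\]
since $2\lfloor x/k\rfloor+1\ge \max\{1,x/k\}$ for $x\ge1$. This gives the lower bound.

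\emph{Upper bound.} First suppose $k=0$, so $\lambda_1>r$. Every nonzero lattice vector has norm at least $\lambda_1>r$, so only the origin lies in $r\cB$ and the count is $1$. In general, the key observation is that every $\mathbf x\in\Lambda\cap r\cB$ lies, up to bounded error, in the subspace $V_k=\spanR(\bv_1,\dots,\bv_k)$. Indeed, every lattice point of norm at most $r<\lambda_{k+1}$ lies in $V_k$: otherwise $\bv_1,\dots,\bv_k,\mathbf x$ would give $k+1$ independent vectors of norm less than $\lambda_{k+1}$, contradicting the definition of $\lambda_{k+1}$. This holds for $r<\lambda_{k+1}$, and is automatic when $k=d$. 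Thus $\Lambda\cap r\cB=\Lambda_k\cap r\cB$, where $\Lambda_k=\Lambda\cap V_k$ is a rank-$k$ lattice whose successive minima with respect to $\cB\cap V_k$ are exactly $\lambda_1,\dots,\lambda_k$.

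We count $\Lambda_k\cap r\cB$ by a packing argument. Let $F$ be the fundamental parallelepiped of a reduced basis of $\Lambda_k$, which has diameter $\ll_d\lambda_k\le r$. Translates of $F$ by the points counted are disjoint and lie in $(r+O_d(r))\cB\cap V_k$. The $k$-dimensional volume of this region is $\ll_d r^k$, where volume is measured with the Euclidean structure on $V_k$; all norm comparisons lose only constants depending on $d$. By Minkowski's second theorem applied to $\Lambda_k$ in $V_k$,
\[
\operatorname{covol}(\Lambda_k)\asymp_d \operatorname{vol}(\cB\cap V_k)\prod_{i\le k}\lambda_i,
\]
and $\operatorname{vol}_k(\cB\cap V_k)\asymp_d1$ by Vaaler-type bounds: it is at least $2^k$ and bounded above in terms of $d$. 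Hence
\[
\#\le \frac{C_dr^k}{\operatorname{covol}(\Lambda_k)}\ll_d\prod_{i\le k} r/\lambda_i .
\]

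The main obstacle is the upper bound, specifically controlling the covolume of $\Lambda_k$ uniformly over arbitrary subspaces $V_k$. We handle this by invoking Minkowski's second theorem for sections of the cube, whose volumes are uniformly comparable to $1$. An alternative, if the section-volume bound is unwelcome, is to write each point in the reduced basis $\mathbf w_1,\dots,\mathbf w_k$ and bound its coefficients by $\ll_d r/\lambda_i$. This works because a reduced basis satisfies $|\sum a_i\mathbf w_i|\gg_d\max_i|a_i|\lambda_i$, which follows from $\prod|\mathbf w_i|\asymp_d\operatorname{covol}\cdot$(const).
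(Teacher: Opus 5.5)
Your proof is correct. The lower bound uses the box $\{\sum_{i\le k}a_i\bv_i : |a_i|\le r/(k\lambda_i)\}$. The upper bound uses the observation that $\Lambda\cap r\cB\subseteq V_k$ whenever $\lambda_{k+1}>r$, then packs translates of a fundamental parallelepiped of $\Lambda_k=\Lambda\cap V_k$ and applies Minkowski's second theorem in $V_k$. Together these form the standard proof of this estimate. The paper does not prove the lemma itself; it cites \cite[Lemma~5.1]{KN26}, so there is no in-paper argument to compare against.

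One simplification: the ``main obstacle'' you identify does not arise, and you do not need Vaaler's theorem. Put $K=\cB\cap V_k$. The disjoint translates lie in $(1+C_d)rK$, whose $k$-volume is $((1+C_d)r)^k\operatorname{vol}_k(K)$. Minkowski's second theorem, applied with the same body $K$, gives $\operatorname{covol}(\Lambda_k)\ge 2^{-k}\lambda_1\cdots\lambda_k\operatorname{vol}_k(K)$. The factor $\operatorname{vol}_k(K)$ therefore cancels, and the count is at most $(2(1+C_d))^k\prod_{i\le k}r/\lambda_i$ without any uniform bound on the section volume.
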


\begin{lem}
\label{lem:profile-count}
One has
\begin{equation}\label{eq:3.3}
 N_A(R,Q)\asymp_d \mathcal N_A(R,Q)
\end{equation}
uniformly in $Q,R>0$.
\end{lem}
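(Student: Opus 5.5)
The plan is to identify the set counted by $N_A(R,Q)$ with the points of a rescaled lattice in a cube, and then apply Lemma~\ref{lem:lattice-count}. A pair $(\p,\q)\in\Z^m\times\Z^n$ corresponds to the vector
\[
u_A\begin{pmatrix}-\p\\ \q\end{pmatrix}=\begin{pmatrix}A\q-\p\\ \q\end{pmatrix}\in u_A\Z^d .
\]
The map $(\p,\q)\mapsto(-\p,\q)$ is a bijection of $\Z^d$, so
\[
N_A(R,Q)=\#\bigl(u_A\Z^d\cap ([-R,R]^m\times[-Q,Q]^n)\bigr).
\]

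Next I convert the box into a cube using $a_t$. The aim is to choose $t$ so that $a_t$ sends $[-R,R]^m\times[-Q,Q]^n$ onto $r\cB$ for a single $r$. This requires
\[
\e^{t/m}R=\e^{-t/n}Q=r,
\qquad\text{that is,}\qquad
\e^{t(1/m+1/n)}=Q/R .
\]
Since $1/m+1/n=d/(mn)$, this gives $t=\frac{mn}{d}\log(Q/R)=t_{R,Q}$. Then
\[
r^d=(\e^{t/m}R)^m(\e^{-t/n}Q)^n=R^mQ^n\,\e^{t-t}=R^mQ^n ,
\]
so $r=Q^{n/d}R^{m/d}$. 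Because $a_t$ is a linear bijection, the count becomes
\[
N_A(R,Q)=\#\bigl(a_{t_{R,Q}}u_A\Z^d\cap r\cB\bigr).
\]

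Finally I apply Lemma~\ref{lem:lattice-count} to the unimodular lattice $\Lambda=a_{t_{R,Q}}u_A\Z^d$ with this value of $r$. Its successive minima are exactly $\lambda_i(t_{R,Q})$, so the lemma gives
\[
N_A(R,Q)\asymp_d\prod_{i=1}^d\max\Bigl\{\frac{Q^{n/d}R^{m/d}}{\lambda_i(t_{R,Q})},1\Bigr\}=\mathcal N_A(R,Q).
\]
The implied constants come from Lemma~\ref{lem:lattice-count}, which is uniform in the lattice and in $r$. Hence the comparison is uniform in $Q,R>0$.

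There is no real obstacle. The only step needing care is the choice of $t$, namely checking that $t_{R,Q}$ equalizes the two side lengths and that the resulting $r$ is $Q^{n/d}R^{m/d}$, as computed above.
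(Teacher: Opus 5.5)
Your proposal is correct and follows essentially the same route as the paper: rewrite $N_A(R,Q)$ as a count of points of $u_A\Z^d$ in the box $[-R,R]^m\times[-Q,Q]^n$, apply $a_{t_{R,Q}}$ to turn that box into the cube $Q^{n/d}R^{m/d}\cB$, and invoke Lemma~\ref{lem:lattice-count}, whose constants depend only on $d$. Your explicit sign bijection $(\p,\q)\mapsto(-\p,\q)$ and the verification of the cube radius are details the paper leaves implicit.
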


\begin{proof}
Note that by definition
$$
 N_A(R,Q)
 =
 \#\left(
 u_A\Z^d
 \cap [-R,R]^m\times[-Q,Q]^n
 \right).
$$
The diagonal map 
$$
a_{{t_{R,Q}}} = \begin{pmatrix}
\left( Q/R \right)^{\frac{n}{d}}I_m&0\\
 0& \left( Q/R \right)^{-\frac{m}{d}} I_n
 \end{pmatrix}
$$ takes the rectangle
\[
 [-R,R]^m\times[-Q,Q]^n
\]
to the cube of radius $Q^{n/d}R^{m/d}$.  Therefore
\[
 N_A(R,Q)
 =
 \#\left(
 a_{{t_{R,Q}}}u_A\Z^d
 \cap Q^{n/d}R^{m/d}\cB
 \right),
\]
and %
Lemma \ref{lem:lattice-count} yields that
\begin{equation}
    N_A(R,Q) \asymp_d
 \prod_{i=1}^d\max\left\{\frac {Q^{n/d}R^{m/d}}{\lambda_i({t_{R,Q}})},1\right\} = \mathcal N_A(R, Q),
\end{equation}
proving \eqref{eq:3.3}.
\end{proof}

{
\begin{lem}\label{lem:profile-properties}
The function $\mathcal N_A(R,Q)$ is nondecreasing in each argument. Moreover, for
every $R,Q,\beta>0$,
\begin{equation}\label{eq:profile-scaling}
 \min\{1,\beta^d\}\mathcal N_A(R,Q)
 \le \mathcal N_A(\beta R,\beta Q)
 \le\max\{1,\beta^d\}\mathcal N_A(R,Q).
\end{equation}
\end{lem}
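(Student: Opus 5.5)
The plan is to rewrite $\mathcal N_A(R,Q)$ directly in terms of successive minima of the fixed lattice $\Lambda_A=u_A\Z^d$ with respect to a box. After that rewriting, both claims follow from elementary properties of successive minima, and the parameter $t_{R,Q}$ no longer appears. Put $E_{R,Q}:=[-R,R]^m\times[-Q,Q]^n$ and $r:=Q^{n/d}R^{m/d}$.

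\emph{Key identity.} By the computation in the proof of Lemma~\ref{lem:profile-count}, $a_{t_{R,Q}}E_{R,Q}=r\cB$. Since linear maps preserve linear independence, $\lambda_i(E,\Lambda)=\lambda_i(gE,g\Lambda)$ for every $g\in GL_d(\R)$. Also $\lambda_i(r\cB,\Lambda')=r^{-1}\lambda_i(\cB,\Lambda')$ for any lattice $\Lambda'$. Combining these gives
\[
\lambda_i(E_{R,Q},\Lambda_A)=\lambda_i\bigl(r\cB,a_{t_{R,Q}}\Lambda_A\bigr)=\frac{\lambda_i(t_{R,Q})}{r}.
\]
Hence
\[
\mathcal N_A(R,Q)=\prod_{i=1}^d\max\Bigl\{\lambda_i(E_{R,Q},\Lambda_A)^{-1},1\Bigr\}.
\]

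\emph{Monotonicity.} If $R\le R'$ and $Q\le Q'$, then $E_{R,Q}\subseteq E_{R',Q'}$. Any $\lambda$ for which $\lambda E_{R,Q}$ contains $i$ linearly independent lattice vectors also works for $E_{R',Q'}$. Therefore each $\lambda_i(E_{R,Q},\Lambda_A)$ is nonincreasing in each argument. Each factor $\max\{\lambda_i^{-1},1\}$ is then nondecreasing, and so is the product.

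\emph{Scaling.} We have $E_{\beta R,\beta Q}=\beta E_{R,Q}$, so $\lambda_i(E_{\beta R,\beta Q},\Lambda_A)=\beta^{-1}\lambda_i(E_{R,Q},\Lambda_A)$. It then suffices to check the scalar inequality
\[
\min\{1,\beta\}\max\{x,1\}\le\max\{\beta x,1\}\le\max\{1,\beta\}\max\{x,1\}\qquad(x,\beta>0).
\]
For the upper bound, both $\beta x$ and $1$ are at most $\max\{1,\beta\}\max\{x,1\}$. For the lower bound, $\min\{1,\beta\}x\le\beta x$ and $\min\{1,\beta\}\le1$. Applying this with $x=\lambda_i(E_{R,Q},\Lambda_A)^{-1}$ and multiplying over the $d$ indices gives \eqref{eq:profile-scaling}.

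The only step needing care is the key identity, namely keeping track of the normalization $r$ correctly under $a_{t_{R,Q}}$; everything else is routine.
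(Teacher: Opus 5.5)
Your proposal is correct and follows essentially the same route as the paper: it rewrites $\mathcal N_A(R,Q)$ via the successive minima of the box $[-R,R]^m\times[-Q,Q]^n$ on $u_A\Z^d$ (the paper's identity \eqref{eq:rectangular-profile}), gets monotonicity from enlarging the body, and gets \eqref{eq:profile-scaling} from the same scalar inequality $\min\{1,\beta\}\max\{x,1\}\le\max\{\beta x,1\}\le\max\{1,\beta\}\max\{x,1\}$. Your explicit check that $a_{t_{R,Q}}$ maps the box onto $Q^{n/d}R^{m/d}\cB$ simply fills in the step the paper leaves as ``it follows from \eqref{eq:2.1}''.
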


\begin{proof}
It follows from \eqref{eq:2.1} that
\begin{equation}\label{eq:rectangular-profile}
 \mathcal N_A(R,Q)
 =\prod_{i=1}^d\max\left\{
 \lambda_i\left([-R,R]^m\times[-Q,Q]^n,u_A\Z^d\right)^{-1},1
 \right\}.
\end{equation}
Monotonicity follows since
enlarging a convex body decreases each of its successive minima.
Multiplying both arguments by $\beta$ dilates the rectangle by $\beta$
and divides its minima by $\beta$. Thus \eqref{eq:profile-scaling} follows
from the inequalities
\[
 \min\{1,\beta\}\max\{x,1\}
 \le\max\{\beta x,1\}
 \le\max\{1,\beta\}\max\{x,1\},\qquad x>0.
\]
\end{proof}

\begin{lem}\label{lem:translated-count}
Let $(\bx,\q_0)\in u_A\Z^d$, where $\bx\in\R^m, \ \q_0 \in \Z^n$, and let $Q,R>0$.
Then
\begin{equation}\label{eq:lattice-translation-count}
 \#\left(u_A\Z^d\cap
 \bigl((\bx+[-R,R]^m)\times(\q_0+[-Q,Q]^n)\bigr)\right)
 =N_A(R,Q).
\end{equation}
If $|\q_0|\le Q$, then
\begin{equation}\label{eq:translated-count-window}
\begin{split}
 N_A(R,Q)
 &\le\#\left(u_A\Z^d\cap
 \bigl((\bx+[-R,R]^m)\times[-2Q,2Q]^n\bigr)\right)\\
 &\le N_A(R,3Q).
\end{split}
\end{equation}
\end{lem}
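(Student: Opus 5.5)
The first identity will come from a translation bijection, and the window inequalities in \eqref{eq:translated-count-window} will follow from it by containment.

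\textbf{Translation bijection.} Since $(\bx,\q_0)\in u_A\Z^d$ and $u_A\Z^d$ is a group, the map
\[
 \mathbf v\mapsto\mathbf v-(\bx,\q_0)
\]
is a bijection of $u_A\Z^d$ onto itself. It carries
\[
 (\bx+[-R,R]^m)\times(\q_0+[-Q,Q]^n)
\]
onto the centered box $[-R,R]^m\times[-Q,Q]^n$. Hence the count in \eqref{eq:lattice-translation-count} equals
\[
 \#\bigl(u_A\Z^d\cap[-R,R]^m\times[-Q,Q]^n\bigr).
\]
By the description of $N_A(R,Q)$ at the start of the proof of Lemma~\ref{lem:profile-count}, this is exactly $N_A(R,Q)$. (A lattice vector $u_A(\p',\q')$ has coordinates $(\p'+A\q',\q')$, so after the sign change $\p=-\p'$ it is the pair counted by $N_A$.)

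\textbf{Lower window bound.} Assume $|\q_0|\le Q$. Then
\[
 \q_0+[-Q,Q]^n\subseteq[-2Q,2Q]^n.
\]
So the translated box $(\bx+[-R,R]^m)\times(\q_0+[-Q,Q]^n)$ lies inside the window $(\bx+[-R,R]^m)\times[-2Q,2Q]^n$. Combined with \eqref{eq:lattice-translation-count}, this gives the first inequality of \eqref{eq:translated-count-window}.

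\textbf{Upper window bound.} We also have
\[
 [-2Q,2Q]^n\subseteq\q_0+[-3Q,3Q]^n,
\]
because every $\mathbf y$ with $|\mathbf y|\le2Q$ satisfies $|\mathbf y-\q_0|\le 3Q$. So the window lies inside the box $(\bx+[-R,R]^m)\times(\q_0+[-3Q,3Q]^n)$. Applying \eqref{eq:lattice-translation-count} with $3Q$ in place of $Q$ bounds the count by $N_A(R,3Q)$, which is the second inequality.

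\textbf{Main obstacle.} There is no genuine obstacle here. The only point needing care is that the translation vector $(\bx,\q_0)$ is itself a lattice point. That is what makes the shifted count exact rather than merely comparable.
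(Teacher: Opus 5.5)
Your proof is correct and is the paper's argument: translation by the lattice point $(\bx,\q_0)$ gives the exact count \eqref{eq:lattice-translation-count}. The inclusions $\q_0+[-Q,Q]^n\subseteq[-2Q,2Q]^n\subseteq\q_0+[-3Q,3Q]^n$, combined with that count, give both window bounds.
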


\begin{proof}
Translation by $(\bx,\q_0)$ preserves $u_A\Z^d$ and gives
\eqref{eq:lattice-translation-count}. If $|\q_0|\le Q$, then
\[
 \q_0+[-Q,Q]^n\subseteq[-2Q,2Q]^n
 \subseteq\q_0+[-3Q,3Q]^n.
\]
Applying \eqref{eq:lattice-translation-count} to these inclusions proves
\eqref{eq:translated-count-window}.
\end{proof}
}

\subsection{The upper construction: proof of Proposition \ref{prop:upper}.}\label{sec:upperbd}

For $N\ge1$, let $\Dhat_{A,N}(\psi)$ be the set of $\bb\in\T^m$ for
which \eqref{eq:1.2} is solvable for every $T\ge N$.  Then
\begin{equation}\label{eq:3.7}
\Dhat_A(\psi)=\bigcup_{N\ge1}\Dhat_{A,N}(\psi).
\end{equation}
Fix the sequence $\{T_j\}$ from Section \ref{sec:CD} and an initial
index $k$.  Here and below, $B(\x,r)$ denotes the closed ball of radius
$r$ centered at $\x$; we will also define multiplication of balls by a scalar $\lambda > 0$ via $\lambda B(\x,r): = B(\x,\lambda r)$.  

We will inductively construct a sequence of collections $\{ \cG_{T_j}^{(k)} \}_{j \geq k}$ of balls which yield a cover for the set $\Dhat_{A, T_k}(\psi)$. First we define the collection
\begin{equation*}\label{eq:3.8}
 \cP_{T_k}^{(k)}
 :=
 \left\{
 B(\langle A\q\rangle,\psi(T_k)):
 \q\in\Z^n,\ |\q|\le T_k
 \right\},
\end{equation*} 
where $\langle\cdot\rangle$ denotes reduction modulo $\Z^m$.  Choose a
maximal pairwise disjoint subcollection (also called Vitali subcollection) $\cG_{T_k}^{(k)}\subset\cP_{T_k}^{(k)}$.  Every omitted ball meets a
selected ball of the same radius and is therefore contained in its triple.
Thus the triples of the selected balls cover the union of all balls in $\cP_{T_k}^{(k)}$.

Suppose that $j \ge k$ and $\cG_{T_j}^{(k)}$ has been defined, and let
$Q\in\cG_{T_j}^{(k)}$.  Let
\begin{equation*}\label{eq:3.9}
\begin{split}
 \cP_{T_j\to T_{j+1}}^{(k)}(Q)
 :=\{&
 B(\langle A\q\rangle,\psi(T_{j+1}))\subset6Q:\\
 &\q\in\Z^n,\ |\q|\le T_{j+1}\}.
\end{split}
\end{equation*}
Choose a Vitali subcollection
$\cG_{T_j\to T_{j+1}}^{(k)}(Q)$ of $\cP_{T_j\to T_{j+1}}^{(k)}(Q)$.  As above, triples of elements of $\cG_{T_j\to T_{j+1}}^{(k)}(Q)$ cover the union of all balls in $\cP_{T_j\to T_{j+1}}^{(k)}(Q)$. We will refer to elements of $\cG_{T_j\to T_{j+1}}^{(k)}(Q)$ as children of $Q$. Put
\begin{equation}\label{eq:3.10}
 \cG_{T_{j+1}}^{(k)}
 :=
 \bigcup_{Q\in\cG_{T_j}^{(k)}}
 \cG_{T_j\to T_{j+1}}^{(k)}(Q).
\end{equation}

\smallskip

If $\cG$ is a collection of balls and $\lambda > 0$, we will use the notation $\lambda \cG: = \{ \lambda B: B \in \cG \}$.

\begin{lem}
\label{lem:upper-cover}
For every $j\ge k$, the collection $4\cG_{T_j}^{(k)}$ covers
$\Dhat_{A,T_k}(\psi)$.
\end{lem}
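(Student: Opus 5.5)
We argue by induction on $j\ge k$, proving the slightly stronger statement: for every $\bb\in\Dhat_{A,T_k}(\psi)$ and every $j\ge k$ there is $Q\in\cG_{T_j}^{(k)}$ such that $\bb\in 3Q'$ for some ball $Q'$ of radius $\psi(T_j)$ meeting $Q$. This already gives $\bb\in 4Q$ when $\bb$ lies in the triple of a same-radius ball that meets $Q$. The cleaner form to carry along is: $\bb$ lies in some ball $B_j\in\mathcal P_{T_j}$ (centre $\langle A\q\rangle$, $|\q|\le T_j$, radius $\psi(T_j)$), and $B_j$ meets some $Q\in\cG_{T_j}^{(k)}$. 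Then $\bb\in B_j\subseteq 3Q$, since $B_j$ and $Q$ have the same radius and intersect. Thus $\bb\in 3Q\subseteq 4Q$. The factor $4$ leaves slack that we will use in the inductive step.

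\emph{Base case $j=k$.} Since $\bb\in\Dhat_{A,T_k}(\psi)$, solvability of \eqref{eq:1.2} at $T=T_k$ gives $\q$ with $|\q|\le T_k$ and $\bb\in B(\langle A\q\rangle,\psi(T_k))\in\cP_{T_k}^{(k)}$. By maximality of the Vitali subcollection, this ball meets some $Q\in\cG_{T_k}^{(k)}$.

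\emph{Inductive step.} Suppose $\bb\in B_j$, where $B_j$ meets $Q\in\cG_{T_j}^{(k)}$. Solvability at $T=T_{j+1}$ gives a ball $B_{j+1}=B(\langle A\q'\rangle,\psi(T_{j+1}))$ with $|\q'|\le T_{j+1}$ and $\bb\in B_{j+1}$. We need $B_{j+1}\subseteq 6Q$, so that $B_{j+1}\in\cP_{T_j\to T_{j+1}}^{(k)}(Q)$. Write $Q=B(\bx,\psi(T_j))$. Then
\[
|\langle A\q'\rangle-\bx|\le \psi(T_{j+1})+|\bb-\bx|\le\psi(T_{j+1})+3\psi(T_j).
\]
Every point of $B_{j+1}$ is therefore within $2\psi(T_{j+1})+3\psi(T_j)\le 5\psi(T_j)<6\psi(T_j)$ of $\bx$, using $\psi(T_{j+1})\le\psi(T_j)$. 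Hence $B_{j+1}\subseteq 6Q$. By maximality of $\cG_{T_j\to T_{j+1}}^{(k)}(Q)$, the ball $B_{j+1}$ meets some child $Q^*$ of $Q$, and $Q^*\in\cG_{T_{j+1}}^{(k)}$ by \eqref{eq:3.10}. Thus $\bb\in B_{j+1}\subseteq 3Q^*\subseteq 4Q^*$, which closes the induction.

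The one point needing care is the torus metric. Containment $B_{j+1}\subseteq 6Q$ should be read in $\T^m$ with the quotient supremum metric, and the triangle inequality holds there. Since $\psi(1)<1/12$, all radii are at most $6\psi(T_j)<1/2$, so balls in $\T^m$ are genuine cubes and the triple and six-fold enlargements behave as in $\R^m$. I do not expect a real obstacle here. The content is only that the enlargement constant $6$ in the definition of $\cP_{T_j\to T_{j+1}}^{(k)}(Q)$ dominates the $3+2=5$ accumulated from the previous step.
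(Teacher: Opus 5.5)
Your proof is correct and follows the paper's induction. The paper carries the invariant $B(\langle A\q_j\rangle,\psi(T_j))\subseteq 4B$ for some $B\in\cG_{T_j}^{(k)}$ and gets $B_{j+1}\subseteq 6Q$ from $4+2=6$, while you carry the sharper $B_j\subseteq 3Q$ and get $3+2=5<6$. Drop your opening formulation ($\bb\in 3Q'$ with $Q'$ meeting $Q$), since it only gives $\bb\in 5Q$ rather than $4Q$; the invariant you actually use in the induction is the right one.
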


\begin{proof}
Let $\bb\in\Dhat_{A,T_k}(\psi)$; for every $j\ge k$ there exists
$\q_j\in\Z^n$ such that
\begin{equation*}\label{eq:3.11}
 |\q_j|\le T_j,\qquad
 |A\q_j-\bb|_{\Z^m}\le\psi(T_j), 
\end{equation*}
equivalently, $\bb \in B(\langle A\q_j\rangle,\psi(T_j))$. We claim, inductively, that there is $B\in\cG_{T_j}^{(k)}$ for which
\begin{equation}\label{eq:3.12}
 B(\langle A\q_j\rangle,\psi(T_j))\subset4B.
\end{equation}
For $j=k$, this follows from the defining property of the initial Vitali
subcollection $\cG_{T_k}^{(k)}$.  Suppose \eqref{eq:3.12} holds at level $j$, with parent
$Q$ in place of $B$.  The two balls %
$B(\langle A\q_j\rangle,\psi(T_j))$ and $B(\langle A\q_{j+1}\rangle,\psi(T_{j+1}))$ both contain $\bb$, and $$\operatorname{diam} B(\langle A\q_{j+1}\rangle,\psi(T_{j+1})) = 2\psi(T_{j+1})\le 2\psi(T_j) = \operatorname{diam} Q.$$  Consequently,
\[
 B(\langle A\q_{j+1}\rangle,\psi(T_{j+1}))\subset6Q.
\]
The defining property of the next Vitali subcollection gives a ball $B$ at level $j+1$ for which \eqref{eq:3.12} holds.  In particular, $\bb$ belongs to a member of
$4\cG_{T_j}^{(k)}$ at every level.
\end{proof}

\begin{cor}
\label{cor:cover-dimension}
For every $k\ge1$,
\begin{equation}\label{eq:3.17}
 {\dim_H}
 \bigl(\Dhat_{A,T_k}(\psi)\bigr)
 \le
 {\liminf_{\ell\to\infty}
 \frac{\log\#\cG_{T_\ell}^{(k)}}{-\log\psi(T_\ell)}}.
\end{equation}
Consequently,
\begin{equation}\label{eq:3.18}
 {\dim_H}\bigl(\Dhat_A(\psi)\bigr)
 \le
 \sup_{k\ge1}\liminf_{\ell\to\infty}
 {\frac{\log\#\cG_{T_\ell}^{(k)}}{-\log\psi(T_\ell)}}.
\end{equation}
\end{cor}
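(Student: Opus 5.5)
The first inequality \eqref{eq:3.17} follows from Lemma~\ref{lem:upper-cover} by a direct covering argument. The second, \eqref{eq:3.18}, then follows from countable stability applied to the decomposition \eqref{eq:3.7}.

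Fix $k$ and let $s$ exceed the right-hand side of \eqref{eq:3.17}. Then there is a sequence $\ell_i\to\infty$ along which
\[
 \#\cG_{T_{\ell_i}}^{(k)}\le\psi(T_{\ell_i})^{-s'}
\]
for some $s'$ with $\liminf<s'<s$. By Lemma~\ref{lem:upper-cover}, the balls of $4\cG_{T_{\ell_i}}^{(k)}$ cover $\Dhat_{A,T_k}(\psi)$. Each of these balls has radius $4\psi(T_{\ell_i})$, which tends to $0$. Hence
\[
 \mathcal H^s_{\delta}\bigl(\Dhat_{A,T_k}(\psi)\bigr)
 \le \#\cG_{T_{\ell_i}}^{(k)}\,\bigl(8\psi(T_{\ell_i})\bigr)^s
 \le 8^s\psi(T_{\ell_i})^{s-s'}\to0,
\]
where $\delta=8\psi(T_{\ell_i})$. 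Here I use diameters in the quotient supremum metric on $\T^m$; the diameter of a ball is at most twice its radius. This gives $\mathcal H^s(\Dhat_{A,T_k}(\psi))=0$, so the dimension is at most $s$. Letting $s$ decrease to the liminf gives \eqref{eq:3.17}.

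One point needs care. If $\#\cG_{T_\ell}^{(k)}$ is finite, the argument above applies directly. Finiteness holds because the balls in $\cG^{(k)}_{T_\ell}$ are pairwise disjoint in $\T^m$ and all have the same radius $\psi(T_\ell)>0$. For $j>k$, the balls are disjoint within each parent but need not be disjoint across parents. Even so, each parent has finitely many children, and by induction the collection is finite. If the liminf is infinite, there is nothing to prove.

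For \eqref{eq:3.18}, the sets $\Dhat_{A,N}(\psi)$ increase in $N$. Since $T_k\to\infty$, every $N$ satisfies $N\le T_k$ for some $k$, which gives $\Dhat_{A,N}(\psi)\subseteq\Dhat_{A,T_k}(\psi)$. Therefore
\[
 \Dhat_A(\psi)=\bigcup_k\Dhat_{A,T_k}(\psi).
\]
Countable stability of Hausdorff dimension then turns \eqref{eq:3.17} into \eqref{eq:3.18}.

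There is no real obstacle here. The only points needing care are the finiteness of the cover, the factor relating radius to diameter (a constant that does not affect the exponent), and the passage from arbitrary $N$ to the indices $T_k$.
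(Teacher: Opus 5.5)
Your proof is correct and follows the paper's argument. The paper also covers $\Dhat_{A,T_k}(\psi)$ by the $\#\cG_{T_\ell}^{(k)}$ balls of $4\cG_{T_\ell}^{(k)}$, which have diameter at most $8\psi(T_\ell)$, bounds $\dim_H$ by $\underline{\dim}_B$ and hence by the liminf, and applies countable stability to $\Dhat_A(\psi)=\bigcup_k\Dhat_{A,T_k}(\psi)$; your direct $\mathcal H^s$ estimate along a subsequence is just the unpacked proof of $\dim_H\le\underline{\dim}_B$. (Finiteness of $\cG_{T_\ell}^{(k)}$ is simpler than you make it, since every ball in it is centred at one of the finitely many points $\langle A\q\rangle$ with $|\q|\le T_\ell$. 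Your first claim that all balls of $\cG_{T_\ell}^{(k)}$ are pairwise disjoint holds only for $\ell=k$.)
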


\begin{proof}
By Lemma \ref{lem:upper-cover}, for every $\ell\ge k$ the set
$\Dhat_{A,T_k}(\psi)$ is covered by
$\#\cG_{T_\ell}^{(k)}$ balls of diameter at most $8\psi(T_\ell)$.
Therefore, 
\[
    \dim_H \Dhat_{A,T_k}(\psi) \leq \underline{\dim}_B \Dhat_{A,T_k}(\psi) 
    \leq\liminf_{\ell\to\infty}\frac{\log \#\mathcal{G}_{T_\ell}^{(k)}}{-\log \psi(T_{\ell})},
    \]
where $\underline{\dim}_B$ stands for the lower box dimension.
This proves \eqref{eq:3.17}. The sets $\Dhat_{A,N}(\psi)$ are monotonically increasing, thus by \eqref{eq:3.7} one has 
$$
\Dhat_A(\psi)=\bigcup_{k\ge1}\Dhat_{A,T_k}(\psi).
$$
Countable stability of Hausdorff dimension now gives
\eqref{eq:3.18}.
\end{proof}

\begin{lem}%
\label{lem:upper-transition}
For $j\ge k$ and $Q\in\cG_{T_j}^{(k)}$,
\begin{equation*}\label{eq:3.13}
 \#\cG_{T_j\to T_{j+1}}^{(k)}(Q)
 \le C_d
 \frac{\mathcal N_A(\psi(T_j),T_{j+1})}
 {\mathcal N_A(\psi(T_{j+1}),T_{j+1})}
\end{equation*}
for some constant $C_d$ depending only on dimension.
\end{lem}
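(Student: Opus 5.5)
We bound the number of children of $Q=B(\langle A\q_Q\rangle,\psi(T_j))$, where $|\q_Q|\le T_j$, by a volume (packing) argument in the lifted space $\R^m\times\R^n$, using Lemmas~\ref{lem:translated-count} and~\ref{lem:profile-count}. The children are pairwise disjoint balls $B(\langle A\q\rangle,\psi(T_{j+1}))$ with $|\q|\le T_{j+1}$, each contained in $6Q$.

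\emph{Lifting the children.} For each child with denominator $\q$, choose $\p\in\Z^m$ so that $\bx_\q:=A\q-\p$ is the lift of $\langle A\q\rangle$ lying within $6\psi(T_j)$ of a fixed lift $\bx_0$ of the centre of $Q$. This is possible because $B\subset 6Q$, and since $6\psi(T_j)\le 6\psi(1)<1/2$ the lift is well defined. We thus obtain distinct lattice points $(\bx_\q,\q)\in u_A\Z^d$ lying in the box
\[
 \mathcal R:=(\bx_0+[-6\psi(T_j),6\psi(T_j)]^m)\times[-T_{j+1},T_{j+1}]^n.
\]

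\emph{Counting the whole box.} Choose one point $(\bx,\q_0)\in u_A\Z^d\cap\mathcal R$; if there is none, there are no children and the bound is trivial. Then $\mathcal R$ is contained in the translate
\[
 (\bx,\q_0)+[-12\psi(T_j),12\psi(T_j)]^m\times[-2T_{j+1},2T_{j+1}]^n.
\]
By \eqref{eq:lattice-translation-count}, the number of lattice points in $\mathcal R$ is therefore at most $N_A(12\psi(T_j),2T_{j+1})$. Lemma~\ref{lem:profile-count}, monotonicity, and \eqref{eq:profile-scaling} with $\beta=12$ bound this by $\ll_d\mathcal N_A(\psi(T_j),T_{j+1})$.

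\emph{Disjoint clusters around each child.} Attach to each child $(\bx_\q,\q)$ the cluster of lattice points
\[
 \mathcal C_\q:=u_A\Z^d\cap\Bigl((\bx_\q+[-\tfrac12\psi(T_{j+1}),\tfrac12\psi(T_{j+1})]^m)\times(\q+[-\tfrac12T_{j+1},\tfrac12T_{j+1}]^n)\Bigr).
\]
By \eqref{eq:lattice-translation-count},
\[
 \#\mathcal C_\q=N_A(\tfrac12\psi(T_{j+1}),\tfrac12T_{j+1})\gg_d\mathcal N_A(\psi(T_{j+1}),T_{j+1}),
\]
using Lemma~\ref{lem:profile-count} and \eqref{eq:profile-scaling} with $\beta=1/2$.

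The clusters are pairwise disjoint, and this is the key point. Suppose two clusters shared a point. Then $|\bx_\q-\bx_{\q'}|\le\psi(T_{j+1})$ in $\R^m$, so the centres $\langle A\q\rangle$ and $\langle A\q'\rangle$ would be within $\psi(T_{j+1})<2\psi(T_{j+1})$ of each other in $\T^m$. Two closed balls of radius $\psi(T_{j+1})$ with such centres intersect, contradicting disjointness of the Vitali subcollection. The $\q$-coordinate is irrelevant here, so we may as well take an $\R^m$-factor of radius $\psi(T_{j+1})$; any fixed fraction works.

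Every cluster lies in the enlarged box with $m$-radius $7\psi(T_j)$ and $n$-radius $\tfrac32T_{j+1}$. Repeating the box count above, with constant $14$ in place of $12$ and $3$ in place of $2$, the total number of lattice points in this enlarged box is $\ll_d\mathcal N_A(\psi(T_j),T_{j+1})$.

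\emph{Conclusion.} Summing the cluster sizes over all children and using disjointness,
\[
 \#\cG^{(k)}_{T_j\to T_{j+1}}(Q)\cdot\mathcal N_A(\psi(T_{j+1}),T_{j+1})\ll_d\mathcal N_A(\psi(T_j),T_{j+1}),
\]
which is the claim.

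The main obstacle is ensuring that the lifted clusters are truly disjoint in $\R^d$ and not merely disjoint modulo $\Z^m$. This is handled by the single consistent choice of lifts near $\bx_0$, which is legitimate because all radii are below $1/12$, together with the explicit dependence of the constants in \eqref{eq:profile-scaling} on the scaling factor.
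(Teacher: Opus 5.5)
Your proof is correct and is essentially the paper's own argument. You lift the children consistently near a lift of the centre of $Q$, attach to each child a box of lattice points of size $\gg_d \mathcal N_A(\psi(T_{j+1}),T_{j+1})$, use the disjointness of the Vitali children to make these boxes disjoint, and bound their union by one translated box with $\ll_d \mathcal N_A(\psi(T_j),T_{j+1})$ points. The differences are only cosmetic: the paper takes each cluster to be the full lifted child $B$ times the common window $[-2T_{j+1},2T_{j+1}]^n$, and counts the big box from the parent's own lattice point $(A\q_0-\p_0,\q_0)$. You instead use half-size windows translated to $(\bx_\q,\q)$ and an arbitrary lattice point of the box.
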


\begin{proof}
Write
\[
 Q=B(\langle A\q_0\rangle,\psi(T_j)),
 \qquad |\q_0|\le T_j,
\]
and choose \(\p_0\in\Z^m\). If $\cG_{T_j\to T_{j+1}}^{(k)}(Q) = \varnothing$, the conclusion is immediate; from now on, assume this set to be nonempty.

Consider an arbitrary child
\[
 B=B(\langle A\q\rangle,\psi(T_{j+1}))
 \in\cG_{T_j\to T_{j+1}}^{(k)}(Q)
\]
of $Q$, where \(|\q|\le T_{j+1}\). Since $B \subseteq 6Q$ and $6 \psi(T_j) < \frac{1}{2}$, we can choose \(\p\in\Z^m\) so that
$$
B( A\q - \p,\psi(T_{j+1})) \subset B( A\q_0 - \p_0,6\psi(T_j))
$$
(here we are considering balls in $\R^m$ instead of $\T^m$). For convenience, from now on we use \(Q\) and \(B\) for the corresponding Euclidean balls $B( A\q_0 - \p_0,\psi(T_j))$ and $B( A\q - \p,\psi(T_{j+1}))$, respectively; we will refer to them as lifts of balls in $\T^m$.  Lifts of different balls in $\cG_{T_j\to T_{j+1}}^{(k)}(Q)$ are pairwise disjoint because their
projections to \(\T^m\) are pairwise disjoint.  Therefore
\begin{equation}\label{eq:3.14}
 \bigsqcup_{B\in\cG_{T_j\to T_{j+1}}^{(k)}(Q)}
 \left(
 u_A\Z^d\cap
 \bigl(B\times[-2T_{j+1},2T_{j+1}]^n\bigr)
 \right)
 \subseteq
 u_A\Z^d\cap
 \bigl(6Q\times[-2T_{j+1},2T_{j+1}]^n\bigr).
\end{equation}

We first estimate the number of lattice points on the right-hand side.
Since $|\q_0|\le T_j\le T_{j+1}$,
\eqref{eq:translated-count-window} bounds this number by
$N_A(6\psi(T_j),3T_{j+1})$.
Lemma \ref{lem:profile-count}, followed by monotonicity and
\eqref{eq:profile-scaling}, therefore gives
\begin{equation}\label{eq:3.15}
 \#\left(
 u_A\Z^d\cap
 \bigl(6Q\times[-2T_{j+1},2T_{j+1}]^n\bigr)
 \right)
 \ll_d
 \mathcal N_A(\psi(T_j),T_{j+1}).
\end{equation}

Now fix a child
\[
 B=B(A\q-\p,\psi(T_{j+1})).
\]
Since $|\q|\le T_{j+1}$, the lower bound in \eqref{eq:translated-count-window} and Lemma \ref{lem:profile-count} give
\begin{equation}\label{eq:3.16}
 \#\left(
 u_A\Z^d\cap
 \bigl(B\times[-2T_{j+1},2T_{j+1}]^n\bigr)
 \right)
 \gg_d
 \mathcal N_A(\psi(T_{j+1}),T_{j+1}).
\end{equation}

The sets on the left-hand side of \eqref{eq:3.14} are pairwise
disjoint.  Thus \eqref{eq:3.14}, \eqref{eq:3.15}, and
\eqref{eq:3.16} imply
$$
 \#\cG_{T_j\to T_{j+1}}^{(k)}(Q)\,
 \mathcal N_A(\psi(T_{j+1}),T_{j+1}) \ll_d
 \mathcal N_A(\psi(T_j),T_{j+1}),
$$
completing the proof.
\end{proof}

\begin{proof}[Proof of Proposition \ref{prop:upper}]
For $\ell\ge k$, Lemma \ref{lem:upper-transition} gives
\begin{equation}\label{eq:3.19}
 \#\cG_{T_\ell}^{(k)}
 \le
 \#\cG_{T_k}^{(k)}C_d^{\,\ell-k}
 \prod_{j=k}^{\ell-1}
 \frac{\mathcal N_A(\psi(T_j),T_{j+1})}
 {\mathcal N_A(\psi(T_{j+1}),T_{j+1})}.
\end{equation}
Note that $-\log\psi(T_\ell)\to\infty$,
while for each fixed $k$ the values $\#\cG_{T_k}^{(k)}$ and $ \prod_{j=1}^{k-1}
 \frac{\mathcal N_A(\psi(T_j),T_{j+1})}
 {\mathcal N_A(\psi(T_{j+1}),T_{j+1})}$ are fixed. Substituting \eqref{eq:3.19} into \eqref{eq:3.18} now gives
\eqref{eq:2.4}.
\end{proof}

\subsection{The lower construction: proof of Proposition \ref{prop:lower}.}\label{subs:proof-of-lower}

The collections $\cP$ and $\cG$ introduced below are different from those
in the preceding subsection, but play analogous roles; we therefore reuse
the same notation.

Fix
$k \geq 1$, pick $\q_k \in \Z^n$ with $|\q_k|\le T_k$ and define
\begin{equation*}\label{eq:3.20}
 \cG_{T_k}^{(k)}
 :=
 \{B(\langle A\q_k\rangle,\psi(T_k))\}.
\end{equation*}
Let $j \geq k$ and suppose that
$Q=B(\langle A\q_j\rangle,\psi(T_j))$ belongs to
$\cG_{T_j}^{(k)}$.  Let
\begin{equation}\label{eq:3.21}
\begin{split}
 \cP_{T_j\to T_{j+1}}^{(k)}(Q)
 :=\{&
 B(\langle A\q\rangle,\psi(T_{j+1}))\subset Q:\\
 &\q\in\Z^n,\ |\q|\le T_{j+1}\}.
\end{split}
\end{equation}
This collection is nonempty, since the ball with center
$\langle A\q_j\rangle$ and radius $\psi(T_{j+1})$ belongs to it.
Choose a maximal Vitali subcollection of \eqref{eq:3.21},
denote it by $\cG_{T_j\to T_{j+1}}^{(k)}(Q)$, and define the next
level as in \eqref{eq:3.10}.  Put
\begin{equation*}\label{eq:3.22}
 \mathbf G_j^{(k)}
 :=
\bigcup_{B\in\cG_{T_j}^{(k)}}B,
 \qquad
 \mathbf G_\infty^{(k)}
 :=
 \bigcap_{j\ge k}\mathbf G_j^{(k)}.
\end{equation*}

\noindent The sets $\mathbf G_j^{(k)}$ are compact, nonempty and nested, thus $\mathbf G_\infty^{(k)}$ is compact and nonempty.

\begin{lem}
\label{lem:limit-inclusion}
If \eqref{eq:2.5} holds, then
\begin{equation*}\label{eq:3.28}
 \mathbf G_\infty^{(k)}\subseteq\Dhat_A(\rho\psi).
\end{equation*}
\end{lem}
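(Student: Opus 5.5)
We need to show that every $\bb\in\mathbf G_\infty^{(k)}$ admits, for every $T\ge T_k$, some $\q$ with $|\q|\le T$ and $|A\q-\bb|_{\Z^m}\le\rho\psi(T)$. The plan is to use the nested structure of the construction directly and interpolate between consecutive mesh times using \eqref{eq:2.5}.

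First, fix $\bb\in\mathbf G_\infty^{(k)}$. For each $j\ge k$ we have $\bb\in\mathbf G_j^{(k)}$, so there is a ball $B(\langle A\q_j\rangle,\psi(T_j))\in\cG_{T_j}^{(k)}$ containing $\bb$. By construction (level $k$ by the choice of $\q_k$, later levels by \eqref{eq:3.21}), every ball in $\cG_{T_j}^{(k)}$ has the form $B(\langle A\q\rangle,\psi(T_j))$ with $|\q|\le T_j$. Hence for every $j\ge k$ there is $\q_j\in\Z^n$ with
\[
 |\q_j|\le T_j,\qquad |A\q_j-\bb|_{\Z^m}\le\psi(T_j).
\]
No nestedness of the particular balls containing $\bb$ is needed here; membership in each $\mathbf G_j^{(k)}$ suffices.

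Next, take an arbitrary $T\ge T_k$. Since $T_j\to\infty$ is strictly increasing, there is a unique $j\ge k$ with $T_j\le T<T_{j+1}$. We use the denominator $\q_j$: it satisfies $|\q_j|\le T_j\le T$. For the approximation, monotonicity of $\psi$ and \eqref{eq:2.5} give
\[
 |A\q_j-\bb|_{\Z^m}\le\psi(T_j)\le\rho\psi(T_{j+1})\le\rho\psi(T).
\]
Choosing $\p\in\Z^m$ that realizes the distance, the system \eqref{eq:1.2} with $\rho\psi$ is solvable for every $T\ge T_k$. Therefore $\bb\in\Dhat_{A,T_k}(\rho\psi)\subseteq\Dhat_A(\rho\psi)$.

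There is no real obstacle. The only point needing care is that the denominator bound comes from the left endpoint $T_j$ while the radius comes from the right endpoint $T_{j+1}$; condition \eqref{eq:2.5} is exactly what reconciles the two. One should also check that the torus distance $|\cdot|_{\Z^m}$ matches the ball membership in the quotient supremum metric, which holds by definition.
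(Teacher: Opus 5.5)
Your proposal is correct and follows the paper's proof: for each $j\ge k$ you take the denominator $\q_j$ of a level-$j$ ball containing $\bb$, and on $T_j\le T<T_{j+1}$ you combine $|\q_j|\le T_j\le T$ with $\psi(T_j)\le\rho\psi(T_{j+1})\le\rho\psi(T)$ from \eqref{eq:2.5}. Your remark that only membership of $\bb$ in each $\mathbf G_j^{(k)}$ is used, and not the nesting of the particular balls, also matches the paper's argument.
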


\begin{proof}
If $\bb\in\mathbf G_\infty^{(k)}$, there are $\q_j\in\Z^n$ such that
\[
 \bb\in B(\langle A\q_j\rangle,\psi(T_j)),
 \qquad |\q_j|\le T_j.
\]
For $T_j\le T<T_{j+1}$, condition \eqref{eq:2.5} gives
\[
 |A\q_j-\bb|_{\Z^m}
 \le\psi(T_j)
 \le\rho\psi(T_{j+1})
 \le\rho\psi(T),
 \qquad |\q_j|\le T.
\]
Thus $\bb \in \Dhat_A(\rho\psi)$.
\end{proof}

Due to Lemma \ref{lem:limit-inclusion}, it is enough to produce a lower bound for the {Hausdorff dimension} of the set $\mathbf G_\infty^{(k)}$ instead of the set $\Dhat_A(\rho \psi)$. We will find such a bound using the mass distribution principle. The lemma below is a {consequence} of \cite[p.~975, Mass Distribution Principle]{BV06}:

\begin{lem}
\label{lem:mass-distribution}
Let $(X,\dist)$ be a metric space, let $F\subseteq X$ be a Borel set, let $s>0$,
and let $\nu$ be a Borel probability measure such that
$\operatorname{supp}\nu\subseteq F$.  Suppose that there exist
$C>0$ and $r_0>0$ such that
\[
 \nu\bigl(B(x,r)\bigr)\le Cr^s
\]
for every $x\in X$ and every $0<r\le r_0$.  Then,
\[
 {\dim_H(F)\ge s}.
\]
\end{lem}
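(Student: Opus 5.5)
The statement is the mass distribution principle: if a Borel probability measure $\nu$ is supported in $F$ and satisfies $\nu(B(x,r))\le Cr^s$ for all $x$ and all $0<r\le r_0$, then $\dim_H F\ge s$. The plan is to show that this forces $\mathcal H^s(F)>0$, which gives $\dim_H F\ge s$ directly from the definition of Hausdorff dimension.

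Fix $\delta\in(0,r_0/2]$ and take any countable cover $\{U_i\}$ of $F$ with $\operatorname{diam}U_i\le\delta$; we may discard the empty $U_i$. For each $i$ pick $x_i\in U_i$ and set $r_i:=\operatorname{diam}U_i$. If $r_i>0$, then $U_i\subseteq B(x_i,r_i)$ and $r_i\le r_0$, so the hypothesis gives
\[
 \nu(U_i)\le\nu\bigl(B(x_i,r_i)\bigr)\le C(\operatorname{diam}U_i)^s .
\]
If $r_i=0$, then $U_i=\{x_i\}$. Since $\{x_i\}\subseteq B(x_i,r)$ for every $0<r\le r_0$, we get $\nu(\{x_i\})\le Cr^s$ for all such $r$, hence $\nu(U_i)=0$. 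So the bound holds in this case as well, with both sides zero. Outer measure is used throughout so that no measurability of the $U_i$ is needed.

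Because $\operatorname{supp}\nu\subseteq F$ and $\nu$ is a probability measure, $\nu(F)=1$. Note that $F$ is Borel, and in our application $F$ is compact, so $\nu(F)\ge\nu(\operatorname{supp}\nu)=1$ holds without any separability issue. By countable subadditivity,
\[
 1=\nu(F)\le\sum_i\nu(U_i)\le C\sum_i(\operatorname{diam}U_i)^s .
\]
Taking the infimum over all such covers gives $\mathcal H^s_\delta(F)\ge 1/C$, and letting $\delta\to0$ gives $\mathcal H^s(F)\ge 1/C>0$. Hence $\dim_H F\ge s$.

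The only delicate point is the normalisation of Hausdorff measure. If the definition carries a constant factor, or uses radii in place of diameters, this changes $C$ by a constant and leaves positivity of $\mathcal H^s(F)$ untouched. The same argument is exactly \cite[Mass Distribution Principle]{BV06}, so in the paper one can simply cite it, as the authors do.
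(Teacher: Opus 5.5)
Your argument is correct and is the standard proof of the Mass Distribution Principle. The paper itself gives no argument beyond citing \cite{BV06}; your proof bounds the outer measure of each covering set by $C(\operatorname{diam})^s$ via a closed ball whose radius is the diameter, then uses countable subadditivity to get $\mathcal H^s(F)\ge 1/C>0$.

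One small correction: compactness of $F$ does not justify the step $\nu(F)=1$. What gives $\nu(\operatorname{supp}\nu)=1$ is separability of the ambient space (here $\T^m$), and in the application in Lemma~\ref{lem:tree} one has $\nu(\mathbf G_\infty^{(k)})=1$ directly by construction.
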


We formulate the next lemma with a slightly weaker assumption \eqref{eq:tree-log-ratio} on $\psi$ and sequence $\{ T_j \}$ than required for the purpose of this proof; it will be later used in the proof of Proposition \ref{prop:uniform-exponent-levels}. 
\begin{lem}
\label{lem:tree}
Suppose that
\begin{equation}\label{eq:tree-log-ratio}
 \frac{-\log\psi(T_{j+1})}{-\log\psi(T_j)}\longrightarrow1,
\end{equation}
and put
\[
 b_j^{(k)}:=
 \inf_{Q\in\cG_{T_j}^{(k)}}
 \#\cG_{T_j\to T_{j+1}}^{(k)}(Q).
\]
Then,
\begin{equation}\label{eq:3.23}
 {\dim_H}\bigl(\mathbf G_\infty^{(k)}\bigr)
 {\ge}
 \liminf_{\ell\to\infty}
 {\frac{\displaystyle\sum_{j=k}^{\ell-1}\log b_j^{(k)}}
 {-\log\psi(T_\ell)}}.
\end{equation}
\end{lem}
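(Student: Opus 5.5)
The plan is to apply the mass distribution principle (Lemma~\ref{lem:mass-distribution}) to a measure $\nu$ on $\mathbf G_\infty^{(k)}$ built by equal splitting along the tree. If the right-hand side $s_*$ of \eqref{eq:3.23} is zero there is nothing to prove, so I assume $s_*>0$. I fix $0<s<s_*$.

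\emph{Construction of $\nu$.} Give the unique ball of $\cG_{T_k}^{(k)}$ mass one. Each $Q\in\cG_{T_j}^{(k)}$ then distributes its mass equally among its children $\cG_{T_j\to T_{j+1}}^{(k)}(Q)$. These children are pairwise disjoint and contained in $Q$, so the masses are consistent and Kolmogorov's extension gives a Borel probability measure $\nu$ supported on $\mathbf G_\infty^{(k)}$. By induction, every $B\in\cG_{T_\ell}^{(k)}$ satisfies
\[
 \nu(B)\le\prod_{j=k}^{\ell-1}\bigl(b_j^{(k)}\bigr)^{-1}.
\]
Since $s<s_*$, for all large $\ell$ this is at most $\psi(T_\ell)^{s}$.

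\emph{Estimate for arbitrary balls.} Let $\bx\in\T^m$ and let $r$ be small. Choose $\ell$ with $\psi(T_{\ell+1})\le r<\psi(T_\ell)$. I bound $\nu(B(\bx,r))$ by the number of level-$(\ell+1)$ balls meeting $B(\bx,r)$, multiplied by their common mass bound.

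\emph{Counting level-$(\ell+1)$ balls.} These balls are pairwise disjoint and have radius $\psi(T_{\ell+1})$. Those meeting $B(\bx,r)$ therefore lie in $B(\bx,r+2\psi(T_{\ell+1}))\subseteq B(\bx,3r)$, and a volume comparison in $\T^m$ bounds their number by $\ll (r/\psi(T_{\ell+1}))^m$. This yields
\[
 \nu(B(\bx,r))\ll \Bigl(\frac{r}{\psi(T_{\ell+1})}\Bigr)^m\psi(T_{\ell+1})^{s}.
\]

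\emph{Main obstacle.} The factor $(r/\psi(T_{\ell+1}))^m\le(\psi(T_\ell)/\psi(T_{\ell+1}))^m$ may be large. This is where \eqref{eq:tree-log-ratio} enters: it gives $\psi(T_{\ell+1})\ge\psi(T_\ell)^{1+o(1)}$. Hence the ratio is at most $\psi(T_\ell)^{-o(1)}\le r^{-o(1)}$. I handle it by interpolating between two bounds. If $r\ge\psi(T_{\ell+1})$, I use the counting bound above together with $\psi(T_{\ell+1})^{s}\le r^{s}\cdot(\psi(T_{\ell+1})/r)^{s}$. Alternatively, I use the trivial bound $\nu(B)\le$ the mass of one level-$\ell$ ball, which is at most $\psi(T_\ell)^{s}\le r^{s}\psi(T_\ell)^{-o(1)}$, since only $O(1)$ level-$\ell$ balls meet $B(\bx,r)$ (they are disjoint with radius $>r$).

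The second bound already suffices: it gives $\nu(B(\bx,r))\ll r^{s-\varepsilon}$ for every $\varepsilon>0$ once $r$ is small enough, because $\psi(T_\ell)^{-o(1)}$ is absorbed as $r^{-o(1)}$ using $r\ge\psi(T_{\ell+1})=\psi(T_\ell)^{1+o(1)}$.

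\emph{Conclusion.} Lemma~\ref{lem:mass-distribution} then gives $\dim_H\mathbf G_\infty^{(k)}\ge s-\varepsilon$. Letting $\varepsilon\to0$ and $s\nearrow s_*$ proves \eqref{eq:3.23}.
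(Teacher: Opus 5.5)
Your proof is correct and is essentially the paper's argument. Both use equal mass splitting down the tree, the observation that a ball of radius $r\in[\psi(T_{\ell+1}),\psi(T_\ell))$ meets only $O_m(1)$ of the disjoint level-$\ell$ balls, and \eqref{eq:tree-log-ratio} to bridge the one-level gap between $r$ and $\psi(T_\ell)$.

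The only difference is how that gap is absorbed. The paper re-indexes the liminf, which by \eqref{eq:tree-log-ratio} gives the same value, and so bounds a level-$\ell$ mass directly by $\psi(T_{\ell+1})^{s}\le r^{s}$ with no loss. You instead absorb the gap as an $r^{-o(1)}$ factor and then let $\varepsilon\to0$. The level-$(\ell+1)$ counting step is not needed, as you yourself note.
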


\begin{proof}
Let $\nu$ be the natural probability measure on
$\mathbf G_\infty^{(k)}$ obtained by distributing the mass of each
parent $Q \in \cG_{T_j}^{(k)}$ equally among its children $B \in \cG_{T_j\to T_{j+1}}^{(k)}(Q)$.  Every level-$\ell$ ball then has
measure at most
\[
 \left(\prod_{j=k}^{\ell-1}b_j^{(k)}\right)^{-1}.
\]
Suppose $\psi(T_\ell)\le r<\psi(T_{\ell-1})$.
Pairwise disjointness of level-$\ell-1$ balls shows that a ball of radius $r$ meets at most $O_m(1)$ balls from $\cG_{T_{\ell-1}}^{(k)}$.

By \eqref{eq:tree-log-ratio},
\[
\begin{aligned}
 \liminf_{\ell\to\infty}
 \frac{\sum_{j=k}^{\ell-2}\log b_j^{(k)}}{-\log\psi(T_\ell)}
 &=
 \liminf_{\ell\to\infty}
 \frac{\sum_{j=k}^{\ell-2}\log b_j^{(k)}}{-\log\psi(T_{\ell-1})}\\
 &=
 \liminf_{\ell\to\infty}
 \frac{\sum_{j=k}^{\ell-1}\log b_j^{(k)}}{-\log\psi(T_\ell)}.
\end{aligned}
\]
{If the liminf in \eqref{eq:3.23} is zero, the conclusion
is immediate. Otherwise, fix $s>0$ smaller than this liminf.}
For all sufficiently small $r$ (equivalently, sufficiently large $\ell$),
\[
 \nu(B(\x,r))
 \ll_{m}
 \left(\prod_{j=k}^{\ell-2}b_j^{(k)}\right)^{-1}
 \le {\psi(T_\ell)^s}
 \le {r^s}.
\]
Lemma \ref{lem:mass-distribution} gives
${\dim_H}(\mathbf G_\infty^{(k)})
 {\ge s}$.
Letting $s$ tend to the liminf completes the proof.
\end{proof}

The last ingredient in the proof of Proposition \ref{prop:lower} is a lower bound for $b_j^{(k)}$.

\begin{lem}\label{lem:lower-transition}
For $j\ge k$ and $Q\in\cG_{T_j}^{(k)}$,
\begin{equation}\label{eq:3.24}
 \#\cG_{T_j\to T_{j+1}}^{(k)}(Q)
 \ge c_d
 \frac{
 \mathcal N_A(\psi(T_j)-\psi(T_{j+1}),T_{j+1}-T_j)
 }{
 \mathcal N_A(\psi(T_{j+1}),T_{j+1})
 }
\end{equation}
for some constant $c_d$ depending only on dimension. If \eqref{eq:2.7} holds, then
\begin{equation}\label{eq:3.25}
 \#\cG_{T_j\to T_{j+1}}^{(k)}(Q)
 \ge
 c_d\left(\frac{\eta-1}{\eta}\right)^d
 \frac{\mathcal N_A(\psi(T_j),T_{j+1})}
 {\mathcal N_A(\psi(T_{j+1}),T_{j+1})}.
\end{equation}

Here we do not have to require \eqref{eq:2.5} to hold for any $\rho > 1$.
\end{lem}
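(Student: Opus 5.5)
The plan is a packing argument, dual to the proof of Lemma~\ref{lem:upper-transition}. We count lattice points inside the parent ball and show that few of them can be absorbed by any single selected child. Write $Q=B(\langle A\q_j\rangle,\psi(T_j))$ with $|\q_j|\le T_j$. Lift to $\R^m$ by fixing $\p_j$ and working with the Euclidean ball $B(A\q_j-\p_j,\psi(T_j))$. Put $\delta:=\psi(T_j)-\psi(T_{j+1})$ and $\Delta:=T_{j+1}-T_j$.

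\textbf{Candidate denominators.} Consider the lattice points $(A\q-\p,\q)\in u_A\Z^d$ lying in
\[
 (A\q_j-\p_j+[-\delta,\delta]^m)\times(\q_j+[-\Delta,\Delta]^n).
\]
By \eqref{eq:lattice-translation-count} there are exactly $N_A(\delta,\Delta)$ of them. Each such point satisfies $|\q|\le T_j+\Delta=T_{j+1}$. Its centre lies within $\delta$ of the centre of $Q$, so $B(A\q-\p,\psi(T_{j+1}))\subseteq Q$, because $\delta+\psi(T_{j+1})=\psi(T_j)$. Since $\psi(T_j)<1/2$, the projection to $\T^m$ is injective on $Q$, so the projected ball lies in $\cP^{(k)}_{T_j\to T_{j+1}}(Q)$.

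\textbf{Absorption by one child.} By maximality of the Vitali subcollection, each such candidate ball meets some selected child $B'=B(A\q'-\p',\psi(T_{j+1}))$. We may take the lift of $B'$ inside the lift of $Q$, again using injectivity. The candidate's centre then lies in $A\q'-\p'+[-2\psi(T_{j+1}),2\psi(T_{j+1})]^m$. Its denominator satisfies $|\q|\le T_{j+1}$, and $|\q'|\le T_{j+1}$ as well.

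Apply \eqref{eq:translated-count-window} with window $2T_{j+1}$, after rescaling via Lemma~\ref{lem:profile-properties}. This shows that the number of candidates assigned to one child is at most $N_A(2\psi(T_{j+1}),3T_{j+1})\ll_d\mathcal N_A(\psi(T_{j+1}),T_{j+1})$. Note that distinct candidates are distinct lattice points, so counting lattice points is legitimate here.

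\textbf{Conclusion.} Summing over children gives
\[
 N_A(\delta,\Delta)\ll_d \#\cG^{(k)}_{T_j\to T_{j+1}}(Q)\,\mathcal N_A(\psi(T_{j+1}),T_{j+1}).
\]
Lemma~\ref{lem:profile-count} converts $N_A(\delta,\Delta)$ into $\mathcal N_A(\delta,\Delta)$, which yields \eqref{eq:3.24}.

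\textbf{The bound \eqref{eq:3.25}.} Under \eqref{eq:2.7} we have
\[
 \delta\ge\frac{\eta-1}{\eta}\,\psi(T_j),\qquad \Delta\ge\frac{\eta-1}{\eta}\,T_{j+1}.
\]
Monotonicity and \eqref{eq:profile-scaling} with $\beta=(\eta-1)/\eta<1$ then give $\mathcal N_A(\delta,\Delta)\ge\beta^d\,\mathcal N_A(\psi(T_j),T_{j+1})$, which is \eqref{eq:3.25}.

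The only delicate step is the absorption count. It needs the denominator window to be centred at a point of norm at most $T_{j+1}$, so that \eqref{eq:translated-count-window} applies, and it needs the lifts to stay inside a fundamental domain. Both are ensured by $|\q'|\le T_{j+1}$ and $\psi(1)<1/12$.
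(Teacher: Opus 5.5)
Your proposal is correct and follows essentially the same route as the paper's proof. Both arguments count lattice points in the box $(A\q_j-\p_j+[-\delta,\delta]^m)\times(\q_j+[-\Delta,\Delta]^n)$, assign each resulting candidate ball to a selected child it meets (the paper uses $B'\subseteq 3B$, you use the equivalent centre-within-$2\psi(T_{j+1})$ condition), bound each child's share by \eqref{eq:translated-count-window} together with Lemmas~\ref{lem:profile-count} and~\ref{lem:profile-properties}, and deduce \eqref{eq:3.25} by the same rescaling with $\beta=(\eta-1)/\eta$.
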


\begin{proof}
Write
\[
 Q=B(\langle A\q_0\rangle,\psi(T_j)),
 \qquad |\q_0|\le T_j.
\]
By maximality of $\cG_{T_j\to T_{j+1}}^{(k)}(Q)$, every ball
\[
 B'\in\cP_{T_j\to T_{j+1}}^{(k)}(Q)
\]
meets some ball
\[
 B\in\cG_{T_j\to T_{j+1}}^{(k)}(Q).
\]
Since $B$ and $B'$ have the same radius, it follows that
\[
 B'\subseteq 3B.
\]

Choose $\p_0\in\Z^m$.  From now on we use the notation $Q$ for
the Euclidean lift
\[
 Q=B(A\q_0-\p_0,\psi(T_j)) \subseteq \R^m.
\]

Since $\psi(T_j) < \frac{1}{2}$, the projection map $\R^m \to \T^m$ is injective on $Q$. Therefore, every element of $\cP_{T_j\to T_{j+1}}^{(k)}(Q)$ has a unique lift to $\R^m$ which is contained in $Q$. Moreover, we can consider balls from $\cP_{T_j\to T_{j+1}}^{(k)}(Q)$ and $\cG_{T_j\to T_{j+1}}^{(k)}(Q)$ as subsets of $\R^m$; injectivity of the projection map on $Q$ yields that the triple-covering property lifts from $\T^m$ to $\R^m$.

Consider a lattice point
\begin{equation}\label{eq:lattice-pt-general-Q}
 \begin{pmatrix}
 A\q-\p\\
 \q
 \end{pmatrix}
 \in
 u_A\Z^d\cap
 \left(
 B(A\q_0-\p_0,\psi(T_j)-\psi(T_{j+1}))
 \times B(\q_0,T_{j+1}-T_j)
 \right).
\end{equation}
Then
\[
 |\q|
 \le |\q-\q_0|+|\q_0|
 \le T_{j+1},
\]
and
\[
 B' := B(A\q-\p,\psi(T_{j+1}))\subseteq Q,
\]
that is, $B' \in \cP_{T_j\to T_{j+1}}^{(k)}(Q)$. The triple-covering property now implies that there exists $B \in \cG_{T_j\to T_{j+1}}^{(k)}(Q)$ such that $B' \subseteq 3B$. Thus
\begin{equation}\label{eq:3.24.1}
\begin{split}
 &u_A\Z^d\cap
 \left(
 B(A\q_0-\p_0,\psi(T_j)-\psi(T_{j+1}))
 \times B(\q_0,T_{j+1}-T_j)
 \right)\\
 &\quad\subseteq
 \bigcup_{B\in\cG_{T_j\to T_{j+1}}^{(k)}(Q)}
 \left(
 u_A\Z^d\cap
 \bigl(3B\times[-T_{j+1},T_{j+1}]^n\bigr)
 \right).
\end{split}
\end{equation}

We first estimate the number of lattice points on the left-hand side.
Equation \eqref{eq:lattice-translation-count} and Lemma \ref{lem:profile-count} yield that
\begin{equation}\label{eq:3.26}
\begin{split}
 &\#\left(
 u_A\Z^d\cap
 \left(
 B(A\q_0-\p_0,\psi(T_j)-\psi(T_{j+1}))
 \times B(\q_0,T_{j+1}-T_j)
 \right)
 \right)\\
 &\quad=
 N_A\bigl(
 \psi(T_j)-\psi(T_{j+1}),T_{j+1}-T_j
 \bigr)\\
 &\quad\gg_d
 \mathcal N_A\bigl(
 \psi(T_j)-\psi(T_{j+1}),T_{j+1}-T_j
 \bigr).
\end{split}
\end{equation}

Now fix $B \in \cG_{T_j\to T_{j+1}}^{(k)}(Q)$: one has
\[
 B=B(A\q-\p,\psi(T_{j+1})) \quad \text{for some} \quad
 \p\in\Z^m,\quad \q\in\Z^n,\quad |\q|\le T_{j+1}.
\]
The upper bound in
\eqref{eq:translated-count-window} shows that
\[
\begin{aligned}
\#\left(u_A\mathbb Z^d\cap
 \bigl(3B\times[-T_{j+1},T_{j+1}]^n\bigr)\right)
&\le
\#\left(u_A\mathbb Z^d\cap
 \bigl(3B\times[-2T_{j+1},2T_{j+1}]^n\bigr)\right)\\
&\le
N_A(3\psi(T_{j+1}),3T_{j+1}).
\end{aligned}
\]
Lemma \ref{lem:profile-count} and \eqref{eq:profile-scaling} therefore give
\begin{equation}\label{eq:3.24.2}
 \#\left(
 u_A\Z^d\cap
 \bigl(3B\times[-T_{j+1},T_{j+1}]^n\bigr)
 \right)
 \ll_d
 \mathcal N_A(\psi(T_{j+1}),T_{j+1}).
\end{equation}

The union in \eqref{eq:3.24.1} need not be disjoint, but its cardinality is at most the sum of the cardinalities of its members.  Thus \eqref{eq:3.26}
and \eqref{eq:3.24.2} imply that
\[
\begin{split}
 &\mathcal N_A\bigl(
 \psi(T_j)-\psi(T_{j+1}),T_{j+1}-T_j
 \bigr)\\
 &\quad\ll_d
 \#\cG_{T_j\to T_{j+1}}^{(k)}(Q)\,
 \mathcal N_A(\psi(T_{j+1}),T_{j+1}),
\end{split}
\]
which proves \eqref{eq:3.24}.

Now suppose that \eqref{eq:2.7} holds and put
$\beta=(\eta-1)/\eta$.  Then
\begin{equation*}\label{eq:3.27}
 T_{j+1}-T_j\ge\beta T_{j+1},
 \qquad
 \psi(T_j)-\psi(T_{j+1})\ge\beta\psi(T_j).
\end{equation*}
By Lemma \ref{lem:profile-properties},
\[
\begin{split}
  \mathcal N_A(&\psi(T_j)-\psi(T_{j+1}),T_{j+1}-T_j)\\
  &\ge
  \mathcal N_A(\beta\psi(T_j),\beta T_{j+1})\\
  &\ge\beta^d \mathcal N_A(\psi(T_j),T_{j+1}).
\end{split}
\]
Combining this with \eqref{eq:3.24} proves
\eqref{eq:3.25}.
\end{proof}

\begin{proof}[Proof of Proposition \ref{prop:lower}]
{Fix $k\ge1$.}
For any $j \geq k$ and any $Q \in \cG_{T_j}^{(k)}$, the set $\cG_{T_j\to T_{j+1}}^{(k)}(Q)$ is nonempty.  Thus Lemma
\ref{lem:lower-transition} implies that
\[
 b_j^{(k)}\ge
 \max\left\{1,
 c_d\frac{
 \mathcal N_A(\psi(T_j)-\psi(T_{j+1}),T_{j+1}-T_j)}
 {\mathcal N_A(\psi(T_{j+1}),T_{j+1})}
 \right\}.
\]
Lemma \ref{lem:tree} gives the corresponding lower bound for
${\dim_H}(\mathbf G_\infty^{(k)})$.  Lemma
\ref{lem:limit-inclusion} and monotonicity of Hausdorff {dimension} give
\[
 {\dim_H}\bigl(\Dhat_A(\rho\psi)\bigr)
 \ge {\dim_H}\bigl(\mathbf G_\infty^{(k)}\bigr),
\]
{Since $-\log\psi(T_\ell)\to\infty$, adding the first
$k-1$ logarithmic summands does not change the liminf. Thus every fixed
$k$ gives the same lower bound, proving} \eqref{eq:2.6}.  Under \eqref{eq:2.7}, use
\eqref{eq:3.25} instead to obtain \eqref{eq:2.8}.
\end{proof}

\subsection{Proof of the first assertion of Remark \ref{rem:lower-Q}}

The current proof of Proposition \ref{prop:lower} works, with only minor local adjustments needed. Namely:

    \begin{itemize}
        \item Replace $N_A$ by $N_{A, \mathcal Q}^{\p_0, \q_0}$ defined via
        \begin{equation*}\label{eq:3.1-new}
 \begin{aligned}N_{A, \mathcal{Q}}^{\p_0, \q_0}(R,Q):= &
 \#\left\{(\p,\q)\in\Z^m\times \mathcal Q: \right. \\
 & \left. |A(\q-\q_0)-(\p - \p_0)|\le R,\ |\q - \q_0|\le Q\right\}.
 \end{aligned}
\end{equation*}

If $\q_0 \in \mathcal Q$, then, due to \eqref{eq:central-extension}, 
$$\frac{1}{2} N_{A}(R,Q) \leq N_{A, \mathcal{Q}}^{\p_0, \q_0}(R,Q) \leq N_{A}(R,Q).
$$
In particular, Lemma \ref{lem:profile-count} holds with $N_A$ replaced by $N_{A, \mathcal Q}^{\p_0, \q_0}$ (with a different implied constant). 

\item In Subsection \ref{subs:proof-of-lower}, define $\cG_{T_k}^{(k)}$ using $\q_k \in \mathcal Q$. To be able to choose such $\q_k$, we need to choose $k$ large enough so that there exist elements of $\mathcal Q$ with norm not exceeding $T_k$. {Omitting the first $k-1$ logarithmic summands does not change the dimension bounds.} Analogously, define $ \cP_{T_j\to T_{j+1}}^{(k)}(Q)$ with restriction $\q \in \mathcal Q$. 

\item Lemma \ref{lem:limit-inclusion} holds, line to line together with its proof, with $\Dhat_A(\rho\psi)$ replaced by $\Dhat_{A,\mathcal Q}(\rho\psi)$.

\item Lemma \ref{lem:tree} and its proof hold unchanged.

\item Lemma \ref{lem:lower-transition} holds, and its proof is subject to the following local changes:
\begin{itemize}
    \item In \eqref{eq:lattice-pt-general-Q}, \eqref{eq:3.24.1} and \eqref{eq:3.26}, the lattice $u_A \Z^d$ should be replaced by the set
    $$
    \left\{ u_A \begin{pmatrix}
 \p\\
 \q
 \end{pmatrix}: \p \in \Z^m, \q \in \mathcal Q \right\}.
    $$
    \item To obtain the modified version of \eqref{eq:3.26}, one should directly use the modified version of Lemma \ref{lem:profile-count} for $N_{A, \mathcal Q}^{\p_0, \q_0}$ described above.
\end{itemize}

\item With the modifications described above, the proof of Proposition \ref{prop:lower} works as currently written.
    \end{itemize}

\section{Proof of the main theorem}
\label{sec:pressure}

Throughout the section, $\{ T_j \}$ is the equal-$\psi$ mesh defined in \eqref{eq:1.11}.

\subsection{Uniform estimates for Hausdorff dimension}

In this section we show how lower quasimultiplicativity allows us to express the {dimension} bounds obtained in Section \ref{sec:CD} through $\mathcal P_{A,\psi}$.

\begin{thm}
\label{thm:uniform-pressure}
Suppose that the function $\psi$ is lower quasimultiplicative.  There exist $L_0>0$ and
numbers $\varepsilon_L\ge0$,
\begin{equation}\label{eq:4.5}
 \varepsilon_L=O(L^{-1})\qquad \text{as} \ L\to\infty,
\end{equation}
such that for every $L\ge L_0$ and $c>0$ one has
\begin{equation}\label{eq:4.6}
 \mathcal P_{A,\psi}(L,c\e^{-L})-\varepsilon_L
 \le
 \dim_H\Dhat_A(c\psi)
 \le
 \mathcal P_{A,\psi}(L,c)+\varepsilon_L.
\end{equation}
The error $\varepsilon_L$ is uniform in $c$ and $A$.
\end{thm}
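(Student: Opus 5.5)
The plan is to apply Propositions~\ref{prop:upper} and~\ref{prop:lower} to the function $c\psi$ (or a rescaled version of it), using sequences adapted to the equal-$\psi$ mesh $T_j=\psi^{-1}(\e^{-jL})$. Both propositions are stated for a general strictly decreasing function, so they may be applied with $c\psi$ in place of $\psi$. First I restrict to $j$ large enough that $c\psi(T_j)<1/12$. Discarding finitely many indices does not change the liminfs.

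\emph{Upper bound.} Apply Proposition~\ref{prop:upper} to $c\psi$ with the mesh $\{T_j\}$. Then $-\log(c\psi(T_\ell))=\ell L-\log c$, so the denominator is $\ell L+O_c(1)$. The numerator is
\[
 (\ell-1)\log C_d+\sum_{j}\log\frac{\mathcal N_A(c\psi(T_j),T_{j+1})}{\mathcal N_A(c\psi(T_{j+1}),T_{j+1})}.
\]
Dividing by $\ell L$ and taking the liminf gives $\dim_H\Dhat_A(c\psi)\le\mathcal P_{A,\psi}(L,c)+\log C_d/L$. Each ratio is at least $1$ by the monotonicity in Lemma~\ref{lem:profile-properties}, and the finitely many omitted initial terms do not affect the liminf. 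So $\varepsilon_L\ge \log C_d/L$ suffices here, and this bound is uniform in $c$ and $A$.

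\emph{Lower bound.} I apply Proposition~\ref{prop:lower} to the function $\phi:=c\e^{-L}\psi$, with $\rho=\e^{L}$ and the same mesh. Condition~\eqref{eq:2.5} holds because $\phi(T_j)=\e^{L}\phi(T_{j+1})$, which gives $\Dhat_A(\rho\phi)=\Dhat_A(c\psi)$. For~\eqref{eq:2.7}, the second inequality holds with $\eta=\e^L$. The first requires $T_{j+1}\ge\eta T_j$, and this is where lower quasimultiplicativity enters. If $T_{j+1}<RT_j$ with $R\ge R_0$, then~\eqref{eq:lower-qm-comparison} gives $\e^{-L}\psi(T_j)=\psi(T_{j+1})\ge R^{-\tau}\psi(T_j)$, so $R\ge \e^{L/\tau}$. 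Hence $T_{j+1}\ge \min\{\e^{L/\tau},R_0\}T_j$ for large $j$. Taking $L_0$ with $\e^{L_0/\tau}\ge R_0$, I can use $\eta=\min\{R_0,\e^{L}\}>1$, a fixed constant. The point is that $\eta$ is bounded below independently of $L$, so the factor $c_d((\eta-1)/\eta)^d$ is a constant $c'>0$ not depending on $L$ or $c$.

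Then~\eqref{eq:2.8} gives
\[
 \dim_H\Dhat_A(c\psi)\ge\liminf_\ell\frac1{\ell L+L-\log c}\sum_{j<\ell}\log\max\Bigl\{1,c'\frac{\mathcal N_A(c\e^{-L}\psi(T_j),T_{j+1})}{\mathcal N_A(c\e^{-L}\psi(T_{j+1}),T_{j+1})}\Bigr\}.
\]
Using $\log\max\{1,c'x\}\ge\log x-\log(1/c')$ for $x\ge1$, this is at least $\mathcal P_{A,\psi}(L,c\e^{-L})-\log(1/c')/L$. So I take $\varepsilon_L=\max\{\log C_d,\log(1/c')\}/L$.

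\emph{Main obstacle.} The delicate step is the uniformity of $\varepsilon_L$. One has to check that the liminf normalization by $\ell L-\log c+O(L)$ versus $NL$ introduces no $c$-dependent error. It does not, since $\log c$ is fixed and the liminf is over $\ell\to\infty$. One also has to check that the constant $\eta$ from lower quasimultiplicativity depends only on $\psi$, which it does. The constants then depend only on $d$, $\psi$, and the lower quasimultiplicativity data, and not on $A$ or $c$.
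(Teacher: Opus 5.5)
Your proposal is correct and follows the paper's proof essentially step for step. You apply Proposition~\ref{prop:upper} to $c\psi$ on the equal-$\psi$ mesh. You apply estimate \eqref{eq:2.8} of Proposition~\ref{prop:lower} with $\rho=\e^L$ and an $L$-independent $\eta$ (the paper takes $\eta=R_0$), obtained from $T_{j+1}\ge\e^{L/\tau}T_j\ge R_0T_j$. Applying it directly to $c\e^{-L}\psi$ is the same as the paper's final substitution $c\mapsto c\e^{-L}$.

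The only slip is the intermediate claim $T_{j+1}\ge\min\{\e^{L/\tau},R_0\}T_j$. It is unjustified when $\e^{L/\tau}<R_0$, because lower quasimultiplicativity gives nothing for $R<R_0$. Your choice of $L_0$ excludes that case, so the argument is unaffected.
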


We will need the following technical lemma.

\begin{lem}
\label{lem:vertical}
There is a constant $K_d\ge1$, depending only on $d$, such that for all $Q,R_1,R_2>0$ with
$R_1\ge R_2$ one has
\begin{equation}\label{eq:4.2}
 1\le\frac{\mathcal N_A(R_1,Q)}{\mathcal N_A(R_2,Q)}
 \le K_d\left(\frac{R_1}{R_2}\right)^m.
\end{equation}
Consequently, for every $L>0$, $c>0$, and $j\ge0$,
\begin{equation}\label{eq:4.3}
 0\le
 \log\frac{\mathcal N_A(c\psi(T_j),T_{j+1})}
           {\mathcal N_A(c\psi(T_{j+1}),T_{j+1})}
 \le mL+\log K_d.
\end{equation}
\end{lem}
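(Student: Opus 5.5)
The plan is to work directly from the rectangular description \eqref{eq:rectangular-profile}:
\[
 \mathcal N_A(R,Q)=\prod_{i=1}^d\max\{\mu_i(R,Q)^{-1},1\},
 \qquad
 \mu_i(R,Q):=\lambda_i\bigl([-R,R]^m\times[-Q,Q]^n,u_A\Z^d\bigr).
\]
The lower bound $1\le \mathcal N_A(R_1,Q)/\mathcal N_A(R_2,Q)$ is immediate from the monotonicity in Lemma~\ref{lem:profile-properties}.

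For the upper bound, set $\beta=R_1/R_2\ge1$. The box $E_1=[-R_1,R_1]^m\times[-Q,Q]^n$ is contained in $\beta E_2$, where $E_2=[-R_2,R_2]^m\times[-Q,Q]^n$. This only gives $\mu_i(E_1)\ge\beta^{-1}\mu_i(E_2)$, hence a ratio bound $\beta^d$, which is too weak since only $m$ coordinates are stretched. So I would argue through lattice-point counts instead. By Lemma~\ref{lem:profile-count} it suffices to show
\[
 N_A(R_1,Q)\ll_d \beta^m N_A(R_2,Q).
\]
To do this, cover $[-R_1,R_1]^m$ by at most $(2\lceil\beta\rceil)^m\ll\beta^m$ translates of $[-R_2/2,R_2/2]^m$, keeping the $\q$-window $[-Q,Q]^n$ fixed. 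Then bound the lattice points of $u_A\Z^d$ in each piece $(\bx+[-R_2/2,R_2/2]^m)\times[-Q,Q]^n$. If such a piece contains a lattice point $(\bx',\q')$, then differences of its lattice points lie in $u_A\Z^d\cap([-R_2,R_2]^m\times[-2Q,2Q]^n)$. Hence the piece contains at most $N_A(R_2,2Q)$ points. Lemma~\ref{lem:profile-properties} together with Lemma~\ref{lem:profile-count} gives $N_A(R_2,2Q)\ll_d N_A(R_2/2\cdot 2,2Q)$, which is $\ll_d\mathcal N_A(R_2,Q)$ after using scaling \eqref{eq:profile-scaling} with $\beta=2$ (monotonicity in $R$ handles the $R$ mismatch). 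Summing over the $\ll\beta^m$ pieces and comparing $N_A(R_1,Q)\asymp_d\mathcal N_A(R_1,Q)$ gives \eqref{eq:4.2}, with $K_d$ collecting the constants from Lemma~\ref{lem:profile-count}, the doubling, and the covering number $2^m\lceil\beta\rceil^m\le 4^m\beta^m$.

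The only care needed is the difference trick: the map $v\mapsto v-v_0$ injects the lattice points of a piece into a centered box, which is the argument behind Lemma~\ref{lem:translated-count}. I expect this step, rather than any successive-minima manipulation, to be the crux, because it is what yields exponent $m$ instead of $d$.

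Finally, \eqref{eq:4.3} follows by applying \eqref{eq:4.2} with $Q=T_{j+1}$, $R_1=c\psi(T_j)$ and $R_2=c\psi(T_{j+1})$. By \eqref{eq:1.11}, $R_1/R_2=\e^{L}$, so the logarithm of the ratio lies in $[0,mL+\log K_d]$.
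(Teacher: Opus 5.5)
Your argument is correct and is essentially the paper's proof. Both cover $[-R_1,R_1]^m$ by $\ll(R_1/R_2)^m$ cubes of side $R_2$, bound the lattice points above each cube by $N_A(R_2,2Q)$ by translating by one lattice point in it (as in Lemma~\ref{lem:translated-count}), and return to $\mathcal N_A(R_2,Q)$ via Lemma~\ref{lem:profile-count}, monotonicity and \eqref{eq:profile-scaling}. The one blemish is the expression $N_A(R_2/2\cdot 2,2Q)$; the intended chain is simply $N_A(R_2,2Q)\asymp_d\mathcal N_A(R_2,2Q)\le\mathcal N_A(2R_2,2Q)\le 2^d\mathcal N_A(R_2,Q)$.
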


\begin{proof}

The first inequality in \eqref{eq:4.2} follows from the monotonicity in Lemma \ref{lem:profile-properties}.

For the second inequality, cover $[-R_1,R_1]^m$ by $\ll\left(\frac{R_1}{R_2}\right)^m$ axis-parallel cubes of side length $R_2$.  Let $\mathcal K$ be one of these cubes, and suppose $\mathcal K \times [-Q, Q]^n$ contains a point $ \bv_0$ of $u_A \Z^d$. 
Since
\[
 \mathcal K\times[-Q,Q]^n
 \subseteq \bv_0+\bigl([-R_2,R_2]^m\times[-2Q,2Q]^n\bigr),
\]
Lemma \ref{lem:translated-count} gives
\[
 \#\left(u_A\Z^d\cap\bigl(\mathcal K\times[-Q,Q]^n\bigr)\right)
 \le N_A(R_2,2Q).
\]
Summing over these cubes and applying Lemma \ref{lem:profile-count},
monotonicity, and \eqref{eq:profile-scaling}, we obtain
\[
 \mathcal N_A(R_1,Q)
 \ll_d\left(\frac{R_1}{R_2}\right)^m N_A(R_2,2Q)
 \ll_d\left(\frac{R_1}{R_2}\right)^m\mathcal N_A(R_2,Q).
\]
This proves \eqref{eq:4.2}.  Finally, \eqref{eq:4.3} follows by taking $Q=T_{j+1}$, $R_1=c\psi(T_j)$, and $R_2=c\psi(T_{j+1})$, since \eqref{eq:1.11} gives $R_1/R_2=\e^L$.
\end{proof}

\smallskip

For later use, observe that for every $L>0$, $c>0$, and every fixed integer $k\ge0$,
\begin{equation}\label{eq:1.13.1}
 \mathcal P_{A,\psi}(L,c)
 =
 \liminf_{\ell\to\infty}
 \frac1{\ell L-\log c}
 \log\prod_{j=k}^{\ell-1}
 \frac{\mathcal N_A(c\psi(T_j),T_{j+1})}
      {\mathcal N_A(c\psi(T_{j+1}),T_{j+1})}.
\end{equation}
Indeed, deleting finitely many terms does not change the liminf in \eqref{eq:4.19}, and
$\lim\limits_{\ell\to\infty}
\frac{\ell L-\log c}{\ell L}=1$.

\smallskip

Now we are ready to prove Theorem \ref{thm:uniform-pressure}.

\begin{proof}[Proof of Theorem \ref{thm:uniform-pressure}.]
For $c > 0$, put
\begin{equation}\label{eq:R_j(c)}
 \mathcal R_j(c):=
 \frac{\mathcal N_A(c\psi(T_j),T_{j+1})}
      {\mathcal N_A(c\psi(T_{j+1}),T_{j+1})}.
\end{equation}

Fix $\tau$ and $R_0$ as in Definition \ref{def:regular}.
Choose $L_0>\max\{\tau\log R_0,\log R_0\}$.
Fix $L\ge L_0$ and $c>0$. We apply both Propositions \ref{prop:upper} and \ref{prop:lower} to
$c\psi$. By \eqref{eq:1.3} and \eqref{eq:1.11}, for all sufficiently
large $j$ one has
\[
 \psi(\e^{L/\tau}T_j)\ge\e^{-L}\psi(T_j)=\psi(T_{j+1}).
\]
Strict monotonicity of $\psi$ therefore gives
\begin{equation}\label{eq:4.13}
 T_{j+1}\ge \e^{L/\tau}T_j\ge R_0T_j.
\end{equation}
Moreover, \eqref{eq:1.11} gives
\begin{equation}\label{eq:4.11}
 \frac{c\psi(T_j)}{{c\psi(T_{j+1})}}
 =\e^L\ge R_0.
\end{equation}
Thus \eqref{eq:2.5} holds with $\rho=\e^L$, and
\eqref{eq:2.7} holds with the fixed constant
\begin{equation}\label{eq:4.14}
 \eta:=R_0.
\end{equation}
We omit finitely many initial mesh points and modify
$c\psi$ on the resulting bounded initial interval to satisfy the
standing smallness assumption in Section \ref{sec:CD}. This changes
neither $\Dhat_A(c\psi)$ and $\Dhat_A(\e^Lc\psi)$ nor the tail limits below.

First, we obtain the upper estimate. Using \eqref{eq:R_j(c)} and \eqref{eq:1.11}, Proposition \ref{prop:upper} {and \eqref{eq:1.13.1}} give
\begin{equation}\label{eq:4.9}
\begin{split}
 \dim_H\Dhat_A({c\psi})
 &\le
 {\liminf_{\ell\to\infty}
 \frac{(\ell-1)\log C_d+
       \sum_{j=1}^{\ell-1}\log\mathcal R_j(c)}
      {\ell L-\log c}}\\
 &{=}
 \mathcal P_{A,\psi}(L,{c})+\frac{\log C_d}{L}.
\end{split}
\end{equation}

For the lower estimate, apply estimate \eqref{eq:2.8} of Proposition
\ref{prop:lower} to $c\psi$. Put
\[
 c_*:=c_d\left(\frac{\eta-1}{\eta}\right)^d.
\]
Proposition \ref{prop:lower} gives
\[
 \dim_H\Dhat_A(\e^Lc\psi)
 \ge
 \liminf_{\ell\to\infty}
 \frac{\sum_{j=1}^{\ell-1}\log\max\{1,c_*\mathcal R_j(c)\}}
      {\ell L-\log c}
 \ge \mathcal P_{A,\psi}(L,c)+\frac{\log c_*}{L}.
\]
Here the last inequality follows from
$\log\max\{1,c_*\mathcal R_j(c)\}\ge\log c_*+\log\mathcal R_j(c)$
and \eqref{eq:1.13.1}. Replacing $c$ by $c\e^{-L}$ in this lower
bound and combining it with \eqref{eq:4.9} proves \eqref{eq:4.6} with
\[
 \varepsilon_L=
 \frac1L\max\left\{\log C_d,
 {0,-\log c_*}\right\}.
\]
\end{proof}

For {$0<c_1<c_2$}, one has
\[
 \Dhat_A({c_1}\psi)\subseteq\Dhat_A({c_2}\psi).
\]
Consequently, {$c\mapsto\dim_H\Dhat_A(c\psi)$} is nondecreasing, and
\begin{equation}\label{eq:4.17}
\begin{split}
 {\lim_{c\to0}\dim_H\Dhat_A(c\psi)}
 &={\inf_{c>0}\dim_H\Dhat_A(c\psi)}
 =\inf_{c>0}\dim_H\Dhat_A(\widetilde\psi_c),\\
 {\lim_{c\to\infty}\dim_H\Dhat_A(c\psi)}
 &={\sup_{c>0}\dim_H\Dhat_A(c\psi)}
 =\sup_{c>0}\dim_H\Dhat_A(\widetilde\psi_c).
\end{split}
\end{equation}
The last equalities follow from Lemma \ref{lem:rescaling-comparison} under the assumption of lower quasimultiplicativity of $\psi$.
Indeed, \eqref{eq:rescaling-comparison} places the dimension of every
$\Dhat_A(\widetilde\psi_c)$ between
{$\inf_{c>0}\dim_H\Dhat_A(c\psi)$} and
{$\sup_{c>0}\dim_H\Dhat_A(c\psi)$}, while
\eqref{eq:elementary-rescaling-comparison}, first for $0<c\le1$ and
then for $c\ge1$, gives the reverse inequalities after taking the
infimum and the supremum, respectively.

Taking the infimum and the supremum over {$c>0$} in \eqref{eq:4.6}, and
observing that {$c\e^{-L}$ ranges over $\R_+$ together with $c$}, gives the
following comparison.  Its final assertion follows from \eqref{eq:4.5}
and \eqref{eq:4.17}.

\begin{cor}\label{cor:endpoints}
For every $L\ge L_0$, where $L_0$ is as in Theorem \ref{thm:uniform-pressure},
\begin{equation*}\label{eq:4.21}
\begin{split}
 \left|
 {\inf_{c>0}\mathcal P_{A,\psi}(L,c)}
 -{\inf_{c>0}\dim_H\Dhat_A(c\psi)}
 \right|&\le\varepsilon_L,\\
 \left|
 {\sup_{c>0}\mathcal P_{A,\psi}(L,c)}
 -{\sup_{c>0}\dim_H\Dhat_A(c\psi)}
 \right|&\le\varepsilon_L.
\end{split}
\end{equation*}
Consequently, the limits $P_-(A,\psi)$ and $P_+(A,\psi)$ introduced
in Theorem \ref{thm:main} exist, and
\begin{equation}\label{eq:4.22}
\begin{split}
 P_-(A,\psi)
 &={\inf_{c>0}\dim_H\Dhat_A(c\psi)}
 =\inf_{c>0}\dim_H\Dhat_A(\widetilde\psi_c),\\
 P_+(A,\psi)
 &={\sup_{c>0}\dim_H\Dhat_A(c\psi)}
 =\sup_{c>0}\dim_H\Dhat_A(\widetilde\psi_c).
\end{split}
\end{equation}
\end{cor}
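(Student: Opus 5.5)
The corollary follows from Theorem~\ref{thm:uniform-pressure} by taking the infimum and the supremum over $c$, together with monotonicity in $c$. Fix $L\ge L_0$. For every $c>0$, inequality \eqref{eq:4.6} gives
\[
\dim_H\Dhat_A(c\psi)\le \mathcal P_{A,\psi}(L,c)+\varepsilon_L .
\]
Taking the infimum over $c$ gives $\inf_c\dim_H\Dhat_A(c\psi)\le\inf_c\mathcal P_{A,\psi}(L,c)+\varepsilon_L$. For the reverse direction, apply the left inequality of \eqref{eq:4.6} with $c$ replaced by $c\e^{L}$:
\[
\mathcal P_{A,\psi}(L,c)-\varepsilon_L\le\dim_H\Dhat_A(c\e^{L}\psi).
\]
Since $c\mapsto c\e^{L}$ is a bijection of $\R_+$, taking the infimum over $c$ yields $\inf_c\mathcal P_{A,\psi}(L,c)-\varepsilon_L\le\inf_c\dim_H\Dhat_A(c\psi)$. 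The supremum statement is identical: the upper bound of \eqref{eq:4.6} gives $\sup\dim\le\sup\mathcal P+\varepsilon_L$, and the reparametrized lower bound gives $\sup\mathcal P-\varepsilon_L\le\sup\dim$. The only point needing care is that $\varepsilon_L$ is uniform in $c$, which Theorem~\ref{thm:uniform-pressure} guarantees. These are inequalities in $[0,m]$, because each summand in \eqref{eq:4.19} lies in $[0,mL+\log K_d]$ by Lemma~\ref{lem:vertical}, so $\mathcal P_{A,\psi}(L,c)\in[0,m+\log K_d/L]$ and all quantities are finite.

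For the limits, set $D_-:=\inf_{c>0}\dim_H\Dhat_A(c\psi)$, which does not depend on $L$. The first estimate gives $|\inf_c\mathcal P_{A,\psi}(L,c)-D_-|\le\varepsilon_L$ for all $L\ge L_0$. Since $\varepsilon_L=O(L^{-1})$ by \eqref{eq:4.5}, $\inf_c\mathcal P_{A,\psi}(L,c)\to D_-$ as $L\to\infty$. Hence the limit defining $P_-(A,\psi)$ in \eqref{eq:pressure-minus} exists and equals $D_-$. The same argument gives existence of $P_+(A,\psi)$ and $P_+(A,\psi)=\sup_c\dim_H\Dhat_A(c\psi)$.

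Finally, the equalities with the $\widetilde\psi_c$ versions in \eqref{eq:4.22} are exactly \eqref{eq:4.17}, already established from Lemma~\ref{lem:rescaling-comparison} and \eqref{eq:elementary-rescaling-comparison}. No step here is a genuine obstacle. The one thing to check is the reparametrization $c\mapsto c\e^{\pm L}$ in the lower bound, which is harmless because the infimum and supremum run over all of $\R_+$.
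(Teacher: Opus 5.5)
Your proposal is correct and follows the paper's argument. You take the infimum and supremum over $c$ in \eqref{eq:4.6}, using the uniformity of $\varepsilon_L$ in $c$ and the fact that $c\mapsto c\e^{\pm L}$ is a bijection of $\R_+$; you then let $L\to\infty$ via \eqref{eq:4.5} and invoke \eqref{eq:4.17} for the $\widetilde\psi_c$ versions. One small point: your finiteness remark is fine, but $\mathcal P_{A,\psi}(L,c)$ lies in $[0,m+\log K_d/L]$ rather than $[0,m]$, as you yourself go on to note.
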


\subsection{No dimension drop}\label{subsec:no-drop}

\begin{prop}
\label{prop:no-drop}
Under the assumptions of Theorem \ref{thm:main},
\begin{equation}\label{eq:4.28}
 \dim_H\Sing_A(\psi)
 =\inf_{c>0}\dim_H\Dhat_A(\widetilde\psi_c)
 =\inf_{c>0}\dim_H\Dhat_A(c\psi)
 =P_-(A,\psi).
\end{equation}
\end{prop}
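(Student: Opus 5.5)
The last two equalities in \eqref{eq:4.28} are already contained in Corollary~\ref{cor:endpoints} (equation \eqref{eq:4.22}). The upper bound $\dim_H\Sing_A(\psi)\le\inf_{c>0}\dim_H\Dhat_A(c\psi)$ is immediate, since $\Sing_A(\psi)\subseteq\Dhat_A(c\psi)$ for every $c>0$. So the work is the lower bound $\dim_H\Sing_A(\psi)\ge P_-(A,\psi)$. We will construct a Cantor set inside $\Sing_A(\psi)$ directly.

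The construction runs the lower construction of Subsection~\ref{subs:proof-of-lower} with constants $c_i\searrow0$ that change along the mesh. Fix $\varepsilon>0$ and $L\ge L_0$ large, so that $\varepsilon_L<\varepsilon$ and $\inf_c\mathcal P_{A,\psi}(L,c)\ge P_--\varepsilon$. Take a sequence $c_i=\e^{-iL}$ and block lengths $N_1<N_2<\cdots$, growing fast enough that each block dominates all earlier ones. On the $i$-th block of mesh indices $j\in[M_{i-1},M_i)$ we use the radii $c_i\e^{-L}\psi(T_j)$.

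The key point at each block transition is that the radius function stays decreasing. Passing from $c_{i}$ to $c_{i+1}=\e^{-L}c_i$ just inserts one extra factor $\e^{-L}$. We can realize this as a single continuous, strictly decreasing function $\vartheta$ with $\vartheta(T_j)=c_{i}\e^{-L}\psi(T_j)$ on block $i$. This $\vartheta$ satisfies $\vartheta(T_j)\le\e^{L}\vartheta(T_{j+1})\cdot\e^{L}$ at transitions, so \eqref{eq:2.5} holds with $\rho=\e^{2L}$. Lemma~\ref{lem:limit-inclusion} then places the limit set $\mathbf G_\infty$ in $\Dhat_A(\e^{2L}\vartheta)$. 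Since $\vartheta/\psi\to0$, this set lies in $\Dhat_A(c\psi)$ for every $c$, hence in $\Sing_A(\psi)$.

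For the dimension, Lemma~\ref{lem:lower-transition} with \eqref{eq:2.7} (which holds with $\eta=R_0$ as in the proof of Theorem~\ref{thm:uniform-pressure}) gives the following. At each step of block $i$ the branching is at least $c_*\mathcal R_j(c_i\e^{-L})$. At a transition step, the branching only needs to be at least $1$, by the $\max\{1,\cdot\}$.

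The estimate then goes as follows. By the choice of $N_i$, the partial sums over block $i$ satisfy
\[
\sum_{j\in\text{block }i,\ j<\ell}\log\mathcal R_j(c_i\e^{-L})\ge(P_--2\varepsilon)(\ell-M_{i-1})L-o(\ell L)
\]
uniformly in $\ell$ inside the block. We need this also for \emph{partial} blocks. To get it, we use that the liminf in $\mathcal P_{A,\psi}(L,c_i\e^{-L})$ is eventually at least $P_--\varepsilon$: choose $M_{i-1}$ beyond the threshold where this liminf bound is attained for the new constant. We also use that the terms are bounded by $mL+\log K_d$ (Lemma~\ref{lem:vertical}), so earlier blocks contribute at least $0$ and at most a negligible proportion.

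Dividing by $-\log\vartheta(T_\ell)=\ell L+iL+O(1)$, where $i\ll\ell$, Lemma~\ref{lem:tree} gives $\dim_H\mathbf G_\infty\ge P_--3\varepsilon-O(L^{-1})$. The hypothesis \eqref{eq:tree-log-ratio} holds here since consecutive logs differ by at most $2L$. Letting $\varepsilon\to0$ and $L\to\infty$ concludes.

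The main obstacle is the partial-block bookkeeping. The liminf defining $\mathcal P_{A,\psi}(L,c)$ controls averages starting from $j=0$, not from the block start $M_{i-1}$. Handling this requires choosing $M_{i-1}$ so large relative to the earlier contributions that the average from $0$ and the average from $M_{i-1}$ differ negligibly for all $\ell$ beyond the ``good'' threshold of the new constant. It also requires beginning the new constant only once that threshold has passed.
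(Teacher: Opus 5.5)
Your overall architecture matches the paper's, and most of the individual checks are fine:
\begin{itemize}
\item you shrink the constant slowly along the equal-$\psi$ mesh and interpolate a strictly decreasing $\vartheta=o(\psi)$;
\item you apply Proposition~\ref{prop:lower} with $\rho$ a fixed power of $\e^L$ and $\eta=R_0$, and handle the $o(\ell)$ transition steps by the $\max\{1,\cdot\}$;
\item the inclusion $\Dhat_A(\e^{2L}\vartheta)\subseteq\Sing_A(\psi)$, the checks of \eqref{eq:2.5}, \eqref{eq:2.7}, \eqref{eq:tree-log-ratio}, and the normalization $-\log\vartheta(T_\ell)=\ell L+O(iL)$ are all correct.
\end{itemize}
The gap is exactly the step you flag as the main obstacle, and the fix you sketch does not close it.

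Being past the threshold for the new constant $c_i$ only gives $\sum_{j<\ell}a_j^{(c_i)}\ge(d_0-\varepsilon)\ell$ for large $\ell$. It gives no upper bound on $\sum_{j<M_{i-1}}a_j^{(c_i)}$ beyond the trivial $(m+L^{-1}\log K_d)M_{i-1}$. Suppose $M_{i-1}$ happens to fall where the running average for $c_i$ is near that maximum. Then $a_j^{(c_i)}$ may vanish for all $M_{i-1}\le j<\ell_1\approx (m/d_0)M_{i-1}$ without contradicting the liminf. At $\ell=\ell_1$ your total sum can then be as small as about $d_0M_{i-1}L$, while you need about $d_0\ell_1L$. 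That is a loss proportional to $\ell$, not $o(\ell L)$. Neither taking $M_{i-1}$ larger nor making the blocks grow faster prevents this. Indeed, the averages from $0$ and from $M_{i-1}$ cannot be made to agree for $\ell$ just beyond $M_{i-1}$, so the displayed partial-block bound "by the choice of $N_i$" is unjustified.

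What is missing is a rule for \emph{where} to switch constants, namely at a low point of the new sequence. The paper's Lemma~\ref{lem:diagonal} does this as follows:
\begin{itemize}
\item it takes the switching time $K_p\ge R_p$ to be a minimizer of $B_p(N)=\sum_{j<N}\bigl(a_j^{(p)}-(d_0-\delta_p)\bigr)$ over $N\ge R_p$, which exists because $B_p\to+\infty$;
\item this yields $\sum_{j=K_p}^{N-1}a_j^{(p)}\ge(d_0-\delta_p)(N-K_p)$ for \emph{every} $N\ge K_p$, which is precisely the uniform partial-block bound you need.
\end{itemize}
An alternative that also works is to switch, past the threshold, at a time where the running average for the new constant is within $\varepsilon_i$ of its liminf. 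This costs an error of $2\varepsilon_i\ell$, which is harmless with $\varepsilon_i\to0$ and geometrically growing blocks. With either rule inserted, the rest of your argument goes through.
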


Recall that $P_-(A,\psi)$ si well-defined due to Corollary \ref{cor:endpoints}. To prove Proposition \ref{prop:no-drop}, we will need the following diagonalization lemma.

\begin{lem}
\label{lem:diagonal}
Let $d_0 \in \R$, and let
\[
 a_j^{(p)}\in[0,M], \qquad j\in\Z_{\ge0},\quad p\in\N,
\]
satisfy
\begin{equation}\label{eq:4.23}
 \liminf_{N\to\infty}\frac1N
 \sum_{j=0}^{N-1}a_j^{(p)}
 \ge d_0\qquad \text{for any} \  p\in\N.
\end{equation}
Then there is a sequence $\{p_j\}_{j\ge0}$ of positive integers such
that
\begin{equation}\label{eq:4.24}
 p_{j+1}-p_j\in\{0,1\},\qquad
 p_j\to\infty,\qquad
 \frac{p_j}{j}\to0,
\end{equation}
and
\begin{equation}\label{eq:4.25}
 \liminf_{N\to\infty}\frac1N
 \sum_{j=0}^{N-1}a_j^{(p_j)}
 \ge d_0.
\end{equation}
The same assertion holds if both of the inequalities in
\eqref{eq:4.23} and \eqref{eq:4.25} reversed.

\end{lem}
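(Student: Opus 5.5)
The plan is a standard block diagonalization. Since every term lies in $[0,M]$, the averages can be controlled block by block; the monotone switching index is chosen so slowly that each switch happens only after the current sequence has already delivered its liminf bound.

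First, fix a sequence $\varepsilon_p\searrow0$, say $\varepsilon_p=1/p$. By \eqref{eq:4.23}, for each $p$ there is $N_p$ such that
\[
 \frac1N\sum_{j=0}^{N-1}a_j^{(p)}\ge d_0-\varepsilon_p \qquad\text{for all } N\ge N_p .
\]
More usefully, I want a statement for averages over windows $[J,N)$ that start late. Since $a\in[0,M]$, we have
\[
 \sum_{j=J}^{N-1}a_j^{(p)}\ge N(d_0-\varepsilon_p)-JM .
\]
So if $N\ge J/\varepsilon_p$, the window sum is at least $N(d_0-\varepsilon_p-M\varepsilon_p)$.

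Next, define switching times $J_1<J_2<\cdots$ inductively. Set $p_j=p$ for $J_p\le j<J_{p+1}$, and $p_j=1$ for $j<J_1$ with $J_1=0$. Choose $J_{p+1}$ so large that:
\begin{enumerate}
\item $J_{p+1}\ge \max\{N_p,\ J_p/\varepsilon_p,\ N_{p+1}\}$;
\item $J_{p+1}\ge p^2 J_p$ and $J_{p+1}\ge p^2$.
\end{enumerate}
Then $p_{j+1}-p_j\in\{0,1\}$ and $p_j\to\infty$. Also $p_j/j\le p/J_p\to0$, giving \eqref{eq:4.24}.

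For \eqref{eq:4.25}, take $N$ with $J_p\le N<J_{p+1}$ and split $\sum_{j<N}a_j^{(p_j)}$ into the completed blocks $[J_q,J_{q+1})$ for $q<p$ and the partial block $[J_p,N)$. There are two cases.
\begin{itemize}
\item If $N\ge J_p/\varepsilon_p$, the partial block alone gives $\sum_{j=J_p}^{N-1}a_j^{(p)}\ge N(d_0-(1+M)\varepsilon_p)$ by the window estimate. The earlier terms are nonnegative, so we may drop them when $d_0\ge0$.
\item If $N<J_p/\varepsilon_p$, the partial block has length less than $J_p/\varepsilon_p$. In this case I use block $p-1$ instead: the sum over $[J_{p-1},J_p)$ is at least $J_p(d_0-(1+M)\varepsilon_{p-1})$, because $J_p\ge J_{p-1}/\varepsilon_{p-1}$ and $J_p\ge N_{p-1}$. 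The partial block contributes between $0$ and $(N-J_p)M$.
\end{itemize}

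This second case is the main obstacle. The average over $[0,N)$ is then only bounded below by roughly $d_0J_p/N$, which need not be close to $d_0$ unless the partial block itself carries average about $d_0$. To fix this, I would replace the window estimate by a statement uniform in the starting point of the window. By \eqref{eq:4.23}, for each $p$ and each tolerance there is $N'_p$ such that $\sum_{j=0}^{N-1}a^{(p)}_j\ge N(d_0-\varepsilon_p)$ for all $N\ge N'_p$. Combined with the choice $J_p\ge N'_p$, this gives $\sum_{j=J_p}^{N-1}a_j^{(p)}\ge N(d_0-\varepsilon_p)-\sum_{j<J_p}a_j^{(p)}$.

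The remaining gap is to compare $\sum_{j<J_p}a_j^{(p)}$ with $\sum_{j<J_p}a_j^{(p_j)}$. Taking $d_0\ge0$ without loss of generality (the case $d_0<0$ is trivial, since all terms are nonnegative), I would bound the difference using $\sum_{j<J_p}a^{(p)}_j\le MJ_p$ together with an inductive lower bound $\sum_{j<J_p}a^{(p_j)}_j\ge J_p(d_0-\delta_p)$. The plan is to carry this lower bound as an induction hypothesis, strengthened by choosing $J_{p+1}$ large enough that the loss from the last switch is at most $\delta_p$. If the upper-by-$MJ_p$ comparison turns out too lossy, I will fall back to making the switch to $p$ only at times $N$ where the running average of sequence $p$ over $[0,N)$ is within $\varepsilon_p$ of the running diagonal average. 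Such times exist along a subsequence by the liminf hypothesis and the boundedness of the terms.

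The reversed-inequality version follows by applying the same argument to $M-a_j^{(p)}$.
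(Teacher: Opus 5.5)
\paragraph{Main gap.} The block structure and the bookkeeping for \eqref{eq:4.24} are fine. The gap is the one you flag yourself: the range $J_p\le N<J_p/\varepsilon_p$ just after a switch. None of your repairs closes it. Every condition you impose on $J_{p+1}$ says only that it is sufficiently large, and bad switching times occur at arbitrarily large scales.

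For example, take $d_0=1$, $M=2$ and $S_{k+1}\ge 4S_k$. Let $a^{(p)}_j=1$ except that $a^{(p)}_j=2$ on $[S_k,\tfrac32 S_k)$ and $a^{(p)}_j=0$ on $[\tfrac32 S_k,2S_k)$. Then $\sum_{j<N}a^{(p)}_j\ge N$ for every $N$. Now take $a^{(q)}\equiv1$ for odd $q$, give the sequences with even $q$ this structure, and switch to each even $p$ at $J_p=\tfrac32 S_k$ for some large $k$, which your conditions allow. Because $J_p\ge p^2J_{p-1}$, the diagonal average at $J_p$ is $1+O(p^{-2})$. The window $[\tfrac32 S_k,2S_k)$ then contributes nothing, so the diagonal average at $N=2S_k$ is about $\tfrac34$, and \eqref{eq:4.25} fails.

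Your two repairs do not help:
\begin{itemize}
\item The comparison $\sum_{j<J_p}a^{(p)}_j\le MJ_p$ combined with an inductive bound $\sum_{j<J_p}a^{(p_j)}_j\ge(d_0-\delta_p)J_p$ yields only about $(2d_0-M)J_p$ at $N\approx J_p$, as you suspected.
\item The fallback relies on a false existence claim. If $a^{(q)}\equiv d_0$ for $q<p$ and $a^{(p)}\equiv M>d_0+\varepsilon_p$, the running averages of $a^{(p)}$ and of the diagonal never come within $\varepsilon_p$ of each other.
\end{itemize}

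\paragraph{The missing idea.} Choose the switching time according to the \emph{forward} behaviour of $a^{(p)}$, not just large enough. Put $B_p(N)=\sum_{j<N}\bigl(a^{(p)}_j-d_0+\delta_p\bigr)$ with $\delta_p\searrow0$. By \eqref{eq:4.23}, $B_p(N)\ge\delta_pN/2$ for all large $N$, so $B_p$ attains its minimum over the integers $N\ge R_p$ at some $K_p$. Minimality gives
\[
\sum_{j=K_p}^{N-1}a^{(p)}_j\ge(d_0-\delta_p)(N-K_p)\qquad\text{for every } N\ge K_p.
\]
Set $p_j=p$ on $[K_p,K_{p+1})$ and take $R_p>\max\{K_{p-1},p^2\}$. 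Then every completed block and the final partial block contribute at least $d_0-\delta_p$ times their length. Hence $\sum_{j<N}a^{(p_j)}_j\ge d_0N-\sum_{j<N}\delta_{p_j}-O(1)$, which gives \eqref{eq:4.25}; this is the paper's argument.

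\paragraph{The reversed statement.} This does not follow by applying the argument to $M-a^{(p)}_j$. That substitution turns the hypothesis $\liminf\le d_0$ into $\limsup_{N}\frac1N\sum_{j<N}(M-a^{(p)}_j)\ge M-d_0$. This is a limsup condition and gives no eventual lower bound, so the forward argument cannot be run on it. The reversed case needs its own argument, which is easier because a subsequence of good times suffices. Choose $K_{p+1}$ with
\[
\Bigl(MK_p+\sum_{j<K_{p+1}}a^{(p)}_j\Bigr)\big/K_{p+1}\le d_0+1/p,
\]
which the reversed hypothesis permits. Then $0\le a^{(p)}_j\le M$ bounds the diagonal average at $N=K_{p+1}$ by $d_0+1/p$.
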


\begin{proof}
Choose a decreasing sequence
 $\{\delta_p\}_{p\in \N}$ of positive real numbers such that $\delta_p\to0$ and put
\[
 B_p(N):=
 \sum_{j=0}^{N-1}
 \bigl(a_j^{(p)}-(d_0-\delta_p)\bigr),
 \qquad N\in\Z_{\ge0},\quad p\in\N.
\]
For every fixed $p\in\N$, equation \eqref{eq:4.23} implies that $\liminf\limits_{N \to \infty} \frac{B_p(N)}{N} \geq \delta_p$ and thus
\[
 B_p(N)\ge\frac{\delta_p}{2}N
\]
for all sufficiently large $N$. In particular, $B_p(N)\to+\infty$.

We recursively choose integers
\[
 1\le K_1<K_2<\cdots
\]
as follows.  Put $K_0=0$.  Having chosen $K_{p-1}$, choose an integer
\[
 R_p>\max\{K_{p-1},p^2\},
\]
and let $K_p\ge R_p$ be a point at which $B_p$ attains its minimum on
the integers $N\ge R_p$.  Such a point exists because
$B_p(N)\to+\infty$.  For every $N\ge K_p$, we then have
$B_p(N)-B_p(K_p)\ge0$,
and consequently
\begin{equation}\label{eq:4.26}
 \sum_{j=K_p}^{N-1}a_j^{(p)}
 \ge(d_0-\delta_p)(N-K_p).
\end{equation}

Put $p_j=1$ for $0\le j<K_1$, and define $p_j=p$ when
\[
 K_p\le j<K_{p+1}, \qquad p\in\N.
\]
The first two assertions in \eqref{eq:4.24} are immediate. Moreover, if
$K_p\le j<K_{p+1}$, then
\[
 0\le\frac{p_j}{j}
 =\frac pj
 \le\frac p{K_p}
 <\frac1p.
\]
Since $p\to\infty$ as $j\to\infty$, this proves the last assertion in
\eqref{eq:4.24}.

For $N\ge K_1$, applying \eqref{eq:4.26} to each completed block and
the last partial block gives
\begin{equation}\label{eq:4.27}
 \sum_{j=0}^{N-1}a_j^{(p_j)}
\ge d_0N-\sum_{j=0}^{N-1}\delta_{p_j}+O(1).
\end{equation}
Here the $O(1)$ term covers the fixed initial block $0\le j<K_1$.  Since $p_j\to\infty$ and $\delta_p\to0$,
\[
 \frac1N\sum_{j=0}^{N-1}\delta_{p_j}\longrightarrow0
\]
as $N \to \infty$. Dividing \eqref{eq:4.27} by $N$ and taking the lower limit proves
\eqref{eq:4.25}.

For the reversed inequalities, choose  $K_1=0$ and
recursively choose integers $K_{p+1}>\max\{K_p,(p+1)^2\}$ such that
\begin{equation}\label{eq:reverse_ineq_c}
 \frac{MK_p+\displaystyle\sum_{j=0}^{K_{p+1}-1}a_j^{(p)}}
      {K_{p+1}}
 \le d_0+\frac1p.
\end{equation}
This is possible by the reversed hypothesis \eqref{eq:4.23}.
Define $p_j$ by the same block rule as above. Then
\eqref{eq:4.24} holds as before, while boundedness and nonnegativity of $a_j^{(p)}$ toogether with \eqref{eq:reverse_ineq_c} give
\[
 \sum_{j=0}^{K_{p+1}-1}a_j^{(p_j)}
 \le MK_p+\sum_{j=0}^{K_{p+1}-1}a_j^{(p)}
 \le\left(d_0+\frac1p\right)K_{p+1}.
\]
Dividing by $K_{p+1}$ and letting $p\to\infty$ proves the reversed
\eqref{eq:4.25}.
\end{proof}

The technical lemma below will also be used in proofs of Propositions \ref{prop:endpoint-representations} and
\ref{prop:dimension-continuity}.

\begin{lem}\label{lem:interpolation}
Let $T_j\to\infty$ be strictly increasing and let $c_j\to\infty$ be
nondecreasing and positive. Suppose that, for some $0<\delta<1$,
\begin{equation}\label{eq:interpolation-condition}
 1\le\frac{c_{j+1}}{c_j}
 \le\left(\frac{\psi(T_j)}{\psi(T_{j+1})}\right)^\delta
 \qquad\text{for every }j.
\end{equation}
Then there exist continuous strictly decreasing functions $\varphi$
and $\vartheta$, tending to zero, such that
\begin{equation*}\label{eq:interpolation-values}
 \varphi(T_j)=c_j\psi(T_j),
 \qquad
 \vartheta(T_j)=c_j^{-1}\psi(T_j),
\end{equation*}
and $\psi=o(\varphi)$, $\vartheta=o(\psi)$. The ratios
$\varphi/\psi$ and $\psi/\vartheta$ are nondecreasing.
If $\psi$ is lower quasimultiplicative, then so are $\varphi$
and $\vartheta$.
\end{lem}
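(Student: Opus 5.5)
We will build $\varphi$ and $\vartheta$ by interpolating in logarithmic coordinates between the prescribed mesh values. Put $h(T):=-\log\psi(T)$, which is continuous, strictly increasing and unbounded. Put $\gamma_j:=\log c_j$, so that $\gamma_j$ is nondecreasing and tends to $\infty$. Condition \eqref{eq:interpolation-condition} reads
\[
0\le\gamma_{j+1}-\gamma_j\le\delta\bigl(h(T_{j+1})-h(T_j)\bigr).
\]
Define $\gamma:[T_1,\infty)\to\R$ by interpolating linearly in the variable $h$: for $T_j\le T\le T_{j+1}$ set
\[
\gamma(T):=\gamma_j+\frac{\gamma_{j+1}-\gamma_j}{h(T_{j+1})-h(T_j)}\bigl(h(T)-h(T_j)\bigr).
\]
Then $\gamma$ is continuous and nondecreasing, satisfies $\gamma(T_j)=\gamma_j$, and tends to $\infty$. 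Along $h$ it is Lipschitz with slope at most $\delta<1$: for $S\le T$ we have $0\le\gamma(T)-\gamma(S)\le\delta(h(T)-h(S))$. On $(0,T_1]$ extend $\gamma$ by the constant $\gamma_1$; this affects nothing asymptotic, and the normalization $\psi\to\infty$ at $0$ is preserved.

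Set $\varphi:=\e^{\gamma}\psi$ and $\vartheta:=\e^{-\gamma}\psi$. The interpolation values hold by construction. Since $\gamma\to\infty$, we get $\psi=o(\varphi)$ and $\vartheta=o(\psi)$, and the ratios $\varphi/\psi=\psi/\vartheta=\e^{\gamma}$ are nondecreasing. Both functions are continuous. For the monotonicity of $\vartheta$, note that $-\log\vartheta=h+\gamma$, which is strictly increasing. For $\varphi$, note that $-\log\varphi=h-\gamma$, and $h(T)-\gamma(T)-(h(S)-\gamma(S))\ge(1-\delta)(h(T)-h(S))>0$ for $S<T$. So $\varphi$ is strictly decreasing and tends to zero, because $h-\gamma\ge(1-\delta)h-O(1)\to\infty$. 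This is exactly where $\delta<1$ is used.

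The remaining claim is lower quasimultiplicativity. Suppose $R^{-\tau}\psi(T)\le\psi(RT)$ for large $T$ and $R\ge R_0$, that is, $h(RT)-h(T)\le\tau\log R$. Then
\[
-\log\vartheta(RT)+\log\vartheta(T)=h(RT)-h(T)+\gamma(RT)-\gamma(T)\le(1+\delta)\tau\log R,
\]
so $\vartheta$ satisfies \eqref{eq:1.3} with exponent $(1+\delta)\tau$. For $\varphi$, the increment of $-\log\varphi$ is $h(RT)-h(T)-(\gamma(RT)-\gamma(T))\le h(RT)-h(T)\le\tau\log R$, because $\gamma$ is nondecreasing. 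So $\varphi$ is lower quasimultiplicative with the same $\tau$ and $R_0$.

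The only delicate point is the strict monotonicity of $\varphi$ together with $\varphi\to0$. Both follow from the slope bound $\delta<1$, which the interpolation in the $h$-variable preserves; everything else is routine.
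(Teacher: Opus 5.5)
Your proposal is correct and is essentially the paper's own proof. The paper's $c(T)=c_j\bigl(\psi(T_j)/\psi(T)\bigr)^{\beta_j}$ is exactly $\e^{\gamma(T)}$ for your interpolation of $\log c_j$ linearly in $h=-\log\psi$, and both arguments use the same slope bound $\beta_j\le\delta<1$ to get strict monotonicity and obtain the same lower quasimultiplicativity exponents $\tau$ for $\varphi$ and $(1+\delta)\tau$ for $\vartheta$.
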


\begin{proof}
For $T_j\le T\le T_{j+1}$, put
\[
 \beta_j:=
 \frac{\log(c_{j+1}/c_j)}
      {\log(\psi(T_j)/\psi(T_{j+1}))}\in[0,\delta],
 \qquad
 c(T):=c_j\left(\frac{\psi(T_j)}{\psi(T)}\right)^{\beta_j},
\]
and extend $c$ constantly to the omitted initial interval.
Then $c$ is continuous and nondecreasing, with $c(T_j)=c_j$.
Define
\[
 \varphi:=c\psi,\qquad \vartheta:=c^{-1}\psi.
\]
On each mesh interval, these functions are positive constant multiples of
$\psi^{1-\beta_j}$ and $\psi^{1+\beta_j}$, respectively.

For $T_j \le T \le S \le T_{j+1}$, the definition shows that $\frac{c(S)}{c(T)}
=\left(\frac{\psi(T)}{\psi(S)}\right)^{\beta_j}$. Therefore, for any $S \ge T$ one has
$$
1 \le \frac{c(S)}{c(T)} \le \left(\frac{\psi(T)}{\psi(S)}\right)^{\delta}
$$
and so
\begin{equation}\label{eq:interpolation-comparison}
\begin{split}
 \frac{\psi(S)}{\psi(T)}
 &\le\frac{\varphi(S)}{\varphi(T)}
 \le\left(\frac{\psi(S)}{\psi(T)}\right)^{1-\delta},\\
 \left(\frac{\psi(S)}{\psi(T)}\right)^{1+\delta}
 &\le\frac{\vartheta(S)}{\vartheta(T)}
 \le\frac{\psi(S)}{\psi(T)}.
\end{split}
\end{equation}
Thus both functions are strictly decreasing to zero. Since
$c(T)\nearrow \infty$, we get the relative asymptotics of the new functions compared to $\psi$ and monotonicity of ratios. Finally, \eqref{eq:interpolation-comparison} and \eqref{eq:1.3} give lower
quasimultiplicativity with exponents $\tau$ and $(1+\delta)\tau$,
respectively.
\end{proof}

\begin{proof}[Proof of Proposition \ref{prop:no-drop}]
The inclusion
{$\Sing_A(\psi)\subseteq\Dhat_A(c\psi)$ for every $c>0$}, together
with \eqref{eq:4.22}, gives
\begin{equation}\label{eq:4.29}
 \dim_H\Sing_A(\psi)\le P_-(A,\psi).
\end{equation}
We prove the reverse inequality.  The case $P_-(A,\psi)=0$ is
immediate, so assume $P_-(A,\psi)>0$.  Fix $s$ with $0<s<P_-(A,\psi)$.

Fix $\tau$ and $R_0$ as in Definition \ref{def:regular}. As in \eqref{eq:4.14}, let $\eta=R_0$, and put
\[
 c_*:=c_d\left(\frac{\eta-1}{\eta}\right)^d,
 \qquad
 \delta_L:=\frac{\max\{0,-\log c_*\}}{L}.
\]
Due to the definition given in \eqref{eq:pressure-minus}, we can choose $L$ so large that
\begin{equation}\label{eq:choice-of-L}
 s<{\inf_{c>0}\mathcal P_{A,\psi}(L,c)}-\delta_L.
\end{equation}
Fix such an $L>1$.  For $p\ge1$ and $j\ge0$, let
\begin{equation*}\label{eq:4.30}
 a_j^{(p)}:=\frac1L
 \log
 \frac{\mathcal N_A(\e^{-p}\psi(T_j),T_{j+1})}
      {\mathcal N_A(\e^{-p}\psi(T_{j+1}),T_{j+1})}.
\end{equation*}
By Lemma \ref{lem:vertical},
$$
 a_j^{(p)}\in\left[0,m+\frac{\log K_d}{L}\right].
$$
By \eqref{eq:4.19},
\[
 \liminf_{N\to\infty}\frac1N
 \sum_{j=0}^{N-1}a_j^{(p)}
 =
 \mathcal P_{A,\psi}(L,{\e^{-p}})
 \ge{\inf_{c>0}\mathcal P_{A,\psi}(L,c)}.
\]
Apply Lemma \ref{lem:diagonal} with {$d_0=\inf\limits_{c>0}\mathcal P_{A,\psi}(L,c)$}, and let $\{p_j\}$ be
the resulting sequence.  Then
{
\begin{equation}\label{eq:4.31}
 \liminf_{N\to\infty}
\frac1N\sum_{j=0}^{N-1}a_j^{(p_j)}
 \ge\inf_{c>0}\mathcal P_{A,\psi}(L,c).
\end{equation}
}

By \eqref{eq:1.11} and \eqref{eq:4.24}, the sequences $\{T_j\}$ and
$c_j:=\e^{p_j}$ satisfy \eqref{eq:interpolation-condition} with
$\delta=L^{-1}$. Let $\vartheta$ be the continuous strictly decreasing,
lower quasimultiplicative function given by Lemma
\ref{lem:interpolation}; one has
\begin{equation}\label{eq:4.33}
 \frac{\vartheta(T)}{\psi(T)}
 \longrightarrow0.
\end{equation}
At the mesh points,
{
\begin{equation}\label{eq:4.34}
 \vartheta(T_j)=\e^{-p_j}\psi(T_j)=\e^{-(jL+p_j)}
\end{equation}
}
and
\begin{equation}\label{eq:4.35}
 \frac{\vartheta(T_j)}{\vartheta(T_{j+1})}
 =\e^{L+p_{j+1}-p_j}\in[\e^L,\e^{L+1}].
\end{equation}
We can additionally assume that $L \geq L_0$, where $L_0$ is chosen as in the proof of Theorem \ref{thm:uniform-pressure}. 
After omitting finitely many initial indices, \eqref{eq:4.13} gives
\[
 T_{j+1}\ge R_0T_j.
\]
Moreover, since $L$ is sufficiently large that $\e^L\ge R_0$, the
lower bound in \eqref{eq:4.35} gives
\[
 \vartheta(T_j)\ge R_0\vartheta(T_{j+1}).
\]
Thus \eqref{eq:2.7} holds for $\vartheta$ with
$\eta=R_0$.  The upper bound in \eqref{eq:4.35} shows that
\eqref{eq:2.5} holds with
\[
 \rho_L:=\e^{L+1}.
\]
As before, $\vartheta$ may be modified on the omitted bounded interval
to satisfy the standing smallness assumption in Section \ref{sec:CD}. Now we can apply Proposition \ref{prop:lower} to the function $\vartheta$ and sequence $\{ T_j \}$.

By \eqref{eq:4.34} and since $p_{j+1}\ge p_j$, one has
$\vartheta(T_{j+1})\le\e^{-p_j}\psi(T_{j+1})$. Thus monotonicity
in \eqref{eq:4.2} gives
\[
 \log\max\left\{1,c_*
 \frac{\mathcal N_A(\vartheta(T_j),T_{j+1})}
      {\mathcal N_A(\vartheta(T_{j+1}),T_{j+1})}\right\}
 \ge La_j^{(p_j)}+\log c_*.
\]
Proposition \ref{prop:lower} and \eqref{eq:4.34} therefore give
\begin{equation}\label{eq:4.40}
\begin{gathered}
 \dim_H\Dhat_A(\rho_L\vartheta)
 \ge\liminf_{N\to\infty}
 \frac{L\sum_{j=1}^{N-1}a_j^{(p_j)}+(N-1)\log c_*}{NL+p_N}\\
 =\liminf_{N\to\infty}\frac1N\sum_{j=0}^{N-1}a_j^{(p_j)}
   +\frac{\log c_*}{L}
 \ge\inf_{c>0}\mathcal P_{A,\psi}(L,c)-\delta_L>s.
\end{gathered}
\end{equation}
The equality uses boundedness of $a_j^{(p_j)}$ and $p_N/N\to0$
from \eqref{eq:4.24}; the last line follows from \eqref{eq:4.31},
the definition of $\delta_L$, and \eqref{eq:choice-of-L}.

By \eqref{eq:4.33}, for every $c>0$ one has
$\rho_L\vartheta(T)\le c\psi(T)$ for all sufficiently large $T$.
Therefore
\begin{equation*}\label{eq:4.41}
 \Dhat_A(\rho_L\vartheta)
 \subseteq
 \bigcap_{c>0}\Dhat_A(c\psi)
 =\Sing_A(\psi),
\end{equation*}
and thus
$$
 \dim_H\Sing_A(\psi)\ge s.
$$
Letting $s\nearrow P_-(A,\psi)$ proves the reverse inequality in
\eqref{eq:4.29}.  The remaining equalities in \eqref{eq:4.28} follow
from \eqref{eq:4.22}.
\end{proof}

\subsection{Proof of Theorem \ref{thm:main}}\label{subs:proof-of-main}

Corollary \ref{cor:endpoints} shows that the limits defining
$P_-(A,\psi)$ and $P_+(A,\psi)$ exist.  Proposition
\ref{prop:no-drop} gives \eqref{eq:1.15}.
For $0<a<b$, one has $a\psi(T)\le b\psi(T)$.
Consequently,
\begin{equation}\label{eq:4.42}
  U_A(\psi)
 =
 \bigcup_{N=1}^\infty\Dhat_A(N\psi).
\end{equation}
By countable stability of Hausdorff dimension,
\[
 \dim_H U_A(\psi)
 =\sup_{N\ge1}\dim_H\Dhat_A(N\psi)
 =\sup_{c>0}\dim_H\Dhat_A(c\psi)
 =P_+(A,\psi).
\]
Here the last
equality is \eqref{eq:4.22}.  This proves \eqref{eq:1.16}.

By \eqref{eq:4.22}, $P_-(A,\psi)=P_+(A,\psi)$ if and only if
$\dim_H\Dhat_A(c\psi)$ is independent of $c>0$. This proves the
equivalence in \eqref{eq:1.17}. Under this condition, denote the
common value by $D$. Equations \eqref{eq:1.15} and \eqref{eq:1.16} give
\[
 \dim_H\Sing_A(\psi)=D=\dim_H U_A(\psi).
\]
Finally, $\dim_H\Dhat_A(\psi)=D$, and
\[
 {\inf_{c>0}\mathcal P_{A,\psi}(L,c)}
 \le\mathcal P_{A,\psi}(L,{1})
 \le{\sup_{c>0}\mathcal P_{A,\psi}(L,c)}.
\]
Equation \eqref{eq:1.18} now follows from \eqref{eq:1.17}.

\smallskip

The last assertion of Remark \ref{rem:lower-Q} follows in the same
way. Indeed, the first assertion of that remark allows us to replace
$\Dhat_A(\rho\psi)$ by $\Dhat_{A,\mathcal Q}(\rho\psi)$ in Proposition
\ref{prop:lower}, after omitting finitely many initial mesh points if
necessary. Proposition \ref{prop:upper} remains valid with
$\Dhat_A(\psi)$ replaced by $\Dhat_{A,\mathcal Q}(\psi)$, by the
trivial inclusion
\[
 \Dhat_{A,\mathcal Q}(\psi)\subseteq\Dhat_A(\psi).
\]
The right-hand sides of the estimates in Propositions
\ref{prop:upper} and \ref{prop:lower} are the only input from the
covering constructions used in this section. Hence all the arguments
above remain unchanged.

\section{Functions close to powers and uniform exponents}\label{sec:near-power-proofs}

\subsection{Some lemmata from lattice geometry}

Write $\Lambda_A=u_A\Z^d$ and put
\[
 a_{x,y}:=\operatorname{diag}(\e^yI_m,\e^{-x}I_n),
 \qquad
 F(x,y):=\max_{\Gamma}\bigl(-\log|a_{x,y}\omega_\Gamma|\bigr),
\]
where $\Gamma$ runs over primitive sublattices of $\Lambda_A$, including
the rank-zero lattice, and $\omega_\Gamma$ is the exterior product of
a basis of $\Gamma$; the empty exterior product is $1$.
Here primitive means
$\Gamma=\Lambda_A\cap\spanR\Gamma$. The norm is the supremum norm in the standard
exterior basis; considering the rank-zero $\Gamma$ yields that $F(x,y) \ge 0$ for any $x, y$. 

\begin{lem}\label{lem:power-profile}
The function $F$ is continuous and locally piecewise affine, and
\begin{equation}\label{eq:power-profile-comparison}
 F(x,y)=\log\mathcal N_A(\e^{-y},\e^x)+O_d(1),
 \qquad 0\le-\frac{\partial F}{\partial y}\le m
\end{equation}
wherever the derivative exists.
\end{lem}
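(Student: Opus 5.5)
We plan to derive both assertions from Minkowski's second theorem applied to the successive minima of the lattice $a_{x,y}\Lambda_A$, together with the standard comparison between products of successive minima and minimal covolumes of primitive sublattices. The first step is to note that $a_{x,y}$ equals a scalar multiple of some $a_t$: writing $x=\log Q$ and $y=-\log R$, we have $a_{x,y}=Q^{n/d}R^{m/d}\,a_{t_{R,Q}}$. Indeed, the $m$-block of the right-hand side is $Q^{n/d}R^{m/d}(Q/R)^{n/d}=R^{-1}=\e^{y}$, and the $n$-block is $Q^{n/d}R^{m/d}(Q/R)^{-m/d}=Q^{-1}=\e^{-x}$. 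Hence $\lambda_i(a_{x,y}\Lambda_A)=\lambda_i(t_{R,Q})/(Q^{n/d}R^{m/d})$, and therefore
\[
 \log\mathcal N_A(\e^{-y},\e^x)=\sum_{i=1}^d\max\{0,-\log\lambda_i(a_{x,y}\Lambda_A)\}.
\]

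The second step is the standard fact, from Minkowski's second theorem and the reduction theory of primitive sublattices, that for a lattice $\Lambda'$ of covolume one, or indeed of any fixed covolume,
\[
 \min_{\Gamma\text{ primitive},\ \rank\Gamma=r}\log|\omega_{\Gamma}(\Lambda')|=\sum_{i\le r}\log\lambda_i(\Lambda')+O_d(1).
\]
The upper bound comes from taking $\Gamma$ spanned by vectors realizing the first $r$ minima, after saturating it, which only decreases the covolume. The lower bound comes from Minkowski's second theorem applied within $\spanR\Gamma$, using the fact that the minima of $\Gamma$ dominate those of $\Lambda'$. The supremum norm on the exterior power agrees with the Euclidean covolume up to $O_d(1)$. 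Note that $a_{x,y}\Lambda_A$ has covolume $\e^{my-nx}$, but the argument only uses the minima and is insensitive to this. Maximizing over $r$ then gives
\[
 F(x,y)=\max_r\Bigl(-\sum_{i\le r}\log\lambda_i\Bigr)+O_d(1),
\]
and since the $\lambda_i$ are nondecreasing, this maximum equals $\sum_i\max\{0,-\log\lambda_i\}$, with $r=0$ covering the empty case. This proves the comparison in \eqref{eq:power-profile-comparison}.

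For the regularity claims, each $\Gamma$ contributes $-\log|a_{x,y}\omega_\Gamma|$. In the standard exterior basis, the coordinates of $a_{x,y}\omega_\Gamma$ are coordinates of $\omega_\Gamma$ multiplied by $\e^{ky-lx}$, where $k$ counts the indices among the first $m$ and $l=\rank\Gamma-k$. Hence $\log|a_{x,y}\omega_\Gamma|=\max_{(k,l)}(ky-lx+c_{k,l}(\Gamma))$ is a maximum of finitely many affine functions, and $-\log|a_{x,y}\omega_\Gamma|$ is concave, piecewise affine, with $y$-slope in $[-m,0]$ since $0\le k\le m$.

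The main obstacle is local finiteness. We need that on any compact set in $(x,y)$ only finitely many $\Gamma$ can attain the maximum. This holds because a maximizer must satisfy $|a_{x,y}\omega_\Gamma|\le1$, which bounds $|\omega_\Gamma|$ on a compact set, and discreteness of $\{\omega_\Gamma\}$ then leaves finitely many candidates. Consequently $F$ is locally a maximum of finitely many functions of this form, hence continuous and locally piecewise affine, and wherever the derivative exists it equals the $y$-slope of an active piece, which lies in $[-m,0]$. This gives $0\le-\partial F/\partial y\le m$.
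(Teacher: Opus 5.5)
Your proposal is correct and follows essentially the same route as the paper. Minkowski's second theorem (lower bound) plus saturating the span of the first $r$ minimal vectors (upper bound) gives $\min_{\rank\Gamma=r}|a_{x,y}\omega_\Gamma|\asymp_d\prod_{i\le r}\lambda_i(a_{x,y}\Lambda_A)$, the optimal rank is $\#\{i:\lambda_i(a_{x,y}\Lambda_A)<1\}$, and the expansion of $a_{x,y}\omega_\Gamma$ in the standard exterior basis together with discreteness of $\{\omega_\Gamma\}$ gives local finiteness, piecewise affinity and the slope bound.

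One slip to fix: the scaling identity should read $a_{x,y}=(Q^{n/d}R^{m/d})^{-1}a_{t_{R,Q}}$, since your check $Q^{n/d}R^{m/d}(Q/R)^{n/d}=R^{-1}$ is false as written. The consequence $\lambda_i(a_{x,y}\Lambda_A)=\lambda_i(t_{R,Q})/(Q^{n/d}R^{m/d})$ that you actually use is nevertheless correct.
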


\begin{proof}
Minkowski's second theorem, applied in the span of a rank-$r$
sublattice $a_{x,y}\Gamma$, gives
\begin{equation}\label{eq:Mink_second_first}
 |a_{x,y}\omega_\Gamma|
 \gg_d\prod_{i=1}^r\lambda_i(a_{x,y}\Lambda_A).
\end{equation}
Taking $\Gamma$ to be the primitive sublattice of $\Lambda_A$ in the span of independent vectors whose images under $a_{x,y}$ realize the first $r$ minima yields the reverse bound.
Now let 
\begin{equation}\label{eq:power-small-lattice}
 r':=\#\{i:\lambda_i(a_{x,y}\Lambda_A)<1\}, \quad
 \Gamma_{r'}:=\Lambda_A\cap
 \spanR\{\bv\in\Lambda_A:|a_{x,y}\bv|<1\}.
\end{equation}
The discussion above shows that
\begin{equation}\label{eq:Mink_second_equiv}
 \min_{\Gamma}|a_{x,y}\omega_\Gamma| \asymp_d |a_{x,y}\omega_{\Gamma_{r'}}|
 \asymp_d\prod_{i=1}^{r'}\lambda_i(a_{x,y}\Lambda_A).
\end{equation}
It remains to use \eqref{eq:rectangular-profile} to deduce
the first assertion in \eqref{eq:power-profile-comparison}.

For a fixed $\Gamma$ of rank $r$, write $$\omega_\Gamma=\sum_{|I|=r}\omega_I e_I,
\qquad
e_I=e_{i_1}\wedge\cdots\wedge e_{i_r}.$$
If \(I\) contains \(a\) indices from the first \(m\) coordinates and \(b\) from the last \(n\), then \(a+b=r\), and 
\[
a_{x,y}e_I=\e^{ay-bx}e_I.
\]
Therefore, 
$$
|a_{x,y}\omega_\Gamma| =\max_{\substack{a+b=r\\0\le a\le m,\;0\le b\le n}} C_a\,\e^{ay-bx}
$$
for some $C_a \ge 0$, and thus
\begin{equation}\label{eq:power-exterior-profile}
 \log|a_{x,y}\omega_\Gamma|
 =\max_{a+b=r}\{c_a+ay-bx\},
 \qquad 0\le a\le m,\quad0\le b\le n.
\end{equation}
Here $c_a = \log C_a \in\R\cup\{-\infty\}$ depends on $\Gamma$.
We have established earlier that $F(x,y) \ge 0$, and on any compact set of $(x,y)$ only finitely many primitive exterior vectors can satisfy
$|a_{x,y}\omega_\Gamma|\le1$ at some point, so the maximum defining
$F$ is locally finite. Formula~\eqref{eq:power-exterior-profile}
therefore proves continuity, local piecewise affinity, and the
slope bound in \eqref{eq:power-profile-comparison}.
\end{proof}

Fix $\tau>0$ and write
\begin{equation}\label{eq:power-normalized-minima}
 g_i(x):=\log\lambda_i\left(\frac{mn(1+\tau)}d x\right)
             +\frac{m\tau-n}{d}x,
 \qquad G_r(x):=\sum_{i=1}^r g_i(x).
\end{equation}

Note that $\e^{g_i(x)}$ are the successive minima of
$a_{x,\tau x}\Lambda_A$.

\begin{lem}\label{lem:power-gaps}
Let $1\le r<d$. On each interval where $g_r<g_{r+1}$,
the vectors in $\Lambda_A$ whose images realize the first $r$
minima span a fixed subspace, and
\begin{equation}\label{eq:power-gap-profile}
 G_r(x)=\max_{a+b=r}\{c_a+(a\tau-b)x\}+O_d(1),
 \qquad 0\le a\le m,\quad0\le b\le n,
\end{equation}
with constants $c_a\in\R\cup\{-\infty\}$ depending on this span.
For $r=0$ and $r=d$, the spans are always $\{0\}$ and $\R^d$,
respectively, and \eqref{eq:power-gap-profile} holds on every
interval, with no inequality between successive minima required.
\end{lem}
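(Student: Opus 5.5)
The plan is to combine the exterior-product description from Lemma~\ref{lem:power-profile} with the standard fact that, across a gap in the successive minima, the span of the short vectors is locally constant.

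\textbf{Step 1: the span is fixed on a gap interval.} Let $I$ be an interval on which $g_r<g_{r+1}$. For each $x\in I$ put
\[
V(x):=\spanR\{\bv\in\Lambda_A:|a_{x,\tau x}\bv|\le \e^{g_r(x)}\}.
\]
Since $g_r(x)<g_{r+1}(x)$, this space has dimension exactly $r$, and it is the span realizing the first $r$ minima. Continuity of $x\mapsto a_{x,\tau x}$ and of the minima gives the following. Near any $x_0\in I$, every lattice vector that is short at $x$ is, at $x_0$, of length close to something at most $\e^{g_r(x_0)}$, hence strictly below $\e^{g_{r+1}(x_0)}$. Such a vector therefore lies in $V(x_0)$. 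So $V(x)\subseteq V(x_0)$, and equality holds because both spaces have dimension $r$. Hence $V$ is locally constant on $I$, and since $I$ is connected it is constant. Call it $V$ and set $\Gamma:=\Lambda_A\cap V$.

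\textbf{Step 2: compare $G_r$ with an exterior norm.} By Minkowski's second theorem in $V$, in the form of \eqref{eq:Mink_second_first} together with its reverse for the realizing sublattice, we get
\[
|a_{x,\tau x}\omega_\Gamma|\asymp_d\prod_{i=1}^r\lambda_i(a_{x,\tau x}\Lambda_A)=\e^{G_r(x)}.
\]
The reverse bound uses that the first $r$ minima of $a_{x,\tau x}\Gamma$ coincide with those of $a_{x,\tau x}\Lambda_A$, because $V$ contains the realizing vectors. Applying \eqref{eq:power-exterior-profile} with $y=\tau x$ then gives
\[
\log|a_{x,\tau x}\omega_\Gamma|=\max_{a+b=r}\{c_a+(a\tau-b)x\},
\]
with $c_a$ depending only on $\Gamma$, hence only on $V$. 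Combining the two displays yields \eqref{eq:power-gap-profile}.

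\textbf{Step 3: the cases $r=0$ and $r=d$.} For $r=0$ the span is $\{0\}$ and $G_0=0$; the formula holds with the single term $a=b=0$ and $c_0=0$. For $r=d$ the span is $\R^d$ and $\Gamma=\Lambda_A$. Since $\det a_{x,\tau x}=\e^{m\tau x-nx}$ and $\Lambda_A$ is unimodular, Minkowski's second theorem gives $G_d(x)=(m\tau-n)x+O_d(1)$. This is the single term $a=m$, $b=n$, and it holds on every interval.

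\textbf{Main obstacle.} The hardest part is the local-constancy argument in Step 1. It needs a uniform margin near $x_0$, so that vectors short at nearby $x$ cannot escape $V(x_0)$. This follows from the gap $g_{r+1}(x_0)-g_r(x_0)>0$ together with the bounded distortion $|a_{x,\tau x}a_{x_0,\tau x_0}^{-1}|\le \e^{C|x-x_0|}$.
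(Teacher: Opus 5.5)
Your proposal is correct and follows the same route as the paper. The span is locally constant across the gap. The primitive sublattice $\Gamma=\Lambda_A\cap V$ has first $r$ minima equal to those of $a_{x,\tau x}\Lambda_A$. Minkowski's second theorem combined with the exterior profile \eqref{eq:power-exterior-profile} at $y=\tau x$ gives the formula, and unimodularity handles $r=d$.

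Your Step~1 spells out the local-constancy argument that the paper states in one line. The one fact you use there without stating it is that lattice vectors of norm below $\e^{g_{r+1}(x_0)}$ span a space of dimension at most $r$, which contains $V(x_0)$ and hence equals it.
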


\begin{proof}
On an interval where $g_r<g_{r+1}$, the span is locally constant and hence constant. If $\Gamma$ is the primitive lattice in this
span, the minima of $a_{x,\tau x}\Gamma$ are precisely
$\e^{g_1},\ldots,\e^{g_r}$. Minkowski's second theorem and
\eqref{eq:power-exterior-profile} prove
\eqref{eq:power-gap-profile}.
For $r=0$, the empty sum gives $G_0=0$. For $r=d$,
unimodularity of $\Lambda_A$ and Minkowski's second theorem give
$G_d(x)=(m\tau-n)x+O_d(1)$ for every $x$.
\end{proof}

\begin{lem}\label{lem:power-maximizer}
Let $r'$ and $\Gamma_{r'}$ be as in \eqref{eq:power-small-lattice}. There is $H_d>0$ such that, if every logarithm of successive minimum of $a_{x,y}\Lambda_A$ lies outside $[-H_d,H_d]$, then the maximum
defining $F(x,y)$ is attained uniquely by $\Gamma_{r'}$.
\end{lem}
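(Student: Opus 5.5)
Write $\Lambda=a_{x,y}\Lambda_A$ and $\ell_i=\log\lambda_i(\Lambda)$. The plan is to compare $|a_{x,y}\omega_\Gamma|$ for an arbitrary primitive $\Gamma$ with $|a_{x,y}\omega_{\Gamma_{r'}}|$, using a quantitative form of Minkowski's second theorem relative to the filtration by successive minima. By hypothesis, every $\ell_i$ is either $<-H_d$ (exactly for $i\le r'$) or $>H_d$. The second half of the proof of Lemma~\ref{lem:power-profile} gives
\[
\log|a_{x,y}\omega_{\Gamma_{r'}}|=\sum_{i\le r'}\ell_i+O_d(1).
\]
Note that $\Gamma_{r'}$ is exactly the primitive lattice spanned by the first $r'$ minimal vectors, because the gap at level $r'$ is large.

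The key estimate is the following: for every primitive $\Gamma$ of rank $r$,
\[
\log|a_{x,y}\omega_\Gamma|\ \ge\ \sum_{i=1}^{r}\ell_{k_i}-O_d(1)\quad\text{(with }k_1<\dots<k_r\text{)},
\]
where $k_i$ is the smallest index $k$ such that $\dim(\Gamma\otimes\R\cap V_k)\ge i$, and $V_k$ is the span of the first $k$ minimal vectors. To prove it, I pick a reduced (e.g.\ Minkowski-reduced) basis of $\Lambda$ adapted to the minima. The $i$-th successive minimum of $a_{x,y}\Gamma$ is then at least $\gg_d\lambda_{k_i}(\Lambda)$, since $\Gamma$ has only $i-1$ independent vectors inside $V_{k_i-1}$, and any vector outside $V_{k-1}$ has length $\gg\lambda_k$. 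Minkowski's second theorem in $\spanR\Gamma$ finishes the estimate.

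Next, decompose $\{k_i\}$ into the indices $\le r'$ and those $>r'$. Each index $>r'$ contributes $\ell_{k_i}>H_d$. Each index $\le r'$ is absent from $\{k_i\}$ at most for the indices $i\le r'$ missed, and missing such an index means losing a term $\ell_i<-H_d$. Hence, if $\Gamma\ne\Gamma_{r'}$, then either some $k_i>r'$ or $\Gamma\cap V_{r'}$ is a proper subspace of $V_{r'}$. In either case,
\[
\log|a_{x,y}\omega_\Gamma|\ \ge\ \sum_{i\le r'}\ell_i+H_d-O_d(1).
\]
Both the $O_d(1)$ here and the one in the formula for $\Gamma_{r'}$ depend only on $d$, so choosing $H_d$ larger than their sum plus one gives strict inequality $|a_{x,y}\omega_\Gamma|>|a_{x,y}\omega_{\Gamma_{r'}}|$. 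Hence $\Gamma_{r'}$ is the unique maximizer of $-\log|a_{x,y}\omega_\Gamma|$. The case $\Gamma$ with $\spanR\Gamma= V_{r'}$ forces $\Gamma=\Gamma_{r'}$ by primitivity.

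The main obstacle is the lower bound for an arbitrary $\Gamma$. The $i$-th minimum of $\Gamma$ is controlled by $\lambda_{k_i}$ only up to constants depending on $d$, and this requires care with non-orthogonality: the distance from a lattice vector outside $V_{k-1}$ to $V_{k-1}$ need not be comparable to its length. To handle this, I would work with the Gram--Schmidt norms of a reduced basis, which are $\asymp_d\lambda_k$. I would then bound $|\omega_\Gamma|$ below by the product of the distances from successive basis vectors of $\Gamma$, chosen compatibly with the filtration, to the previous spans. Each such distance is $\gg_d\lambda_{k_i}$ by the reduction property.
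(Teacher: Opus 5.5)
Your proof is correct and is essentially the paper's argument. Both rest on Minkowski's second theorem in $\spanR\Gamma$ together with lower bounds for the minima of $a_{x,y}\Gamma$. Your filtration bound $\lambda_i(a_{x,y}\Gamma)\ge\lambda_{k_i}(a_{x,y}\Lambda_A)$ just packages the paper's two cases: the case $\rank\Gamma\ne r'$ needs only $k_i\ge i$, and the case $\rank\Gamma=r'$, $\Gamma\ne\Gamma_{r'}$ needs only that the last minimum is at least $\lambda_{r'+1}$.

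The ``main obstacle'' in your last paragraph is not one. A lattice vector outside $V_{k-1}$ has length at least $\lambda_k$ exactly, directly from the definition of successive minima. So your minima bound plus Minkowski's theorem in $\spanR\Gamma$ already finish the proof, with no reduced bases or Gram--Schmidt estimates.
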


\begin{proof}
Recall that, by \eqref{eq:Mink_second_first} and \eqref{eq:Mink_second_equiv}, 
\[
\begin{aligned}
 |a_{x,y}\omega_{\Gamma_{r'}}|
 \asymp_d\prod_{i=1}^{r'}\lambda_i(a_{x,y}\Lambda_A), \quad
 |a_{x,y}\omega_\Gamma|
 \gg_d\prod_{i=1}^r\lambda_i(a_{x,y}\Lambda_A)
 \quad\text{if }\rank\Gamma=r.
\end{aligned}
\]
For $r\ne r'$, the latter product exceeds the former by a factor at least $\e^{H_d}$. If $0<r'<d$ and $\Gamma\ne\Gamma_{r'}$ has rank
$r'$, the last minimum of $a_{x,y}\Gamma$ is at least
$\lambda_{r'+1}(a_{x,y}\Lambda_A)$: otherwise its span
would equal that of $a_{x,y}\Gamma_{r'}$, and primitivity would give
$\Gamma=\Gamma_{r'}$. Minkowski's second theorem therefore gives
\[
\begin{aligned}
 |a_{x,y}\omega_\Gamma|
 \gg_d\lambda_{r+1}(a_{x,y}\Lambda_A)
            \prod_{i=1}^{r-1}\lambda_i(a_{x,y}\Lambda_A)
 >\e^{2H_d}\prod_{i=1}^r\lambda_i(a_{x,y}\Lambda_A).
\end{aligned}
\]
Taking $H_d$ large enough to suppress absolute constants arising from applications of Minkowski's second theorem proves uniqueness. For $r=0,d$, there is no other primitive sublattice of the same rank, so only the different-rank comparison is needed.
\end{proof}

\begin{lem}\label{lem:power-threshold}
If $\tau\notin\mathcal E_{m,n}$, then, for every $1\le i\le d$,
\begin{equation}\label{eq:power-single-threshold-density}
 \lim_{\eta\searrow0}\limsup_{x\to\infty}\frac1x
 \left|\left\{u\in[0,x]:|g_i(u)|\le\eta x\right\}\right|=0.
\end{equation}
Consequently,
\begin{equation}\label{eq:power-threshold-density}
 \lim_{\eta\searrow0}\limsup_{x\to\infty}\frac1x
 \left|\left\{u\in[0,x]:\min_{1\le i\le d}|g_i(u)|\le\eta x
 \right\}\right|=0.
\end{equation}
In particular, replacing $\eta x$ by any fixed constant gives
a set of length $o(x)$.
\end{lem}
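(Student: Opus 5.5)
We plan a covering argument on $[0,x]$. Around each $u$ with $|g_i(u)|$ small we find a window where some block of consecutive minima containing $g_i$ is separated from the others by large gaps. Lemma~\ref{lem:power-gaps} then makes the sum of that block nearly affine with slope bounded away from zero; this is the step where $\tau\notin\mathcal E_{m,n}$ is used. A Vitali covering then bounds the total measure. Note first that each $g_i$ is Lipschitz with a constant $C=C(m,n,\tau)$: the map $u\mapsto a_{u,\tau u}$ distorts the sup norm at a bounded exponential rate, so each $\log\lambda_i$ moves at bounded speed. The second assertion \eqref{eq:power-threshold-density} follows from \eqref{eq:power-single-threshold-density} by a union over $i$, and the final remark is immediate.

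\emph{Slope gap.} Let $S_r=\{a\tau-b:\ a+b=r,\ 0\le a\le m,\ 0\le b\le n\}$ be the set of slopes allowed in \eqref{eq:power-gap-profile}. We claim that for $0\le r<s\le d$ every difference $\sigma-\sigma'$ with $\sigma\in S_s$ and $\sigma'\in S_r$ satisfies $|\sigma-\sigma'|\ge\kappa$, for some $\kappa=\kappa(m,n,\tau)>0$. Indeed, write $\sigma-\sigma'=(a-a')\tau-(b-b')$, where $(a-a')+(b-b')=s-r>0$. This vanishes only if $a\ne a'$ and $\tau=(b-b')/(a-a')$. Since $\tau>0$, both differences must then be positive, with $a-a'\le m$ and $b-b'\le n$, which would put $\tau\in\mathcal E_{m,n}$. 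There are finitely many such differences, so $\kappa$ exists.

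\emph{Local windows.} Fix a large $K$, to be chosen in terms of $\kappa$, $C$ and $d$, and then let $\eta\to0$. Suppose $u\le x$ and $|g_i(u)|\le\eta x$. Consider the $d+1$ annuli $\{|g|\in[K^k\eta x,K^{k+1}\eta x)\}$ for $0\le k\le d$. Since there are only $d$ values $g_j(u)$, pigeonhole gives some $k\le d$ whose annulus contains none of them. The indices with $|g_j(u)|<K^k\eta x$ then form a block $r<j\le s$ containing $i$, and the gaps satisfy $g_{r+1}-g_r\ge(K-1)K^k\eta x$ and $g_{s+1}-g_s\ge(K-1)K^k\eta x$ (when $r\ge1$, resp.\ $s<d$). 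By Lipschitz continuity these gaps stay positive on the window $W_u$ centred at $u$ of length $\ell_u=cK^{k+1}\eta x$, where $c\asymp1/C$. On $W_u$, Lemma~\ref{lem:power-gaps}, including its cases $r=0$ and $s=d$, shows that $G_s$ and $G_r$ are, up to $O_d(1)$, convex piecewise affine functions. Each has slopes in $S_s$, resp.\ $S_r$, and at most $m+1$ pieces. Hence $G_s-G_r=\sum_{r<j\le s}g_j$ splits into at most $2d+2$ pieces on which it is affine up to $O(1)$, with slope of modulus at least $\kappa$. At every bad point in $W_u$ the same gap-persistence shows the block is unchanged, and we need $|G_s-G_r|\le dK^{k+1}\eta x$ there. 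Each piece therefore contributes measure at most $O(dK^{k+1}\eta x/\kappa)+O(1)$, which is not yet small relative to $\ell_u$. I would therefore strengthen the pigeonhole to two consecutive empty annuli, using $2d+1$ annuli, so that the window length gains an extra factor $K$ against the block bound. Then the bad measure in $W_u$ is at most $O_d(1/(cK\kappa))\,\ell_u+O(1)$.

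\emph{Covering.} The windows $W_u$ with $u$ bad have lengths at most $cK^{2d+2}\eta x\le x$ once $\eta$ is small. The Vitali lemma on $\R$ gives disjoint windows whose triples cover the bad set. Their total length is at most $2x$, and their number is $O(1/\eta)$, which is bounded for fixed $\eta$. Summing gives bad measure $\le O_d(1/(cK\kappa))\,x+O_\eta(1)$. Taking $\limsup_x$, then $\eta\to0$, then $K\to\infty$ proves \eqref{eq:power-single-threshold-density}. The main obstacle I expect is the bookkeeping in the local window step. There we must make the ratio of block size to window length genuinely small, which is why we use the extra empty annulus, and check that the piecewise structure from Lemma~\ref{lem:power-gaps} has uniformly boundedly many pieces with the $O(1)$ errors controlled.
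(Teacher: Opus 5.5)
\textbf{There is a genuine gap in the local window step, and the second empty annulus does not close it.}

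To bound the bad set inside $W_u$, you need $|G_s-G_r|$ to be small compared with $\kappa\ell_u$ at every bad point $u'\in W_u$. Gap persistence only shows that the index block $\{r+1,\dots,s\}$ is still separated from its neighbours at $u'$. It gives no control on the sizes of $g_j(u')$ for $r<j\le s$, and the condition $|g_i(u')|\le\eta x$ constrains only one member of the block. The only bound available is Lipschitz propagation from $u$, namely $|g_j(u')|\le K^k\eta x+C\ell_u/2$. So the block-sum bound you can actually prove at bad points of $W_u$ is of order $dC\ell_u$, whatever the choice of $\ell_u$ and however wide the empty zone. Widening the zone by a factor $K$ lengthens $W_u$ by $K$, but the propagated bound grows by the same factor. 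The ratio of provable block bound to window length therefore stays bounded below by a constant depending only on $C$ and $d$. The claimed estimate $O_d(1/(cK\kappa))\,\ell_u+O(1)$ does not follow, and the covering step has nothing small to sum.

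\textbf{The missing idea is to make smallness of the block a pointwise property of the points being counted.}
\begin{itemize}
\item Run your pigeonhole (with the wide empty zone) separately at each bad point $u'$, record its type $(k,r,s)$, and estimate each of the $O_d(1)$ sets $E_{k,r,s}$ of bad points of a given type on its own.
\item At every point of $E_{k,r,s}$ one has $|G_s-G_r|\le dK^k\eta x$ directly, while the gaps there are $\gtrsim K^{k+2}\eta x$.
\item Hence on any interval $I$ of length $\asymp K^{k+2}\eta x/C$ meeting $E_{k,r,s}$, the gaps stay positive and Lemma~\ref{lem:power-gaps} applies on $I$. Your slope-gap bound then gives $|E_{k,r,s}\cap I|\ll_{m,n,\tau}K^k\eta x+1$.
\item Since the interval length depends only on the type, an equal partition of $[0,x]$ replaces Vitali. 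This also removes the issue that your in-window estimate would otherwise have to hold on the triples $3W_u$.
\item The result is $|E_{k,r,s}|\ll x/(cK^2\kappa)+O_{\eta,K}(1)$, and summing over types finishes the proof.
\end{itemize}

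This corrected version is essentially the paper's argument. The paper uses the nested index sets $\{k:|g_k(u)|\le R_j\}$ with $R_j=\eta^{(d-j)/d}x$ and defines $E_{j,r,s}$ by pointwise conditions. It then partitions $[0,x]$ into intervals of length $R_{j+1}/(8K)$, and the scale ratio $\eta^{-1/d}\to\infty$ plays the role of your separate parameter $K$.

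Your slope-gap step, the Lipschitz remark, and the deductions of the second assertion and of the final remark are correct and match the paper.
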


\begin{proof}
Fix $1\le i\le d$. Each $g_k$ is Lipschitz with constant
$K:=\max\{1,\tau\}$: changing $u$ by $v\ge0$ multiplies the norm
of any vector by a factor between $\e^{-v}$ and $\e^{\tau v}$.

\smallskip
\noindent\emph{Step 1. Bound the length of the set where certain sums of minima are close.}
Let $0\le r<s\le d$, and suppose that \begin{equation}\label{eq:k_non-bdry_comparison}
    g_k(u)<g_{k+1}(u)
\ \text{for any} \ u \in I \ \text{and every} \
k\in\{r,s\}\cap\{1,\ldots,d-1\}.
\end{equation}
Recall that $G_0=0$. %
By Lemma~\ref{lem:power-gaps}, the function $G_s-G_r$ differs
by $O_d(1)$ from a continuous piecewise-affine function with at
most $2d+1$ pieces. Its slopes have the form
\[
 a\tau-b,\qquad a+b=s-r>0,\qquad
 a,b\in\Z,\quad |a|\le m,\quad |b|\le n.
\]
A zero slope would imply $a,b>0$ and
$\tau=b/a\in\mathcal E_{m,n}$. Thus the finitely many possible
slopes are bounded away from zero in terms of $m,n,\tau$.
On each affine piece, the inverse image of an interval of
length $O_d(H+1)$ therefore has length $O_{m,n,\tau}(H+1)$.
It follows that, for every $H\ge0$,
\begin{equation}\label{eq:power-separated-block}
 \left|\left\{u\in I:|G_s(u)-G_r(u)|\le H\right\}\right|
 \ll_{m,n,\tau}H+1.
\end{equation}

\smallskip
\noindent\emph{Step 2. Decomposing the desired set into a finite union.}
Fix $x\ge1$ and $0<\eta<4^{-d}$, and put
\[
 R_j:=\eta^{(d-j)/d}x,
 \qquad 0\le j\le d.
\]
At any point $u$ with $|g_i(u)|\le\eta x=R_0$, the $d+1$
nested sets
\begin{equation}\label{eq: set-of-indices-Rj}
 \{k:|g_k(u)|\le R_j\},\qquad 0\le j\le d,
\end{equation}
all contain $i$. Two consecutive sets must coincide, since their cardinalities cannot increase strictly $d$ times. Since $g_1(u)\le\cdots\le g_d(u)$, every index between two indices satisfying $-R_j\le g_k(u)\le R_j$ also satisfies these inequalities. Denote the first and last indices in \eqref{eq: set-of-indices-Rj} by $r+1$ and $s$, respectively; the set \eqref{eq: set-of-indices-Rj} is therefore equal to $\{r+1,\ldots,s\}$, and $0\le r<i\le s\le d$ because it contains $i$.
For $0\le j<d$ and $0\le r<i\le s\le d$, let $E_{j,r,s}$
be the set of points $u\in[0,x]$ satisfying
\begin{equation}\label{eq:power-block-separation}
\begin{gathered}
 |g_k(u)|\le R_j \ \text{if}\ r<k\le s,\\
 g_r(u)<-R_{j+1} \ \text{if } \ r\ge1, \qquad 
 g_{s+1}(u)>R_{j+1} \ \text{if } \ s<d.
\end{gathered}
\end{equation}
The preceding argument proves that
\[
 \{u\in[0,x]:|g_i(u)|\le\eta x\}
 \subseteq\bigcup_{\substack{0\le j<d\\0\le r<i\le s\le d}}E_{j,r,s}.
\]

\smallskip
\noindent\emph{Step 3. Estimate for lengths of the sets $E_{j,r,s}$.}
Fix $j,r,s$ and partition $[0,x]$ into intervals of length $R_{j+1}/(8K)$ (the last one may be shorter). If an interval $I$ meets $E_{j,r,s}$ at $u_0$, then $$\begin{aligned}
g_{r+1}(u_0)-g_r(u_0)&>R_{j+1}-R_j
&&\text{if }r\ge1,\\
g_{s+1}(u_0)-g_s(u_0)&>R_{j+1}-R_j
&&\text{if }s<d.
\end{aligned}$$
Lipschitz continuity then guarantees that 
$$\begin{gathered}
g_{r+1}(u)-g_r(u)>R_{j+1}-R_j-\frac{R_{j+1}}4 > 0
\ \text{if }r\ge1,\\
g_{s+1}(u)-g_s(u)>R_{j+1}-R_j-\frac{R_{j+1}}4 > 0
\ \text{if }s<d
\end{gathered}$$
for any $u\in I$; thus \eqref{eq:k_non-bdry_comparison} holds.
Moreover, \eqref{eq:power-block-separation} gives $|G_s-G_r|\le dR_j$ on $E_{j,r,s}$. Applying
\eqref{eq:power-separated-block}, we conclude that
\[
 |E_{j,r,s}\cap I|\ll_{m,n,\tau}R_j+1.
\]
There are $O_\tau(x/R_{j+1})$ intervals in the partition , since $R_{j+1}\le x$.
Using $R_j=\eta^{1/d}R_{j+1}$ and
$R_{j+1}\ge\eta^{(d-1)/d}x$, we obtain
\[
 |E_{j,r,s}|
 \ll_{m,n,\tau}\frac{x}{R_{j+1}}(R_j+1)
 \le \eta^{1/d}x+\eta^{-(d-1)/d}.
\]
Summing over the finitely many choices in Step~2 gives
\[
 \left|\left\{u\in[0,x]:|g_i(u)|\le\eta x\right\}\right|
 \ll_{m,n,\tau}\eta^{1/d}x+\eta^{-(d-1)/d}.
\]
Dividing by $x$, then letting $x\to\infty$ and
$\eta\searrow0$, proves \eqref{eq:power-single-threshold-density}.
Taking the union over $i=1,\ldots,d$ proves
\eqref{eq:power-threshold-density}. The last assertion holds trivially.
\end{proof}

\subsection{Proof of Theorem~\ref{thm:near-powers}}

Put
\[
 B(x):=-\tau\int_0^x\frac{\partial F}{\partial y}(u,\tau u)\,du,
\]
setting the derivative to zero where it is undefined.

\begin{lem}\label{lem:power-pressure}
Suppose that $\tau\notin\mathcal E_{m,n}$. Then
\begin{equation}\label{eq:power-pressure-integral}
 \mathcal P_{A,\psi_\tau}(L,c)
 =\liminf_{x\to\infty}\frac{B(x)}{\tau x}+O_d(L^{-1})
\end{equation}
for every $L,c>0$, with an implied
constant independent of $c$.
\end{lem}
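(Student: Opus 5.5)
Rewrite each summand of $\mathcal P_{A,\psi_\tau}(L,c)$ through $F$ and then telescope along the curve $y=\tau x$. For $\psi_\tau$ the mesh \eqref{eq:1.11} is $T_j=\e^{jL/\tau}$. Put $x_j:=jL/\tau$, so that $T_j=\e^{x_j}$ and $c\psi(T_j)=\e^{-(\tau x_j-\log c)}$. By Lemma~\ref{lem:power-profile}, the $j$-th summand equals
\[
F(x_{j+1},\tau x_j-\log c)-F(x_{j+1},\tau x_{j+1}-\log c)+O_d(1).
\]
This is the integral of $-\partial_yF(x_{j+1},\cdot)$ over a vertical segment of length $\tau(x_{j+1}-x_j)=L$. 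Summed over $j<N$, the $O_d(1)$ errors contribute $O_d(N)$. After division by $NL$ this becomes the $O_d(L^{-1})$ term.

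\emph{Removing the constant $c$.} I will compare each vertical integral with the same integral at $\log c=0$. The obvious route is a shift in $y$ of size $|\log c|$, using the slope bound $0\le-\partial_yF\le m$. A crude estimate gives an error $O(|\log c|)$ per step, which is too large. Instead I would write the difference of the two vertical integrals as a difference of boundary contributions: $F(x_{j+1},\tau x_j-\log c)-F(x_{j+1},\tau x_j)$ minus the same expression at $\tau x_{j+1}$. Each is $O(|\log c|)$, but they do not telescope in $j$ because the $x$-coordinate is the same ($x_{j+1}$) at both ends.

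To fix this, I will rewrite the summand so that it does telescope. Insert $F(x_j,\tau x_j-\log c)$ and use
\[
\sum_j\bigl[F(x_{j+1},\tau x_j-\log c)-F(x_j,\tau x_j-\log c)\bigr]-\bigl[F(x_{j+1},\tau x_{j+1}-\log c)-F(x_j,\tau x_j-\log c)\bigr].
\]
The second bracket telescopes to $F(x_N,\tau x_N-\log c)+O(1)=\log\mathcal N_A(c\psi(T_N),T_N)+O(1)$. The first bracket is a horizontal increment of $F$. Its total, modulo shifting $y$ by $\log c$, has to be compared with $\int r\,dS/S$-type quantities.

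\emph{Using $\tau\notin\mathcal E_{m,n}$.} Lemmas~\ref{lem:power-maximizer} and~\ref{lem:power-threshold} say that outside a set of $u$ of density $o(1)$, no log-minimum of $a_{u,\tau u}\Lambda_A$ is within $\eta u$ of $0$. On such stretches $F$ is given by the single affine expression coming from $\Gamma_{r'}$. There the partial derivative $\partial_yF(u,\tau u)$ equals $-a$, where $a$ is the number of first-block indices for that maximizer. A vertical shift by $\log c$, or by at most $L$, leaves this maximizer unchanged once $u$ is large, since $\eta u\gg L+|\log c|$. Hence on good stretches the vertical integral over $[\tau x_j,\tau x_{j+1}]$ equals $-\int_{x_j}^{x_{j+1}}\tau\partial_yF(u,\tau u)\,du$ up to the horizontal-vs-vertical mismatch, and this mismatch is also affine and controlled. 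On bad stretches each term is bounded by $mL+O(1)$ via Lemma~\ref{lem:vertical}, and their total density is $o(1)$ after letting $\eta\to0$. Summing gives $\sum_j(\cdots)=B(x_N)+o(x_N)+O_d(N)$, uniformly in $c$. Dividing by $NL=\tau x_N$ and taking the liminf gives \eqref{eq:power-pressure-integral}.

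\emph{Main obstacle.} The hard step is justifying that the maximizer of $F$ is locally constant along the segments and under the $\log c$ shift, uniformly in $c$. The shift is fixed while $u\to\infty$, so for each $c$ it is eventually negligible against $\eta u$. The uniformity claim only concerns the implied constant in $O_d(L^{-1})$, which comes solely from the Minkowski comparisons. I also need the genericity statement to hold on a neighbourhood of the curve $y=\tau u$ and not only on the curve itself. I would get this from the Lipschitz property of the $g_i$ in Lemma~\ref{lem:power-threshold}.
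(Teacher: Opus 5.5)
Your first step (rewriting the $j$-th summand as a vertical increment of $F$ at $x_{j+1}$, with $O_d(1)$ errors giving the $O_d(L^{-1})$ term) is correct. The third paragraph follows the paper's strategy: good and bad mesh intervals via Lemma~\ref{lem:power-threshold}, the maximizer $\Gamma_{r'}$ via Lemma~\ref{lem:power-maximizer}, and bad intervals costing at most $mL+O(1)$ each by Lemma~\ref{lem:vertical}. The telescoping detour through horizontal increments in your second paragraph is not needed. Uniformity in $c$ is only required for the Minkowski constant, so for each fixed $c$ the shift by $\log c$ can be absorbed into the height $R>|\log c|$ of the rectangles $[x_j,x_{j+1}]\times[jL-R,(j+1)L+R]$ and into the good/bad threshold, as you observe at the end.

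There is, however, a genuine gap. You claim that on a good stretch $F$ is a single affine expression with vertical slope $-a$. Fixing the maximizing sublattice $\Gamma_{r'}$ does not fix the affine piece. By \eqref{eq:power-exterior-profile}, $F(x,y)=r'x-\max_a\{c_a+a(x+y)\}$, which is only piecewise affine. Its vertical slope jumps across boundedly many lines $x+y=\mathrm{const}$, determined by the coordinates of $\omega_{\Gamma_{r'}}$. These switches occur deep inside good regions, and neither a smaller $\eta$ nor a larger $u$ prevents them. Already for $m=n=1$, with $\Gamma_{r'}$ spanned by $(q_k\alpha-p_k,q_k)$, the switch occurs where $\lambda_1$ attains its local minimum, far below $1$. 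On a mesh interval whose rectangle meets such a line, the summand and $B(x_{j+1})-B(x_j)$ can differ by order $mL$, not $O_d(1)$. So the conclusion $\sum_j(\cdots)=B(x_N)+o(x_N)+O_d(N)$ is not justified, and the phrase ``this mismatch is also affine and controlled'' hides exactly this point.

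The missing step is a counting argument.
\begin{itemize}
\item On each maximal block of consecutive good mesh intervals, $r'$ is constant and $g_{r'}<0<g_{r'+1}$. By Lemma~\ref{lem:power-gaps}, $\Gamma_{r'}$ is therefore constant on the whole block, not merely locally.
\item Hence each block contributes at most $d$ switch lines.
\item Each switch line meets $O(1+R/L)$ rectangles: the ranges of $x+y$ have length $(1+\tau^{-1})L+2R$ and are offset by $(1+\tau^{-1})L$.
\item The number of good blocks among the first $N$ mesh intervals is at most one more than the number of bad intervals.
\end{itemize}
If badness is defined by a fixed constant threshold depending on $L$ and $R$, the last assertion of Lemma~\ref{lem:power-threshold} makes the bad intervals $o(N)$ in number. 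Then the blocks and the exceptional rectangles are also $o(N)$ in number, and their total contribution is $o(NL)$. With your $\eta u$ threshold, the same count gives an error of order $\epsilon(\eta)(1+|\log c|/L)$ after division by $NL$. This still vanishes as $\eta\to0$ for fixed $c$, but the constant threshold is cleaner.
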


\begin{proof}
Fix $L,c>0$, put $w:=\log c$, and choose
$R>|w|$. For $\psi=\psi_\tau$, equation~\eqref{eq:1.11} gives
$T_j=\e^{jL/\tau}$. Put $x_j=jL/\tau$ and consider the rectangles
\begin{equation}\label{eq:profile-rectangles}
 [x_j,x_{j+1}]\times[jL-R,(j+1)L+R],\qquad j\ge0.
\end{equation}
Call a mesh interval $[x_j,x_{j+1}]$ bad if some $|g_i|$ is at most
$H_d+2(L+R+1)\max\{1,\tau,\tau^{-1}\}$ at some point of it.
Lipschitz continuity and Lemma~\ref{lem:power-threshold} show that
there are $o(N)$ bad intervals among the first $N$: the union of
such intervals is contained in a fixed, slightly wider threshold
strip. Call each remaining mesh interval good. A \emph{good block}
is a maximal union of consecutive good mesh intervals. Among the
first $N$ mesh intervals, the good intervals form at most one
more block than there are bad intervals, hence $o(N)+1$ good blocks.

Fix a good block and let $r'$ and $\Gamma_{r'}$ be defined by
\eqref{eq:power-small-lattice} at $(x,\tau x)$; namely, $r'=\#\{i:g_i(x)<0\}$. The rank $r'$ is constant throughout the block, since all the $|g_i|$ are separated from zero. Thus $\Gamma_{r'}$ is fixed by Lemma~\ref{lem:power-gaps}.

If $(x,y)$ belongs to a rectangle  \eqref{eq:profile-rectangles}, then $|y-\tau x|\le L+R$.
Choose a rectangle corresponding to an interval from our fixed good block, and consider the lattice $a_{x,y} \Lambda_A$ instead of $a_{x, \tau x} \Lambda_A$. Application of $a_{x,y}a_{x, \tau x}^{-1}$ multiplies each vector norm by a factor between $\e^{-(L+R)}$ and $\e^{L+R}$. Since the interval to which $x$ belongs is good, the first $r'$ minimizing vectors remain shorter than one and the number of minima less than one remains $r'$.
Thus the rank and lattice in \eqref{eq:power-small-lattice}
remain $r'$ and $\Gamma_{r'}$, and all logarithmic minima remain
outside $[-H_d,H_d]$. Lemma~\ref{lem:power-maximizer} and
\eqref{eq:power-exterior-profile} therefore give
\[
 F(x,y)= -\log|a_{x,y}\omega_{\Gamma_{r'}}| = - \max_{a+b=r'}\{c_a+ay-bx\} =r'x-\max_a\{c_a+a(x+y)\}.
\]
The affine branch of the maximum changes at at most $d$ lines of form $x + y = const$.
On the $j$th rectangle, $x+y$ ranges from
$(1+\tau^{-1})jL-R$ to $(1+\tau^{-1})(j+1)L+R$.
Each branch change line $x+y = const$ therefore meets only $O_{L,R,\tau}(1)$ rectangles.
Apart from these $O_{L,R,\tau,d}(1)$ exceptions per block (summing to $o(N)$ exceptional rectangles overall), $F$ has
constant vertical slope on each rectangle. Including all rectangles
over bad intervals, the total number of exceptional rectangles is $o(N)$.

On a nonexceptional rectangle with vertical slope $-a_j$, 
\begin{equation}\label{eq:power-pressure-transition}
\begin{gathered}
 \log\frac{\mathcal N_A(c\psi_\tau(T_j),T_{j+1})}
              {\mathcal N_A(c\psi_\tau(T_{j+1}),T_{j+1})}
 ={}F(x_{j+1},jL-w)\\
 -F(x_{j+1},(j+1)L-w)+O_d(1)
 ={}B(x_{j+1})-B(x_j)+O_d(1).
\end{gathered}
\end{equation}
Indeed: both arguments of $F$ belong to the rectangle, so the difference of these terms equals $a_jL$, which is$B(x_{j+1})-B(x_j)$.
The first equality uses
\eqref{eq:power-profile-comparison}.

\smallskip

On an exceptional rectangle, the vertical increment of $F$ and
the increment of $B$ both lie in $[0,mL]$, by the slope bound in
\eqref{eq:power-profile-comparison}. The $o(N)$ exceptions therefore
contribute a total error $o(NL)$. Summing
\eqref{eq:power-pressure-transition}, dividing by $NL$, and using
\eqref{eq:4.19}, we obtain
\[
 \mathcal P_{A,\psi_\tau}(L,c)
 =\liminf_{N\to\infty}\frac{B(x_N)}{NL}+O_d(L^{-1}).
\]
The implied constant is independent of $c$, since it comes from
the uniform error in \eqref{eq:power-profile-comparison}. 

For $x_N\le x<x_{N+1}$, one has
\[
 0\le B(x)-B(x_N)\le mL,\qquad 0\le\tau x-NL<L.
\]
Therefore, $\left| \frac{B(x)}{\tau x} - \frac{B(x_N)}{NL} \right| = O(N^{-1})$ and replacing $x_N$ by $x$ does not change the liminf. This proves \eqref{eq:power-pressure-integral}.
\end{proof}

\begin{lem}\label{lem:power-rank-identity}
Suppose that $\tau\notin\mathcal E_{m,n}$. Then
\begin{equation}\label{eq:power-rank-identity}
 \frac{B(x)}{\tau}
 =\frac{\displaystyle\int_0^x r_{A,\tau}(\e^u)\,du
             -\log\mathcal N_A(\e^{-\tau x},\e^x)}{1+\tau}
   +o(x).
\end{equation}
\end{lem}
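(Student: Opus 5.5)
The plan is to exploit the chain rule along the ray $y=\tau x$, together with a pointwise identity between the two partial derivatives of $F$ away from the threshold set. The identity comes from the explicit form of $F$ given by Lemma~\ref{lem:power-maximizer}. Throughout I write $\Phi(u):=F(u,\tau u)$. This function is continuous and piecewise affine, so it is absolutely continuous. At points where $F$ is differentiable,
\[
 \Phi'(u)=\frac{\partial F}{\partial x}(u,\tau u)+\tau\frac{\partial F}{\partial y}(u,\tau u).
\]
By the definition of $r_{A,\tau}$ in \eqref{eq:power-rank} and of $g_i$ in \eqref{eq:power-normalized-minima}, we have $r_{A,\tau}(\e^u)=\#\{i:g_i(u)<0\}$. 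This is exactly the rank $r'$ of \eqref{eq:power-small-lattice} at the point $(u,\tau u)$.

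\emph{Step 1 (good points).} Suppose every $|g_i(u)|$ exceeds $H_d+1$. Then Lemma~\ref{lem:power-maximizer} applies in a neighbourhood of $(u,\tau u)$, and combined with \eqref{eq:power-exterior-profile} it gives, locally,
\[
 F(x,y)=-\log|a_{x,y}\omega_{\Gamma_{r'}}|=r'x-\max_{a}\{c_a+a(x+y)\}.
\]
This is the same computation as in the proof of Lemma~\ref{lem:power-pressure}. Hence, wherever the derivative exists, $\partial_xF=r'-a$ and $\partial_yF=-a$, so $\partial_xF=r'+\partial_yF$. Substituting into the chain rule gives
\[
 -(1+\tau)\,\frac{\partial F}{\partial y}(u,\tau u)=r_{A,\tau}(\e^u)-\Phi'(u).
\]

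\emph{Step 2 (bad points).} On the complementary set $\{u\in[0,x]:\min_i|g_i(u)|\le H_d+1\}$ I only need boundedness of all terms. We have $0\le-\partial_yF\le m$ by Lemma~\ref{lem:power-profile}. From \eqref{eq:power-exterior-profile}, every affine branch has $x$-slope $-b$ with $0\le b\le n$, so $|\partial_xF|\le n$. Finally $0\le r_{A,\tau}\le d$. By the last assertion of Lemma~\ref{lem:power-threshold}, which uses $\tau\notin\mathcal E_{m,n}$, this bad set has length $o(x)$. Its total contribution to either side is therefore $O_d(1)\cdot o(x)=o(x)$.

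\emph{Step 3 (integration).} Integrate the identity of Step 1 over $[0,x]$, absorbing the bad set by Step 2. Then use $\int_0^x\Phi'=F(x,\tau x)-F(0,0)$, which holds by absolute continuity. This yields
\[
 (1+\tau)\frac{B(x)}{\tau}=\int_0^x r_{A,\tau}(\e^u)\,du-F(x,\tau x)+O(1)+o(x).
\]
Now replace $F(x,\tau x)$ by $\log\mathcal N_A(\e^{-\tau x},\e^x)+O_d(1)$ using \eqref{eq:power-profile-comparison}, and divide by $1+\tau$; this gives \eqref{eq:power-rank-identity}.

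The main obstacle is Step 1. One must justify that near good points the maximum defining $F$ is attained by the single sublattice $\Gamma_{r'}$ throughout a full neighbourhood, rather than merely at the point, so that the partial derivatives are those of that one branch. My first approach is to use the Lipschitz continuity of the $g_i$ together with the margin of $1$ beyond $H_d$. This keeps all logarithmic minima of $a_{x,y}\Lambda_A$ outside $[-H_d,H_d]$ for $(x,y)$ near $(u,\tau u)$, exactly as in the rectangle argument of Lemma~\ref{lem:power-pressure}.
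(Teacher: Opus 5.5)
Your proposal is correct and follows essentially the same route as the paper: off the threshold set, which has length $o(x)$ by Lemma~\ref{lem:power-threshold}, the local form $F(x,y)=r'x-\max_a\{c_a+a(x+y)\}$ gives $\frac{d}{du}F(u,\tau u)=r_{A,\tau}(\e^u)+(1+\tau)\frac{\partial F}{\partial y}(u,\tau u)$, and integrating this with the slope bounds and \eqref{eq:power-profile-comparison} yields the identity. Your margin $H_d+1$ is a harmless, slightly more explicit way to keep $\Gamma_{r'}$ fixed on a full neighbourhood. The only detail you leave implicit is that the branch changes along lines $x+y=\mathrm{const}$ meet the ray $y=\tau u$ in isolated points, so the identity holds almost everywhere; the paper states this remark explicitly.
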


\begin{proof}
Outside of the exceptional set $E = \{u:\min_i|g_i(u)|\le H_d\}$,
take $r'$ and $\Gamma_{r'}$ from \eqref{eq:power-small-lattice}
at $(u,\tau u)$.
By definition of $r_{A,\tau}$ and \eqref{eq:power-normalized-minima},
\begin{equation}\label{eq:r'_through_r}
r'(u)=\#\{i:g_i(u)<0\}=r_{A,\tau}(\e^u),
\end{equation}
and
Lemma~\ref{lem:power-maximizer} gives
\[
F(u,\tau u)=-\log|a_{u,\tau u}\omega_{\Gamma_{r'}}|.
\]
The lattice $\Gamma_{r'}$ is locally fixed by
Lemma~\ref{lem:power-gaps}.
Its affine branch changes in \eqref{eq:power-exterior-profile} within a connected component of $[0, x] \setminus E$ occur at finite set of fixed values of $u+y$, so the line $y=\tau u$ meets them only at isolated points.
Away from these points, \eqref{eq:power-exterior-profile} gives
\begin{equation}\label{eq:equal-deriv}
\frac d{du}F(u,\tau u)
=r'(u)+(1+\tau)\frac{\partial F}{\partial y}(u,\tau u).
\end{equation}
By Lemma~\ref{lem:power-threshold}, $|E\cap[0,x]|=o(x)$.

By \eqref{eq:r'_through_r} and
\eqref{eq:power-profile-comparison},
\[
0\le r_{A,\tau}(\e^u)\le d,
\qquad
0\le-\frac{\partial F}{\partial y}(u,\tau u)\le m,
\]
with the assumed zero extension of the partial derivative.
Moreover, Lemma~\ref{lem:power-profile} and
\eqref{eq:power-exterior-profile} show that $u\mapsto F(u,\tau u)$
is locally absolutely continuous, with
\[
\left|\frac d{du}F(u,\tau u)\right|\le n+m\tau
\quad\text{almost everywhere}.
\]
Thus the error produced by changing the integration domain from $[0, x]$ to $[0, x] \setminus E$ is $O_{m,n,\tau}(|E\cap[0,x]|) = o(x)$.
Integrating \eqref{eq:equal-deriv} and using \eqref{eq:r'_through_r} gives
\[
\begin{aligned}
F(x,\tau x)-F(0,0)
&=\int_0^x\frac d{du}F(u,\tau u)\,du\\
&=\int_0^x r_{A,\tau}(\e^u)\,du
 -\frac{1+\tau}{\tau}B(x)+o(x),
\end{aligned}
\]
where the last equality uses the definition of $B$ and
Lemma~\ref{lem:power-threshold}.
Using \eqref{eq:power-profile-comparison} and absorbing the constant
$F(0,0)$ into $o(x)$, we obtain \eqref{eq:power-rank-identity}.
\end{proof}

\begin{proof}[Proof of Theorem~\ref{thm:near-powers}]
First let $\psi=\psi_\tau$. The error in
\eqref{eq:power-pressure-integral} is uniform in $c$, so
Theorem~\ref{thm:main} proves $c$-invariance and identifies the
common dimension with the liminf there. Formula
\eqref{eq:power-rank-identity}, with $T=\e^x$, gives \eqref{eq:power-rank-dimension}.

We next prove that this dimension is continuous in $\tau$ away
from $\mathcal E_{m,n}$. Give the normalized minima an additional
superscript indicating the exponent. Since \(a_{x,\sigma x} =\operatorname{diag}\bigl(\mathrm e^{(\sigma-\tau)x}I_m,I_n\bigr) a_{x,\tau x}\),
\begin{equation}\label{eq:minima-Lips}
 |g_i^\sigma(x)-g_i^\tau(x)|\le|\sigma-\tau|x.
\end{equation}
Moreover, \eqref{eq:2.1} and
\eqref{eq:power-normalized-minima} give
\[
\log\mathcal N_A(\e^{-\tau x},\e^x)
=\sum_{i=1}^d\max\{-g_i^\tau(x),0\},
\]
thus by \eqref{eq:minima-Lips}
 $$
 |\log\mathcal N_A(\e^{-\sigma x},\e^x)
       -\log\mathcal N_A(\e^{-\tau x},\e^x)|
 \le d|\sigma-\tau|x.
$$

The quantities $r'(u)$ for $\sigma$ and $\tau$, and therefore (see \eqref{eq:r'_through_r}) the quantities $r_{A,\sigma}(\e^u)$ and $r_{A,\tau}(\e^u)$, can differ only where
$\min_i|g_i^\tau(u)|\le|\sigma-\tau|x$, for $0\le u\le x$.
Since both lie in $[0,d]$, Lemma~\ref{lem:power-threshold}
 gives
\[
\lim_{\sigma\to\tau}\limsup_{x\to\infty}
\frac1x\left|
\int_0^x
\bigl(r_{A,\sigma}(\e^u)-r_{A,\tau}(\e^u)\bigr)\,du
\right|=0.
\]
Therefore, continuity follows directly from \eqref{eq:power-rank-dimension}.

Finally, suppose that $-\log\psi(T)/\log T\to\tau$. For every
$c>0$ and arbitrarily small $\varepsilon>0$, eventually
\[
 T^{-(\tau+\varepsilon)}\le c\psi(T)
       \le T^{-(\tau-\varepsilon)}.
\]
The corresponding inclusions of approximation sets, followed by
the continuity just proved as $\varepsilon\searrow0$, prove the "Moreover" assertion.
\end{proof}

\subsection{Uniform Diophantine exponents}

Proposition~\ref{prop:no-drop} shows that intersecting constant
rescalings of an approximating function causes no dimension drop.
The next lemma gives the analogous conclusion when power exponents increase to a fixed value; the construction follows the idea in Section \ref{subsec:no-drop}.

\begin{lem}\label{lem:no-drop-powers}
For every $A\in\Mat_{m,n}$ and every $\tau>0$,
\begin{equation}\label{eq:no-drop-powers}
 \dim_H\left(\bigcap_{0<\sigma<\tau}\Dhat_A(\psi_\sigma)\right)
 =\inf_{0<\sigma<\tau}\dim_H\Dhat_A(\psi_\sigma).
\end{equation}
\end{lem}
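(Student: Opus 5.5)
The inequality ``$\le$'' is immediate from monotonicity, since the intersection is contained in each $\Dhat_A(\psi_\sigma)$. For ``$\ge$'' we follow the scheme of Proposition~\ref{prop:no-drop}. Let $D:=\inf_{0<\sigma<\tau}\dim_H\Dhat_A(\psi_\sigma)$, assume $D>0$, and fix $0<s<D$. The plan is to construct a single continuous, strictly decreasing function $\vartheta$ with two properties. First, $\vartheta(T)\le T^{-\sigma}$ eventually for every $\sigma<\tau$, so that $\Dhat_A(\rho\vartheta)\subseteq\bigcap_{\sigma<\tau}\Dhat_A(\psi_\sigma)$ for any fixed $\rho$. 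Second, the lower estimate \eqref{eq:2.8} of Proposition~\ref{prop:lower} gives $\dim_H\Dhat_A(\rho\vartheta)>s$.

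For the construction, fix an increasing sequence $\sigma_p\nearrow\tau$. By Theorem~\ref{thm:uniform-pressure} (or Corollary~\ref{cor:endpoints}) applied to $\psi_{\sigma_p}$, and since $P_-(A,\psi_{\sigma_p})=\inf_c\dim_H\Dhat_A(c\psi_{\sigma_p})\ge D$ by Remark~\ref{rem:generic-c-invariance}-type monotonicity ($c\psi_{\sigma_p}\le\psi_{\sigma'}$ eventually for $\sigma_p>\sigma'$), we can choose $L$ large so that $\inf_c\mathcal P_{A,\psi_{\sigma_p}}(L,c)-\delta_L>s$ for all $p$. The errors $\varepsilon_L$ are uniform in $c$ and $A$, and we must also check that they are uniform in $\sigma_p$. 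Lower quasimultiplicativity holds with the common exponent $\tau$ and $R_0$ fixed for all $\sigma_p\le\tau$, so the constants $\eta=R_0$ and $c_*$ are uniform.

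The difficulty is that different $\sigma_p$ give different meshes $T_j^{(p)}=\e^{jL/\sigma_p}$. To handle this, use a common radius mesh instead. Index by the radius level $r_j=\e^{-jL}$ and build $\vartheta$ piecewise, with $\vartheta$ behaving like $\psi_{\sigma_{p_j}}$ on stage $j$. Concretely, we define the times $T_j$ recursively by $T_{j+1}=T_j\e^{L/\sigma_{p_j}}$ and set $\vartheta(T_j)=\e^{-jL}$. Take $a_j^{(p)}$ to be the normalized transition terms $\frac1L\log\bigl(\mathcal N_A(\e^{-jL},T_je^{L/\sigma_p})/\mathcal N_A(\e^{-(j+1)L},T_j\e^{L/\sigma_p})\bigr)$. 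These are bounded in $[0,m+\log K_d/L]$ by Lemma~\ref{lem:vertical}. We then apply the diagonalization Lemma~\ref{lem:diagonal} to get $p_j\to\infty$ slowly, with liminf average at least $s+\delta_L$. Interpolating as in Lemma~\ref{lem:interpolation} (piecewise power functions) gives a continuous, strictly decreasing $\vartheta$ satisfying \eqref{eq:2.5} and \eqref{eq:2.7} with uniform $\rho,\eta$. Since $\sigma_{p_j}\to\tau$ and $p_j$ grows slowly, $\log T_j\sim jL/\tau'$ with exponents eventually exceeding any $\sigma<\tau$. This gives $\vartheta(T)\le T^{-\sigma}$ eventually, proving the first property.

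The main obstacle is that $a_j^{(p)}$ as defined depends on the past choices through $T_j$; it is not the fixed sequence attached to $\psi_{\sigma_p}$, so Lemma~\ref{lem:diagonal} does not apply verbatim. I would handle this by comparing with the genuine $\psi_{\sigma_p}$-transitions via scaling. On a block where $p_j=p$ is constant, $(r_j,T_j)$ equals the $\psi_{\sigma_p}$-mesh point shifted by a constant factor in $T$, which is equivalent to a rescaling $c\psi_{\sigma_p}$. Because the pressure bound $\inf_c\mathcal P_{A,\psi_{\sigma_p}}(L,c)$ is uniform in $c$, each block's averages are controlled. We then run the block construction of Lemma~\ref{lem:diagonal} directly, using the lower bound \eqref{eq:4.26} started at the block's beginning, with the appropriate constant $c$, instead of invoking the lemma as a black box. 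The dimension bound then follows from Proposition~\ref{prop:lower} and Lemma~\ref{lem:tree} exactly as in \eqref{eq:4.40}, giving $\dim_H\bigcap_{\sigma<\tau}\Dhat_A(\psi_\sigma)\ge s$.
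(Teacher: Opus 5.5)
Your overall plan (diagonalize over $\sigma_p\nearrow\tau$, build one Cantor set, and check that it lies in every $\Dhat_A(\psi_\sigma)$) is the right one. However, the step you flag as the main obstacle is not actually resolved, and this is a genuine gap.

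\textbf{The gap.} With the common radius mesh $T_{j+1}=T_j\e^{L/\sigma_{p_j}}$, block $p$ runs along the curve $r=c_pT^{-\sigma_p}$ with $c_p=\e^{-K_pL}T_{K_p}^{\sigma_p}$. One computes $c_{p+1}=c_p^{\sigma_{p+1}/\sigma_p}\e^{K_{p+1}L(\sigma_{p+1}/\sigma_p-1)}$. So the sequence of transition terms on block $p$ is not fixed in advance: it is determined only after the starting index $K_p$ and all earlier choices are fixed, and different phases are not index shifts of one another. But \eqref{eq:4.26} in Lemma~\ref{lem:diagonal} comes precisely from choosing $K_p$ as a minimum point of the partial sums $B_p$ of a \emph{fixed} sequence. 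When the sequence depends on $K_p$, that choice is circular.

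Knowing that for each phase $c$ the liminf of the averages is at least $D-\varepsilon_L$, uniformly in $c$, controls only the limiting value. It says nothing about how long the averages from the block start can stay small. You would need $\sum_{j=K_p}^{N-1}a_j\ge(d_0-\delta_p)(N-K_p)-o(K_p)$ for all $N\ge K_p$, and nothing in the proposal provides this.

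Two smaller points:
\begin{itemize}
\item Your block sums are those of $\mathcal P_{A,c_p\psi_{\sigma_p}}(L,1)$, whose mesh differs in phase from that of $\mathcal P_{A,\psi_{\sigma_p}}(L,c_p)$. So $\inf_c\mathcal P_{A,\psi_{\sigma_p}}(L,c)$ does not literally bound them.
\item The bound $\inf_c\dim_H\Dhat_A(c\psi_{\sigma_p})\ge D$ follows from $c\psi_{\sigma_p}\ge\psi_{\sigma''}$ eventually for $\sigma_p<\sigma''<\tau$. The comparison you wrote points the wrong way and only gives an upper bound.
\end{itemize}

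\textbf{The missing idea.} Fix a common \emph{denominator} mesh $T_j=\e^{jL}$ for all $p$ and vary the radii instead: $R_j=T_j^{-\sigma_{p_j}}$. Then $a_j^{(p)}=\frac1L\log\bigl(\mathcal N_A(T_j^{-\sigma_p},T_{j+1})/\mathcal N_A(T_{j+1}^{-\sigma_p},T_{j+1})\bigr)$ is independent of the history. Proposition~\ref{prop:upper} for $\psi_{\sigma_p}$ on this mesh gives directly $\liminf_N\frac1N\sum_{j<N}a_j^{(p)}\ge\sigma_pD-L^{-1}\log C_d$, with no pressure theorem or uniformity in $c$ needed. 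Lemma~\ref{lem:diagonal} then applies verbatim.

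Since $p_j$ is nondecreasing, $R_{j+1}\le T_{j+1}^{-\sigma_{p_j}}$. Monotonicity \eqref{eq:4.2} and Lemma~\ref{lem:lower-transition} then give $\log b_j^{(k)}\ge La_j^{(p_j)}+\log c_*(L)$.

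The price is that $R_j/R_{j+1}$ is unbounded at block switches, so \eqref{eq:2.5} fails and Proposition~\ref{prop:lower} cannot be used as a black box. Instead:
\begin{itemize}
\item use Lemma~\ref{lem:lower-transition}, which does not need \eqref{eq:2.5};
\item use Lemma~\ref{lem:tree}, whose hypothesis \eqref{eq:tree-log-ratio} holds since $\sigma_{p_j}\to\tau$;
\item check $\mathbf G_\infty^{(k)}\subseteq\bigcap_{\sigma<\tau}\Dhat_A(\psi_\sigma)$ directly from $R_j\le T_{j+1}^{-\sigma}$ for large $j$.
\end{itemize}
The bound $-\log R_N\le NL\tau$ costs a factor $\sigma_1/\tau$, which is removed by letting $L\to\infty$ and then $\sigma_1\nearrow\tau$.
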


\begin{proof}
Put $D:=\inf_{0<\sigma<\tau}\dim_H\Dhat_A(\psi_\sigma)$.
The left-hand side does not exceed the right-hand side by inclusion, so we only need to prove the reverse bound when $D>0$.

\smallskip
\noindent\emph{Step 1: choose transition averages with a common lower bound.}
Fix $L>0$ and $0<\sigma_1<\tau$, choose a sequence $\sigma_p\nearrow\tau$,
and put $T_j=\e^{jL}$ and
\[
 a_j^{(p)}:=\frac1L\log
 \frac{\mathcal N_A(T_j^{-\sigma_p},T_{j+1})}
      {\mathcal N_A(T_{j+1}^{-\sigma_p},T_{j+1})},
 \qquad j\ge0.
\]
By Lemma~\ref{lem:vertical},
$0\le a_j^{(p)}\le m\tau+L^{-1}\log K_d$.
Proposition~\ref{prop:upper} gives \eqref{eq:4.23} with
$d_0=\sigma_1D-L^{-1}\log C_d$, after changing each power on a
bounded initial interval to meet the smallness assumption in
Section~\ref{sec:CD}. Lemma~\ref{lem:diagonal} therefore gives
a nondecreasing sequence $p_j\to\infty$ such that
\begin{equation}\label{eq:exponent-diagonal-average}
 \liminf_{N\to\infty}\frac1N
       \sum_{j=0}^{N-1}a_j^{(p_j)}
 \ge \sigma_1D-\frac{\log C_d}{L}.
\end{equation}

\smallskip
\noindent\emph{Step 2: construct a Cantor set with the required dimension bound.}
Put $R_j:=T_j^{-\sigma_{p_j}}$ and choose a continuous strictly
decreasing $\varphi$ with $\varphi(T_j)=R_j$ for all sufficiently
large $j$ and $\varphi(1)<1/12$.
We use Lemmata~\ref{lem:lower-transition} and~\ref{lem:tree}
with $\varphi$ in place of $\psi$ and $k$ sufficiently large,
retaining their notation $\mathbf G_\infty^{(k)}$ and $b_j^{(k)}$. Since $p_j$ is nondecreasing,
$R_j/R_{j+1}\ge\e^{\sigma_1L}$.
Put $\eta_L:=\min\{\e^L,\e^{\sigma_1L}\}>1$; then \eqref{eq:2.7} holds for large enough $j$. Put also $c_*(L):=c_d\left(\frac{\eta_L-1}{\eta_L}\right)^d$. Note that Lemma \ref{lem:lower-transition} does not require \eqref{eq:2.5} to hold, so we can use it in the estimate below. One  has
$$
\begin{gathered}
 b_j^{(k)}
 \underset{\eqref{eq:3.25}}{\ge}
 c_*(L)
 \frac{\mathcal N_A(R_j,T_{j+1})}
      {\mathcal N_A(R_{j+1},T_{j+1})}
\underset{\substack{
      \eqref{eq:4.2}, \\
      R_{j+1}\le T_{j+1}^{-\sigma_{p_j}}
   }}{\ge}
 c_*(L)
 \frac{\mathcal N_A(T_{j}^{-\sigma_{p_j}},T_{j+1})}
      {\mathcal N_A(T_{j+1}^{-\sigma_{p_j}},T_{j+1})}.
\end{gathered}
$$
Taking logarithms and retaining the definition of $a_j^{(p)}$ gives
\[
 \log b_j^{(k)}
 \ge La_j^{(p_j)}+\log c_*(L).
\]

Moreover,
\[ \begin{gathered}
\frac{- \log \varphi(T_{j+1})}{- \log \varphi(T_{j})} = \frac{-\log R_{j+1}}{-\log R_j}
 =\frac{(j+1)\sigma_{p_{j+1}}}{j\sigma_{p_j}}
 \longrightarrow1, 
\end{gathered} \]
so \eqref{eq:tree-log-ratio} holds. Note also that $$-\log \varphi(T_N) =  -\log R_N=NL\sigma_{p_N} \le NL \tau.$$ Lemma~\ref{lem:tree} and \eqref{eq:exponent-diagonal-average}
therefore give
\[
 \dim_H\mathbf G_\infty^{(k)}
 \ge \frac{\sigma_1}{\tau}D
 -\frac{\log C_d-\log c_*(L)}{\tau L}.
\]

\smallskip
\noindent\emph{Step 3: verify inclusion in every set in the intersection.}
Fix $0<\sigma<\tau$. Since $j\sigma_{p_j}/(j+1)\to\tau$,
one has $R_j\le T_{j+1}^{-\sigma}$ for all sufficiently large $j$.
For $\bb\in\mathbf G_\infty^{(k)}$ and $T_j\le T<T_{j+1}$,
its level-$j$ ball provides $\q_j\in\Z^n$ satisfying
\[
 |\q_j|\le T_j\le T,
 \qquad
 |A\q_j-\bb|_{\Z^m}\le R_j\le T_{j+1}^{-\sigma}\le T^{-\sigma}.
\]
Thus $\mathbf G_\infty^{(k)}\subseteq
\bigcap_{0<\sigma<\tau}\Dhat_A(\psi_\sigma)$.
Letting $L\to\infty$, so that $c_*(L)\to c_d>0$, and then
$\sigma_1\nearrow\tau$ proves \eqref{eq:no-drop-powers}.
\end{proof}

\begin{proof}[Proof of Proposition~\ref{prop:uniform-exponent-levels}]
By \eqref{eq:power-pressure-ordering}, applied with
$\psi(T)=T^{-1}$, the left limits of $P_-$ and $P_+$ at every $\tau>0$
coincide, as do their right limits. The same limits are obtained
from $\dim_H\Dhat_A(\psi_\sigma)$, which lies between them.
Put
\[
 G_\tau:=\{\bb:\widehat\omega(A,\bb)\ge\tau\},
 \qquad
 H_\tau:=\{\bb:\widehat\omega(A,\bb)>\tau\}.
\]
The identities
\[
 G_\tau=\bigcap_{0<\sigma<\tau}\Dhat_A(\psi_\sigma),
 \qquad
 H_\tau=\bigcup_{\sigma>\tau}\Dhat_A(\psi_\sigma)
\]
hold, and both intersections and unions may be restricted to
rational $\sigma$. Lemma~\ref{lem:no-drop-powers} and countable stability, respectively,
give
\[
 \dim_H G_\tau
 =\inf_{0<\sigma<\tau}\dim_H\Dhat_A(\psi_\sigma),
 \qquad
 \dim_H H_\tau
 =\sup_{\sigma>\tau}\dim_H\Dhat_A(\psi_\sigma).
\]
This proves \eqref{eq:exponent-superlevel-dimension} and \eqref{eq:exponent-strict-superlevel-dimension}.

Suppose $E_\tau:=G_\tau\setminus H_\tau$ is nonempty and choose
$\bb_0\in E_\tau$. One can directly check that $\bb_0+H_\tau\subseteq E_\tau$, thus
$\dim_H H_\tau\le\dim_H E_\tau$. Since
$G_\tau=E_\tau\cup H_\tau$, countable stability proves
$\dim_H E_\tau=\dim_H G_\tau$, as claimed.
\end{proof}

\begin{proof}[Proof of Corollary~\ref{cor:noncritical-exponent-levels}]
By Theorem~\ref{thm:near-powers}, the common dimension for powers
is continuous at every noncritical exponent.
Thus both one-sided limits in
Proposition~\ref{prop:uniform-exponent-levels} equal
$\dim_H\Dhat_A(\psi_\tau)$, completing the proof.
\end{proof}

\section{Proofs of remaining theorems.}\label{sec:remaining-theorems}

\subsection{Proof of Theorem \ref{thm:zero-full}}
\begin{lem}\label{lem:1.9}
Let $\mathcal E\subseteq H_A$ be a $\Gamma_A$-invariant Borel set, and let $f$ be a dimension function. Then $\mathcal E$ has either zero or locally full $\mathcal H^f$-measure in $H_A$.
\end{lem}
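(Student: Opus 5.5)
The plan is to reduce to the case of positive measure and then use the density of $\Gamma_A$ in $H_A$ together with translation invariance of Hausdorff measure. Translations of $\T^m$ are isometries for the quotient supremum metric, so $\mathcal H^f$ is translation invariant and $\mathcal H^f(\mathcal E\cap B)$ is unchanged when both $\mathcal E$ and $B$ are translated by an element of $\Gamma_A$. Suppose $\mathcal H^f(\mathcal E\cap B_0)>0$ for some relatively open ball $B_0\subseteq H_A$; we must show $\mathcal H^f(B\setminus\mathcal E)=0$ for every relatively open ball $B\subseteq H_A$.

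\emph{Step 1: the trivial regimes.} If $\mathcal H^f(B)=0$ for balls in $H_A$ there is nothing to prove. Since $H_A$ is a compact Lie group, it is a finite union of translates of a subtorus $H_A^0$ of dimension $k=\dim H_A$. If $\mathcal H^f$ is infinite on every ball, we need a separate argument; otherwise $\mathcal H^f$ restricted to $H_A$ is a locally finite translation-invariant Borel measure on $H_A$ (if $k=0$, $H_A$ is finite and $\Gamma_A=H_A$, so $\mathcal E$ is empty or all of $H_A$). A locally finite translation-invariant measure on a compact group is a multiple of Haar measure $\mathcal L_A$, and this reduces the claim to the Haar measure case.

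\emph{Step 2: Haar case via density points.} For Haar measure, the standard argument applies. By the Lebesgue density theorem on $H_A^0\cong\T^k$ (with the induced metric comparable to Euclidean, balls being doubling), $\mathcal E\cap B_0$ has a density point $x$. By density of $\Gamma_A$ in $H_A$, every point of $H_A$ is approximated by translates $x+\gamma$. Moreover, $\mathcal E$ is $\Gamma_A$-invariant, so the density of $\mathcal E$ at $x$ transfers to $x+\gamma$ at the same scales. If $\mathcal L_A(B\setminus\mathcal E)>0$, take a density point $y$ of $B\setminus\mathcal E$. Choosing a small radius $r$ at which $\mathcal E$ has density $>1-\varepsilon$ near $x$, and $\gamma$ with $|x+\gamma-y|\ll\varepsilon r$, gives a contradiction with $y$ having density $>1-\varepsilon$ of the complement. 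Equivalently, the function $\gamma\mapsto\mathcal L_A((\mathcal E+\gamma)\triangle\mathcal E)$ is continuous on $H_A$ and vanishes on the dense set $\Gamma_A$, so it vanishes identically. Hence $\mathcal E$ is $\mathcal L_A$-a.e.\ invariant under all of $H_A$, which makes it null or conull by ergodicity of the translation action on itself. Conullity in $H_A$ gives $\mathcal H^f(B\setminus\mathcal E)=0$ for every ball.

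\emph{Step 3: the infinite case, the main obstacle.} When $\mathcal H^f$ is infinite (non-$\sigma$-finite) on $H_A$-balls, Haar techniques fail. Here I would invoke the mass transference / translation-invariance zero–full principle for invariant sets, in the spirit of \cite[Theorem~1]{BDK05}, applied inside $H_A^0\cong\T^k$ with the dense subgroup $\Gamma_A\cap H_A^0$ (which is dense in $H_A^0$, by finite index). The key is that $\mathcal H^f(B)=\infty$ for every ball, so locally full means $\mathcal H^f(\mathcal E\cap B)=\infty$. Given $\mathcal H^f(\mathcal E\cap B_0)>0$, for any ball $B$ choose finitely many $\gamma\in\Gamma_A$ with $B_0+\gamma$ covering a set inside $B$. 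Then use Frostman-type subsets: $\mathcal E\cap B_0$ contains a compact $K$ with $0<\mathcal H^f(K)<\infty$ (Howroyd/Rogers for Borel sets), and infinitely many disjoint translates $K+\gamma\subseteq\mathcal E\cap B$ (disjointness arranged by choosing $\gamma$ in a small neighbourhood, since $K$ has nonempty complement structure; if needed shrink $K$). Additivity then yields $\mathcal H^f(\mathcal E\cap B)=\infty=\mathcal H^f(B)$. Making the disjoint translates rigorous, by shrinking $K$ to small diameter and using density of $\Gamma_A$ to place translates in disjoint subballs of $B$, is the step I expect to need the most care.
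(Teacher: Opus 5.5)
\textbf{Gap: Step 3.} Steps 1 and 2 are fine. When $0<\mathcal H^f(H_A)<\infty$, you identify $\mathcal H^f|_{H_A}$ with a multiple of $\mathcal L_A$ and use continuity of $\gamma\mapsto\mathcal L_A((\mathcal E+\gamma)\triangle\mathcal E)$. That is a valid substitute for the paper's use of the fact that a finite measure invariant under the dense subgroup $\Gamma_A$ is $H_A$-invariant. The problem is Step~3, the case $\mathcal H^f(H_A)=\infty$.

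First, your stated target $\mathcal H^f(B\setminus\mathcal E)=0$ is false there in general. Locally full only asks for $\mathcal H^f(\mathcal E\cap B)=\infty$, and the complement can also have infinite measure. For example, take $m=n=1$, $A$ irrational, $f(r)=r^{1/2}$, and $\mathcal E=\Gamma_A+C$ with $C$ the middle-thirds Cantor set. Then $\mathcal E$ is locally full, but it is Lebesgue-null, so $\mathcal H^{1/2}(B\setminus\mathcal E)=\infty$ for every ball $B$.

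Second, your mechanism for reaching $\infty$ does not work as described. Pairwise disjointness of the $K+\gamma_i$ means $\gamma_i-\gamma_j\notin K-K$. If $K-K$ contains a neighbourhood of $0$ (e.g.\ $C-C=[-1,1]$), the $\gamma_i$ are uniformly separated, so only finitely many translates fit into the bounded ball $B$. Shrinking $K$ to a piece of diameter $\rho$ gains $\asymp\rho^{-h}$ translates, with $h=\dim H_A$. But pigeonholing only guarantees a piece of measure $\gg\rho^{h}\mathcal H^f(K)$. So you end up with $\mathcal H^f(\mathcal E\cap B)\gg\mathcal H^f(K)$, which is positivity, not infiniteness. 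Closing this route would need a density bound $\mathcal H^f(K\cap B(x,r))\ge\frac12 f(r)$ along $r\to0$, together with $f(r)/r^h\to\infty$. Neither is in your proposal, and the subset theorem and the zero--infinity law you gesture at are not actually adapted to $\Gamma_A\cap H_A^0$.

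\textbf{Missing idea.} Split according to $\mathcal H^f(\mathcal E)$ instead of $\mathcal H^f(H_A)$, as the paper does.
\begin{itemize}
\item If $\mathcal H^f(\mathcal E)=\infty$, take any relatively open ball $B$. Density of $\Gamma_A$ and compactness give $\gamma_1,\ldots,\gamma_N\in\Gamma_A$ with $H_A\subseteq\bigcup_i(B+\gamma_i)$; note these are translates of $B$, not of $B_0$. Invariance gives $\mathcal H^f(\mathcal E\cap(B+\gamma_i))=\mathcal H^f(\mathcal E\cap B)$, hence $\infty\le N\,\mathcal H^f(\mathcal E\cap B)$.
\item If $0<\mathcal H^f(\mathcal E)<\infty$, then $F\mapsto\mathcal H^f(F\cap\mathcal E)$ is a finite $\Gamma_A$-invariant, hence $H_A$-invariant, measure. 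So it is a positive multiple of $\mathcal L_A$, and $\mathcal E$ is Haar-conull. Thus $\mathcal H^h(\mathcal E)>0$, which together with $\mathcal H^f(\mathcal E)<\infty$ forces $\liminf_{r\to0}f(r)/r^h<\infty$. Choose $r_k\to0$ with $f(r_k)\le Cr_k^h$ and cover $H_A$ by $O(r_k^{-h})$ balls of radius $r_k/2$; this gives $\mathcal H^f(H_A)<\infty$.
\end{itemize}
So in your Step~3 situation a finite positive $\mathcal H^f(\mathcal E)$ never occurs, and the remaining case is the trivial covering argument. No Frostman-type subsets are needed.
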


\begin{proof}
Put $h:=\dim H_A$. If $h=0$, then $H_A$ is finite (a zero-dimensional manifold is discrete and a compact discrete set is finite). Thus $\mathcal H^f(H_A)=0$. We may therefore assume that $h>0$ and $\mathcal H^f(\mathcal E)>0$.

If $\mathcal H^f(\mathcal E)=\infty$, the translates by $\Gamma_A$ of any nonempty relatively open ball in
$H_A$ cover $H_A$, and compactness gives a finite subcover. Since
$\mathcal E$ is $\Gamma_A$-invariant, it follows that
\[
\mathcal H^f(\mathcal E\cap B)=\infty = \mathcal H^f(B)
\]
for every such ball $B$.

Suppose now that $0<\mathcal H^f(\mathcal E)<\infty$; it remains to prove that $\mathcal E$ has locally full $\mathcal H^f$-measure.
Consider the measure
\begin{equation}\label{eq:measure_induced_E}
 F\longmapsto \mathcal H^f(F\cap\mathcal E);
\end{equation}
easy to see that it is well-defined finite Borel measure on $H_A$. This measure is invariant under the dense subgroup $\Gamma_A$ of $H_A$ and thus (see \cite[Lemma 1.1.3]{KSS02}) under $H_A$ itself. Therefore, it is a constant multiple of Haar measure on $H_A$. Since it is concentrated on $\mathcal E$, $\mathcal E$ is Haar-conull.

It is known that compact $h$-dimensional Riemannian manifolds are $h$-Ahlfors regular. The left-invariant Riemannian metric on $H_A$ induced from $\T^m$ is locally bi-Lipschitz equivalent to supremum metric and the corresponding Riemannian volume is left-invariant, thus up to a constant coincides with the Haar measure. Therefore $(H_A, \dist_{\infty}, \mathcal L_A)$ is $h$-Ahlfors regular.
By $h$-Ahlfors regularity,
$0<\mathcal H^h(H_A)<\infty$.
Since translations are isometries, the restriction of
$\mathcal H^h$ to $H_A$ is translation-invariant, and hence, by uniqueness of Haar measure, is a positive constant multiple of $\mathcal L_A$, and therefore of the measure \eqref{eq:measure_induced_E}. We conclude that $\mathcal H^h(\mathcal E)>0$. 

If $\liminf_{r\to0}f(r)/r^h=\infty$, then, for every $C>0$, the inequality $f(r)\ge Cr^h$ for all sufficiently small $r$ gives
\[
 \mathcal H^f(\mathcal E)\ge C\mathcal H^h(\mathcal E),
\]
contradicting finiteness. Hence there are $C>0$ and $r_k\searrow0$ such that $f(r_k)\le Cr_k^h$.

For each $r_k$, choose a maximal $r_k/2$-separated set in $H_A$. The balls of radius $r_k/2$ cover $H_A$, and the open balls of radius $r_k/4$ are pairwise disjoint. Lower Ahlfors regularity bounds their number by $O(r_k^{-h})$, and therefore
\[
 \mathcal H^f_{r_k}(H_A)\ll r_k^{-h}f(r_k)\le C.
\]
It follows that $0<\mathcal H^f(H_A)<\infty$.
The restriction of $\mathcal H^f$ to $H_A$ is therefore a positive finite
translation-invariant measure, and hence a constant multiple of
$\mathcal L_A$. Since $\mathcal E$ is Haar-conull, it is also
$\mathcal H^f$-conull. This proves the lemma.
\end{proof}

All the sets under consideration are Borel. Indeed, 
\[
 \Dhat_A(\eta)=\bigcup_{N\in\N}\bigcap_{T\ge N}\bigcup_{\substack{\q\in\Z^n\\|\q|\le T}}
 \{\bb\in\T^m:|A\q-\bb|_{\Z^m}\le\eta(T)\}.
\]
Since $\Dhat_A(\widetilde\psi_{c_1}) \subseteq
\Dhat_A(\widetilde\psi_{c_2})$ when $c_1<c_2$, we can replace the
intersection and union over all $c>0$ in \eqref{eq:1.14} by countable
intersection and union, proving that $\Sing_A(\psi)$ and
$ U_A(\psi)$ are also Borel.

If $\dim H_A=0$, then $H_A$ is finite and the density of $\Gamma_A$ gives
$\Gamma_A=H_A$.  In this case $\Dhat_A(\widetilde\psi_c)=H_A$ for every $c>0$, and all the assertions
are immediate.  We may therefore assume that $\dim H_A>0$.

Let us first prove part \ref{enu:zero-full-amplification}. Fix a
dimension function $f$ and put
\[
 h_0:=\inf\left\{c>0:
 \mathcal H^f\bigl(\Dhat_A(\widetilde\psi_c)\bigr)>0\right\},
\]
where the infimum of the empty set is $+\infty$. By monotonicity,
\[
 \mathcal H^f\bigl(\Dhat_A(\widetilde\psi_b)\bigr)=0
 \qquad(0<b<h_0).
\]
Now let $b>h_0$. Choose $a$ such that $0<a<b$ and
\[
 \mathcal H^f\bigl(\Dhat_A(\widetilde\psi_a)\bigr)>0.
\]
Pick $\boldsymbol\gamma = A {\bf r} \in\Gamma_A$. Suppose $\bb \in \Dhat_A(\widetilde\psi_a) + \boldsymbol\gamma$; then $\bb - A {\bf r} \in \Dhat_A(\widetilde\psi_a)$, and by definition (it requires solvability of \eqref{eq:1.2} with $\psi$ replaced by $\widetilde\psi_a$, $\bb$ replaced by $\bb - A {\bf r}$ and $T$ replaced by $\frac{a}{b}T$) the system 
\begin{equation}\label{eq:1.2_prime}
 |A(\q + {\bf r})-\p-\bb|\le a\psi\left(\frac{T}{b}\right),\qquad |\q|\le \frac{a}{b}T
\end{equation}
has a solution $(\p,\q)\in\Z^m\times\Z^n$ for any $T$ large enough. Since $a < b$, increasing $T$ if necessary, we see that \eqref{eq:1.2_prime} yields solvability of
\begin{equation*}
 |A(\q + {\bf r})-\p-\bb|\le b\psi\left(\frac{T}{b}\right),\qquad |\q + {\bf r}|\le T
\end{equation*}
in $(\p,\q)\in\Z^m\times\Z^n$ for any $T$ large enough. By definition it means that $\bb \in \Dhat_A(\widetilde\psi_b)$, hence

\begin{equation*}\label{eq:orbit-amplification}
 \Dhat_A(\widetilde\psi_a)+\boldsymbol\gamma\subseteq \Dhat_A(\widetilde\psi_b).
\end{equation*}

Put
$$
 \mathcal E:=\bigcup_{\boldsymbol\gamma\in\Gamma_A}
 \bigl(\Dhat_A(\widetilde\psi_a)+\boldsymbol\gamma\bigr).
$$
This is a $\Gamma_A$-invariant Borel set, and
$\mathcal E\subseteq \Dhat_A(\widetilde\psi_b)$. 

Since $\mathbf0\in\Gamma_A$, the set $\mathcal E$ contains
$\Dhat_A(\widetilde\psi_a)$, and hence
$\mathcal H^f(\mathcal E)>0$. Lemma \ref{lem:1.9} shows that
$\mathcal E$, and hence $\Dhat_A(\widetilde\psi_b)$, has locally full
$\mathcal H^f$-measure in $H_A$. A finite cover of $H_A$ by relative
balls then gives
\[
 \mathcal H^f\bigl(\Dhat_A(\widetilde\psi_b)\bigr)
 =\mathcal H^f(H_A).
\]
Applying the same argument directly to arbitrary $0<a<b$, with
$f(r)=r^{\dim H_A}$, and using the proportionality of
$\mathcal H^{\dim H_A}$ and $\mathcal L_A$ established in the proof of
Lemma \ref{lem:1.9} gives \eqref{eq:Haar-amplification}, completing part
\ref{enu:zero-full-amplification}.

Now we prove part \ref{enu:zero-full-subgroups}. %
Let $a,b>0$, and suppose $\bb_1 \in \Dhat_A(\widetilde\psi_a), \ \bb_2 \in \Dhat_A(\widetilde\psi_b)$. The systems of inequalities 
\begin{equation*}
 |A\q_1-\p_1-\bb_1|\le a\psi\left(\frac{T}{a+b}\right),\qquad |\q_1|\le \frac{a}{a+b}T
\end{equation*}
and 
\begin{equation*}
 |A\q_2-\p_2-\bb_2|\le b\psi\left(\frac{T}{a+b}\right),\qquad |\q_2|\le \frac{b}{a+b}T
\end{equation*}
have solutions $(\p_1,\q_1), (\p_2,\q_2) \in\Z^m\times\Z^n$ for any $T$ large enough, thus 
\begin{equation*}
 |A(\q_1+ \q_2)-(\p_1+\p_2)-(\bb_1+ \bb_2)|\le (a+b)\psi\left(\frac{T}{a+b}\right),\qquad |\q_1 + \q_2|\le T
\end{equation*}
and 
\begin{equation*}
 |A(\q_1- \q_2)-(\p_1-\p_2)-(\bb_1- \bb_2)|\le (a+b)\psi\left(\frac{T}{a+b}\right),\qquad |\q_1 - \q_2|\le T
\end{equation*}
which implies that $\bb_1 \pm \bb_2 \in \Dhat_A(\widetilde\psi_{a+b})$ and
\begin{equation}\label{eq:1.19}
 \Dhat_A(\widetilde\psi_a)\pm
 \Dhat_A(\widetilde\psi_b)
 \subseteq\Dhat_A(\widetilde\psi_{a+b}).
\end{equation}
It follows from \eqref{eq:1.14} and \eqref{eq:1.19} that $\Sing_A(\psi)$ and
$ U_A(\psi)$ are Borel subgroups of $H_A$ containing $\Gamma_A$.

Both sets are therefore $\Gamma_A$-invariant, and Lemma \ref{lem:1.9} gives the required zero-full alternative.

It remains to prove part \ref{enu:zero-full-exact}. The set
$\Dhat_A(\widetilde\psi_a)$ is Haar-conull, so
$\Dhat_A(\widetilde\psi_a)-\Dhat_A(\widetilde\psi_a)=H_A$. Indeed: let
$\boldsymbol\gamma \in H_A$. The sets
$\Dhat_A(\widetilde\psi_a)$ and
$\Dhat_A(\widetilde\psi_a)+\boldsymbol\gamma$ are both conull, hence
their intersection is nonempty and contains a vector $\bb_1$. Since
$\bb_1 \in \Dhat_A(\widetilde\psi_a)+\boldsymbol\gamma$, one has
$\boldsymbol\gamma = \bb_1 -\bb_2$ for some
$\bb_2 \in \Dhat_A(\widetilde\psi_a)$. Equation \eqref{eq:1.19} gives
\[
 H_A=\Dhat_A(\widetilde\psi_a)-\Dhat_A(\widetilde\psi_a)
 \subseteq \Dhat_A(\widetilde\psi_{2a}),
\]
which proves \eqref{eq:exact-amplification}.

\subsection{Proofs of Propositions \ref{prop:endpoint-representations} and
\ref{prop:dimension-continuity}}

\begin{proof}[Proof of Proposition \ref{prop:endpoint-representations}]
We first prove part \ref{enu:upper-intersection}.
If $\bb\in U_A(\psi)$, then
$\bb\in\Dhat_A(c\psi)$ for some $c>0$ by
\eqref{eq:classical-endpoints}. For every $\varphi$ satisfying $\psi=o(\varphi)$ one has $c\psi(T)\le\varphi(T)$ for all
sufficiently large $T$, and hence $\bb\in\Dhat_A(\varphi)$.

Conversely, put
\[
 \rho_{\bb}(T):=
 \min_{|\q|\le T}|A\q-\bb|_{\Z^m}.
\]
If $\bb\notin U_A(\psi)$, then, again by
\eqref{eq:classical-endpoints}, $\rho_{\bb}(T)/\psi(T)$ is not eventually
bounded. Choosing a sufficiently sparse sequence $\{ T_k \}$, we can guarantee that
\[
 \rho_{\bb}(T_k)>k\psi(T_k),
 \qquad
 \psi(T_{k+1})<{\tfrac14}\psi(T_k).
\]
Since $(k+1)/k\le2$, Lemma \ref{lem:interpolation} applies with
$c_k=k$ and $\delta=1/2$. It gives a lower quasimultiplicative
function $\varphi$ such that $\psi=o(\varphi)$ and
\[
 \varphi(T_k)=k\psi(T_k)<\rho_{\bb}(T_k).
\]
Hence $\bb\notin\Dhat_A(\varphi)$, proving \eqref{eq:1.28}, including
the assertion about the restricted intersection.

We now prove part \ref{enu:lower-union}. If $\vartheta=o(\psi)$, then
$\vartheta(T)\le c\psi(T)$ for every $c>0$ and all sufficiently large
$T$. Hence \eqref{eq:classical-endpoints} gives
\[
 \Dhat_A(\vartheta)
 \subseteq\bigcap_{c>0}\Dhat_A(c\psi)=\Sing_A(\psi).
\]

Conversely, let $\bb\in\Sing_A(\psi)$; then $\frac{\rho_{\bb}(T)}{\psi(T)}\to 0$.
Choose $T_k\to\infty$ so that
\[
 \rho_{\bb}(T)\le\frac{\psi(T)}{k+1}
 \quad\text{for every }T\ge T_k,
 \qquad
 \psi(T_{k+1})<\tfrac14\psi(T_k).
\]
Lemma \ref{lem:interpolation}, with $c_k=k$ and $\delta=1/2$, gives
a lower quasimultiplicative function $\vartheta$ such that
$\vartheta(T_k)=\psi(T_k)/k$ and $\vartheta=o(\psi)$.
Since $\psi/\vartheta$ is nondecreasing, for $T\in[T_k,T_{k+1}]$ one has
\[
 \rho_{\bb}(T)
 \le\frac{\psi(T)}{k+1}
 \le\vartheta(T).
\]
Thus $\bb\in\Dhat_A(\vartheta)$, while
$\vartheta=o(\psi)$, proving \eqref{eq:sing-union}.
Since $\vartheta$ is lower quasimultiplicative, the restricted union
is unchanged as well.

\end{proof}

\begin{proof}[Proof of Proposition \ref{prop:dimension-continuity}]
Proposition \ref{prop:endpoint-representations} gives
\[
\begin{split}
 \sup_{\vartheta=o(\psi)}\dim_H\Dhat_A(\vartheta)
 \le\dim_H\Sing_A(\psi), \quad 
 \dim_H U_A(\psi)
\le\inf_{\psi=o(\varphi)}\dim_H\Dhat_A(\varphi).
\end{split}
\]
We prove the reverse inequalities using lower quasimultiplicative
functions, which will also prove the last assertion. Once the
endpoint equalities are established, the two intermediate
inequalities in \eqref{eq:dimension-continuity} follow from
\[
 \Sing_A(\psi)\subseteq\Dhat_A(\psi)\subseteq U_A(\psi).
\]

For the first equality, the case $\dim_H\Sing_A(\psi)=0$ is trivial.
Otherwise, fix $0<s<\dim_H\Sing_A(\psi)$.
Let $\vartheta$ be the lower quasimultiplicative function constructed in the proof of Proposition \ref{prop:no-drop}; for convenience, we multiply it by the constant $\rho_L$. Due to \eqref{eq:4.33} and \eqref{eq:4.40}, $\vartheta=o(\psi)$ and
\[
 \dim_H\Dhat_A(\vartheta)>s.
\]
Letting $s\nearrow\dim_H\Sing_A(\psi)$ proves the reverse inequality and finished the proof of the first equality, including the version for lower quasimultiplicative functions.

The proof of the last equality is similar to the proof of Proposition \ref{prop:no-drop}. Fix $\varepsilon>0$. By Theorem \ref{thm:main}, $$\dim_H U_A(\psi) = \sup_{c>0}\dim_H\Dhat_A(c\psi).$$ By Corollary \ref{cor:endpoints} then, ${\sup_{c>0}\mathcal P_{A,\psi}(L,c)}
 -\dim_H U_A(\psi) \to 0$ as $L \to \infty$.  Therefore, we can choose $L$ so large that
\begin{equation}\label{eq:upper-continuity-choice}
 \sup_{c>0}\mathcal P_{A,\psi}(L,c)+\frac{\log C_d}{L}<\dim_H U_A(\psi)+\varepsilon, 
\end{equation}
where $C_d$ was introduced in Proposition \ref{prop:upper}.
Choose $T_j$ as in \eqref{eq:1.11} and put
\[
 a_j^{(p)}:=
 \frac1L\log
 \frac{\mathcal N_A(\e^p\psi(T_j),T_{j+1})}
      {\mathcal N_A(\e^p\psi(T_{j+1}),T_{j+1})},
 \qquad p\in\N,\quad j\ge0.
\]
By Lemma \ref{lem:vertical}, these numbers belong
to $[0,M]$ for a constant $M = M(L)$ independent of $j$ and $p$. By \eqref{eq:4.19},
\[
 \liminf_{N\to\infty}\frac1N
 \sum_{j=0}^{N-1}a_j^{(p)}
 =\mathcal P_{A,\psi}(L,\e^p)\le \sup_{c>0}\mathcal P_{A,\psi}(L,c).
\]

By Lemma \ref{lem:diagonal},
with $d_0=\sup_{c>0}\mathcal P_{A,\psi}(L,c)$ and the inequalities
in \eqref{eq:4.23} and \eqref{eq:4.25} reversed, there exists a sequence $\{p_j\}$ satisfying
\begin{equation}\label{eq:upper-diagonal-sequence}
 p_{j+1}-p_j\in\{0,1\},\qquad
 p_j\to\infty,\qquad \frac{p_j}{j}\to0,
\end{equation}
and
\begin{equation}\label{eq:upper-diagonal-average}
 \liminf_{N\to\infty}\frac1N
 \sum_{j=0}^{N-1}a_j^{(p_j)}\le \sup_{c>0}\mathcal P_{A,\psi}(L,c).
\end{equation}

By \eqref{eq:1.11} and \eqref{eq:upper-diagonal-sequence},
Lemma \ref{lem:interpolation} applies with $c_j=\e^{p_j}$ and
$\delta=L^{-1}$. It gives a lower quasimultiplicative function
$\varphi$ such that $\psi=o(\varphi)$ and
\[
 \varphi(T_j)=\e^{p_j}\psi(T_j).
\]
The standing smallness assumption in Section \ref{sec:CD} may be
arranged by modifying $\varphi$ on a bounded initial interval;
only values at finitely many $T_j$ are affected.

For all sufficiently large $j$, monotonicity of
$R\mapsto\mathcal N_A(R,Q)$ and $p_{j+1}\ge p_j$ yield that
\begin{equation}\label{eq:upper-variable-pressure}
 \log
 \frac{\mathcal N_A(\varphi(T_j),T_{j+1})}
      {\mathcal N_A(\varphi(T_{j+1}),T_{j+1})}
 \le L a_j^{(p_j)}.
\end{equation}
Since $-\log\varphi(T_N)=NL-p_N$ for all sufficiently large $N$, by Proposition \ref{prop:upper} we have
\[
\begin{aligned}
 \dim_H\Dhat_A(\varphi)
 &\underset{\mathclap{\substack{
      \eqref{eq:upper-variable-pressure}}}}{\le} \
 \liminf_{N\to\infty}
 \frac{(N-1)\log C_d+
       L\displaystyle\sum_{j=1}^{N-1}a_j^{(p_j)}}
      {NL-p_N}\\[1ex]
 \underset{\mathclap{\eqref{eq:upper-diagonal-sequence}}}{=}
 \frac{\log C_d}{L}
 +\liminf_{N\to\infty}\frac1N
  \sum_{j=0}^{N-1}a_j^{(p_j)}
 &\underset{\mathclap{\eqref{eq:upper-diagonal-average}}}{\le}
\ \sup_{c>0}\mathcal P_{A,\psi}(L,c)
 +\frac{\log C_d}{L}\\[1ex]
 &\underset{\mathclap{\eqref{eq:upper-continuity-choice}}}{<} \
 \dim_H U_A(\psi)+\varepsilon.
\end{aligned}
\]
Letting $\varepsilon\to0$ proves the reverse inequality for the
infimum, including the version for lower quasimultiplicative functions, and completes the proof.
\end{proof}

\section{Proofs of one-dimensional results}
\label{subsec:one-dimensional}

Let $\psi$ be continuous and strictly decreasing to zero, and put
$h(x)=-\log\psi(\e^x)$.  {In the slow regime
\eqref{eq:one-dimensional-slow-hypothesis}, fix $0<\theta<1$ such that
\begin{equation}\label{eq:one-dimensional-slow-bound}
 h(x)\le\theta x
 \qquad\text{for all sufficiently large }x.
\end{equation}
In the fast regime \eqref{eq:one-dimensional-fast-hypothesis}, fix
$\theta>1$ such that
\begin{equation}\label{eq:one-dimensional-fast-bound}
 h(x)\ge\theta x
 \qquad\text{for all sufficiently large }x.
\end{equation}
}

Throughout the proof, we will use the well-known relation
\begin{equation}\label{eq:5.56}
 \frac1{q_{k+1}+q_k}
 <|q_k\alpha|_{\Z}
 <\frac1{q_{k+1}},
 \qquad
 \log|q_k\alpha|_{\Z}^{-1}=\log q_{k+1}+O(1).
\end{equation}

for $k \ge 1$. Set
\[
 F_\alpha(x,y):=\max\left\{
 0,x-y,\sup_{k\ge0}\min\bigl(x-\log q_k,
 \log|q_k\alpha|_{\Z}^{-1}-y\bigr)
 \right\}.
\]

\begin{lem}\label{lem:cf-count}
Uniformly for $x\ge0$ and $y\in\R$, one has
\begin{equation}\label{eq:5.67}
\log \mathcal N_\alpha(\e^{-y},\e^x)=F_\alpha(x,y)+O(1).
\end{equation}
\end{lem}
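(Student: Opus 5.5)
We would prove the comparison by reducing $\log\mathcal N_\alpha(\e^{-y},\e^x)$ to a statement about the first successive minimum of a box, and then identifying that minimum through best approximations. Throughout, $m=n=1$ and $d=2$. Put $R=\e^{-y}$, $Q=\e^x$ and $r:=Q^{1/2}R^{1/2}$, so $\log r^2=x-y$.

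\emph{Step 1: the rank-two count.} By \eqref{eq:rectangular-profile}, $\mathcal N_\alpha(R,Q)=\max\{1/\mu_1,1\}\max\{1/\mu_2,1\}$, where $\mu_i:=\lambda_i([-R,R]\times[-Q,Q],u_\alpha\Z^2)$. The box has area $4QR=4r^2$ and the lattice is unimodular, so Minkowski's second theorem gives $\mu_1\mu_2\asymp r^{-2}$. We would check that
\[
\max\{1/\mu_1,1\}\max\{1/\mu_2,1\}\asymp\max\{1,\,1/\mu_1,\,r^2\}
\]
by cases.
\begin{itemize}
\item If $\mu_2\le 1$, the product is $(\mu_1\mu_2)^{-1}\asymp r^2$, and also $r^2\gg\mu_1^{-1}\mu_2^{-1}\ge\mu_1^{-1}$.
\item If $\mu_1\le1<\mu_2$, the product is $1/\mu_1$, and $r^2\ll 1/(\mu_1\mu_2)<1/\mu_1$.
\item If $\mu_1>1$, the product is $1$, and $r^2\ll 1$.
\end{itemize}
Hence
\[
\log\mathcal N_\alpha(\e^{-y},\e^x)=\max\{0,\;x-y,\;-\log\mu_1\}+O(1).
\]

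\emph{Step 2: $\mu_1$ through convergents.} By definition,
\[
-\log\mu_1=\sup_{(p,q)\ne0}\min\Bigl\{\log\tfrac{Q}{|q|},\ \log\tfrac{R}{|q\alpha-p|}\Bigr\}.
\]
\begin{itemize}
\item Vectors with $q=0$ and $p\neq0$ contribute at most $-y$. Since $x\ge0$, this is at most $x-y$, so these vectors are absorbed.
\item For $q\ne0$, we may take $q>0$ and $p$ the nearest integer, so that $|q\alpha-p|=|q\alpha|_{\Z}$.
\item We then use the Lagrange best-approximation property: if $q_k\le q<q_{k+1}$, then $|q_k\alpha|_{\Z}\le|q\alpha|_{\Z}$. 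Hence the convergent $q_k$ gives a value of the minimum at least as large as that of $q$. This is exact, with no loss of constants.
\item Conversely, each convergent is an admissible vector.
\end{itemize}
Therefore
\[
-\log\mu_1=\max\Bigl\{\text{(a quantity}\le x-y),\ \sup_{k\ge0}\min\bigl(x-\log q_k,\ \log|q_k\alpha|_{\Z}^{-1}-y\bigr)\Bigr\}.
\]
Combining this with Step 1 gives exactly $F_\alpha(x,y)+O(1)$, with the constant coming only from Minkowski's theorem. It is therefore uniform in $x\ge0$ and $y\in\R$.

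\emph{Main obstacle.} The step that needs care is Step 2. First, we must confirm that the best-approximation inequality holds in the form needed, including the index $k=0$ with $q_0=1$ and, when $\alpha$ is rational, finite expansions; in the applications $\alpha$ is irrational. Second, we must confirm that the $q=0$ vectors are genuinely dominated by the term $x-y$, which is exactly where the hypothesis $x\ge0$ is used. The Minkowski case analysis in Step 1 is routine.
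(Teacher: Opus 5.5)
Your proposal is correct and follows essentially the same route as the paper: Minkowski's second theorem reduces the count to $\max\{0,\,x-y,\,-\log\mu_1\}+O(1)$, the $q=0$ vectors are absorbed using $x\ge0$, and the supremum is restricted to convergents by the best-approximation property. The only cosmetic difference is in the first step: the paper works with $\lambda_i(t)$ at $t=(x+y)/2$, which equals $\mu_i\,\e^{(x-y)/2}$, and uses the identity $\max\{a,0\}+\max\{b,0\}=\max\{0,a,a+b\}$ for $a\ge b$, whereas you check three cases.
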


\begin{proof}
Put $t:=\frac{x+y}{2}$.
By the definition of $\mathcal N_\alpha$ in \eqref{eq:2.1} and the inequality
$\lambda_1(t)\le\lambda_2(t)$,
$$
 \log \mathcal N_\alpha(\e^{-y},\e^x)
 =
 \max\left\{
 0,
 \frac{x-y}{2}-\log\lambda_1(t),
 x-y-\log\bigl(\lambda_1(t)\lambda_2(t)\bigr)
 \right\}.
$$

Recall that $a_t$ was defined in \eqref{eq:1.6} via 
$$
a_t = \begin{pmatrix}
    e^t & \\ & e^{-t}
\end{pmatrix}.
$$
The lattice $a_tu_\alpha\Z^2$ is unimodular, so Minkowski's second
theorem gives $\log\bigl(\lambda_1(t)\lambda_2(t)\bigr)=O(1)$. Consequently,
\begin{equation}\label{eq:cf-profile-first-minimum}
\log \mathcal N_\alpha(\e^{-y},\e^x)
 =
 \max\left\{0,\frac{x-y}{2}-\log\lambda_1(t),x-y\right\}+O(1).
\end{equation}

The definition of the first minimum gives
\[
 \lambda_1(t)
 =\min\left\{
 \e^t,
 \min_{q\ne0}
 \max\left\{
 \e^t|q\alpha|_{\Z},\e^{-t}|q|
 \right\}
 \right\}.
\]
Here the first term comes from $q=0$ and $|p|=1$.
Therefore
\begin{equation}\label{eq:cf-profile-minimum}
\frac{x-y}{2}-\log\lambda_1(t)
 =
 \max\left\{
 -y,
 \sup_{q\ne0}
 \min\bigl(x-\log|q|,
 \log|q\alpha|_{\Z}^{-1}-y\bigr)
 \right\}.
\end{equation}
If $q_k\le|q|<q_{k+1}$, the best-approximation property gives
$|q\alpha|_{\Z}\ge|q_k\alpha|_{\Z}$, and hence
\[
 \min\bigl(x-\log|q|,
 \log|q\alpha|_{\Z}^{-1}-y\bigr)
 \le
 \min\bigl(x-\log q_k,
 \log|q_k\alpha|_{\Z}^{-1}-y\bigr).
\]
Thus the supremum
in \eqref{eq:cf-profile-minimum} can be restricted to the convergent denominators $q_k$.  Finally, $x\ge0$ implies
$-y\le x-y$.  Substituting \eqref{eq:cf-profile-minimum} into
\eqref{eq:cf-profile-first-minimum} proves \eqref{eq:5.67}.
\end{proof}

Put
\begin{equation}\label{eq:a_k_b_k}
 a_k:=\log q_k,
 \qquad
 b_k:=\log|q_k\alpha|_{\Z}^{-1},
\end{equation}
and define
\begin{equation}\label{eq:5.69}
 \sigma(x):=x+h(x),
 \qquad
 \xi_k:=\sigma^{-1}(a_k+b_k).
\end{equation}

Note that
\begin{equation*}\label{eq:5.70}
 \log Y_k=\xi_k.
\end{equation*}

For fixed
$x$, the function $y \mapsto F_{\alpha}(x,y)$ is continuous and locally piecewise affine, with
slopes belonging to $\{0,-1\}$.
In every integral and pointwise sum below, we assign this partial derivative the value zero wherever it is undefined; this convention does not change any integral. Hence, if $y_1<y_2$, then Lemma \ref{lem:cf-count} gives
\begin{equation}\label{eq:5.72}
\begin{split}
 \log
 \frac{\mathcal N_\alpha(\e^{-y_1},\e^x)}
      {\mathcal N_\alpha(\e^{-y_2},\e^x)}
 &=F_\alpha(x,y_1)-F_\alpha(x,y_2)+O(1)\\
 &=-\int_{y_1}^{y_2}
 \frac{\partial F_\alpha}{\partial y}(x,y)\,dy+O(1).
\end{split}
\end{equation}

\smallskip

Let $a_k$ and $b_k$ be as in \eqref{eq:a_k_b_k}.
For $w\in\R$, put
\begin{equation}\label{eq:shifted-xi}
 \xi_k(w):=\sigma^{-1}(a_k+b_k+w).
\end{equation}
When $w=0$, we omit the
argument and write $\xi_k=\xi_k(0)$, matching the definition given in \eqref{eq:5.69}.

\begin{lem}
\label{lem:one-dimensional-branches}
For every $w\in\R$, there exist $k_0=k_0(w)\in\N$ and $x_w>0$
such that the following statements hold.
\begin{enumerate}[label=\textup{(\roman*)}]
\item Suppose that $\theta<1$.  Set
\begin{equation}\label{eq:5.73}
 I_k(w):=\bigl(h^{-1}(a_k+w),\xi_k(w)\bigr).
\end{equation}
For every $k\ge k_0$, this interval is nonempty precisely when
\[
\e^w\psi\bigl(|q_k\alpha|_{\Z}^{-1}\bigr)<q_k^{-1}.
\]
The nonempty intervals $I_k(w)$ with $k\ge k_0$ are pairwise
disjoint, and, for every $x\ge x_w$, one has
\begin{equation}\label{eq:displ_1}
 \frac{\partial F_{\alpha}}{\partial y}(x,h(x)-w)
 =
 \begin{cases}
  0,
  &x\in\displaystyle\bigcup_{k\ge k_0} I_k(w),\\[2mm]
  -1,
  &x\notin\displaystyle\bigcup_{k\ge k_0}\overline{I_k(w)},\\
  \text{undefined},
  &\text{otherwise}.
 \end{cases}
\end{equation}

\item Suppose that $\theta>1$.  Set
\begin{equation}\label{eq:5.83}
 J_k(w):=\bigl(\xi_k(w),h^{-1}(b_k+w)\bigr).
\end{equation}
For every $k\ge k_0$, this interval is nonempty precisely when
\[
 \e^w\psi(q_k)>|q_k\alpha|_{\Z}.
\]
The nonempty intervals $J_k(w)$ with $k\ge k_0$ are pairwise
disjoint, and, for every $x\ge x_w$, one has
\begin{equation}\label{eq:displ_2}
 \frac{\partial F_{\alpha}}{\partial y}(x,h(x)-w)
 =
 \begin{cases}
  -1,
  &x\in\displaystyle\bigcup_{k\ge k_0} J_k(w),\\[2mm]
  0,
  &x\notin\displaystyle\bigcup_{k\ge k_0}\overline{J_k(w)},\\
  \text{undefined},
  &\text{otherwise}.
 \end{cases}
\end{equation}
\end{enumerate}
In either case, the set of $x\ge x_w$ where the indicated partial
derivative is undefined is locally finite.
\end{lem}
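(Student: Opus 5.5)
The plan is to compute $\partial F_\alpha/\partial y$ along the curve $y=h(x)-w$ directly from the explicit formula for $F_\alpha$, identifying which of the three branches $0$, $x-y$, or $\min(x-a_k,b_k-y)$ attains the maximum. Along $y=h(x)-w$ we have $x-y=\sigma(x)-x-h(x)+x+w-h(x)$; more simply $x-y=x-h(x)+w$. In regime (i) ($\theta<1$), $x-h(x)\ge(1-\theta)x\to\infty$, so the branch $0$ is irrelevant for large $x$. The branch $x-y$ has $y$-slope $-1$. The $k$-th branch $\min(x-a_k,b_k-y)$ has $y$-slope $0$ when $x-a_k<b_k-y$, and slope $-1$ otherwise. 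A $k$-th branch can beat $x-y$ only through its $y$-independent part, since $b_k-y\le x-y$ is equivalent to $b_k\le x$. The derivative is therefore $0$ exactly when some $k$ satisfies both
\[
x-a_k>x-y\quad(\text{i.e. } y>a_k)\qquad\text{and}\qquad x-a_k<b_k-y\quad(\text{i.e. } x+y<a_k+b_k).
\]
Substituting $y=h(x)-w$, these conditions become $h(x)>a_k+w$ and $\sigma(x)<a_k+b_k+w$, that is, $x\in I_k(w)$. Here I use that $h$ and $\sigma$ are strictly increasing, since $\psi$ is strictly decreasing. Where neither inequality is strict we are on a boundary, and the derivative is undefined.

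Nonemptiness of $I_k(w)$ means $h^{-1}(a_k+w)<\sigma^{-1}(a_k+b_k+w)$. Applying $\sigma$, this is equivalent to $a_k+w+h^{-1}(a_k+w)... $; more cleanly, set $x_0=h^{-1}(a_k+w)$ and observe that nonemptiness is equivalent to $\sigma(x_0)<a_k+b_k+w$, i.e. $x_0<b_k$, i.e. $a_k+w<h(b_k)$. That reads $\e^{w}\psi(|q_k\alpha|_{\Z}^{-1})<q_k^{-1}$, as required. For disjointness of the intervals I use that $a_k$ and $b_k$ are increasing and that $b_k=a_{k+1}+O(1)$ by \eqref{eq:5.56}. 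Suppose $x\in I_k\cap I_{k'}$ with $k<k'$. Then $h(x)>a_{k'}+w\ge a_{k+1}+w$. Combined with $\sigma(x)<a_k+b_k+w$, this gives $x<a_k+b_k-a_{k+1}=a_k+O(1)$. But $h(x)>a_{k+1}+w$ forces $x>h^{-1}(a_{k+1}+w)\ge a_{k+1}/\theta$, which is much larger than $a_k$ once $q_{k+1}\gg q_k^{\theta}$. I expect this to be the main obstacle, since $q_{k+1}$ may be comparable to $q_k$. In that situation I would instead argue that $I_{k+1}$ being nonempty requires $a_{k+1}+w<h(b_{k+1})\le\theta b_{k+1}$, and compare endpoints directly. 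The fallback, if necessary, is to refine the condition by the slope-$0$ region being a union of the sets where some branch dominates; that union is all that matters for \eqref{eq:displ_1}, and disjointness then only serves bookkeeping. The value $k_0$ and the threshold $x_w$ absorb the $O(1)$ terms and the indices below which \eqref{eq:one-dimensional-slow-bound} fails.

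Regime (ii) ($\theta>1$) is symmetric. Now $x-y=x-h(x)+w\to-\infty$, so the branch $x-y$ is irrelevant and $0$ competes instead. The derivative is $-1$ exactly when some $k$ has $b_k-y<x-a_k$ and $b_k-y>0$, i.e. $\sigma(x)>a_k+b_k+w$ and $h(x)<b_k+w$, which is $x\in J_k(w)$. Nonemptiness amounts to $h^{-1}(b_k+w)>\xi_k(w)$; applying $\sigma$ at $h^{-1}(b_k+w)$, this becomes $h^{-1}(b_k+w)>a_k$, i.e. $\e^{w}\psi(q_k)>|q_k\alpha|_{\Z}$. Disjointness follows by the analogous comparison. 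Finally, the undefined points are endpoints of these intervals. They are locally finite because $\xi_k(w)$ and $h^{-1}(a_k+w)$ tend to infinity with $k$, so only finitely many of them lie in any bounded range.
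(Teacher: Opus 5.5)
Your route is the paper's: determine which branch of the maximum defining $F_\alpha$ is active along $y=h(x)-w$, read off $I_k(w)$ from $h(x)>a_k+w$ and $\sigma(x)<a_k+b_k+w$ (and $J_k(w)$ analogously in regime (ii)), and test nonemptiness by applying $\sigma$ at $h^{-1}(a_k+w)$ or $h^{-1}(b_k+w)$. Those parts are correct. The gap is the separation step, which you misjudge and then propose to discard.

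\textbf{The anticipated obstacle does not exist.} Your own inequalities already give a contradiction. For $x\in I_k(w)\cap I_{k'}(w)$ with $k<k'$ you get $x<a_k+O(1)$. You also get $x>h^{-1}(a_{k+1}+w)\ge(a_{k+1}+w)/\theta\ge(a_k+w)/\theta$. Since $(\theta^{-1}-1)a_k\to\infty$, these are incompatible for all large $k$, whatever the ratio $q_{k+1}/q_k$. The paper writes this as $b_k=a_{k+1}+O(1)<\theta^{-1}a_{k+1}\le h^{-1}(a_{k+1})$.

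\textbf{The fallback fails.} Separation is not bookkeeping, and the sentence supporting your derivative characterization is false. Along the curve, the $k$-th branch exceeds $x-y$ on the whole wider interval $\bigl(h^{-1}(a_k+w),b_k\bigr)$. On $\bigl(\xi_k(w),b_k\bigr)$ it does so through $b_k-y$, which has slope $-1$. So to get derivative $0$ on $I_k(w)$ you must exclude an earlier branch $k'<k$ equalling or exceeding $x-a_k$ via $b_{k'}-y$. That is exactly the disjointness of the wider intervals, i.e.\ $b_{k-1}<h^{-1}(a_k+w)$ for large $k$; this is what the paper proves, and it is stronger than disjointness of the $I_k(w)$. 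Likewise, in regime (ii), derivative $-1$ on $J_k(w)$ needs $h^{-1}(b_k+w)<a_{k+1}$ to rule out a later branch $x-a_{k'}$. Without this separation the slope-$0$ set need not be $\bigcup_k I_k(w)$, so \eqref{eq:displ_1} could fail.

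\textbf{How to close it.} Insert that inequality, together with the routine remark that only finitely many $k$ matter near a given point (since $x-a_k\to-\infty$ as $k\to\infty$). Then your argument goes through.

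\textbf{A smaller correction.} Take a degenerate index with $h^{-1}(a_k+w)=\xi_k(w)$, equivalently $=b_k$. At that point both of your inequalities become equalities, but the derivative there is $-1$, not undefined. That is what the statement requires, since $I_k(w)$ is then empty. The paper handles this by noting that the $k$-th correction term vanishes identically in $y$ at $x=b_k$.
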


\begin{proof}
We first prove both statements for $w=0$, in which case we write
$I_k:=I_k(0)$ and $J_k:=J_k(0)$.

For $z\in\R$, write $z_+:=\max\{z,0\}$. The definition of
$F_\alpha$ gives
\begin{equation}\label{eq:cf-positive-part}
 F_\alpha(x,y)=
 \begin{cases}
 x-y+\displaystyle\sup_{k\ge0}
       \bigl(\min\{y-a_k,b_k-x\}\bigr)_+,&y<x,\\[2mm]
 \displaystyle\sup_{k\ge0}
       \bigl(\min\{x-a_k,b_k-y\}\bigr)_+,&y>x.
 \end{cases}
\end{equation}

We will refer to the terms $\bigl(\min\{y-a_j,b_j-x\}\bigr)_+$ and $\bigl(\min\{x-a_j,b_j-y\}\bigr)_+$ as \emph{correction terms}.

Suppose that $\theta<1$.
Equation~\eqref{eq:one-dimensional-slow-bound} gives $h(x)<x$
for all sufficiently large $x$. In the first line of
\eqref{eq:cf-positive-part}, the term corresponding to $k$ is positive at $y=h(x)$ precisely on intervals
\begin{equation}\label{eq:interval_corr_Ik}
\left(h^{-1}(a_k),b_k\right).
\end{equation}
Since $\sigma(h^{-1}(a_k))=h^{-1}(a_k)+a_k$, monotonicity of
$\sigma$ and \eqref{eq:5.69} imply
\[
 h^{-1}(a_k)<\xi_k
 \ \Longleftrightarrow\ h^{-1}(a_k)<b_k
 \ \Longleftrightarrow\
 \psi\bigl(|q_k\alpha|_{\Z}^{-1}\bigr)<q_k^{-1}.
\]
Thus the interval \eqref{eq:interval_corr_Ik} and $I_k$ are nonempty precisely for the
indices satisfying \eqref{eq:5.59}. Equations~\eqref{eq:5.56} and
\eqref{eq:one-dimensional-slow-bound} give
\[
 b_k=a_{k+1}+O(1)
 <\theta^{-1}a_{k+1}\le h^{-1}(a_{k+1})
\]
for all sufficiently large $k$, so intervals \eqref{eq:interval_corr_Ik} are pairwise disjoint. Let us assume that furthermore we work with such sufficiently large indices $k$.

For a nonempty interval \eqref{eq:interval_corr_Ik}, the two entries of the minimum in
\eqref{eq:cf-positive-part} agree at $y = h(x)$ precisely when $x=\xi_k$ due to \eqref{eq:5.69}, and
$h^{-1}(a_k)<\xi_k<b_k$. Hence, for every $h^{-1}(a_k)<x<b_k$, if $x \ne \xi_k$, there exists a neighborhood of $h(x)$, depending on $x$, such that for $y$ from this neighborhood one has
\begin{equation}\label{eq:slow-F-two-variables}
 F_\alpha(x,y)=
 \begin{cases}
 x-a_k,&h^{-1}(a_k)<x<\xi_k,\\
 b_k-y,&\xi_k<x<b_k.
 \end{cases}
\end{equation}
Note that only finitely many of the correction terms can be nonzero on this range of $x$ and for $y$ sufficiently close to $h(x)$; by continuity, all of these terms for $j \neq k$ are strictly smaller than the term for $k$. Thus, we do not have to consider indices other than $k$ in \eqref{eq:slow-F-two-variables}.

Outside the closures of the nonempty intervals
\eqref{eq:interval_corr_Ik}, all the correction terms vanish for $y$ sufficiently close to $h(x)$. 
At each endpoint of a nonempty $I_k$, the local formula has distinct one-sided $y$-slopes $-1$ and $0$, so the partial derivative is undefined. At $x=b_k$, the $k$th correction term
is identically zero as a function of $y$; this also handles the degenerate case $h^{-1}(a_k)=b_k$. All remaining boundary points
therefore have vertical derivative $-1$. This proves
\eqref{eq:displ_1}, as well as the disjointness of the nonempty $I_k$.

\medskip

Now suppose that $\theta>1$.
Equation~\eqref{eq:one-dimensional-fast-bound} gives $h(x)>x$
for all sufficiently large $x$. In the second line of
\eqref{eq:cf-positive-part}, the term corresponding to $k$ is positive at $y=h(x)$ precisely on intervals
\begin{equation}\label{eq:interval_corr_Jk}
\left(a_k,h^{-1}(b_k)\right).
\end{equation}
Since $\sigma(h^{-1}(b_k))=h^{-1}(b_k)+b_k$, monotonicity of
$\sigma$ and \eqref{eq:5.69} imply
\begin{equation}\label{eq:equiv-fast-regime}
\xi_k<h^{-1}(b_k)
\ \Longleftrightarrow\ a_k<h^{-1}(b_k)
\ \Longleftrightarrow\
\psi(q_k)>|q_k\alpha|_{\Z}.
\end{equation}
Thus the interval \eqref{eq:interval_corr_Jk} and $J_k$ are nonempty precisely for the
indices satisfying \eqref{eq:5.62}. Equations~\eqref{eq:5.56} and
\eqref{eq:one-dimensional-fast-bound} give
\[
h^{-1}(b_k)\le\theta^{-1}b_k
=\theta^{-1}a_{k+1}+O(1)<a_{k+1}
\]
for all sufficiently large $k$, so intervals \eqref{eq:interval_corr_Jk} are pairwise disjoint. From now on, work with such sufficiently large indices $k$. For a nonempty interval \eqref{eq:interval_corr_Jk}, the two entries of the minimum in \eqref{eq:cf-positive-part} agree at $y=h(x)$ precisely when $x=\xi_k$ due to \eqref{eq:5.69}, and
$a_k<\xi_k<h^{-1}(b_k)$. Hence, for every $a_k<x<h^{-1}(b_k)$, if $x\ne\xi_k$, there exists a neighborhood of $h(x)$, depending on $x$, such that for $y$ from this neighborhood one has
\begin{equation*}\label{eq:fast-F-two-variables}
F_\alpha(x,y)=
\begin{cases}
x-a_k,&a_k<x<\xi_k,\\
b_k-y,&\xi_k<x<h^{-1}(b_k).
\end{cases}
\end{equation*}
The justification of the above expression and the analysis of the remaining cases exactly follows such for the case $\theta < 1$.

\smallskip
Local finiteness of the sets of endpoints of the nonempty intervals $I_k$ and $J_k$ follows from the observation that
\[
 h^{-1}(a_k)\to\infty,
 \qquad
 h^{-1}(b_k)\to\infty,
 \qquad
 \xi_k=\sigma^{-1}(a_k+b_k)\to\infty.
\]

To deduce the theorem for arbitrary $w$,  one needs to apply the case $w=0$ which we just proved to the function $\e^w\psi$.  Its logarithmic transform is
$h(x)-w$, and $\e^w\psi$ satisfies the same hypothesis \eqref{eq:one-dimensional-slow-hypothesis} or \eqref{eq:one-dimensional-fast-hypothesis}.

Taking the thresholds supplied by this application as
 $k_0(w)$ and $x_w$ completes the proof.
\end{proof}

Let $|\cdot|_h$ denote the Lebesgue--Stieltjes measure induced by $h$.
Thus, for $r\le s$,
\[
 |[r,s]|_h:=h(s)-h(r),
 \qquad
 |\varnothing|_h:=0.
\]
Since $h$ is continuous, the inclusion or exclusion of the endpoints
does not affect this quantity.  Choose $x_\ast$ beyond the $w=0$ threshold in
Lemma \ref{lem:one-dimensional-branches} and all endpoints arising
from the finitely many omitted indices.  For $w\in\R$ and
$x\ge x_\ast$, define $B_{-,w}$ in the slow regime and $B_{+,w}$ in
the fast regime by
\begin{equation}\label{eq:branch-density-integral}
 B_{\pm,w}(x):=
 -\int_{x_\ast}^{x}
 \frac{\partial F_\alpha}{\partial y}(t,h(t)-w)\,dh(t),
\end{equation}
where the sign is chosen according to the regime. 

Lemma \ref{lem:one-dimensional-branches} gives, for every fixed
$w\in\R$ and all sufficiently large $x$,
\begin{align}
 B_{-,w}(x)
 &=h(x)-h(x_\ast)
   -\sum_{\substack{k\ge1\\I_k(w)\ne\varnothing}}
    |I_k(w)\cap[x_\ast,x]|_h+O_w(1),
 \label{eq:slow-branch-density}\\
 B_{+,w}(x)
 &=\sum_{\substack{k\ge1\\J_k(w)\ne\varnothing}}
    |J_k(w)\cap[x_\ast,x]|_h+O_w(1).
 \label{eq:fast-branch-density}
\end{align}
The error terms account only for the bounded initial range before the
threshold $x_w$.  For $w=0$, they may be omitted by the choice
of $x_\ast$.
We abbreviate
\[
 B_-(x):=B_{-,0}(x),
 \qquad
 B_+(x):=B_{+,0}(x).
\]

\smallskip

\begin{lem}
\label{lem:one-dimensional-log-sparsity}
For every $w\in\R$, the number of
nonempty intervals $I_k(w)$ in the slow regime, respectively $J_k(w)$
in the fast regime, which meet $[x_\ast,x]$ is {$O_w(\log(h(x)))$}.
\end{lem}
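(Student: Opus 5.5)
The plan is to bound the number of relevant indices $k$ by how fast the convergent denominators grow, and to convert the upper endpoint $x$ into a bound on $k$ through $h$. The intervals in question have left endpoints $h^{-1}(a_k+w)$ (slow regime) or $\xi_k(w)\ge$ something comparable to $a_k$ (fast regime). An interval meeting $[x_\ast,x]$ must have left endpoint at most $x$, so $k$ is confined to an initial segment determined by $x$. Since $q_k$ grows at least like Fibonacci numbers, $a_k=\log q_k\ge ck-O(1)$, so the number of admissible $k$ is $O(a_K)$ for the largest admissible index $K$. Note this gives a bound linear in $x$, not logarithmic in $h(x)$. So I will read the claimed bound $O_w(\log h(x))$ as coming from the exponential growth $a_k\gg$ Fibonacci, meaning $k\ll \log q_k = a_k$. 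Then the claim needs $a_K\ll \log h(x)$; that is false in general, so the intended reading must be that the count of \emph{indices} is at most the count of $k$ with $a_k\le$ something, which is $O(\log(\cdot))$ of $q_K$, i.e. $O(a_K)$. I will therefore aim to show $K\ll a_K+1$ and $a_K\ll_w h(x)$, so the count is $O_w(h(x))$ in general. I expect the statement really means the count is $O(\log)$ of $e^{h(x)}$-scale quantities, i.e. $O_w(h(x))$, and I will flag this.

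In the slow regime, $I_k(w)\cap[x_\ast,x]\ne\varnothing$ forces $h^{-1}(a_k+w)<x$, i.e. $a_k<h(x)-w$. In the fast regime, $J_k(w)$ meeting $[x_\ast,x]$ forces $\xi_k(w)<x$, i.e. $a_k+b_k+w<\sigma(x)=x+h(x)\le(1+\theta^{-1})h(x)$ for large $x$ by \eqref{eq:one-dimensional-fast-bound}. In either case $a_k\ll_w h(x)+1$. Then use $q_k\ge F_k$ (Fibonacci), so $k\le C\log q_k+C=Ca_k+C$, giving at most $O_w(h(x))$ indices. If a genuinely logarithmic bound is required, the extra input must be that only \emph{nonempty} intervals count. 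Nonemptiness ties $a_k$ and $b_k$ via \eqref{eq:5.59} or \eqref{eq:5.62}. Together with disjointness and the regime bounds $b_k=a_{k+1}+O(1)$ and $h(x)\lessgtr\theta x$, this should force $a_{k_{i+1}}\ge\theta^{\mp1}a_{k_i}-O(1)$ along the nonempty indices. That is geometric growth of $a_{k_i}$, so $i\ll\log a_{k_i}\ll\log h(x)$.

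Concretely, in the slow regime, nonemptiness of $I_{k}$ gives $h^{-1}(a_k)<b_k$, hence $a_{k+1}\ge b_k-O(1)>h^{-1}(a_k)-O(1)\ge\theta^{-1}a_k-O(1)$. Since $\theta<1$, the next nonempty index $k'>k$ satisfies $a_{k'}\ge a_{k+1}\ge\theta^{-1}a_k-O(1)$. In the fast regime, nonemptiness gives $a_k<h^{-1}(b_k)\le\theta^{-1}b_k$, so $a_{k+1}\ge\theta a_k-O(1)$ with $\theta>1$. Iterating yields $a_{k_i}\ge\gamma^i-O(1)$ for some $\gamma>1$. Combined with $a_{k_i}\ll_w h(x)$ for intervals meeting $[x_\ast,x]$, this gives $i\ll_w\log h(x)$.

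The main obstacle is making the $O(1)$ losses in $b_k=a_{k+1}+O(1)$ and in the $w$-shift harmless. I will handle this by discarding finitely many initial indices, so that $a_k$ is large enough for $\gamma a_k-O(1)\ge\gamma' a_k$ with $1<\gamma'<\gamma$.
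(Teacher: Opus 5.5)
Your argument from the second paragraph on is correct and is essentially the paper's proof: nonemptiness of $I_k(w)$ or $J_k(w)$, combined with $b_k=a_{k+1}+O(1)$ and \eqref{eq:one-dimensional-slow-bound} or \eqref{eq:one-dimensional-fast-bound}, gives $a_{k+1}\ge\lambda a_k$ for some fixed $\lambda>1$ along all sufficiently large nonempty indices, while meeting $[x_\ast,x]$ forces $a_k\ll_w h(x)$, so the count is $O_w(\log h(x))$. (Your fast-regime bound via $\sigma(\xi_k(w))<\sigma(x)$ is a harmless variant of the paper's $a_k<\xi_k(w)<x\le h(x)/\theta$.) Drop the hedging in your first paragraph: the statement holds as written, and the weaker $O_w(h(x))$ reading would not suffice later, since Lemmas~\ref{lem:one-dimensional-shift-stability} and~\ref{lem:one-dimensional-pressure} need the count to be $o(h(x))$.
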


\begin{proof}
Fix $w\in\R$. Recall the nonemptiness criteria in
Lemma~\ref{lem:one-dimensional-branches} and apply the map $\sigma$ to the endpoints of intervals $I_k$ or $J_k$; together with
\eqref{eq:one-dimensional-slow-bound} and \eqref{eq:one-dimensional-fast-bound} it gives, for all sufficiently large $k$,
\[
\begin{aligned}
 a_k+w&<h(b_k)\le\theta b_k
 &&\text{if }\theta<1,\ I_k(w)\ne\varnothing,\\
 \theta a_k&\le h(a_k)<b_k+w
 &&\text{if }\theta>1,\ J_k(w)\ne\varnothing.
\end{aligned}
\]
Using $b_k=a_{k+1}+O(1)$ from \eqref{eq:5.56}, we obtain
\begin{equation}\label{eq:one-dimensional-selected-growth}
 a_{k+1}\ge
 \begin{cases}
 \theta^{-1}a_k-O_w(1),&\theta<1,\ I_k(w)\ne\varnothing,\\
 \theta a_k-O_w(1),&\theta>1,\ J_k(w)\ne\varnothing.
 \end{cases}
\end{equation}

{\it Slow regime.} If a nonempty $I_k(w)$ meets $[x_\ast,x]$, then
$$h^{-1}(a_k+w)<x \ \Longleftrightarrow \ a_k+w<h(x).$$
{Hence $a_k<h(x)+|w|$.} Therefore,
\begin{equation}\label{eq:upper-count-1}
    \begin{gathered}\# \{ k: I_k(w) \cap [x_\ast,x]\ne\varnothing \}\le \# \{ k\ge 1: I_k(w)\ne\varnothing,
    \ {a_k\le h(x)+|w|} \}
    \\
    \le O_w(1)+\# \{ k\ge k_0(w): I_k(w)\ne\varnothing,
    \ {a_k\le h(x)+|w|} \}.
    \end{gathered}
\end{equation}
Here the last inequality holds since the indices
below the threshold $k_0(w)$ from Lemma
\ref{lem:one-dimensional-branches} contribute only $O_w(1)$.

Since {$\theta < 1$}, one has
$\lambda = \frac{{\theta^{-1}} + 1}{2} > 1$. Let
$k'=k'(w,{\theta})\ge\max\{k_0(w),1\}$ be an integer with the
following property: for any $k\ge k'$ such that
$I_k(w)\ne\varnothing$, one has $a_{k+1}\ge\lambda a_k$. Such a
$k'$ exists due to \eqref{eq:one-dimensional-selected-growth}.
Therefore, if $k<\ell$ are consecutive indices greater than $k'$ for which the corresponding intervals are nonempty, then $a_\ell\ge a_{k+1}\ge\lambda a_k$. It remains to notice that, since $a_k=\log q_k$, one has $a_{k'+1}\ge a_2\ge\log 2$. Thus, for all
sufficiently large $x$, the indices $k\le k'$ contribute only $O_w(1)$,
whereas the geometric estimate gives
\[
\begin{aligned}
 &\#\{k>k':I_k(w)\ne\varnothing,\ {a_k\le h(x)+|w|}\}\\
 &\qquad\le
 1+\frac{\log({(h(x)+|w|)}/\log 2)}{\log\lambda}
 ={O_w(\log(h(x)))}.
\end{aligned}
\]
Due to \eqref{eq:upper-count-1}, this completes the proof of the
slow-regime assertion.

{\it Fast regime.} If a nonempty $J_k(w)$ meets $[x_\ast,x]$, then
$$a_k<\xi_k(w)<x.$$
The second inequality here is direct; for the first, note that nonemptiness is equivalent to \eqref{eq:equiv-fast-regime}, thus $h(a_k)<b_k+w$, and
\[
 \sigma(a_k)=a_k+h(a_k)<a_k+b_k+w
 =\sigma(\xi_k(w))
\]
by \eqref{eq:shifted-xi}.
Hence {$a_k\le x\le h(x)/\theta$ for all sufficiently large $x$, by
\eqref{eq:one-dimensional-fast-bound}}. The remainder of the proof is analogous; instead of
$\lambda$ above, we use
$\lambda:=\frac{{\theta}+1}{2}>1$.

\end{proof}

\begin{lem}
\label{lem:one-dimensional-shift-stability}
For every $w\in\R$,
\begin{equation}\label{eq:shifted-branch-density}
 B_{\pm,w}(x)=B_\pm(x)+{O_w(\log(h(x)))}
 \qquad\text{as }x\to\infty.
\end{equation}
\end{lem}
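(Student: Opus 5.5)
We compare the explicit interval expansions \eqref{eq:slow-branch-density} and \eqref{eq:fast-branch-density} for $w$ and for $w=0$, index by index. The intervals $I_k(w)$ and $I_k=I_k(0)$ (resp.\ $J_k(w)$ and $J_k$) belong to the same convergent index $k$. We show that each pair differs in $h$-measure by $O_w(1)$. Only $O_w(\log h(x))$ indices contribute to either sum up to $x$, by Lemma~\ref{lem:one-dimensional-log-sparsity} applied to both $w$ and $0$. Summing then gives \eqref{eq:shifted-branch-density}, once the $O_w(1)$ boundary terms in \eqref{eq:slow-branch-density}, \eqref{eq:fast-branch-density} are absorbed.

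\textbf{Per-interval comparison.} In the slow regime, $I_k(w)=(h^{-1}(a_k+w),\xi_k(w))$ and $I_k=(h^{-1}(a_k),\xi_k)$. The $h$-measure of the difference of left endpoints is exactly $|h(h^{-1}(a_k+w))-h(h^{-1}(a_k))|=|w|$.

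For the right endpoints we use $\sigma=\mathrm{id}+h$. Since $\sigma(\xi_k(w))-\sigma(\xi_k)=w$ and both summands of $\sigma$ are increasing, we get
\[
|h(\xi_k(w))-h(\xi_k)|\le|w|.
\]
Hence $\bigl||I_k(w)\cap[x_\ast,x]|_h-|I_k\cap[x_\ast,x]|_h\bigr|\le 2|w|$. This holds also when one of the two intervals is empty (then the other has $h$-length at most $2|w|$) and after truncation at $x$: truncation by a common point changes each endpoint difference by no more than before, since $h$ is monotone.

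The fast regime is identical: the left endpoints $\xi_k(w)$ and $\xi_k$ differ in $h$-measure by at most $|w|$ through $\sigma$, and the right endpoints $h^{-1}(b_k+w)$ and $h^{-1}(b_k)$ differ by exactly $|w|$.

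\textbf{Counting.} Indices $k$ for which neither $I_k(w)$ nor $I_k$ meets $[x_\ast,x]$ contribute nothing. By Lemma~\ref{lem:one-dimensional-log-sparsity}, the number of remaining indices is $O_w(\log h(x))+O(\log h(x))$. The finitely many indices below $k_0(w)$, where Lemma~\ref{lem:one-dimensional-branches} does not control the intervals, are already inside the $O_w(1)$ terms. Multiplying the per-index bound $2|w|$ by this count yields the claim.

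\textbf{Main obstacle.} The only delicate point is indices where exactly one of $I_k(w)$, $I_k$ is nonempty or meets $[x_\ast,x]$. Such an index must still be counted, but it is covered by the sparsity lemma applied to whichever parameter gives the nonempty interval. The endpoint comparison also shows that its $h$-length is at most $2|w|$: if $I_k$ is empty then $h^{-1}(a_k)\ge\xi_k$, so the $h$-length of $I_k(w)$ is at most
\[
|h(h^{-1}(a_k+w))-h(h^{-1}(a_k))|+|h(\xi_k)-h(\xi_k(w))|\le2|w|.
\]
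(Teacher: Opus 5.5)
Your proposal is correct and follows the paper's proof essentially step for step. You obtain the per-index bound $2|w|$ by comparing $a_k+w$ with $a_k$ (respectively $b_k+w$ with $b_k$), and $h(\xi_k(w))$ with $h(\xi_k)$ via monotonicity of both summands of $\sigma$, and then apply Lemma~\ref{lem:one-dimensional-log-sparsity} to both $w$ and $0$. The only difference is cosmetic: the paper folds truncation and the empty (reversed-endpoint) case into the single $1$-Lipschitz function $\mathcal T_x(r,s)=(\min\{s,h(x)\}-\max\{r,h(x_\ast)\})_+$, which is what your separate case analysis amounts to.
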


\begin{proof}
For $r,s\in\R$, put
\[
 \mathcal T_x(r,s)
 :=\bigl(\min\{s,h(x)\}-\max\{r,h(x_\ast)\}\bigr)_+,
 \qquad\text{where}\qquad z_+:=\max\{z,0\}.
\]
The operations minimum, maximum, and positive part are $1$-Lipschitz,
and hence
\begin{equation}\label{eq:truncation-lipschitz}
 \bigl|\mathcal T_x(r,s)-\mathcal T_x(r',s')\bigr|
 \le |r-r'|+|s-s'|.
\end{equation}

Equations \eqref{eq:5.73} and \eqref{eq:5.83}, together with the
strict monotonicity of $h$, give
\begin{align}
 |I_k(w)\cap[x_\ast,x]|_h
 &=\mathcal T_x\bigl(a_k+w,
   h(\xi_k(w))\bigr),
 \label{eq:truncated-I-h-length}\\
 |J_k(w)\cap[x_\ast,x]|_h
 &=\mathcal T_x\bigl(h(\xi_k(w)),
   b_k+w\bigr).
 \label{eq:truncated-J-h-length}
\end{align}
In particular, these identities cover intervals whose endpoints
occur in the reverse order.

We next show that, for every $k$,
\begin{equation}\label{eq:h_k_O}
 {|h(\xi_k(w))-h(\xi_k)|\le |w|.}
\end{equation}
{Indeed, \eqref{eq:5.69} and monotonicity of $h$ give, for $u\le v$,
\[
 0\le h(\sigma^{-1}(v))-h(\sigma^{-1}(u))
 =(v-u)-(\sigma^{-1}(v)-\sigma^{-1}(u))\le v-u.
\]
Thus $h\circ\sigma^{-1}$ is $1$-Lipschitz, and
\eqref{eq:shifted-xi} gives \eqref{eq:h_k_O}.}

Combining \eqref{eq:truncated-I-h-length} and
\eqref{eq:truncated-J-h-length} with \eqref{eq:h_k_O} and applying \eqref{eq:truncation-lipschitz},
we obtain
\begin{equation}\label{eq:truncated-branch-length-stability}
\begin{aligned}
 \left|
 |I_k(w)\cap[x_\ast,x]|_h-|I_k\cap[x_\ast,x]|_h
 \right|&{\le 2|w|},\\
 \left|
 |J_k(w)\cap[x_\ast,x]|_h-|J_k\cap[x_\ast,x]|_h
 \right|&{\le 2|w|}.
\end{aligned}
\end{equation}

Only indices for which the interval corresponding to either $w$ or
$0$ meets $[x_\ast,x]$ contribute to the differences in
\eqref{eq:truncated-branch-length-stability}. Lemma
\ref{lem:one-dimensional-log-sparsity}, applied separately to $w$ and
to $0$, shows that there are only {$O_w(\log(h(x)))$} such indices. Extending the sums in
\eqref{eq:slow-branch-density}--\eqref{eq:fast-branch-density} over all
$k$ by assigning length zero to empty intervals and using \eqref{eq:truncated-branch-length-stability}, we conclude that
\[
 |B_{\pm,w}(x)-B_\pm(x)|
 \le 2|w|\,O_w(\log(h(x)))+O_w(1)
 =O_w(\log(h(x))).
\]
This proves \eqref{eq:shifted-branch-density}.
\end{proof}

\begin{lem}
\label{lem:one-dimensional-pressure}
For every sufficiently large $L$ and every $c>0$, one has
\begin{equation}\label{eq:one-dimensional-pressure}
 \mathcal P_{\alpha,\psi}(L,c)
 =
 \begin{cases}
  \displaystyle
  \liminf_{x\to\infty}\frac{B_-(x)}{h(x)}+O(L^{-1}),
  &{\theta<1},\\[4mm]
  \displaystyle
  \liminf_{x\to\infty}\frac{B_+(x)}{h(x)}+O(L^{-1}),
  &{\theta>1}.
 \end{cases}
\end{equation}
The constants described by $O(L^{-1})$ are independent of {$c$}.
\end{lem}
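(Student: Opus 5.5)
The plan is to treat each transition ratio in \eqref{eq:4.19} as an integral of the vertical slope of $F_\alpha$, and then to sum these integrals into $B_\pm$. Fix $L$ large and $c>0$, and put $w:=-\log c$. The mesh is $T_j=\psi^{-1}(\e^{-jL})$, so $x_j:=\log T_j$ satisfies $h(x_j)=jL$. The numerator and denominator of the $j$-th term are $\mathcal N_\alpha(\e^{-(h(x_j)+w)},\e^{x_{j+1}})$ and $\mathcal N_\alpha(\e^{-(h(x_{j+1})+w)},\e^{x_{j+1}})$. By \eqref{eq:5.72}, the $j$-th logarithm equals
\[
-\int_{jL+w}^{(j+1)L+w}\frac{\partial F_\alpha}{\partial y}(x_{j+1},y)\,dy+O(1).
\]
The $O(1)$ terms contribute $O(L^{-1})$ after dividing by $NL$, uniformly in $c$.

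The key step compares this vertical integral, taken at the fixed abscissa $x_{j+1}$, with the integral $-\int_{x_j}^{x_{j+1}}\frac{\partial F_\alpha}{\partial y}(t,h(t)+w)\,dh(t)$ along the curve. Substituting $y=h(t)+w$ shows that the two differ only in evaluating the slope at $x_{j+1}$ instead of at $t$. Lemma~\ref{lem:one-dimensional-branches}, applied with $-w$ in place of $w$, identifies the slope along the curve: it is $0$ or $-1$ according to membership of $t$ in $I_k(-w)$ (slow regime) or $J_k(-w)$ (fast regime).

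On the vertical segment at $x_{j+1}$, the formula \eqref{eq:cf-positive-part} shows the following. In the slow regime the slope is $-1$ except where a correction term $\min\{y-a_k,b_k-x\}$ is active. In the fast regime the slope is $0$ except where a term $b_k-y$ is active. A given $k$ changes status within the strip $[x_j,x_{j+1}]\times[jL+w,(j+1)L+w]$ only if one of the curves or lines $y=a_k$, $x=b_k$, $x=a_k$, $y=b_k$, $x=\xi_k$ crosses the strip. The strip's height is $L$, and $h$ is increasing with $h(x_{j+1})-h(x_j)=L$. Hence on all other strips the two integrals agree exactly. On a crossed strip each integral lies in $[0,L]$, so the discrepancy is at most $L$.

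It remains to count the crossed strips. Among the first $N$ strips, only indices $k$ with nonempty intervals and $a_k\lesssim h(x_N)$ matter; the others contribute identically. By Lemma~\ref{lem:one-dimensional-log-sparsity} there are $O_w(\log(NL))$ such indices, each crossing $O(1)$ strips. The total error is therefore $O_w(L\log(NL))=o(NL)$. One must still check that indices $k$ with empty intervals cannot spoil a strip. I would handle this as in the proof of Lemma~\ref{lem:one-dimensional-branches}: in the slow regime, emptiness means $h^{-1}(a_k)\ge b_k$, so the correction term is inactive near the curve. Near the curve means within vertical distance $L$, and controlling this with fixed $L$ may need the sparsity \eqref{eq:one-dimensional-selected-growth}. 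I expect this to be the main obstacle, namely making the per-strip comparison honest near the branch endpoints and for nearly-empty intervals.

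Summing over $j<N$ and dividing by $NL=h(x_N)$ gives
\[
\mathcal P_{\alpha,\psi}(L,c)=\liminf_N \frac{B_{\pm,-w}(x_N)}{h(x_N)}+O(L^{-1}).
\]
Lemma~\ref{lem:one-dimensional-shift-stability} replaces $B_{\pm,-w}$ by $B_\pm$ at a cost of $O_w(\log h(x))=o(h(x))$. Finally, passing from the mesh points $x_N$ to arbitrary $x$ changes both $B_\pm$ and $h$ by at most $L$ between consecutive mesh points, which leaves the liminf unchanged. The implied $O(L^{-1})$ comes only from \eqref{eq:5.72} and is independent of $c$.
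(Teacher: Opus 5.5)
Your argument is correct in substance, but it takes a different route from the paper.

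\textbf{The paper's route.} The paper writes the $j$-th transition term as
$-\int_{\log c}^{\log c+L}\frac{\partial F_\alpha}{\partial y}\bigl(x_{j+1},h(x_{j+1})-v\bigr)\,dv+O(1)$. In effect it parametrizes the vertical segment at $x_{j+1}$ by the family of curves $y=h(x)-v$ passing through it. For each fixed $v$, the sum over $j$ is a right-endpoint Riemann sum for $B_{\pm,v}(x_N)/h(x_N)$ along that single curve. This sum is exact on every mesh interval containing no endpoint of a nonempty $I_k(v)$ or $J_k(v)$. Lemmas~\ref{lem:one-dimensional-log-sparsity} and~\ref{lem:one-dimensional-shift-stability} then give convergence pointwise in $v$, and dominated convergence integrates over $v$.

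\textbf{What each route buys.} The paper only ever uses the one-dimensional description of Lemma~\ref{lem:one-dimensional-branches} along curves. Because each sample point lies on its own curve, your ``nearly empty'' indices never arise. The price is that all thresholds and constants depend on $v$, which forces the limit interchange. Your strip comparison uses a single curve and one application of shift stability. It gives an explicit error $O_{L,c}(L\log(NL))$ with no limit interchange, but it needs the two-dimensional structure of $\partial F_\alpha/\partial y$ on whole strips.

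\textbf{Closing the points you left open.} That structure is easy to supply, and it also removes the obstacle you flagged (keeping your convention $w=-\log c$).
\begin{enumerate}
\item For large $j$, at most one correction term is positive anywhere on the $j$-th strip. If terms $k<k'$ both were, then in the slow regime $x_j<b_k=a_{k+1}+O(1)\le a_{k'}+O(1)<(j+1)L+w+O(1)\le\theta x_j+O_{L,c}(1)$, which is impossible. The fast regime is analogous, using $b_k>h(x_j)+w$, $a_{k'}<x_{j+1}$ and $\theta x_{j+1}\le h(x_j)+L$. So the supremum in \eqref{eq:cf-positive-part} never switches index inside a strip.
\item Inside a strip, the switching locus of the minimum is the anti-diagonal $x+y=a_k+b_k$, not a vertical line. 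The strip's values of $x+y$ fill $[\sigma(x_j)+w,\sigma(x_{j+1})+w]$, so this line meets the strip exactly when $\xi_k(-w)\in[x_j,x_{j+1}]$. Your criterion ``$x=\xi_k$'' should be read this way. Each such line still spoils at most two strips.
\item Suppose the $k$-th correction term is positive somewhere on the $j$-th strip. In the slow regime, $a_k<(j+1)L+w$ and $b_k>x_j$ give $a_k-L-w<jL<h(b_k)$, i.e.\ $I_k(-w-L)\neq\varnothing$. In the fast regime, $a_k<x_{j+1}$ and $b_k>jL+w$ give $h(a_k)<b_k+L-w$, i.e.\ $J_k(L-w)\neq\varnothing$.
\item Apply the counting in the proof of Lemma~\ref{lem:one-dimensional-log-sparsity} at these fixed shifts, restricted to $a_k\le NL+|w|$. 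It bounds the number of relevant indices by $O_{L,c}(\log(NL))$, which is all your error estimate needs. No separate appeal to \eqref{eq:one-dimensional-selected-growth} is required.
\end{enumerate}
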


\begin{proof}
Fix $L$ and {$c>0$}, put {$w:=\log c$}, and
{write $x_j:=\log T_j$, where $T_j$ is defined in \eqref{eq:1.11}. } Then,
\[
 {h(x_j)=jL}.
\]
The finitely many initial indices for which $x_{j+1}<0$ contribute
{$O_{L,c}(1)$} and hence do not affect $\mathcal P_{\alpha,\psi}(L,c)$; we suppress them below.
For $v\in[w,w+L]$, put
\begin{equation}\label{eq:G_N(v)}
 G_N(v):=-\frac1N\sum_{j=0}^{N-1}
 \frac{\partial F_\alpha}{\partial y}
 \bigl(x_{j+1},h(x_{j+1})-v\bigr).
\end{equation}
Thus $0\le G_N(v)\le1$.

Fix $v$.  Apart from finitely many initial mesh intervals, Lemma
\ref{lem:one-dimensional-branches} shows that the integrand in
\eqref{eq:branch-density-integral} is constant on every mesh interval
which contains no endpoint of a nonempty $I_k(v)$ or $J_k(v)$.  On
such an interval the corresponding Riemann sum \eqref{eq:G_N(v)} is exact for the integral \eqref{eq:branch-density-integral}, while on any other interval its error is at most $L$.  Lemma
\ref{lem:one-dimensional-log-sparsity}, applied at $x_{N+1}$, therefore
gives
\begin{equation*}
 G_N(v)
 =\frac{B_{\pm,v}(x_N)}{h(x_N)}
 +O_{L,v}\left(\frac{\log(h(x_{N+1}))}{N}\right).
\end{equation*}
Here and below the sign is chosen according to the regime.  Since
{$h(x_{N+1})=(N+1)L$ by \eqref{eq:1.11}}, the last error tends to zero.  Lemma
\ref{lem:one-dimensional-shift-stability} also gives, for this fixed
$v$,
\[
 \frac{B_{\pm,v}(x_N)-B_\pm(x_N)}{h(x_N)} = \frac{{O_v(\log(NL))}}{NL} \longrightarrow0.
\]
Consequently,
\begin{equation}\label{eq:pressure-pointwise-convergence}
 \lim\limits_{N \to \infty} \left( G_N(v)-\frac{B_\pm(x_N)}{h(x_N)} \right) =0
 \qquad\text{for every }v\in[w,w+L].
\end{equation}

The function $y \mapsto F_{\alpha}(x,y)$ is continuous and locally piecewise affine, thus $G_N$ is measurable.  Moreover,
\eqref{eq:branch-density-integral} gives
$0\le B_\pm(x_N)\le h(x_N)-h(x_\ast)$, so the absolute value of the
left-hand side of \eqref{eq:pressure-pointwise-convergence} is bounded
independently of $v$ and all sufficiently large $N$.  Dominated
convergence theorem now yields
\begin{equation}\label{eq:pressure-averaged-convergence}
 \frac1L\int_w^{w+L}G_N(v)\,dv
 =\frac{B_\pm(x_N)}{h(x_N)}+o_{L,c}(1)
\end{equation}
as $N \to \infty$. By {\eqref{eq:1.11}} and \eqref{eq:5.72},
\begin{equation}\label{eq:pressure-slope-integral}
 {\log
 \frac{\mathcal N_\alpha(c\psi(T_j),T_{j+1})}
      {\mathcal N_\alpha(c\psi(T_{j+1}),T_{j+1})}}
 =-\int_w^{w+L}
 \frac{\partial F_\alpha}{\partial y}
 \bigl(x_{j+1},h(x_{j+1})-v\bigr)\,dv+O(1),
\end{equation}
where the last error is uniform in $j$, $L$, and {$c$}.  Summing \eqref{eq:pressure-slope-integral} and
using \eqref{eq:pressure-averaged-convergence}, we obtain
\[
 \begin{aligned}\frac1{NL} & \sum_{j=0}^{N-1}
 \log
 \frac{\mathcal N_\alpha(c\psi(T_j),T_{j+1})}
      {\mathcal N_\alpha(c\psi(T_{j+1}),T_{j+1})} \\
 &=\frac1L\int_w^{w+L}G_N(v)\,dv+O(L^{-1})
 =\frac{B_\pm(x_N)}{h(x_N)}+{o_{L,c}(1)}+O(L^{-1})
 \end{aligned}
\]
as $N \to \infty$. The constant implicit in $O(L^{-1})$ is independent of {$c$}, since it
comes from the uniform $O(1)$ in Lemma \ref{lem:cf-count}.  Now \eqref{eq:4.19} gives
\[
 \mathcal P_{\alpha,\psi}(L,{c})
 =\liminf_{N\to\infty}\frac{B_\pm(x_N)}{h(x_N)}+O(L^{-1}).
\]

Finally, \eqref{eq:branch-density-integral} gives, for $x\le y$,
\[
 0\le B_\pm(y)-B_\pm(x)\le h(y)-h(x).
\]
Thus, if $x_N\le x<x_{N+1}$, then
\[
 |B_\pm(x)-B_\pm(x_N)|\le L,
 \qquad
 |h(x)-h(x_N)|\le L,
\]
while $B_\pm(x_N)=O(h(x_N))$.  Therefore
\[
 \left|
 \frac{B_\pm(x)}{h(x)}
 -
 \frac{B_\pm(x_N)}{h(x_N)}
 \right|
 =O\left(\frac{L}{h(x_N)}\right)
 =o(1) \qquad \text{as} \ N \to \infty,
\]
and \eqref{eq:one-dimensional-pressure} follows.
\end{proof}

\begin{proof}[Proof of Theorem \ref{thm:one-dimensional}]
{Since $\psi$ is lower quasimultiplicative,} Theorem \ref{thm:main} applies. By Lemma \ref{lem:one-dimensional-pressure} and Theorem \ref{thm:main}\textup{(ii)}, 
\begin{equation}\label{eq:5.80}
\begin{gathered}
 \dim_H\Dhat_\alpha(\psi)
 ={\lim_{L\to\infty}\inf_{c>0}
 \mathcal P_{\alpha,\psi}(L,c)} \\
 =
 {\lim_{L\to\infty}\sup_{c>0}
 \mathcal P_{\alpha,\psi}(L,c)}
 =\liminf_{x\to\infty}\frac{B_\pm(x)}{h(x)},
 \end{gathered}
\end{equation}
where the sign in the last expression is chosen according to regime.

\smallskip
\noindent\emph{The slow regime.} Suppose {$\theta<1$}.
If only finitely many $I_k$ are nonempty, due to Lemma \ref{lem:one-dimensional-branches} and \eqref{eq:branch-density-integral}, the last expression equals
one, proving \eqref{eq:5.60}.  Otherwise, let $\{k_i\}$ be the
increasing sequence from part \textup{(i)}. The quotient $\frac{B_-(x)}{h(x)}$ increases off the intervals $I_{k_i}$ and decreases on
them.  Its liminf is therefore reached at $x=\xi_{k_i}$.  At such a point,
\begin{equation}\label{eq:5.81}
 B_-(\xi_{k_i})
 =a_{k_i}
 +\sum_{j=1}^{i-1}\bigl(a_{k_j}-h(\xi_{k_j})\bigr)
 +O(1).
\end{equation}
Recall that $a_k=\log q_k$, $\xi_k=\log Y_k$, and
$h(\xi_k)=\log\psi(Y_k)^{-1}$. Substituting \eqref{eq:5.81} into \eqref{eq:5.80} gives exactly
the quotient in \eqref{eq:5.61}.  This proves part \textup{(i)}.

\smallskip
\noindent\emph{The fast regime.}
Suppose {$\theta>1$}.
If only finitely many $J_k$ are nonempty, the last expression equals
zero, proving \eqref{eq:5.63}.  Otherwise, let $\{k_i\}$ be the
increasing sequence from part \textup{(ii)}.  The quotient $\frac{B_+(x)}{h(x)}$ increases on the intervals $J_{k_i}$ and decreases
between them, so its liminf is reached at $x=\xi_{k_i}$.  One has
\begin{equation}\label{eq:5.88}
 B_+(\xi_{k_i})
 =\sum_{j=1}^{i-1}\bigl(b_{k_j}-h(\xi_{k_j})\bigr)+O(1).
\end{equation}
Since $b_k=\log|q_k\alpha|_{\Z}^{-1}$, substituting \eqref{eq:5.88} into \eqref{eq:5.80} gives
exactly the quotient in \eqref{eq:5.64}.  This proves part
\textup{(ii)}.

\smallskip

In either regime, \eqref{eq:5.65} follows from the equality {$\lim_{L\to\infty}\inf_{c>0}
 \mathcal P_{\alpha,\psi}(L,c) =
 \lim_{L\to\infty}\sup_{c>0}
 \mathcal P_{\alpha,\psi}(L,c)$} established in \eqref{eq:5.80} via Theorem
\ref{thm:main}\textup{(ii)} {and \eqref{eq:4.22}}.
\end{proof}

\begin{proof}[Proof of Theorem \ref{cor:kim-liao}]
Parts \textup{(iii)} and \textup{(iv)} follow directly from Theorem
\ref{thm:one-dimensional} and Remark \ref{rem:for-KL}. Namely, in case \textup{(iii)} we apply Theorem \ref{thm:one-dimensional}, \textup{(i)} to
$\psi=a^{-1}\psi_\tau$ to get the result for $c = a^{-1}$. In case \textup{(iv)} we apply Theorem \ref{thm:one-dimensional}, \textup{(ii)} to
$\psi=b\psi_\tau$ to get the result for $c = b$. In both cases, \eqref{eq:5.65} shows that the result holds for arbitrary choice of $c > 0$.

Part \textup{(i)} follows directly from the additional assertion in
Proposition \ref{prop:transference-full}. Part \textup{(ii)} follows
directly from the additional assertion in Proposition
\ref{prop:trivial-shifts} applied with $\mathcal Q=\N$.

\end{proof}

\section{Proofs of Propositions \ref{prop:aggarwal-general}, \ref{prop:aggarwal-special} and \ref{prop:aggarwal-sharpness}}\label{sec:Aggarwal_result}

\subsection{Upper dimension estimates}

We start this section with a technical lemma, which is just a translation between two notations. For $\tau,\rho>0$, let $g_i$ be defined by
\eqref{eq:power-normalized-minima} and put
\[
 H_\rho(t):=\log\mathcal N_A(\rho\e^{-\tau t},\rho\e^t).
\]

\begin{lem}\label{lem:power-profile-products}
For $x,h\ge0$, put $t=x+h/(1+\tau)$. Then
\begin{equation}\label{eq:power-shifted-profile-product}
 \mathcal N_A(\rho\e^{-\tau x},\rho\e^{x+h})
 =\prod_{i=1}^d
 \max\left\{\rho\e^{\tau h/(1+\tau)-g_i(t)},1\right\},
\end{equation}
and
\begin{equation}\label{eq:power-profile-product}
 \e^{H_\rho(t)}
 =\prod_{i=1}^d\max\left\{\rho\e^{-g_i(t)},1\right\}.
\end{equation}
\end{lem}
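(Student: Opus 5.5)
This is a direct computation from the definition \eqref{eq:2.1}. Set $R=\rho\e^{-\tau x}$ and $Q=\rho\e^{x+h}$, and compute the two quantities that enter $\mathcal N_A(R,Q)$: the flow time $t_{R,Q}$ and the cube radius $Q^{n/d}R^{m/d}$.

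First, the flow time. By \eqref{eq:2.1},
\[
t_{R,Q}=\frac{mn}{d}\log\frac QR=\frac{mn}{d}\bigl((1+\tau)x+h\bigr)=\frac{mn(1+\tau)}{d}\,t,
\]
since $(1+\tau)t=(1+\tau)x+h$. The factor $\rho$ cancels in $Q/R$. Hence $\lambda_i(t_{R,Q})$ is exactly the quantity $\lambda_i\bigl(\frac{mn(1+\tau)}d t\bigr)$ appearing in \eqref{eq:power-normalized-minima}, so
\[
\log\lambda_i(t_{R,Q})=g_i(t)-\frac{m\tau-n}{d}\,t .
\]

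Second, the radius. We have
\[
\log\bigl(Q^{n/d}R^{m/d}\bigr)=\log\rho+\frac{n(x+h)-m\tau x}{d}.
\]
Substituting $x=t-h/(1+\tau)$ gives
\[
\log\bigl(Q^{n/d}R^{m/d}\bigr)=\log\rho+\frac{(n-m\tau)t}{d}+\frac{h}{d}\Bigl(n+\frac{m\tau-n}{1+\tau}\Bigr).
\]
The bracket simplifies:
\[
n+\frac{m\tau-n}{1+\tau}=\frac{n\tau+m\tau}{1+\tau}=\frac{d\tau}{1+\tau},
\]
so
\[
\log\bigl(Q^{n/d}R^{m/d}\bigr)=\log\rho+\frac{(n-m\tau)t}{d}+\frac{\tau h}{1+\tau}.
\]

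Now subtract. The terms $\frac{(n-m\tau)t}{d}$ cancel, and each factor in \eqref{eq:2.1} becomes
\[
\max\bigl\{\rho\,\e^{\tau h/(1+\tau)-g_i(t)},\,1\bigr\}.
\]
This proves \eqref{eq:power-shifted-profile-product}. Formula \eqref{eq:power-profile-product} is the special case $h=0$, where $x=t$ and $(R,Q)=(\rho\e^{-\tau t},\rho\e^t)$.

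There is no real obstacle here; the only step needing care is the sign bookkeeping in the $h$-coefficient, which must simplify to $\tau/(1+\tau)$.
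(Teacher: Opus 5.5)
Your computation is correct: $t_{R,Q}=\frac{mn(1+\tau)}{d}\,t$, the cube radius satisfies $\log\bigl(Q^{n/d}R^{m/d}\bigr)=\log\rho+\frac{(n-m\tau)t}{d}+\frac{\tau h}{1+\tau}$, and the $t$-terms cancel against the normalization built into $g_i$, which gives both identities (the second as the case $(x,h)=(t,0)$, legitimate since $t\ge0$). The paper verifies the same thing more conceptually. It notes that $(\rho\e^{-\tau x},\rho\e^{x+h})=\rho\e^{\tau h/(1+\tau)}(\e^{-\tau t},\e^{t})$, and that dilating the rectangle in \eqref{eq:rectangular-profile} divides every successive minimum by the same factor, which avoids your bookkeeping of the $h$-coefficient. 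Apart from that, the two arguments are the same verification.
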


\begin{proof}
By \eqref{eq:power-normalized-minima}, the successive minima of
\begin{equation}\label{eq:rect-to-dilate}
 [-\e^{-\tau t},\e^{-\tau t}]^m
 \times[-\e^t,\e^t]^n
\end{equation}
with respect to $\Lambda_A$ are $\e^{g_i(t)}$. Since
\[
 (\rho\e^{-\tau x},\rho\e^{x+h})
 =\rho\e^{\tau h/(1+\tau)}(\e^{-\tau t},\e^t),
\]
the rectangles defining the lattice counts in
\eqref{eq:power-shifted-profile-product} and
\eqref{eq:power-profile-product}  via \eqref{eq:rectangular-profile} are dilates of \eqref{eq:rect-to-dilate} by $\rho\e^{\tau h/(1+\tau)}$ and $\rho$, respectively. Dilation divides each successive minimum by the same factor,
so their minima are, respectively,
\[
 \frac{\e^{g_i(t)}}{\rho\e^{\tau h/(1+\tau)}}
 \qquad\text{and}\qquad
 \frac{\e^{g_i(t)}}{\rho}.
\]
Both identities now follow from \eqref{eq:rectangular-profile}.
\end{proof}

The upper dimension estimate below is a key step to prove Proposition~\ref{prop:aggarwal-general}.

\begin{lem}\label{lem:power-upper-all}
Let $A\in\Mat_{m,n}$. For every $\tau,c,L>0$, one has
\begin{equation}\label{eq:critical-power-upper}
\begin{split}
 \dim_H\Dhat_A(c\psi_\tau)
 \le{}&\liminf_{T\to\infty}
 \frac{\displaystyle\int_1^T
 r_{A,\tau}\bigl(S;(c\e^L)^{1/(1+\tau)}\bigr)\,\frac{dS}{S}
 -\log\mathcal N_A(T^{-\tau},T)}{(1+\tau)\log T}\\
 &+\frac{\log C_d}{L},
\end{split}
\end{equation}
where $C_d\ge1$ is the constant in Proposition~\ref{prop:upper}.
\end{lem}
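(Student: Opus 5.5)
The plan is to apply Proposition~\ref{prop:upper} to $c\psi_\tau$ with the geometric mesh $T_j=\e^{jL/\tau}$. Then $c\psi_\tau(T_j)=c\e^{-jL}$ and $-\log(c\psi_\tau(T_\ell))=\ell L-\log c$. The terms $(\ell-1)\log C_d$ in \eqref{eq:2.4} contribute exactly $\log C_d/L$ in the limit. It therefore remains to show
\[
 \liminf_{\ell\to\infty}\frac1{\ell L}\sum_{j=1}^{\ell-1}\log\frac{\mathcal N_A(c\psi_\tau(T_j),T_{j+1})}{\mathcal N_A(c\psi_\tau(T_{j+1}),T_{j+1})}
 \le \liminf_{T\to\infty}\frac{\int_1^T r_{A,\tau}(S;\delta)\,\frac{dS}{S}-\log\mathcal N_A(T^{-\tau},T)}{(1+\tau)\log T},
\]
with $\delta=(c\e^L)^{1/(1+\tau)}$. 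A bounded number of initial mesh points are discarded as usual.

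\emph{Step 1 (translate to normalized minima).} Use Lemma~\ref{lem:power-profile-products} to write both counts through the functions $g_i$. For the denominator, take $\rho=c$ at the point $x=x_{j+1}:=(j+1)L/\tau$ with the appropriate shift $h$. For the numerator, take $\rho=c$, $x=x_j$ and $h$ chosen so that $c\e^{x_j+h}=\e^{x_{j+1}}$; then $c\e^{\tau h/(1+\tau)}$ is exactly $(c\e^{L})^{1/(1+\tau)}=\delta$, up to the convention in $h$. Each logarithm then becomes
\[
 \log\mathcal N=\sum_i\bigl(\kappa-g_i(t)\bigr)_+,
\]
where the threshold is $\kappa=\log\delta$ in the numerator and some $\kappa'\le\kappa$ in the denominator, evaluated at nearby times $t_j<t_{j+1}'$.

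\emph{Step 2 (sum and telescope).} Set $\Phi_\kappa(t):=\sum_i(\kappa-g_i(t))_+$. Each $g_i$ is Lipschitz with slopes in $[-1,\tau]$, by the argument of Lemma~\ref{lem:power-threshold}. Hence, almost everywhere,
\[
 \frac{d}{dt}\Phi_\kappa=-\sum_{i:\,g_i<\kappa}g_i' .
\]
I will write each log-ratio as an increment of $\Phi$ along $t$ plus an error. The increment of $\Phi$ at the fixed time between thresholds $\kappa'$ and $\kappa$ is controlled by the number of indices with $g_i<\kappa$, which is exactly $r_{A,\tau}(\e^t;\delta)$. Summing over $j$ should then produce
\[
 \text{(an integral of } r_{A,\tau}(\e^u;\delta)\text{ against } du)\;-\;\Phi_0(t_\ell),
\]
using the normalization $\sum_i g_i=(m\tau-n)t+O(1)$ (Minkowski) to rewrite the $t$-derivative part. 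Dividing by $(1+\tau)$ should come from converting the time variable $t$ into $\log T$, just as in Lemma~\ref{lem:power-rank-identity}.

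\emph{Step 3 (compare the liminfs).} $\Phi_0(t)=\log\mathcal N_A(\e^{-\tau t},\e^t)$ by \eqref{eq:power-profile-product} with $\rho=1$. Passing from mesh points to arbitrary $T$ costs only $O(L)$ per step, by the slope bounds of Lemma~\ref{lem:vertical}, so the liminf over the mesh equals the liminf over $T$.

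The main obstacle is Step 2. No exclusion of $\mathcal E_{m,n}$ or threshold-density argument is available here. I therefore need a genuine one-sided inequality rather than an asymptotic identity, and I need to check that every boundary error has the right sign. The plan is to use only monotonicity: each $(\kappa-g_i)_+$ is nondecreasing in $\kappa$, and an index contributes to an increment only while $g_i<\kappa=\log\delta$. The cruder cutoff $\delta$ in place of $1$ is exactly the price paid for not needing the density lemma. If the $t$-shift between numerator and denominator produces a boundary term of the wrong sign, I would bound it by $mL+O(1)$ per step via \eqref{eq:4.3}, and absorb it by redefining $\delta$ with the factor $\e^{L}$ already built in.
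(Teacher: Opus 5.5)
\paragraph{Where the proposal breaks.} Your Step 1 is correct and agrees with the paper's first step. With $s:=L/(1+\tau)$, $\kappa=\log\delta$ and $\kappa'=\kappa-s$, the $j$th transition term equals $\Phi_\kappa(t_j)-\Phi_{\kappa'}(t_j+s)$. It is therefore at most $s\,r_{A,\tau}(\e^{t_j};\delta)+\Phi_{\kappa'}(t_j)-\Phi_{\kappa'}(t_j+s)$.

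The gap is Step~2: along a single mesh (spacing $L/\tau$ in $t$) this bound does not telescope. Summing it gives two things the target does not contain.
\begin{itemize}
\item The count $r_{A,\tau}(\cdot\,;\delta)$ appears sampled at the mesh points rather than integrated.
\item The $\Phi_{\kappa'}$ terms are increments over the subintervals $[t_j,t_j+s]$, which fill only the fraction $\tau/(1+\tau)$ of each step.
\end{itemize}
The target instead needs $\int r$ and $-\frac{\tau}{1+\tau}\Phi_0(t_\ell)$. In the critical case the $g_i$ may oscillate on scale $L$ in phase with the mesh. Then the discrepancy is of order $L$ per step, i.e.\ of order one in the dimension.

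\paragraph{A concrete failure.} Let $m=n=1$, $\tau=1$, $\alpha=[0;N,N,\ldots]$, $L=\log\frac{N+\sqrt{N^2+4}}{2}$, and let $c$ be a large absolute constant. Then:
\begin{itemize}
\item $g_2$ and $-g_1$ both equal, up to $O(1)$, a tent function of height $L/2$ and period $L$;
\item your times $t_j$ lie within $O(1)$ of its peaks, and $t_j+s$ within $O(1)$ of its zeros;
\item $r_{\alpha,1}(\cdot\,;\delta)\equiv 2$.
\end{itemize}
The summed bound is $\frac32 L\ell+O(\ell)$, i.e.\ $\frac32+O(1/L)$ after normalization. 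The right-hand side of the lemma, however, equals $1+O(1/L)$.

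Your fallback cannot repair this, for three reasons. An error of $mL$ per step costs $m$ in the dimension. The cutoff $\delta$ is fixed by the statement. And the offending term is an increment in $t$ at a fixed threshold, which no change of cutoff absorbs.

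\paragraph{The missing idea: average over the mesh phase.} The paper applies Proposition~\ref{prop:upper} to every shifted mesh $T_j=\rho\e^{v+jh}$, with $h=L/\tau$, $\rho=c^{1/(1+\tau)}$ and $v\in[0,h]$. It then averages the resulting bounds over $v$, using Fatou's lemma (the transition terms are nonnegative). The phase average of a mesh sum is $\frac{\tau}{L}$ times an integral. Hence $r$ is genuinely integrated, and the shifted terms telescope:
\[
 \frac{\tau}{L}\int_0^x\bigl[\Phi_{\kappa'}(u)-\Phi_{\kappa'}(u+s)\bigr]\,du
 =-\frac{\tau}{1+\tau}\,\Phi_{\kappa'}(x)+O(1).
\]
This uses only that $\Phi_{\kappa'}$ is Lipschitz, together with $|\Phi_{\kappa'}-\Phi_0|\le d|\kappa'|$. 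It yields exactly the factor $1/(1+\tau)$ without any density lemma. The Minkowski normalization of $\sum_i g_i$ is not needed. Your Step~3 is fine.
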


\begin{proof}
Put $h=L/\tau$, $\rho=c^{1/(1+\tau)}$, and
\[
 V_\rho(x):=\log
 \frac{\mathcal N_A(\rho\e^{-\tau x},\rho\e^{x+h})}
      {\mathcal N_A(\rho\e^{-\tau(x+h)},\rho\e^{x+h})}.
\]
Lemma~\ref{lem:vertical} gives
$0\le V_\rho\le mL+\log K_d$.

\smallskip
\noindent\emph{Bound one transition factor.} One has

\[
\begin{aligned}
&\log\mathcal N_A(\rho\e^{-\tau x},\rho\e^{x+h})
 -H_\rho\!\left(x+\frac{h}{1+\tau}\right)\\
&\underset{\substack{\eqref{eq:power-shifted-profile-product}\\
                    \eqref{eq:power-profile-product}}}{=}
 \sum_{i=1}^d\Biggl[
 \max\left\{\log\rho+\frac{\tau h}{1+\tau}
             -g_i\!\left(x+\frac{h}{1+\tau}\right),0\right\}\\
&\hspace{35mm}
 -\max\left\{\log\rho
             -g_i\!\left(x+\frac{h}{1+\tau}\right),0\right\}
 \Biggr]\\
&\le \frac{\tau h}{1+\tau}
 \#\left\{i:
 g_i\!\left(x+\frac{h}{1+\tau}\right)
 <\log\rho+\frac{\tau h}{1+\tau}\right\}\\
&\underset{\eqref{eq:power-rank-cutoff}}{=}
 \frac{\tau h}{1+\tau}
 r_{A,\tau}\!\left(
 \e^{x+h/(1+\tau)};(c\e^L)^{1/(1+\tau)}
 \right).
\end{aligned}
\]
The last equality becomes easy to see if one uses the variable $t=x+h/(1+\tau)$. By the definition of $V_\rho$, this gives

\[
\begin{aligned}
 V_\rho(x)\le{}&
 \frac{\tau h}{1+\tau}
 r_{A,\tau}\!\left(
 \e^{x+h/(1+\tau)};(c\e^L)^{1/(1+\tau)}
 \right)\\
 &+H_\rho\!\left(x+\frac{h}{1+\tau}\right)-H_\rho(x+h).
\end{aligned}
\]

\smallskip
\noindent\emph{Integrate this bound from $0$ to $x$.}
By monotonicity and \eqref{eq:profile-scaling},
$H_\rho$ is Lipschitz with constant $d\max\{1,\tau\}$ and
$|H_\rho-H_1|\le d|\log\rho|$. The difference of the two $H_\rho$ integrals reduces to
endpoint intervals:
\[
\begin{gathered}
 \int_0^x
 \left[
 H_\rho\!\left(u+\frac{h}{1+\tau}\right)-H_\rho(u+h)
 \right]\,du=
 \int_{h/(1+\tau)}^h H_\rho(u)\,du
\\ -\int_{x+h/(1+\tau)}^{x+h}H_\rho(u)\,du
=-\frac{\tau h}{1+\tau}H_\rho(x)+O_{c, \tau, h}(1),
\end{gathered}
\]
where the last equality holds by Lipschitz continuity, and $O_{c, \tau, h}(1) = O_{\rho, \tau, h}(1)$.
Since $0\le r_{A,\tau}\le d$, shifting its integral by
$h/(1+\tau)$ also changes it by $O_{c, h, \tau}(1)$. Consequently,
\begin{equation}\label{eq:all-power-average-bound}
 \frac1h\int_0^x V_\rho(u)\,du
 \le\frac{\tau}{1+\tau}
 \left[
 \int_0^x
 r_{A,\tau}\!\left(\e^u;(c\e^L)^{1/(1+\tau)}\right)\,du
 -H_1(x)
 \right]+O_{c, h, \tau}(1).
\end{equation}

\smallskip
\noindent\emph{Apply the upper dimension estimate.}
For each $v\in[0,h]$, use the mesh
$T_j=\rho\e^{v+jh}$. Since
$c\psi_\tau(T_j)=\rho\e^{-\tau(v+jh)}$, the ratio in
\eqref{eq:2.4} satisfies
\[
 \log
 \frac{\mathcal N_A(c\psi_\tau(T_j),T_{j+1})}
      {\mathcal N_A(c\psi_\tau(T_{j+1}),T_{j+1})}
 =V_\rho(v+jh).
\]
Discarding a common fixed initial segment and modifying the
approximation function on a bounded interval, we can satisfy the initial assumptions of Proposition~\ref{prop:upper} without affecting
the limits below. Applying that proposition, averaging over $v$, and using Fatou's lemma gives
\[
\begin{gathered}
 \dim_H\Dhat_A(c\psi_\tau)-\frac{\log C_d}{L}
 \le
 \frac1h\int_0^h
 \liminf_{N\to\infty}
 \frac{\sum_{j=0}^{N-1}V_\rho(v+jh)}{\tau Nh}\,dv\\
 \quad \le
 \liminf_{N\to\infty}\frac1h\int_0^h
 \frac{\sum_{j=0}^{N-1}V_\rho(v+jh)}{\tau Nh}\,dv
 =
 \liminf_{N\to\infty}
 \frac{\int_0^{Nh}V_\rho(u)\,du}{\tau Nh^2}\\
 \quad\underset{\eqref{eq:all-power-average-bound}}{\le}
 \liminf_{N\to\infty}
 \frac{\displaystyle
 \int_0^{Nh}
 r_{A,\tau}\!\left(\e^u;(c\e^L)^{1/(1+\tau)}\right)\,du
 -H_1(Nh)}
 {(1+\tau)Nh}.
\end{gathered}
\]
The numerator in the last line is Lipschitz, so replacing
$Nh$ by arbitrary $x\to\infty$ leaves the liminf unchanged.
Substituting $T=\e^x$ proves \eqref{eq:critical-power-upper}.
\end{proof}

\begin{proof}[Proof of Proposition~\ref{prop:aggarwal-general}]
Fix $\delta,L>0$ and apply \eqref{eq:critical-power-upper} with
$c=\delta^{1+\tau}\e^{-L}$. Since
$\Sing_A(\psi_\tau)\subseteq\Dhat_A(c\psi_\tau)$, letting
$L\to\infty$ and then $\delta\searrow0$ gives \eqref{eq:5.31}.
The outer limit exists by monotonicity in $\delta$.
\end{proof}

\begin{proof}[Proof of Proposition~\ref{prop:aggarwal-special}]
Take $\tau=n/m$.
Minkowski's second theorem gives
$\prod_{i=1}^d\lambda_i(t)\ge1/d!$, and so $\lambda_d(t) \ge \left(d! \right)^{-\frac 1d}$. Thus, for
$0<\delta<(d!)^{-1/d}$,
\[
 r_{A,n/m}(T;\delta)
 \le(d-1)\one_{\{\lambda_1(\frac{mn}{d}(1+n/m)\log T)<\delta\}}.
\]
Since $\log\mathcal N_A(T^{-n/m},T)\ge0$, using \eqref{eq:5.31} and making the change of
variables $t=\frac{mn}{d}(1+n/m)\log T$ gives
\[
 \dim_H\Sing_A(\psi_{n/m})
 \le\frac{d-1}{1+n/m}\,\underline{\Emass}(A)
 =\frac{m(d-1)}d\,\underline{\Emass}(A).
\]
Here we used $<$ instead of $\le$ in the definition of escape of mass, which does not change its value since $\delta\searrow0$.
\end{proof}

\subsection{Templates and lattice trajectories}\label{subsec:templates}

We recall the definitions and correspondence theorem from
\cite{DFSU24} needed in Subsection~\ref{subsec:sharpness}.
For $1\le r\le d$, put
\begin{equation}\label{eq:template-slopes}
 \mathcal Z_r:=
 \left\{\frac am-\frac bn:
 a,b\in\Z_{\ge0},\ a\le m,\ b\le n,\ a+b=r\right\}.
\end{equation}

\begin{defn}[{\cite[Definition~4.1]{DFSU24}}]\label{def:template}
An \emph{$(m,n)$-template} is a continuous map
$\mathbf f=(f_1,\ldots,f_d):[0,\infty)\to\R^d$,
affine on each piece of a finite partition of every bounded interval,
with the following properties:
\begin{enumerate}[label=\textup{(\roman*)}]
\item $f_1\le\cdots\le f_d$.
\item $-1/n\le f_i'\le1/m$ wherever the derivative exists.
\item For each $1\le r\le d$, on any interval where $f_r<f_{r+1}$,
the partial sum $\sum_{i=1}^r f_i$ is convex and all its slopes
belong to $\mathcal Z_r$. Here $f_{d+1}:=+\infty$.
\end{enumerate}
\end{defn}

Since $\mathcal Z_d=\{0\}$, the total sum is constant.

In the normalization \eqref{eq:1.6}, we say that a matrix $A$
\emph{follows} $\mathbf f$ if, for some constant $0\le C<\infty$,
\begin{equation}\label{eq:template-correspondence}
 |\log\lambda_i(t)-f_i(t)|\le C
 \qquad \text{for} \ t\ge0,\ 1\le i\le d.
\end{equation}

\begin{thm}[{\cite[Theorem~4.2]{DFSU24}}]\label{thm:template-correspondence}
\begin{enumerate}[label=\textup{(\roman*)}]
\item Every $A\in\Mat_{m,n}$ follows some $(m,n)$-template.
\item Every $(m,n)$-template is followed by some $A\in\Mat_{m,n}$.
\end{enumerate}
\end{thm}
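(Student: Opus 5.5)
This is a result quoted from \cite{DFSU24}, so within the paper we simply invoke it. If we wanted to reprove it, we would follow the parametric geometry of numbers strategy of Schmidt--Summerer and Roy, adapted to the $(m,n)$ setting. The two parts are quite different in difficulty: (i) is a regularization statement about the functions $t\mapsto\log\lambda_i(t)$, while (ii) is a realization statement requiring an explicit construction of $A$.

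For part (i), we would set $L_i(t):=\log\lambda_i(t)$ and first record the coarse properties. Each $L_i$ is Lipschitz with slopes in $[-1/n,1/m]$, because $a_t$ stretches vectors by factors between $\e^{-t/n}$ and $\e^{t/m}$. Minkowski's second theorem gives $\sum_i L_i=O_d(1)$. Next we would use the exterior-power description already exploited in Lemma~\ref{lem:power-profile} and Lemma~\ref{lem:power-gaps}. On intervals where $L_r<L_{r+1}$ by a large margin, $\sum_{i\le r}L_i$ equals, up to $O_d(1)$, $\log|a_t\omega_\Gamma|$ for a fixed primitive $\Gamma$. By \eqref{eq:power-exterior-profile} the latter is a maximum of affine functions with slopes $a/m-b/n\in\mathcal Z_r$, hence convex with the prescribed slopes. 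The work is then to replace $(L_i)$ by an honest template $\mathbf f$ within bounded distance. We would do this by a combinatorial smoothing. Declare two consecutive minima ``joined'' when their gap is below a large constant $H_d$ of the kind used in Lemma~\ref{lem:power-maximizer}. On each maximal joined cluster, replace the individual functions by their average, adjusted so that the cluster's partial sums are the convex piecewise-affine maxima above. Finally, check that conditions (i)--(iii) of Definition~\ref{def:template} hold after this replacement. A further check is that the finitely-many-pieces condition holds on bounded intervals, which follows from local finiteness of the relevant $\Gamma$.

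For part (ii), we would construct $A$ as a limit of rational or nested-neighbourhood approximations, proceeding piece by piece along the template. First, we would reduce to ``standard'' templates in which at each time only one elementary event occurs: a merge or a split of two adjacent $f_i$, or a change of slope of a single partial sum. This reduction comes from a perturbation lemma showing that any template lies within bounded distance of a standard one. Then we would prove, inductively over a sequence of times $t_k\to\infty$, that the set of $A$ whose lattice trajectory follows $\mathbf f$ up to time $t_k$ within error $C$ contains a nonempty open set $U_k$ with $U_{k+1}\subseteq U_k$. The inductive step chooses a new primitive vector or sublattice in $a_{t_k}u_A\Z^d$ realizing the next prescribed event. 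It does so by applying the unipotent and diagonal flows to a fixed local configuration, and then uses the freedom in $A$ (dimension $mn$, matching the expanding horospherical directions) to steer it. The compact intersection $\bigcap_k\overline{U_k}$ then yields $A$.

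The main obstacle is the inductive step in (ii): controlling all $d$ minima simultaneously with an error constant $C$ that does not grow with $k$. The place this must be settled is the uniform local model for each elementary event. For this we would first try a compactness argument on the space of ``marked'' lattices with a given flag of short sublattices. Such an argument would give uniform constants for each of the finitely many event types, which can then be combined into a single $C$ depending only on $(m,n)$.
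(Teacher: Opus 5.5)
Your proposal agrees with the paper: Theorem~\ref{thm:template-correspondence} is not proved there but quoted from \cite[Theorem~4.2]{DFSU24}, so simply invoking it is exactly what the paper does. The reproof outline you add is optional and only heuristic. The cluster-averaging in (i) and the uniform-constant inductive step in (ii) are where the substantive work of \cite{DFSU24} lies, so the outline should not be read as a proof in its own right.
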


\subsection{Sharpness of the escape-of-mass estimate}\label{subsec:sharpness}

\begin{proof}[Proof of Proposition~\ref{prop:aggarwal-sharpness}]
For $\eta = 0$, the desired statement only requires us to prove the existence of a matrix $A$ for which $\underline{\Emass}(A) = 0$. Such matrices exist: for instance, it holds for badly approximable matrices. From now on, assume $0<\eta\le1$.

\smallskip
\noindent\emph{Step 1: describe the templates.}
Choose positive numbers $h_k$ such that
\begin{equation}\label{eq:sharp-height-conditions}
 h_k\longrightarrow\infty,
 \qquad
 h_k=o\left(\sum_{j=1}^{k-1}h_j\right).
\end{equation}
Take $s_1\ge0$ and put $s_{k+1}=s_k+dh_k/\eta$.
Define $b=0$ on $[0,s_1]$ and, for $s_k\le t\le s_{k+1}$, put
\begin{equation}\label{eq:sharp-tents}
 b(t)=
 \begin{cases}
 \displaystyle\min\left\{\frac{t-s_k}{m},
                  \frac{s_k+dh_k-t}{n}\right\},
       &s_k\le t\le s_k+dh_k,\\[4pt]
 0,&s_k+dh_k\le t\le s_{k+1}.
 \end{cases}
\end{equation}
Set
\begin{equation}\label{eq:sharp-template}
 f_1(t)=\cdots=f_{d-1}(t)=-\frac{b(t)}{d-1},
 \qquad f_d(t)=b(t).
\end{equation}
Figure~\ref{fig:sharp-template} shows one cycle.
These functions satisfy Definition~\ref{def:template}. Indeed, they are ordered, sum to
zero, and their slopes lie in $[-1/n,1/m]$.
The only strict gap between consecutive coordinates is $f_{d-1}<f_d$ on $(s_k,s_k+dh_k)$. On these segments $\sum_{i=1}^{d-1}f_i=-b$ is convex, with successive slopes $-\frac1m=\frac{m-1}{m}-\frac nn$ and $\frac1n=\frac mm-\frac{n-1}{n}$, both belonging to $\mathcal Z_{d-1}$ in \eqref{eq:template-slopes}.

\begin{figure}[htbp]
\centering
\begin{tikzpicture}[x=1.05cm,y=1cm,
                    every node/.style={font=\small}]
 \draw[->,thin] (-.25,0)--(9,0) node[right] {$t$};
 \draw[densely dashed,thin] (0,-1.5)--(0,.15);
 \draw[densely dashed,thin] (3.1,-1.5)--(3.1,1.8);
 \draw[densely dashed,thin] (5.4,-1.5)--(5.4,.15);
 \draw[densely dashed,thin] (8.3,-1.5)--(8.3,.15);
 \draw[densely dashed,thin] (-.1,1.8)--(3.1,1.8);
 \draw[densely dashed,thin] (-.1,-.9)--(3.1,-.9);
 \draw[thick] (0,0)--(3.1,1.8)--(5.4,0)--(8.3,0);
 \draw[thick] (0,0)--(3.1,-.9)--(5.4,0);
 \node[left] at (-.1,1.8) {$h_k$};
 \node[left] at (-.1,-.9) {$-\dfrac{h_k}{d-1}$};
 \node[above left] at (1.55,.9) {$1/m$};
 \node[above right] at (4.25,.9) {$-1/n$};
 \node[above] at (4.6,1.55) {$f_d$};
 \node[below right] at (3.8,-.7) {$f_1=\cdots=f_{d-1}$};
 \node[above] at (6.85,.12) {$f_1=\cdots=f_d=0$};
 \node[below] at (0,-1.5) {$s_k$};
 \node[below] at (3.1,-1.5) {$s_k+mh_k$};
 \node[below] at (5.4,-1.5) {$s_k+dh_k$};
 \node[below] at (8.3,-1.5) {$s_{k+1}$};
\end{tikzpicture}
\caption{One cycle: $[s_k,s_k+dh_k]$ and the following zero interval.
}
\label{fig:sharp-template}
\end{figure}

\smallskip
\noindent\emph{Step 2: compute $\underline{\Emass}(A)$ and
$\mathcal P_{A,\psi_{n/m}}(L,c)$.}
Suppose that $A$ follows this template with constant $C\ge0$
in \eqref{eq:template-correspondence}.
By \eqref{eq:sharp-height-conditions}, the number of cycles
before time $t$ is $o(t)$, and the length of an unfinished cycle
is $o(t)$. The coordinate $f_1$, which approximates
$\log\lambda_1$ within $C$ by \eqref{eq:template-correspondence},
therefore satisfies
\begin{equation}\label{eq:sharp-template-proportions}
 \lim_{t\to\infty}\frac1t
 \mathcal L^1\{u\in[0,t]:f_1'(u)=v\}
 =
 \begin{cases}
 \eta m/d,&v=-1/(m(d-1)),\\
 \eta n/d,&v=1/(n(d-1)),\\
 1-\eta,&v=0.
 \end{cases}
\end{equation}
For any fixed $K>0$, the part of $[s_k,s_k+dh_k]$ where
$b\le K$ has length at most $dK$.
Using \eqref{eq:template-correspondence}, we conclude that,
for every fixed $0<\delta<\e^{-C}$,
\[
 \lim_{t\to\infty}\frac1t
 \mathcal L^1\{u\in[0,t]:\lambda_1(u)\le\delta\}=\eta.
\]
In particular,
\begin{equation}\label{eq:sharp-template-escape}
 \underline{\Emass}(A)=\eta.
\end{equation}

Now we compute $\mathcal P_{A,\psi_{n/m}}(L,c)$: \eqref{eq:2.1} and
\eqref{eq:template-correspondence} give
\begin{equation}\label{eq:sharp-template-profile}
 \left|
 \log\mathcal N_A(R,Q)
 -\sum_{i=1}^d
  \max\left\{\frac{n\log Q+m\log R}{d}
                  -f_i({t_{R,Q}}),0\right\}
 \right|\le dC
\end{equation}
whenever ${t_{R,Q}}\ge0$.
Fix $L>0$ and $0 < c\le\e^{-L}$. For $\psi=\psi_{n/m}$, the mesh
\eqref{eq:1.11} is $T_j=\e^{mjL/n}$. Put
\[
 t_j=t_{c\psi(T_{j+1}),T_{j+1}}=m(j+1)L-\frac{mn}{d}\log c;
\]
then,
\[
 t_j-\frac{mnL}{d}
 =\frac{mn}{d}\log\frac{T_{j+1}}{c\psi(T_j)}
 =t_{c\psi(T_j),T_{j+1}}.
\]
For all sufficiently large $j$, substitution into
\eqref{eq:sharp-template-profile} gives
\begin{equation}\label{eq:sharp-template-transition}
\begin{gathered}
 \log
 \frac{\mathcal N_A(c\psi(T_j),T_{j+1})}
      {\mathcal N_A(c\psi(T_{j+1}),T_{j+1})}
 =
 \sum_{i=1}^d
 \Bigl[
 \max\left\{\tfrac md(\log c+L)
      -f_i\!\left(t_j-\tfrac{mnL}{d}\right),0\right\}\\
 -\max\{\tfrac md\log c-f_i(t_j),0\}
 \Bigr]+O_d(C).
\end{gathered}
\end{equation}

If $t_j$ lies outside neighborhoods of the points
$s_k$, $s_k+mh_k$, and $s_k+dh_k$, of sufficiently large fixed
width depending on $L$ and $c$, both times in
\eqref{eq:sharp-template-transition} belong to the same affine
interval of the template. If they lie in $(s_k,s_k+dh_k)$, by increasing these exceptional intervals if necessary, we can in addition guarantee that 
$$
f_i\!\left(t_j-\tfrac{mnL}{d}\right) < \tfrac md(\log c+L) \quad \text{and} \quad  f_i(t_j) <  \tfrac md\log c
$$
for $i = 1, \ldots, d-1$. Only the first $d-1$ coordinates contribute nonzero terms to the summation, and the sum in
\eqref{eq:sharp-template-transition} is equal to
\[
 \frac{m(d-1)}dL
 +b\!\left(t_j-\frac{mnL}{d}\right)-b(t_j)
 =
 \begin{cases}
 (m-1)L,&b\text{ is increasing},\\
 mL,&b\text{ is decreasing}.
 \end{cases}
\]
On $(s_k + dh_k, s_{k+1})$, the same sum equals $m\bigl(\max\{\log c+L,0\}-\max\{\log c,0\}\bigr) = 0$.

The exceptional indices have density zero. Indeed, each
of the points $s_k$, $s_k+mh_k$ and $s_k+dh_k$ excludes $O_{L,c}(1)$ indices, while their number up to time $t$ is $o(t)$.
Their contribution is negligible by Lemma~\ref{lem:vertical}.
Counting points of the arithmetic progression $\{t_j\}$ in
each interval changes its length divided by $mL$ by $O(1)$.
Thus the three types of indices have the same proportions
$\eta m/d$, $\eta n/d$, and $1-\eta$ as in
\eqref{eq:sharp-template-proportions}. Equation~\eqref{eq:4.19} consequently gives
\begin{equation}\label{eq:sharp-template-pressure}
\begin{gathered}
 \mathcal P_{A,\psi_{n/m}}(L,c)
 ={}\eta\,\frac{m(d-1)}d+O_d(C/L),
\end{gathered}
\end{equation}
where the implied constant is independent of $c$.
After taking the infimum over $c>0$, and then letting $L\to\infty$, Theorem~\ref{thm:main} gives
\begin{equation}\label{eq:sharp-template-dimension}
 \dim_H\Sing_A(\psi_{n/m})=\eta\,\frac{m(d-1)}d.
\end{equation}
This completes the proof.

\end{proof}

\begin{proof}[Proof of Example ~\ref{rem:aggarwal-sharp-example}]
The convergent denominators of $\alpha=[0;2,3,4,\ldots]$ satisfy
$q_0=1$, $q_1=2$, and
\[
 q_{k+1}=(k+2)q_k+q_{k-1}.
\]
Since $q_{k-1}\ge kq_{k-2}$, this gives
$$
\begin{gathered} 1\le\frac{q_k}{(k+1)!}
 \le \prod_{j=2}^k\left(1+\frac1{j(j+1)}\right)
\le \exp\left(\sum_{j=2}^k\frac1{j(j+1)}\right)
\\ = 
\exp\left(\frac12-\frac1{k+1}\right) \le e^{1/2}.
\end{gathered}
$$
Writing $a_k=\log q_k$ as in \eqref{eq:a_k_b_k}, we obtain
\begin{equation}\label{eq:sim-klog}
\lim_{k\to\infty}\frac{a_k}{k\log k}=1 \quad \text{and} \quad a_{k+1}-a_k = \log(k+2) + O(1).
\end{equation}
For $a_k\le t\le a_{k+1}$, $k\ge1$, put
\[
 b(t)=\min\{t-a_k,a_{k+1}-t\},
\]
and set $b=0$ on $[0,a_1]$.

For $a_k\le t\le a_{k+1}$, take the primitive vector
$\bv_k:=(q_k\alpha-p_k,q_k)\in u_\alpha\Z^2$.
By \eqref{eq:5.56} and the definition of $b$,
\[
 \frac12\e^{-b(t)}
 \le |a_t\bv_k|
 =\max\{\e^t|q_k\alpha|_{\Z},\e^{-t}q_k\}
 \le\e^{-b(t)}\le1.
\]

For every $\bv\in u_\alpha\Z^2$ independent of $\bv_k$,
unimodularity gives
\[
 1\le|\det(a_t\bv_k,a_t\bv)|
 \le2|a_t\bv_k|\,|a_t\bv|,
\]
so $|a_t\bv|\ge1/2$. Every nonzero collinear lattice vector
is an integer multiple of $\bv_k$ and hence is no shorter.
Therefore
\[
 \frac12\e^{-b(t)}\le\lambda_1(t)\le\e^{-b(t)}.
\]
After enlarging the constant to cover the bounded initial
interval $[0,a_1]$, we obtain
$-\log\lambda_1(t)=b(t)+O(1)$.
Minkowski's second theorem gives $1/2\le\lambda_1(t)\lambda_2(t)\le1$,
and hence $\log\lambda_2(t)=b(t)+O(1)$.

Thus \eqref{eq:template-correspondence} holds for the
functions \eqref{eq:sharp-tents}--\eqref{eq:sharp-template},
with $m=n=1$, $\eta=1$, $s_k=a_k$, and
\[
 h_k=\frac{a_{k+1}-a_k}{2}.
\]
The estimates above verify \eqref{eq:sharp-height-conditions}.
Equation~\eqref{eq:sharp-template-escape} gives
$\underline{\Emass}(\alpha)=1$, while
\eqref{eq:sharp-template-dimension} and
Theorem~\ref{thm:main} give $P_-(\alpha,\psi_1)=1/2$.

To prove that the same value holds for $\dim_H\Dhat_\alpha(c\psi_1)$, fix $\delta>0$.
By \eqref{eq:template-correspondence}, for some fixed $C\ge0$,
$r_{\alpha,1}(\e^t;\delta)=1$ whenever $b(t)>C+|\log\delta|$.
The complementary set has bounded length in each interval
$[a_k,a_{k+1}]$ and hence has density zero, due to \eqref{eq:sim-klog}.
Consequently,
\[
 \lim_{T\to\infty}\frac1{\log T}
 \int_1^T r_{\alpha,1}(S;\delta)\,\frac{dS}{S}=1.
\]
Since $\log\mathcal N_\alpha(T^{-1},T)\ge0$,
Lemma~\ref{lem:power-upper-all}, with $\delta=(c\e^L)^{1/2}$, yields
\[
 \dim_H\Dhat_\alpha(c\psi_1)
 \le \frac12+\frac{\log C_2}{L}.
\]
Letting $L \to \infty$ completes the proof.
\end{proof}

\section{Proofs of Propositions \ref{prop:transference-null},
\ref{prop:transference-full}, and \ref{prop:trivial-shifts}}\label{sec:transference}

We use the following two lemmata in the proofs of Propositions
\ref{prop:transference-null} and \ref{prop:transference-full}.

\begin{lem}
\label{lem:dual-strip}
Suppose that $A^\top$ is $\varphi$-approximable and that $\psi$ and
$\varphi$ are dual. Then, for
every $0<\delta<1/(2d)$,
\begin{equation}\label{eq:dual-strip-bound}
 \mathcal L^m(\Dhat_A(\widetilde\psi_\delta))\le2d\delta.
\end{equation}
\end{lem}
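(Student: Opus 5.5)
The plan is to use the dual vector supplied by $\varphi$-approximability of $A^\top$ to confine every approximable shift to a thin neighbourhood of a family of parallel hyperplanes. Fix $S$ arbitrarily large from the unbounded set of parameters, and take $(\p,\q)\in(\Z^m\setminus\{\mathbf0\})\times\Z^n$ with
\[
|A^\top\p-\q|\le\varphi(S),\qquad |\p|\le S.
\]
Put $T:=\delta\,\psi^{-1}(1/S)$, so that, by duality \eqref{eq:5.2}, $\psi^{-1}(1/S)=1/\varphi(S)$. Then
\[
\widetilde\psi_\delta(T)=\delta\psi\bigl(\psi^{-1}(1/S)\bigr)=\delta/S .
\]
Since $S\to\infty$ along the sequence, $T\to\infty$ as well.

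Now let $\bb\in\Dhat_A(\widetilde\psi_\delta)$. For all large such $S$ there are $\q'\in\Z^n$ and $\p'\in\Z^m$ with $|\q'|\le T$ and $|A\q'-\p'-\bb|\le\delta/S$. I would then compute $\p\cdot\bb$ modulo $\Z$. The identity $\p\cdot A\q'=(A^\top\p)\cdot\q'$ gives
\[
\p\cdot\bb\equiv(A^\top\p-\q)\cdot\q'+\p\cdot(\bb-A\q'+\p')\pmod 1 .
\]
The first term is bounded by $n\,\varphi(S)T=n\delta$. The second is bounded by $m|\p|\,\delta/S\le m\delta$. Altogether this yields
\[
|\p\cdot\bb|_{\Z}\le d\delta ,
\]
using the $\ell^1$--$\ell^\infty$ pairing bounds.

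Next I would estimate the measure of the set $\{\bb\in\T^m:|\p\cdot\bb|_{\Z}\le d\delta\}$ for a nonzero integer vector $\p$. The map $\bb\mapsto\p\cdot\bb$ from $\T^m$ to $\T$ pushes Lebesgue measure forward to Lebesgue measure, so this set has measure exactly $2d\delta$, valid since $d\delta<1/2$. For a fixed $N$, the set $\Dhat_{A,N}(\widetilde\psi_\delta)$ is contained in this strip for every admissible $S$ with $T\ge N$; hence its measure is at most $2d\delta$. Letting $N\to\infty$ and using monotone union gives \eqref{eq:dual-strip-bound}.

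The main obstacle is bookkeeping with the norms in the pairing bound. In the sup-norm normalization, $|\mathbf x\cdot\mathbf y|\le k|\mathbf x||\mathbf y|$ in $\R^k$, which is exactly why the constant $d=m+n$ appears. A second point to check is that the time $T$ at which the uniform property is invoked tends to infinity, which is what allows a single threshold $N$ to be used. If the constants come out with an extra factor, the threshold $1/(2d)$ in the statement suggests that the bound $d\delta$ for $|\p\cdot\bb|_{\Z}$ is the intended one.
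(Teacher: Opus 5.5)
Your proposal is correct and is essentially the paper's proof. You use the same time $T=\delta/\varphi(S)$ (the paper's $\delta T_k$), at which $\widetilde\psi_\delta(T)=\delta/S$, the same transference identity giving $|\p\cdot\bb|_{\Z}\le m\delta+n\delta=d\delta$, and the same strip of measure $2d\delta$. The only cosmetic difference is that you pass to the limit through the exhaustion $\Dhat_A(\widetilde\psi_\delta)=\bigcup_N\Dhat_{A,N}(\widetilde\psi_\delta)$ and continuity of measure from below, whereas the paper places $\Dhat_A(\widetilde\psi_\delta)$ inside the $\liminf$ of the strips and uses that the measure of a $\liminf$ is at most the lower limit of the measures, which amounts to the same argument.
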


\begin{proof}
Choose a sequence $S_k\to\infty$ and
\[
 (\p_k,\q_k)
 \in(\Z^m\setminus\{\mathbf0\})\times\Z^n
\]
such that
\[
 |\p_k|\le S_k,
 \qquad
 |A^\top\p_k-\q_k|\le\varphi(S_k);
\]
it exists due to the theorem assumption. Put $T_k:=1/\varphi(S_k)$. By duality, $\psi(T_k)=1/S_k$.

Fix $\bb\in \Dhat_A(\widetilde\psi_\delta)$. Applying the defining property of $\Dhat_A(\widetilde\psi_\delta)$ at
the parameter $\delta T_k$, for all sufficiently large $k$ we can find
$\mathbf r_k\in\Z^n$ and $\mathbf a_k\in\Z^m$ such that
\[
 |\mathbf r_k|\le\delta T_k,
 \qquad
 |A\mathbf r_k-\bb-\mathbf a_k|\le\frac{\delta}{S_k}.
\]
We will use a classical transference argument. Note that $\p_k \cdot A\mathbf r_k = \mathbf r_k \cdot A^\top\p_k$. Therefore,
\[
\begin{gathered}
 |\p_k\cdot\bb|_{\Z} = |\p_k\cdot(A\mathbf r_k - \bb) -  \mathbf r_k \cdot A^\top\p_k|_{\Z}\\
 \le
 m|\p_k|\,|A\mathbf r_k-\bb-\mathbf a_k|
 +n|\mathbf r_k|\,|A^\top\p_k-\q_k|
 \le m\delta+n\delta=d\delta.
\end{gathered}
\]
Consequently,
\[
 \Dhat_A(\widetilde\psi_\delta)
 \subseteq
 \liminf_{k\to\infty}
 \left\{
 \bb\in\T^m:
 |\p_k\cdot\bb|_{\Z}\le d\delta
 \right\}.
\]
Since $\p_k\ne\mathbf0$, the homomorphism
\[
 \bb\longmapsto\p_k\cdot\bb\pmod{\Z}
\]
is surjective, thus it maps Haar measure on $\T^m$ to Haar measure on $\T$ (see e.g. \cite[\S 63, Theorem C]{Hal}). Therefore the sets under $\liminf$ above are strips of measure $2d\delta$. Since the measure of a
liminf is at most the lower limit of the measures, this proves
\eqref{eq:dual-strip-bound}.
\end{proof}

The following lemma is a classical transference statement; we will use it in the form of
\cite[Lemma~I]{MN25}.

\begin{lem}
\label{lem:inhomogeneous-transference}
Let $S,T>0$ and suppose that
\begin{equation}\label{eq:transference-hypothesis}
 |A^\top\p|_{\Z^n}\ge \frac{\kappa_d}T
 \qquad
 \text{for every }\p\in\Z^m\setminus\{\mathbf0\}
 \text{ with }|\p|\le S.
\end{equation}
Then, for every $\bb\in\T^m$, there exist $\q\in\Z^n$ and
$\mathbf a\in\Z^m$ such that
\begin{equation*}\label{eq:transference-conclusion}
 |\q|\le T,
 \qquad
 |A\q-\mathbf a-\bb|\le \frac{\kappa_d}S.
\end{equation*}
\end{lem}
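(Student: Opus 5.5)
The plan is to prove this in the geometry of numbers, combining Mahler's transference for successive minima of polar bodies with Jarník's bound on the covering radius. Work with the lattice $\Lambda:=u_A\Z^d=\{(A\q-\mathbf a,\q)\}$ and the box
\[
 K:=\{(\bx,\by)\in\R^m\times\R^n:\ |\bx|\le 1/S,\ |\by|\le T\}.
\]
Then the conclusion is equivalent to a covering statement: every translate $(-\bb,\mathbf 0)+\Lambda$ meets $\kappa_d K$. In other words, the inhomogeneous minimum (covering radius) of $\Lambda$ with respect to $K$ should be at most $\kappa_d$.

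\emph{Step 1: the dual side.} The dual lattice is $\Lambda^*=\{(\p,\q'-A^\top\p):\p\in\Z^m,\ \q'\in\Z^n\}$, since the pairing with $(A\q-\mathbf a,\q)$ equals $\q'\cdot\q-\p\cdot\mathbf a$, an integer. The polar body $K^\circ$ contains the box $\{|\p|\le S/(dm),\ |\mathbf r|\le 1/(dT)\}$ and is contained in $\{|\p|\le S,\ |\mathbf r|\le 1/T\}$, up to factors at most $d$ coming from the sup-norm/$\ell^1$ duality. The hypothesis \eqref{eq:transference-hypothesis} says that no dual vector with $\p\ne\mathbf 0$, $|\p|\le S$ has $|\q'-A^\top\p|<\kappa_d/T$. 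Dual vectors with $\p=\mathbf 0$ and $\q'\neq \mathbf 0$ satisfy $|\mathbf r|\ge1$, so they are also excluded once $T$ is large compared with $\kappa_d$. (If $T<\kappa_d$, I will handle this separately: $\q=\mathbf 0$ trivially approximates within distance $1/2$, and then the hypothesis with $\p$ a unit vector forces $S$ to be small.) Together these give the lower bound $\lambda_1(K^\circ,\Lambda^*)\gg\kappa_d$, with an explicit constant depending only on $d$.

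\emph{Step 2: transfer and covering.} Mahler's theorem $\lambda_1(K^\circ,\Lambda^*)\,\lambda_d(K,\Lambda)\le d!$ then yields an upper bound on $\lambda_d(K,\Lambda)$. Jarník's inequality, that the covering radius is at most $\tfrac12\sum_i\lambda_i\le\tfrac d2\lambda_d$, converts this into a bound on the covering radius. Tracking the constants should give exactly $2^{1-d}(d!)^2=\kappa_d$; this bookkeeping is the form used in \cite[Lemma~I]{MN25}, so one may simply cite it.

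The main obstacle is getting the constants to match $\kappa_d$ exactly rather than merely up to $O_d(1)$. I would first try to calibrate the polar body precisely: use the box $K$ and its exact polar, a product of cross-polytopes, and check that the hypothesis on the sup norm of $A^\top\p$ controls the polar gauge with the stated constant. Where the constants do not close, I would fall back on citing \cite[Lemma~I]{MN25} verbatim.
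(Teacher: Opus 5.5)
The paper gives no argument of its own for this lemma; it simply quotes \cite[Lemma~I]{MN25}. Your fallback of citing it therefore matches the paper.

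The self-contained Mahler--Jarn\'ik derivation you sketch does not work as set up, because the lemma is anisotropic. The constant $\kappa_d$ enters the $\bx$-bound of the conclusion and the $A^\top\p$-bound of the hypothesis, but it does not enter $|\q|\le T$ or $|\p|\le S$.
\begin{itemize}
\item With $K=\{|\bx|\le 1/S,\ |\by|\le T\}$, meeting $\kappa_d K$ only gives $|\q|\le\kappa_d T$. So it is not equivalent to the conclusion.
\item The hypothesis does not give $\lambda_1(K^\circ,\Lambda^*)$ of size $\kappa_d$. It excludes nonzero dual vectors only from $\{|\p|\le S,\ |\mathbf r|<\kappa_d/T\}$, so the factor $\kappa_d$ enlarges only the $\mathbf r$-side. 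But $\lambda K^\circ$ reaches dual vectors with $|\p|>S$ as soon as $\lambda>1$, and the hypothesis says nothing about those.
\item Since $\kappa_d$ depends only on $d$, a bound ``$\lambda_1\gg\kappa_d$'' with a $d$-dependent implied constant says no more than $\lambda_1\gg 1$.
\item What you actually get is $\lambda_1(K^\circ,\Lambda^*)\ge1$. This gives covering radius at most $d\cdot d!/2$, hence only $|\q|\le\frac{d\cdot d!}{2}T$. No constant bookkeeping with uniform dilates of this $K$ recovers $|\q|\le T$.
\end{itemize}

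The fix is to build $\kappa_d$ into the body anisotropically. Take $K:=\{|\bx|\le 1/S,\ |\by|\le T/\kappa_d\}$. Writing $\|\cdot\|_1$ for the $\ell^1$ norm, its polar is $K^\circ=\{S^{-1}\|\p\|_1+\kappa_d^{-1}T\|\mathbf r\|_1\le1\}$, which lies in $\{|\p|\le S,\ |\mathbf r|\le\kappa_d/T\}$. So for $\lambda<1$ the body $\lambda K^\circ$ contains no nonzero point of $\Lambda^*$:
\begin{itemize}
\item points with $\p\ne\mathbf 0$ are excluded by \eqref{eq:transference-hypothesis};
\item points with $\p=\mathbf 0$ have $|\mathbf r|\ge1>\lambda\kappa_d/T$ once $T\ge\kappa_d$.
\end{itemize}
You handle the case $T<\kappa_d$ correctly: there the hypothesis forces $S<1$, and $\q=\mathbf 0$ works.

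Hence $\lambda_1(K^\circ,\Lambda^*)\ge1$. Mahler gives $\lambda_d(K,\Lambda)\le d!$, and Jarn\'ik gives covering radius at most $\frac d2\,d!$. Since $d\,2^{d-2}\le d!$ for $d\ge2$, this is at most $2^{1-d}(d!)^2=\kappa_d$, with equality for $d=2,3$. So every translate of $\Lambda$ meets $\kappa_d K=\{|\bx|\le\kappa_d/S,\ |\by|\le T\}$, which is exactly the conclusion. There is no need to reproduce $\kappa_d$ exactly; it suffices that the Mahler--Jarn\'ik constant is at most $\kappa_d$, and it is. Corrected this way, your route gives a short complete proof from standard theorems in place of the paper's bare citation.
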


\begin{proof}[Proof of Proposition \ref{prop:transference-null}]
Suppose that $A^\top$ is $\varphi$-approximable. If
$H_A\ne\T^m$, then $\Dhat_A(\widetilde\psi_\varepsilon) \subseteq H_A$ has zero
$m$-dimensional Lebesgue measure. Suppose therefore that
$H_A=\T^m$, and choose
\[
 \varepsilon<\delta<\frac1{2d}.
\]
If $\mathcal L^m(\Dhat_A(\widetilde\psi_\varepsilon))>0$, Theorem
\ref{thm:zero-full}\textup{(i)} gives
$\mathcal L^m(\Dhat_A(\widetilde\psi_\delta))=1$. This contradicts
\eqref{eq:dual-strip-bound}, since $2d\delta<1$. This proves
\eqref{eq:transference-null}.
\end{proof}

\begin{proof}[Proof of Proposition \ref{prop:transference-full}]
Suppose that $A^\top$ is not $\varphi$-approximable. For every
sufficiently large $S$ one then has
\begin{equation}\label{eq:no-dual-solution}
 |A^\top\p|_{\Z^n}>\varphi(S)
 \qquad
 \text{for every }\p\in\Z^m\setminus\{\mathbf0\}
 \text{ with }|\p|\le S.
\end{equation}
Apply Lemma
\ref{lem:inhomogeneous-transference} with $T=\frac{\kappa_d}{\varphi(S)}$. For every $\bb\in\T^m$, it gives a solution of
\[
 |A\q-\mathbf a-\bb|
 \le \frac{\kappa_d}S,
 \qquad
 |\q|\le T.
\]
By \eqref{eq:5.2},
\[
 S=\varphi^{-1}\left(\frac{\kappa_d}T\right)
 \qquad\text{and}\qquad
 \frac{\kappa_d}S
 =\kappa_d\psi\left(\frac{T}{\kappa_d}\right)
 =\widetilde\psi_{\kappa_d}(T).
\]
Since $T$ runs through all sufficiently large values, this proves
\eqref{eq:transference-full}.

To prove the additional assertion, fix $\bb\in\T^m$ and,
for sufficiently large $T$, put
\[
 \mathbf v_T:=\lfloor T/2\rfloor(1,\ldots,1)\in\Z^n,
 \qquad
 S:=\varphi^{-1}\left(\frac{3\kappa_d}{T}\right),
 \qquad Q:=\frac T3.
\]
By \eqref{eq:no-dual-solution},
\eqref{eq:transference-hypothesis} holds for this $S$ and $Q$ in place of $T$. Apply Lemma \ref{lem:inhomogeneous-transference} to
$\bb-A\mathbf v_T$. %
We obtain $\mathbf r\in\Z^n$ and $\mathbf a\in\Z^m$ such that
\[
\begin{gathered}
 |\mathbf r|\le T/3, \qquad 
 |A(\mathbf v_T+\mathbf r)-\mathbf a-\bb|
 \le \frac{\kappa_d}{S}
 =\kappa_d\psi\left(\frac{T}{3\kappa_d}\right)
 \le\widetilde\psi_{3\kappa_d}(T).
\end{gathered}
\]
Every coordinate of $\q:=\mathbf v_T+\mathbf r$ lies between
$T/6-1$ and $5T/6$. Thus, for sufficiently large $T$,
$\q\in\N^n$ and $|\q|\le T$, proving
$\Dhat_{A,\N^n}(\widetilde\psi_{3\kappa_d})=\T^m$.
\end{proof}

\begin{proof}[Proof of Proposition \ref{prop:trivial-shifts}]
Suppose that $A$ is not $\varphi$-approximable and fix
$0<c<1/2$. For every sufficiently large integer $N$ and every nonzero
$\mathbf r\in\Z^n$ with $|\mathbf r|\le2N<N/c$, one has
\begin{equation}\label{eq:lower_on_Ar}
 |A\mathbf r|_{\Z^m}>\varphi(N/c)
 >2c\varphi(N/c)=2\widetilde\varphi_c(N).
\end{equation}
Let $\mathcal Q\subseteq\Z^n$.
Let $\bb\in{\Dhat_{A,\mathcal Q}}(\widetilde\varphi_c)$; for any $T$ large enough, the system \eqref{eq:1.2} has a solution $(\p,\q)\in\Z^m\times \mathcal Q$.
Now let $T$ tend to $N+1$ from below; by the finiteness of $\{\q\in\Z^n:|\q|\le N\}$ and continuity of $\varphi$, for every sufficiently large integer $N$ we can choose $\q_N\in{\mathcal Q}$ such that
\begin{equation}\label{eq:qN_defn}
 |\q_N|\le N,
 \qquad
 |A\q_N-\bb|_{\Z^m}\le\widetilde\varphi_c(N+1).
\end{equation}
Consequently, $|\q_N-\q_{N-1}|\le2N$ and, since
$\widetilde\varphi_c$ is strictly decreasing,
\begin{equation}\label{eq:Ar_upper}
 |A(\q_N-\q_{N-1})|_{\Z^m}
\le\widetilde\varphi_c(N+1)+\widetilde\varphi_c(N)
 <2\widetilde\varphi_c(N).
\end{equation}
Combining \eqref{eq:lower_on_Ar} and \eqref{eq:Ar_upper}, we conclude that $\q_N=\q_{N-1} = \q$ for all sufficiently large $N$. Letting $N\to\infty$ in \eqref{eq:qN_defn} gives
$\bb\in A\mathcal Q\bmod\Z^m$.
The reverse inclusion is immediate, proving the additional assertion.
Taking $\mathcal Q=\Z^n$ gives \eqref{eq:5.51}.
\end{proof}

\section{Concluding remarks and open problems}\label{sec:conclusion}

We close with three directions suggested by the proof: the exact dimension in the non-$c$-invariant case, the role of lower quasimultiplicativity, and a weighted extension.

\subsection{The non-\texorpdfstring{$c$}{c}-invariant case}
\label{subsec:non-c-invariant}

For a fixed $c>0$, Theorem~\ref{thm:uniform-pressure} gives
\begin{equation}\label{eq:non-c-invariant-bounds}
 \mathcal P_{A,\psi}(L,c\e^{-L})-\varepsilon_L
 \le \dim_H\Dhat_A(c\psi)
 \le \mathcal P_{A,\psi}(L,c)+\varepsilon_L,
 \qquad \varepsilon_L=O(L^{-1}).
\end{equation}
The constants implicit in the lattice-counting and packing estimates
contribute only to $\varepsilon_L$, and disappear as $L\to\infty$.
The essential loss is the change of the second parameter from $c$
to $c\e^{-L}$. It is harmless after taking the infimum or supremum
over $c$, and also when the dimension is $c$-invariant. Otherwise,
the two sides of \eqref{eq:non-c-invariant-bounds} need not determine
the dimension for an individual value of $c$.

The covering argument may retain every approximation ball meeting a
given parent, whereas the Cantor construction must select separated
children and then enlarge the approximation function in order to
verify the uniform condition. A coarser equal-$\psi$ mesh makes the
cost of this selection negligible, but produces the factor $\e^L$.
An exact formula therefore seems to require a constant-sensitive block
construction, or a variational principle which records the relative
positions and incidences of clusters at successive scales rather than
only their cardinalities.

\medskip
\noindent\textbf{Question.}
How to determine $\dim_H\Dhat_A(c\psi)$ exactly for every $c>0$?
More specifically, is there a formula, potentially similar to those of Propositions \ref{prop:upper} and \ref{prop:lower}, for which the upper and lower constructions use
the same approximation constant?

\subsection{The role of quasimultiplicativity}
\label{subsec:quasimultiplicativity}

The upper estimate in Proposition~\ref{prop:upper} requires no
quasimultiplicativity.
The essential use of lower quasimultiplicativity occurs in
\eqref{eq:4.13}: for sufficiently large $L$, consecutive mesh
times in \eqref{eq:1.11} are separated by a fixed multiplicative
factor. Hence
$T_{j+1}-T_j$ is comparable to $T_{j+1}$, allowing
\eqref{eq:2.6} to be simplified to \eqref{eq:2.8}.
This gives the lower estimate in
Theorem~\ref{thm:uniform-pressure}; the same separation is used
in Proposition~\ref{prop:no-drop}.

Without this assumption, $T_{j+1}/T_j$ may tend to one: for instance, if $\psi(T)=\e^{-T}$ at large $T$, one has $T_j=jL$ for large $j$. The lower construction then counts increments with
bound $T_{j+1}-T_j=L$, while the upper estimate uses $T_{j+1}$.
The fixed-rescaling comparison used in \eqref{eq:2.8} is
therefore unavailable.

\medskip
\noindent\textbf{Question.}
Can lower quasimultiplicativity be weakened or removed while
retaining explicit lattice-count formulae for the dimensions
of the endpoint sets, possibly using different denominator blocks?
For general continuous, strictly decreasing $\psi\to0$, we use
the definitions \eqref{eq:1.14}, without assuming their equivalence
to \eqref{eq:classical-endpoints}.

\subsection{The weighted setting}\label{subsec:weighted}

Let $\mathbf r=(r_1,\ldots,r_m)$ and
$\mathbf s=(s_1,\ldots,s_n)$ have positive coordinates normalized by
\[
 \sum_{i=1}^m r_i=m,\qquad \sum_{j=1}^n s_j=n.
\]
Define $\Dhat_A^{\mathbf r,\mathbf s}(\psi)$ to be the set of
$\bb\in\T^m$ such that, for every sufficiently large $T$, the system
\begin{equation*}\label{eq:conclusion-weighted-system}
\begin{aligned}
 |(A\bq-\bp-\bb)_i|&\le\psi(T)^{r_i},
 &1\le i\le m,\\
 |q_j|&\le T^{s_j},
 &1\le j\le n,
\end{aligned}
\end{equation*}
has a solution $(\bp,\bq)\in\Z^m\times\Z^n$, where any
representative of $\bb$ in $\R^m$ may be used. When all weights
equal $1$, this is $\Dhat_A(\psi)$. We are interested in the Hausdorff dimension of these sets.

The target sets are now rectangles with different rates of
contraction in different directions. Their lattice-point counts
describe the available approximations, but a Hausdorff dimension
formula must also account for the directions in which these
approximations are distributed. 

\medskip

\noindent\textbf{Question.}
Let $A\in\Mat_{m,n}$, and let $\psi:\R_+\to\R_+$ be continuous,
strictly decreasing and lower quasimultiplicative, with
$\psi(T)\to0$ as $T\to\infty$. Does Theorem~\ref{thm:main} admit
a weighted analogue? In particular,
does
\begin{equation*}\label{eq:conclusion-weighted-no-drop}
 \dim_H\left(
 \bigcap_{c>0}\Dhat_A^{\mathbf r,\mathbf s}(c\psi)
 \right)
 =\inf_{c>0}\dim_H\Dhat_A^{\mathbf r,\mathbf s}(c\psi)
\end{equation*}
hold? Can the dimension in the $c$-invariant case be determined
from the weighted lattice geometry, and which information about
the distribution of approximations in different directions is needed?

\newcommand{\etalchar}[1]{$^{#1}$}
\providecommand{\bysame}{\leavevmode\hbox to3em{\hrulefill}\thinspace}
\providecommand{\MR}{\relax\ifhmode\unskip\space\fi MR }
\providecommand{\MRhref}[2]{%
  \href{http://www.ams.org/mathscinet-getitem?mr=#1}{#2}
}
\providecommand{\href}[2]{#2}

\end{document}